\colorlet{darkblue}{blue!50!black}
\colorlet{darkblue}{red!100!black}
\newtheorem{theorem}{Theorem}[section]
\newtheorem{lemma}[theorem]{Lemma}
\newtheorem{example}[theorem]{Example}
\newtheorem{remark}[theorem]{Remark}
\let\originalleft\left
\let\originalright\right
\renewcommand{\left}{\mathopen{}\mathclose\bgroup\originalleft}
\renewcommand{\right}{\aftergroup\egroup\originalright}
\renewcommand{\d}{\/\mathrm{d}\/}
\def\w{\textbf{W}^{\varepsilon}_{{\theta}^{\varepsilon}}}
\def\T{T\wedge\tau_N}
\def\L{\mathbb{L}}
\def\C{\mathrm{C}}
\def\f{\boldsymbol{f}}
\def\D{\mathrm{D}}
\def\y{\boldsymbol{y}}
\def\X{\mathbb{X}}
\def\V{\mathbb{v}}
\def\T{\mathbb{T}^d}
\def\w{\boldsymbol{w}}
\def\G{\mathbb{G}}
\def\N{\mathbb{N}}
\def\no{\nonumber}
\def\V{\mathbb{V}}
\def\wi{\widetilde}
\def\u{\mathrm{U}}
\def\P{\mathrm{P}}
\def\u{\boldsymbol{v}}
\def\H{\mathbb{H}}
\def\n{\boldsymbol{n}}
\newcommand{\R}{\mathbb{R}}
\renewcommand{\d}{\/\mathrm{d}\/}
\newcommand{\Addresses}{{
		\footnote{
			
			\noindent \textsuperscript{1,2}Department of Mathematics, Indian Institute of Technology Roorkee-IIT Roorkee,
			Haridwar Highway, Roorkee, Uttarakhand 247667, INDIA.\par\nopagebreak
			\noindent 	\textit{e-mail:} \texttt{Pardeep Kumar: pkumar3@ma.iitr.ac.in.}
			
			\noindent  \textit{e-mail:} \texttt{Manil T. Mohan: manilfma@iitr.ac.in, maniltmohan@gmail.com.}
			
			\noindent \textsuperscript{*}Corresponding author.

			\textit{Key words:} Convective Brinkman-Forchheimer equations, inverse problem, final overdetermination, Tikhonov's fixed point theorem.
			
			Mathematics Subject Classification (2020): Primary 35R30; Secondary 35Q35, 35Q30.

}}}
\begin{document}
	
	
	\title[An inverse problem for convective Brinkman-Forchheimer equations]{An inverse source problem for convective Brinkman-Forchheimer equations with the final overdetermination
		\Addresses}
	\author[P. Kumar and M. T. Mohan ]{Pardeep Kumar\textsuperscript{1} and Manil T. Mohan\textsuperscript{2*}}

	\maketitle
	
	\begin{abstract}
		In this paper, we examine an inverse problem for the following convective Brinkman-Forchheimer (CBF) equations or damped Navier-Stokes equations:
	\begin{align*}
		\boldsymbol{v}_t-\mu \Delta\boldsymbol{v}+(\boldsymbol{v}\cdot\nabla)\boldsymbol{v}+\alpha\boldsymbol{v}+\beta|\boldsymbol{v}|^{r-1}\boldsymbol{v}+\nabla p=\boldsymbol{F}:=\boldsymbol{f} g, \ \ \  \nabla\cdot\boldsymbol{v}=0,
	\end{align*}
on a torus  $\mathbb{T}^d$, $d=2,3$.  The inverse problem under consideration consists of determining the vector-valued velocity function $\boldsymbol {v}$, the pressure gradient $\nabla p$ and the vector-valued forcing function $\boldsymbol{f} $. Using the Tikhonov fixed point theorem, we prove the existence of a solution for  the inverse problem for 2D and 3D CBF equations with the final overdetermination data for the divergence free initial data in the energy space $ \mathbb{L}^2(\mathbb{T}^d)$. A concrete example is also provided to validate the obtained result. Moreover, we overcome the technical difficulties while proving  the uniqueness and H\"older type stability results by using the regularity results available for the direct problem for CBF equations.    The well-posedness results hold for $r \geq 1$ in two dimensions  and  for $r \geq 3$ in  three dimensions  for appropriate values of $\alpha,\mu$ and $\beta$. The nonlinear damping term $|\boldsymbol{v}|^{r-1}\boldsymbol{v}$ plays a crucial role in obtaining the required results.  In the case of supercritical growth ($r>3$), we obtain better results than that are available  in the literature for 2D Navier-Stokes equations. 
	\end{abstract}


\section{Introduction}\label{sec1}\setcounter{equation}{0}
The convective Brinkman-Forchheimer (CBF) equations characterize the motion of incomp-ressible fluid flows in a saturated porous medium (cf. \cite{AB,XC}).
            The major objective of this work is to examine the well-posedness of an inverse problem to  CBF equations with periodic boundary conditions for the divergence free initial data  in the energy space  $\mathbb{L}^2(\mathbb{T}^d)$. 
\subsection{The mathematical model and the direct problem}
Let $L>0$ and $\T=\mathbb{R}^d/ (L\mathbb{Z})^d\cong (\mathbb{R}/ L\mathbb{Z})^d,$ $d=2,3$,   be the $d$-dimensional torus. We consider the following  CBF equations on the torus $\T$: 
\begin{align}
	\u_t-\mu \Delta\u+(\u\cdot\nabla)\u+\alpha\u+\beta|\u|^{r-1}\u+\nabla p=\boldsymbol{F}&:=\f g, \ \text{ in } \ \T \times[0,T), \label{1a}\\ \nabla\cdot\u&=0, \ \ \ \ \text{ in } \ \T\times[0,T),\label{1b}
\end{align}
with the initial condition
\begin{align}\label{1c}
	\u=\u_0, \ \text{ in } \ \T \times \{0\},
\end{align}
and	$\u(\cdot,\cdot)$ and $p(\cdot,\cdot)$ satisfy the following periodicity conditions:
\begin{align}\label{2a}
	\u(x+{L} e_i,t)=\u(x,t), \  \ \text{and} \ \ 	p(x+{L} e_i,t)=p(x,t),\ \text{ for all }\ (x,t)\in\R^d\times[0,T],
\end{align}
for $i=1,\ldots,d$, where $\{e_1,\ldots,e_d\}$ is the canonical basis of $\R^d$.    Here $\u(\cdot,\cdot):\T \times[0,T] \to  \R^d$ represents the velocity field, $p(\cdot,\cdot):\T \times[0,T] \to  \R$ denotes the pressure field and $\boldsymbol{F}(\cdot,\cdot):\T \times[0,T] \to  \R^d$ stands for an  external force which is periodic in the first variable, that is, $\boldsymbol{F}(x+{L} e_i,t)=\boldsymbol{F}(x,t), \text{ for all } (x,t)\in\R^d\times[0,T]$. The constant $\mu$ denotes the positive Brinkman coefficient (effective viscosity), while the positive constants $\alpha$ and $\beta$ stand for the Darcy coefficient (permeability of the porous medium) and the Forchheimer coefficient (proportional to the porosity of the material), respectively (cf. \cite{KH,PAM}). The parameter $r\in[1,\infty)$ is known as the absorption exponent  and the cases, $r=3$ and $r>3$, are referred as the critical exponent and the fast growing nonlinearity, respectively (see \cite{KT2}).  When $\alpha=\beta=0$, the classical $d$-dimensional Navier-Stokes equations (NSE) are obtained.  Thus, the system \eqref{1a}-\eqref{2a} can be viewed as a modification (by the introduction of an absorption term $\alpha\u+\beta|\u|^{r-1}\u$) of the classical NSE (or damped Navier-Stokes equations), and the damping term helps to obtain global solvability results even in three dimensions. By imposing the condition $\int_{\T}p(x,t)\d x=0, $ for $t\in [0,T]$, one can obtain the uniqueness of the pressure $p$. The model given in \eqref{1a}-\eqref{2a} is recognized to be more accurate when the flow velocity is too large for the Darcy's law to be valid alone, and apart from that, the porosity is not too small, thus, we call these types of models as \emph{non-Darcy models} (cf. \cite{PAM}). In Proposition 1.1, \cite{KWH}, it is demonstrated that the critical homogeneous CBF equations have the same scaling as NSE only when $\alpha=0$ and no scale invariance property for other values of $\alpha$ and $r$. 

The existence and uniqueness of weak solutions satisfying the energy equality and strong solutions for CBF equations in bounded and periodic domains is established in the works \cite{SNA,CLF,KWH,KWH1,KT2,KKMTM,MTM4}, etc., and references therein, and for the whole space, the results can be accessed from \cite{ZCQJ,ZZXW}, etc. The Navier-Stokes problem with a modified absorption term $|\u|^{r-1}\u$, for $r>1$, in bounded domains with compact boundary is considered in \cite{SNA}. The existence of Leray-Hopf weak solutions, for any dimension $d\geq 2$, and its uniqueness for $d=2$ is established in \cite{SNA}. In \cite{KWH}, the authors obtained a simple proof of the existence of global-in-time	smooth solutions of 3D CBF equations in periodic domains with the absorption exponent $r >3$. For the	critical value $r = 3$, the existence of a unique global, regular solution is proved, provided	that the coefficients satisfy a relation $4 \beta \mu \geq 1$. The authors in  \cite{KWH1} proved that  the strong solutions	of 3D CBF equations in periodic domains with the absorption exponent $r \in [1, 3]$ remain	strong under small changes of the initial condition and forcing function. Recently, for  $r \geq 3$ $(\beta, \mu >0  \ \text{for} \ r >3$ and  $2\beta\mu\geq 1$ for $r=3$), the long time behavior of  3D deterministic and stochastic CBF equations defined on a torus is discussed in \cite{KKMTM}.

\subsection{Investigation of the inverse problem}
	Despite the importance of the direct problem, it necessitates the knowledge of physical parameters such as the Brinkman coefficient $\mu$, Darcy coefficient $\alpha$, Forchheimer coefficient $\beta$ and the forcing term $\boldsymbol{F}:=\f g$. When, in addition to the solution of the equation, recovery of some physical properties of the investigated object or the effects of external sources are needed, it is better to use inverse problems to determine a coefficient or to handle the right hand side of the differential equation arising in a mathematical model of a physical phenomena. However, posing an inverse problem requires some additional information on the solution besides the given initial and boundary conditions. 
	
		 	In this work, as an additional information, we  use the trace of the velocity $\u$ and the pressure gradient $\nabla p$, as prescribed at the final moment $t=T$ of the segment $[0,T]$.  We assume that $\boldsymbol{F}$, the vector-valued external force in \eqref{1a}, can be written as  $$\boldsymbol{F}(x,t):=\f(x)g(x,t),$$ where $\f$ is an unknown  divergence-free vector-valued function  and $g$ is a given scalar function such that  $g$ and  $g_t$ are continuous on $\T \times[0,T]$. We consider the nonlinear inverse problem of determining the functions $\{\boldsymbol {v},\nabla p, \f\}$, satisfying the system \eqref{1a}-\eqref{2a}, with the final overdetermination condition:
		 	\begin{align}\label{1d}
		 		\u(x,T)=\boldsymbol{\varphi}(x), \ \ \ \ \nabla p(x,T)=\nabla\psi(x), \ \ \ x \in \T,
		 	\end{align} 
		 	where the functions $\u_0, \boldsymbol{\varphi}, \nabla \psi, \mu, \alpha, \beta$ and $g$ are given.  Note that $\boldsymbol{F}$ satisfies
		 	\begin{align}\label{g1}
		 		\nabla \cdot \boldsymbol{F}(x,t)=0 \ \ \text{for all} \ (x,t) \in \T \times [0,T),
		 	\end{align}
		 and this condition	is required for the uniqueness of determining the spatially varying 	divergence-free factor $\f$ of the source term $\boldsymbol{F}(x,t):=\f(x) g(x,t)$,  where $g$ is a given function. 
		 	
		 	 Let us now provide one example to show that the divergence free condition on $\boldsymbol{F}$ is necessary for obtaining uniqueness of the inverse problem. Suppose that the condition \eqref{g1} is not satisfied. Then, for
		\begin{align*}
			\mathcal{L}\u=	\u_t-\mu \Delta\u+(\u\cdot\nabla)\u+\alpha\u+\beta|\u|^{r-1}\u, \ \ \text{and} \ \ g \equiv 1,
		\end{align*}
		we assume that $(\u, \nabla p)$ satisfies
		\begin{align*}
			\mathcal{L} \u +\nabla p=\mathbf{0}, \ \ \ \nabla \cdot \u=0,
		\end{align*}
		and we fix $\u(x,0)$. Under the condition \eqref{2a} and $\int_{\T}p(x,t)\d x=0, $ for $t\in [0,T],$ it is trivial that $(\u, \nabla p)=(\mathbf{0},\mathbf{0})$ satisfies \eqref{1a}-\eqref{2a} with $\f\equiv\mathbf{0}$. Now, we choose $\psi \in \C^\infty_0(\T)$ satisfying $\nabla \psi \not\equiv \mathbf{0}$ in $\T$. Then $(\u, \nabla p)=(\mathbf{0},\nabla \psi)$ satisfies the same system \eqref{1a}-\eqref{2a} with $\f=\nabla \psi$. In other words, in the presence of  the pressure $ p$ in the CBF equations, there is no possibility of uniquely determining  the component of $\f$  given by a scalar potential. If the condition \eqref{g1} is satisfied, then $\psi\equiv 0$ and $\f$ can be determined uniquely. 

	Inverse problems with final overdetermination  conditions have been well studied in the literature (see \cite{JF,AH,AH1,VI,VI1,PKMTM,Pk,PV,PV1,POV,SY,VP}, etc.,  and references therein). An inverse problem
		with the final overdetermination for NSE has first been considered by A. I. Prilepko and I. A. Vasin in  \cite{PV} (cf. Sections 4.2 and 4.3, Chapter 4, \cite{POV}). In the work \cite{POV}, they proved the existence and uniqueness results of an inverse problem  for nonstationary linearized NSE with final and integral overdetermination conditions using Schauder's fixed point theorem.  By an application of Schauder's fixed point theorem, the solvability results of an inverse problem for the nonlinear nonstationary Navier-Stokes systems in the case of final overdetermination is established in \cite{PV1,VP} (cf. Chapter 4, \cite{POV}). However, neither uniqueness nor stability are taken into account for the same problem in the works \cite{PV1,VP}.  For an extensive study on numerous inverse problems corresponding to Navier-Stokes equations and related models, where one requires to determine the density of external forces or some coefficients of the equations on the basis of integral or functional overdetermination, we refer the interested readers to  \cite{MBFC,AYC2,AYC,AYC1,MC,JFGN,OYMY,VI,YJJF,AIK,PKKK,PKMTM1,RYL,Pk,PV,POV}, etc. and  references therein. 
	
The simultaneous determination of source terms in a linear parabolic problem from the observation of the final state is achieved through the use of a weak solution approach, as demonstrated in \cite{AH} (cf. \cite{AH1}).	The authors  in \cite{JF} examined the well-posedness of an inverse problem for 2D NSE  with the final overdetermination data using the Tikhonov fixed point theorem. To prove the same, they assumed that the initial data $\u_0 \in \H$ and the viscosity constant is sufficiently large. Recently, based on the existence of strong solutions of  CBF equations, the well-posedness of an inverse problem for 2D and 3D CBF equations with the final overdetermination data is established in \cite{PKMTM} using Schauder's fixed point theorem, where  the authors assumed that the initial data is sufficiently smooth $	(\u_0 \in \H^2(\Omega) \cap \V, \  \mbox{ where }\ \Omega \ \text{is a bounded domain})$.  The main difference of our work with the results obtained in \cite{PKMTM} is that we are proving the well-posedness of the   final overdetermination problem with $\u_0\in\H$ under much relaxed conditions than that obtained in \cite{JF}.  The nonlinear  damping term $|\u|^{r-1}\u$  helps us to control the convective term $(\u\cdot\nabla)\u$  and achieve the required results. An inverse problem of determining the initial condition for 2D and 3D CBF equations, given direct observations of the time dependent velocity field at a finite set of points at positive times (Eulerian observations) in periodic domains, is examined in \cite{MTM7}.

\subsection{Technical difficulties and approaches}
 We emphasize here that the method used in \cite{JF} (for the initial data $\u_0\in\H$) may only be applicable for the case of $d=2, \ r\in[1,3]$ (see \cite{PKMTM1}), due to a technical difficulty in working with bounded domains. Note that in the case of bounded domains,  $\mathrm{P}_{\H}(|\u|^{r-1}\u)$ ($\mathrm{P}_{\H}$ is the Helmholtz-Hodge orthogonal projection, see Subsection \ref{sub2.1}) need not be zero on the boundary, and $\mathrm{P}_{\H}$ and $-\Delta$ are not necessarily commuting (for a counter example, see Example 2.19, \cite{JCR4}). Furthermore, while taking the inner product with $-\Delta\u$ in \eqref{1a}, $-\Delta\u\cdot\n\neq 0$ on the boundary of the domain ($\n$ is the outward drawn normal to the boundary $\partial\Omega$), in general and the term with pressure may not disappear (see \cite{KT2}). As a result, the equality (\cite{KWH})
\begin{align}\label{1g}
	&\int_{\T}(-\Delta\u(x))\cdot|\u(x)|^{r-1}\u(x)\d x\no\\&
		=\int_{\T} |\nabla\u(x)|^{r-1}\u(x) \d x + 4\bigg( \frac{r-1}{(r+1)^2}\bigg) \int_{\T}| \nabla|\u(x)|^\frac{r+1}{2}|^r \d x 
	\nonumber\\&=\int_{\T}|\nabla\u(x)|^2|\u(x)|^{r-1}\d x+\frac{r-1}{4}\int_{\T}|\u(x)|^{r-3}|\nabla|\u(x)|^2|^2\d x,
\end{align}
may not be useful in the context of bounded domains. So, we restrict ourselves to periodic domains in this work, and the equality \eqref{1g} plays a crucial role in obtaining the well-posedness of solutions of the inverse problem \eqref{1a}-\eqref{1d}. Recently, the authors in  \cite{DsSz} addressed the above regularity problem for Dirichlet's boundary conditions and the well-posedness of such kinds of inverse problems for CBF equations  in bounded domains will be a future work. 
		\subsection{Main results and novelties of the work}
	By a solution of the inverse problem  \eqref{1a}-\eqref{1d}, we mean a set of vector-valued functions $\{\u, \nabla p,\f\}$ such that $$	\u\in\mathrm{L}^{\infty}(0,T;\H)\cap\mathrm{L}^{2}(0,T;\V) \cap \mathrm{L}^{r+1}(0,T;\widetilde{\L}^{r+1}),\ \nabla p(\cdot,t)\in \G(\T),\ \f\in\H,$$ for any $t\in[0,T]$ and the triplet  $\{\u,\nabla p,\f\}$  satisfies all the relations \eqref{1a}-\eqref{1d} in the weak sense. 
In order to prove the well-posedness of the inverse problem for the CBF equations in \cite{PKMTM},  the authors exploited the existence  of unique strong solutions for the direct problem for the CBF equations with smooth initial data (like $\u_0 \in \H^2(\Omega) \cap \V$). However, in practice, the data $\u_0, \boldsymbol{\varphi}, \nabla \psi$, and $g$ that are obtained from physical measurements may not be smooth functions. As a result, the methods based on the assumption of smooth initial data  are not applicable in such cases. In this paper, we employ a weak solution approach to formulate and solve the inverse problem for the triplet $\{\boldsymbol {v},\nabla p, \f\}$ by using Tikhonov's fixed point theorem.	We employ the methods developed in \cite{JF,PV1} to prove the existence of solutions and  \cite{JF} to establish the uniqueness and stability to the above formulated inverse problem. The aim of the present paper is to remove the growth restriction (see \cite{PKMTM1}) and verify the well-posedness of solutions of the inverse problem \eqref{1a}-\eqref{1d} with an arbitrary growth exponent for $r\geq1$, in 2D and for $r \geq3$, in 3D. Moreover, for the supercritical growth (that is, for $r>3$), the conditions on $\mu$ are much weaker than that obtained in \cite{JF}  for 2D NSE (cf. Remark \ref{rem4.2} below).  The major goals of this paper is to prove 
	\begin{enumerate}
		\item [(i)] the existence of a solution (using Tikhonov's fixed point theorem)  and its uniqueness, 
		\item [(ii)] the stability of the solution in the norm of the corresponding function spaces, 
	\end{enumerate}
	to the inverse problem \eqref{1a}-\eqref{1d} under the assumptions:
		\begin{align}\label{1e}
		|g(x,T)|\geq g_T >0 \  \text{ for some positive constant} \ g_T \ \text{for} \ x\in \T,
	\end{align}
and 
	\begin{align}\label{1f}
		\u_0 \in \H, \ \boldsymbol{\varphi} \in \H^2(\T) \cap \V , \ \ \ \nabla \psi \in \G(\T).
		\end{align}
In contrast to the results obtained for the CBF equations in \cite{PKMTM,PKMTM1}, and \cite{JF,POV}, etc., for NSE, the well-posedness of the generalized solution of the inverse problem holds for the initial data $\u_0 \in \H$ in the periodic domains.  Moreover, a H\"older type stability result is also obtained in this work. 
	
We now state the main result on the well-posedness of	solutions of the inverse problem \eqref{1a}-\eqref{1d}.
	\begin{theorem}\label{thm2}
		Let $\T  \ (d=2,3)$ be the $d$-dimensional torus, $\u_0 \in \H, \ \boldsymbol{\varphi} \in \H^2(\T) \cap \V , \  \nabla \psi \in \G(\T)$ and $g, g_t \in \C(\T \times[0,T])$ satisfy assumption \eqref{1e}. Moreover, let the conditions 	
		\begin{align}
			\frac{3}{4 \alpha^\frac{1}{3}}\| \boldsymbol{\varphi}\|_{\widetilde{\L}^4}^\frac{4}{3} \leq \mu, \ \alpha >0, \ \ \ \text{for} \ d=2 \ \text{and} \ r \in [1,3], \label{1g1}\\ 
			\frac{2(r-3)}{\alpha(r-1)}\left(\frac{8}{\beta\mu (r-1)}\right)^{\frac{2}{r-3}}< \mu, \  \alpha>0, \ \ \ \ \text{for} \ d=2,3 \ \text{and} \ r> 3, \label{1g2}\\
			\frac{1}{\beta}<\mu, \ \alpha>0, \  \ \ \ \ \ \text{for} \ d=3 \  \text{and} \ r=3,\label{1g3}
		\end{align}
hold,		where  $\alpha, \beta$ and $\mu$ be sufficiently large as discussed in Remark \ref{rem4.2} below. Then, the following assertions hold for the inverse problem \eqref{1a}-\eqref{1d}:
		\begin{enumerate}
			\item [$(i)$]  There exists a  solution $\{\u, \nabla p,\f\}$ to the inverse problem \eqref{1a}-\eqref{1d}.
			\item [$(ii)$]  Let $\{\u_i, \nabla p_i,\f_i\}$ $(i=1,2)$ be two solutions to the inverse problem \eqref{1a}-\eqref{1d} correspo-nding to the input data  $(\u_{0i},\boldsymbol{\varphi}_i,\nabla\psi_i,g_i) \ (i=1,2)$. Then, there exists a constant $C$ such that
			\begin{itemize}
				\item [$(1)$] for $d=2$ and $r \geq 1$
				\begin{align}\label{1k1}
				\nonumber&\|\u_1-\u_2\|_{\mathrm{L}^\infty(0,T;\H)}+	\|\u_1-\u_2\|_{\mathrm{L}^2(0,T;\V)}+	\|\u_1-\u_2\|_{\mathrm{L}^{r+1}(0,T;\widetilde{\L}^{r+1})}\no\\&\quad+\|\nabla(p_1-p_2)\|_{\mathrm{L}^\frac{r+1}{r}(0,T;\H^{-1})}+\|\f_1-\f_2\|_{\H} \nonumber\\&\leq C\big(\|\u_{01}-\u_{02}\|_{\H}^{\frac{2}{r+1}}+\|g_1-g_2\|_0^{\frac{2}{r+1}}+\|(g_1-g_2)_t\|_0^{\frac{2}{r+1}}\nonumber\\&\quad+\| \nabla (\boldsymbol{\varphi}_1-\boldsymbol{\varphi}_2)\|_{\H}^{\frac{2}{r+1}}+\| \nabla (\psi_1-\psi_2)-\mu \Delta(\boldsymbol{\varphi}_1-\boldsymbol{\varphi}_2)\|_{\L^2}^{\frac{2}{r+1}}\big),
				\end{align}
			\item [$(2)$] for $d=3$ and $r \geq 3$
			\begin{align}\label{1k}
				\nonumber&\|\u_1-\u_2\|_{\mathrm{L}^\infty(0,T;\H)}+	\|\u_1-\u_2\|_{\mathrm{L}^2(0,T;\V)}+	\|\u_1-\u_2\|_{\mathrm{L}^{r+1}(0,T;\widetilde{\L}^{r+1})}\no\\&\quad+\|\nabla(p_1-p_2)\|_{\mathrm{L}^\frac{r+1}{r}\big(0,T;\H^{-1}+{\L}^\frac{r+1}{r}\big)}+\|\f_1-\f_2\|_{\H} \nonumber\\&\leq C\left(\|\u_{01}-\u_{02}\|_{\H}^{\frac{2}{r+1}}+\|g_1-g_2\|_0^{\frac{2}{r+1}}+\|(g_1-g_2)_t\|_0^{\frac{2}{r+1}}\right.\nonumber\\&\quad\left.+\| \nabla (\boldsymbol{\varphi}_1-\boldsymbol{\varphi}_2)\|_{\H}^{\frac{2}{r+1}}+\| \nabla (\psi_1-\psi_2)-\mu \Delta(\boldsymbol{\varphi}_1-\boldsymbol{\varphi}_2)\|_{\L^2}^{\frac{2}{r+1}}\right), 
			\end{align}
			where $C$ depends on the input data, $\mu,\alpha,\beta,r$ and $T$. 
				\end{itemize}
		\end{enumerate}	
	\end{theorem}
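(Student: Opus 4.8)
The plan is to recast the inverse problem as a fixed point equation for the spatial factor $\f\in\H$ of the source. Evaluating the (Leray-projected) equation \eqref{1a} at the terminal time $t=T$ and inserting the overdetermination data \eqref{1d}, namely $\u(\cdot,T)=\boldsymbol{\varphi}$ and $\nabla p(\cdot,T)=\nabla\psi$, the pressure gradient is annihilated by the Helmholtz--Hodge projection $\P_{\H}$ and I obtain
\begin{align*}
\f\, g(\cdot,T)=\P_{\H}\Big[\u_t(\cdot,T)-\mu\Delta\boldsymbol{\varphi}+(\boldsymbol{\varphi}\cdot\nabla)\boldsymbol{\varphi}+\alpha\boldsymbol{\varphi}+\beta|\boldsymbol{\varphi}|^{r-1}\boldsymbol{\varphi}\Big].
\end{align*}
Since $|g(\cdot,T)|\geq g_T>0$ by \eqref{1e}, this motivates the operator
\begin{align*}
\Phi(\f):=\frac{1}{g(\cdot,T)}\P_{\H}\Big[\u^{\f}_t(\cdot,T)-\mu\Delta\boldsymbol{\varphi}+(\boldsymbol{\varphi}\cdot\nabla)\boldsymbol{\varphi}+\alpha\boldsymbol{\varphi}+\beta|\boldsymbol{\varphi}|^{r-1}\boldsymbol{\varphi}\Big],
\end{align*}
where $\u^{\f}$ is the unique solution of the direct problem \eqref{1a}--\eqref{1c} with source $\f g$ and datum $\u_0\in\H$, supplied by the well-posedness and regularity theory for CBF equations; the compatibility \eqref{g1} guarantees $\Phi$ takes values in $\H$. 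A fixed point of $\Phi$ is the candidate source.

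For existence I would first fix a large closed ball $\mathcal{K}\subset\H$ and check $\Phi(\mathcal{K})\subseteq\mathcal{K}$: the a priori energy bounds for the direct problem in $\mathrm{L}^\infty(0,T;\H)\cap\mathrm{L}^2(0,T;\V)\cap\mathrm{L}^{r+1}(0,T;\widetilde{\L}^{r+1})$, together with the higher regularity giving $\u^{\f}_t(\cdot,T)\in\H$, bound $\|\Phi(\f)\|_{\H}$ by the data, the structural conditions \eqref{1g1}--\eqref{1g3} being precisely what lets the damping and viscous terms absorb the convective term. The instantaneous parabolic smoothing places $\u^{\f}_t(\cdot,T)$ in a space compactly embedded in $\H$ (an Aubin--Lions--Simon argument), so $\Phi(\mathcal{K})$ is precompact, while continuous dependence of the direct solution on $\f$ yields continuity of $\Phi$; Tikhonov's fixed point theorem then gives $\f^\ast=\Phi(\f^\ast)$. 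The crucial observation is that such a fixed point actually realizes $\u(\cdot,T)=\boldsymbol{\varphi}$: writing $\u=\u^{\f^\ast}$ and subtracting the fixed-point identity from the direct equation at $t=T$ cancels $\u_t(\cdot,T)$ and $\f^\ast g(\cdot,T)$, leaving the stationary relation
\begin{align*}
&\mu\P_{\H}(-\Delta)(\boldsymbol{\varphi}-\u(\cdot,T))+\P_{\H}\big[(\boldsymbol{\varphi}\cdot\nabla)\boldsymbol{\varphi}-(\u(\cdot,T)\cdot\nabla)\u(\cdot,T)\big]\\
&\quad+\alpha(\boldsymbol{\varphi}-\u(\cdot,T))+\beta\P_{\H}\big[|\boldsymbol{\varphi}|^{r-1}\boldsymbol{\varphi}-|\u(\cdot,T)|^{r-1}\u(\cdot,T)\big]=\mathbf{0}.
\end{align*}
Testing against $\boldsymbol{\varphi}-\u(\cdot,T)$ and invoking the monotonicity of the Stokes and damping operators, with the convective contribution $\langle(({\boldsymbol{\varphi}-\u(\cdot,T)})\cdot\nabla)\boldsymbol{\varphi},\boldsymbol{\varphi}-\u(\cdot,T)\rangle$ dominated by the viscous and damping terms under \eqref{1g1}--\eqref{1g3} (this is why those conditions involve $\boldsymbol{\varphi}$), forces $\u(\cdot,T)=\boldsymbol{\varphi}$. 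The pressure is recovered as the gradient part $(\mathrm{I}-\P_{\H})[\,\cdot\,]$ of \eqref{1a}, and $\nabla p(\cdot,T)=\nabla\psi$ follows from the natural compatibility of the data, completing part $(i)$.

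For part $(ii)$ I would set $\boldsymbol{V}=\u_1-\u_2$ and $G=g_1-g_2$ and write the difference equation. Testing with $\boldsymbol{V}$, using $\langle(\u_1\cdot\nabla)\boldsymbol{V},\boldsymbol{V}\rangle=0$, the monotonicity lower bound for the damping difference, and the key identity \eqref{1g} to dominate $\langle(\boldsymbol{V}\cdot\nabla)\u_2,\boldsymbol{V}\rangle$ by the viscous and damping contributions, a Gr\"onwall argument controls $\|\boldsymbol{V}\|$ in $\mathrm{L}^\infty(0,T;\H)\cap\mathrm{L}^2(0,T;\V)\cap\mathrm{L}^{r+1}(0,T;\widetilde{\L}^{r+1})$ by $\|\f_1-\f_2\|_{\H}$ and the differences of $\u_0$ and of $g$. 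Subtracting the two terminal-time identities then expresses $\f_1-\f_2$ through $\boldsymbol{V}_t(\cdot,T)$ and the data combination $\nabla(\psi_1-\psi_2)-\mu\Delta(\boldsymbol{\varphi}_1-\boldsymbol{\varphi}_2)$; bounding $\boldsymbol{V}_t(\cdot,T)$ in $\H$ requires the second-order regularity estimates for the difference, again leaning on \eqref{1g}. Closing the resulting coupled inequalities and interpolating the nonlinear $\widetilde{\L}^{r+1}$-controlled terms between $\L^2$ and $\widetilde{\L}^{r+1}$ produces the H\"older exponent $\tfrac{2}{r+1}$, yielding \eqref{1k1} and \eqref{1k}; uniqueness is the special case of equal data.

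The hard part will be twofold. First, making $\Phi$ well-defined, compact, and continuous when only $\u_0\in\H$ hinges entirely on the instantaneous smoothing of the direct problem furnishing $\u^{\f}_t(\cdot,T)\in\H$ with quantitative bounds. Second, both in the fixed-point realization step and in the stability estimate, the convective term $(\u\cdot\nabla)\u$ must be dominated by the damping $\beta|\u|^{r-1}\u$ together with the viscosity; this is feasible precisely under \eqref{1g1}--\eqref{1g3} and through the identity \eqref{1g}, and it is most delicate in three dimensions and for the supercritical range $r>3$, where controlling $\boldsymbol{V}_t(\cdot,T)$ and tracking the interpolation exponents demands the full strength of the regularity theory for the direct problem.
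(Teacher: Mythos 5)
Your proposal is correct in outline and reproduces the paper's architecture almost verbatim: your operator $\Phi$ built from the terminal-time evaluation of \eqref{1a} is exactly the operator equation of the second kind \eqref{1i}; the invariance of a large ball is the paper's Lemma \ref{lemma4} (with the radius $M$ and the largeness condition of Remark \ref{rem4.2}); your ``crucial observation'' that a fixed point realizes $\u(\cdot,T)=\boldsymbol{\varphi}$, proved by subtracting the two terminal-time identities and testing the resulting stationary system against $\boldsymbol{\varphi}-\u(\cdot,T)$ under \eqref{1g1}--\eqref{1g3}, is precisely the equivalence Theorem \ref{thm1} (Appendix \ref{sec5}); and your scheme for part $(ii)$ matches Lemmas \ref{lemmaS1}, \ref{3.5} and \ref{lemmaSf}, the exponent $\tfrac{2}{r+1}$ coming simply from the $(r+1)$-th root of the $\widetilde{\L}^{r+1}$ energy bound rather than from a genuine interpolation. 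The one real divergence is the fixed-point mechanism. The paper's Theorem \ref{thmS} is the \emph{weak-continuity} form of Tikhonov's theorem: since $\H$ is separable and reflexive and $\mathcal{D}$ is bounded, closed and convex, it suffices that $\mathcal{B}$ be weakly continuous (Lemma \ref{lem4.4}); no compactness of the image and no norm continuity are needed. You instead propose precompactness via smoothing and Aubin--Lions plus norm continuity, i.e.\ a Schauder-type argument. This is feasible --- the Appendix \ref{sec6} estimates (e.g.\ Lemma \ref{lemb6}) bound $\|\nabla\u_t(t)\|_{\H}$ near $t=T$ uniformly over the ball, so the image is bounded in $\V$, hence precompact in $\H$ --- but the norm continuity of $\f\mapsto\u_t^{\f}(T)$ that you wave through is not free: it is essentially the continuous-dependence estimate of part $(ii)$ (Lemmas \ref{lemmaS1}--\ref{lemmaSf} with equal initial data and equal $g$), so your existence proof would import the stability machinery, whereas the paper's weak-continuity route keeps part $(i)$ self-contained. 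A minor imprecision: the term $\langle(\boldsymbol{V}\cdot\nabla)\u_2,\boldsymbol{V}\rangle$ is handled by H\"older--Young absorption into the damping (the paper's estimate \eqref{C4}), not by the identity \eqref{1g}, which enters only when testing against $-\Delta\u$.
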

Theorem \ref{thm2} immediately implies the uniqueness in determining the solutions $\{\u, \nabla p,\f\}$ to the inverse problem \eqref{1a}-\eqref{1d}, that is,  the uniqueness of solutions can be derived from \eqref{1k1} and  \eqref{1k}.

The rest of the paper is structured as follows: In the next section, we first discuss the function spaces and some important inequalities. After defining the function spaces, we provide the relation between the solvability of the inverse problem \eqref{1a}-\eqref{1d} and the equivalent nonlinear operator equation (Theorem \ref{thm1}).  We derive a number of \emph{a-priori} estimates and some  regularity results for the solutions of the CBF equations \eqref{1a}-\eqref{2a} required to investigate the inverse problem \eqref{1a}-\eqref{1d} in Section \ref{sec3}.  We prove the first part of our main  result (Theorem \ref{thm2} $(i)$)  in  Section \ref{sec4}, by first proving the existence of a solution to the equivalent operator equation by using the Tikhonov fixed point theorem. We have provided a concrete example also to validate our claims in the same section (Example \ref{ex4.5}). The second part of Theorem \ref{thm2} $(ii)$ is proved in Section \ref{sec4b} by establishing the uniqueness and stability of the solution to the inverse problem \eqref{1a}-\eqref{1d}. We provide the proof of Theorem \ref{thm1} in Appendix \ref{sec5}. Moreover, we deduce some useful energy estimates that are required to investigate the inverse problem \eqref{1a}-\eqref{1d}  in  Appendix  \ref{sec6}. 
\section{Mathematical Formulation}\label{sec2}\setcounter{equation}{0}
This section begins by introducing the function spaces and standard notations that will be used throughout the paper. We consider the problem \eqref{1a}-\eqref{1d} on a $d$-dimensional torus $\T=\mathbb{R}^d/ (L\mathbb{Z})^d, \ L>0$. Then, we provide a mapping which transforms the original inverse problem \eqref{1a}-\eqref{1d}  into an equivalent nonlinear operator equation of second kind \eqref{1i} and verify their equivalence (Theorem \ref{thm1}).

\subsection{Function spaces}\label{sub2.1} Let ${\C}_p^{\infty}(\T;\R^d)$ denote the space of all infinitely differentiable functions  ($\R^d$-valued) such that $\u(x+{L} e_i)=\u(x), $ for all $ x \in \R^d$, and $i=1,\ldots,d$, where $\{e_1,\ldots,e_d\}$ is the canonical basis of $\R^d$. As mentioned in \cite{KWH}, the absorption term $\beta|\u|^{r-1}\u$ does not preserve zero mean-value constraints on the functions, that is, $\int_{\T} \u(x),\d x = 0$, as is the case with the NSE. Therefore, we cannot apply the well-known Poincar\'e  inequality $\|\u\|_{\L^2} \leq C  \|\nabla\u\|_{\L^2}$, and we must consider the full $\H^1$-norm.

The Sobolev space ${\H}_p^s(\T):={\mathrm{H}}_p^s(\T;\mathbb{R}^d)$ is the completion of ${\C}_p^{\infty}(\T;\R^d)$ with respect to the $\H^s$-norm $$\|\u\|_{{\H}_p^s}:=\bigg(\sum_{0 \leq|\alpha| \leq s}\|\D^{\alpha} \u\|_{\L^2(\T)}^2 \bigg)^{1/2}.$$ 
The Sobolev space of periodic functions $\H_{\mathrm{p}}^s(\mathbb{T}^d)$ is the same as (\cite{JCR}) $$\left\{\y:\y(x)=\sum_{k\in\mathbb{Z}^d}\y_{k} \mathrm{e}^{2\pi i k\cdot x},\ \overline{\y}_{k}=\y_{-k},\ \|\y\|_{\H^s_f}:=\left(\sum_{k\in\mathbb{Z}^d}(1+|k|^{2s})|\y_{k}|^2\right)^{\frac{1}{2}}<\infty\right\}.$$ From \cite[Proposition 5.38]{JCR}, we infer that the norms $\|\cdot\|_{{\H}^s_p}$ and $\|\cdot\|_{{\H}^s_f}$ are equivalent.
	Let us define 
	\begin{align*} 
		\mathcal{V}&:=\{\u\in {\C}_p^{\infty}(\T;\R^d):\nabla\cdot\u=0\},\\
		\mathbb{H}&:=\text{the closure of }\ \mathcal{V} \ \text{ in the Lebesgue space } \L^2(\T)=\mathrm{L}^2(\T;\R^d),\\
		\mathbb{V}&:=\text{the closure of }\ \mathcal{V} \ \text{ in the Sobolev space } \ \H^1(\T)=\mathrm{H}^1(\T;\R^d),\\
		\widetilde{\L}^{p}&:=\text{the closure of }\ \mathcal{V} \ \text{ in the Lebesgue space } \L^p(\T)=\mathrm{L}^p(\T;\R^d),
	\end{align*}
	for $p\in(2,\infty]$. Then, we characterize the spaces $\H$, $\V$, $\widetilde{\L}^p$ and $\widetilde{\L}^\infty$ with the norms
\begin{align*}
\|\u\|_{\H}^2:=\int_{\T}|\u(x)|^2\d x, \quad
	\|\u\|_{\V}^2:=\int_{\T}\left(|\u(x)|^2+|\nabla\u(x)|^2\right)\d x, \quad
	\|\u\|_{\widetilde{\L}^p}^p:=\int_{\T}|\u(x)|^p\d x, \ \ 
\end{align*}
for $p \in(2,\infty),$ and  $$\|\u\|_{\widetilde{\L}^\infty}:=\operatorname*{ess\,sup}\limits_{x \in \T}|\u(x)|,$$ respectively. 

 For the Fourier expansion $\u(x)=\sum\limits_{k\in\mathbb{Z}^d} e^{2\pi i k\cdot x} \u_{k} ,$ we have by using Parseval's identity
\begin{align*}
	\|\u\|_{\H}^2=\sum\limits_{k\in\mathbb{Z}^d} |\u_{k}|^2 \  \text{and} \ \|\Delta \u\|_{\H}^2=(2\pi)^4\sum_{k\in\mathbb{Z}^d}|k|^{4}|\u_{k}|^2.
\end{align*}
Therefore, we have
\begin{align*}
	\|\u\|_{\H^2_\mathrm{p}(\mathbb{T}^d)}^2=\sum_{k\in\mathbb{Z}^d}(1+|k|^{4})|\u_{k}|^2=\|\u\|_{\H}^2+\frac{1}{(2\pi)^4}\|\Delta\u\|_{\H}^2\leq\|\u\|_{\H}^2+\|\Delta\u\|_{\H}^2.
\end{align*}
Moreover, by the definition of $\|\cdot\|_{\H^2_\mathrm{p}(\mathbb{T}^d)}$, we have $ \|\u\|_{\H^2_\mathrm{p}(\mathbb{T}^d)}^2\geq\|\u\|_{\H}^2+\|\Delta \u\|_{\H}^2$ and hence it is immediate that $\|\u\|_{\H}^2+\|\Delta \u\|_{\H}^2\simeq\|\u\|_{\H^2_\mathrm{p}(\mathbb{T}^d)}^2$.

In the case of  $\T$ (see \cite{JCR4}),  every vector  $\u\in {\L}^2(\T)$ can be uniquely decomposed into $$\u=\w+\nabla q,$$ where $\w\in \H$ and $\nabla q\in\mathbb{H}^{1}$ (Helmholtz-Weyl or Helmholtz-Hodge decomposition). We can express this decomposition as $${\L}^2(\T)=\H \oplus \G(\T),$$ where $\G(\T)$ is the orthogonal complement of $\H$ in ${\L}^2(\T)$, which consists of gradients of scalar functions belonging $\mathbb{H}^1$. 

Let $(\cdot,\cdot)$ denote the inner product in the Hilbert space $\H$ and $\langle\cdot,\cdot \rangle$ represent the induced duality between the spaces $\V$ and its dual $\V'$ as well as $\widetilde{\L}^p$ and its dual $\widetilde{\L}^{p'}$, where $\frac{1}{p}+\frac{1}{p'}=1$. Note that $\H$ can be identified with its dual $\H'$.
	Wherever needed, we assume that $p_0\in\mathrm{H}^1(\T)\cap\mathrm{L}^2_0(\T),$ where $\mathrm{L}^2_0(\T):=\left\{p\in\mathrm{L}^2(\T):\int_{\T}p(x)\d x=0\right\}$. 
	The norm in the space $\C(\T\times [0,T])$ is denoted by $\|\cdot\|_0$, that is,  $\|g\|_0:=\sup\limits_{(x,t)\in\T\times [0,T]}|g(x,t)|$.

\subsubsection{Important inequalities} In the sequel, $C$ denotes a generic constant which may take different values at different places. 	The following Gagliardo-Nirenberg's and Agmon's inequa-lities are used repeatedly in the paper: 
	\begin{align*}
		\|\u\|_{\L^p}&\leq C\|\nabla\u\|_{\L^2}^{d\left(\frac{1}{2}-\frac{1}{p}\right)}\|\u\|_{\L^2}^{1-d\left(\frac{1}{2}-\frac{1}{p}\right)}, \ \text{ for all }\ \u\in\H^1(\T),\\
		\|\nabla\u\|_{\L^p}&\leq C\|\u\|_{\H^2}^{\frac{1}{2}+\frac{d}{2}\left(\frac{1}{2}-\frac{1}{p}\right)}\|\u\|_{\L^2}^{\frac{1}{2}-\frac{d}{2}\left(\frac{1}{2}-\frac{1}{p}\right)}, \ \text{ for all }\ \u\in\H^2(\T),
	\end{align*}
	where  $2\leq p<\infty$ for $d=2$ and $2\leq p\leq 6$ for $d=3$, and 
\begin{align*}
		\|\u\|_{\L^{\infty}}\leq C\|\u\|_{\L^2}^{1/2}\|\u\|_{\H_p^2}^{1/2} &\leq C\|\u\|_{\L^2}^{1/2}\left(\|\u\|_{\L^2}^{1/2}+\|\Delta\u\|_{\L^2}^{1/2}\right)\\&\leq C \left(\|\u\|_{\L^2}+\|\u\|_{\L^2}^\frac{1}{2}\|\Delta\u\|_{\L^2}^\frac{1}{2}\right), \ \ \ \text{ for }d=2, \\
			\|\u\|_{\L^{\infty}}\leq C\|\u\|_{\L^2}^{1/4}\|\u\|_{\H_p^2}^{3/4} &\leq C\|\u\|_{\L^2}^{1/4}\left(\|\u\|_{\L^2}^{3/4}+\|\Delta\u\|_{\L^2}^{3/4}\right)\\&\leq C \left(\|\u\|_{\L^2}+\|\u\|_{\L^2}^\frac{1}{4}\|\Delta\u\|_{\L^2}^\frac{3}{4}\right), \ \ \ \text{ for }d=3,
\end{align*}
	for all $\u\in\H_p^2(\T)$. In 2D, the well-known Ladyzhenskaya's inequality, that is, $\|\u\|_{\L^4}^2\leq \sqrt{2}\|\u\|_{\L^2}\|\nabla\u\|_{\L^2}$, for all $\u\in\H^1(\mathbb{T}^2)$ and the inequality  $\|\nabla\u\|_{\L^6} \leq C\|\Delta\u\|_{\L^2},$ for all $\u\in\mathbb{T}^d$ will also be used (see Lemma 2.6, \cite{KH}).

	\subsubsection{Nonlinear operator}
	Let us now consider the operator $\mathcal{C}:\widetilde{\L}^{r+1} \to \widetilde{\L}^\frac{r+1}{r}$ defined by $\mathcal{C}(\u):=|\u|^{r-1}\u$. It can be easily seen that $\langle\mathcal{C}(\u),\u\rangle =\|\u\|_{\wi\L^{r+1}}^{r+1}$. Furthermore, for all $\u \in \widetilde{\L}^{r+1}$, the map is Gateaux differentiable with the Gateaux derivative
	\begin{align}\label{29}
		\mathcal{C}'(\u)\w&=\left\{\begin{array}{cl}\w,&\text{ for }r=1,\\ \left\{\begin{array}{cc}|\u|^{r-1}\w+(r-1)\frac{\u}{|\u|^{3-r}}(\u\cdot\w),&\text{ if }\u\neq \mathbf{0},\\\mathbf{0},&\text{ if }\u=\mathbf{0},\end{array}\right.&\text{ for } 1<r<3,\\ |\u|^{r-1}\w+(r-1)\u|\u|^{r-3}(\u\cdot\w), &\text{ for }r\geq 3,\end{array}\right.
	\end{align}
	for all $\w \in \widetilde{\L}^{r+1}$. For $\u,\w \in \widetilde{\L}^{r+1}$, it can be easily verified that
	\begin{align}\label{30}
		\langle\mathcal{C}'(\u)\w,\w\rangle=\int_{\T} |\u(x)|^{r-1}|\w(x)|^2 \d x+(r-1)\int_{\T}|\u(x)|^{r-3}|\u(x)\cdot\w(x)|^2 \d x \geq 0,
	\end{align}
	for $r \geq1$ (note that for the case $1<r<3$, \eqref{30} holds, since in that case the second integral becomes $\int_{\{x \in \T:\u(x) \neq0\}}\frac{1}{|\u(x)|^{3-r}}|\u(x)\cdot\w(x)|^2 \d x \geq 0$). For $r \geq 3$, we have
	\begin{align*}
		\mathcal{C}''(\u)(\w \otimes \boldsymbol{\vartheta})&=(r-1)\big\{|\u|^{r-3}[(\u\cdot \boldsymbol{\vartheta})\w+(\u\cdot\w)\boldsymbol{\vartheta}+( \boldsymbol{\vartheta} \cdot \w)\u]\big\} \\ &\quad+(r-1)(r-3)\big[|\u|^{r-5}(\u\cdot\w)(\u\cdot \boldsymbol{\vartheta})\u\big],
	\end{align*}
	for all $\u \neq \mathbf{0},\w, \boldsymbol{\vartheta} \in \widetilde{\L}^{r+1}$ and is zero for $\u=\mathbf{0}$.

	\subsection{Equivalent formulation}
For the inverse problem \eqref{1a}-\eqref{1d}, let us provide an equivalent formulation as a nonlinear operator equation. Consider $\mathcal{D}$ to be a subset of $\H$ defined by
\begin{align*}
	\mathcal{D}:=\left\{\f \in \H \ : \ \|\f\|_{\H}\leq M\right\},
\end{align*}
where $M$ is a positive constant, which  will be specified  later.	We define a nonlinear operator $\mathcal{A}:\mathcal{D} \to \H$ by
	\begin{align}\label{1h}
		(\mathcal{A}\f)(x):=\u_t(x,T), \ \text{ for }\ x \in \T,
	\end{align}
	where $\u(x,T)$ is the function entering the set $(\u,\nabla p)$ and solving the direct problem \eqref{1a}-\eqref{2a}. Then, for $\f$, we analyze the following nonlinear operator equation of the second kind:
	\begin{align}\label{1i}
		\f=\mathcal{B}\f:=\frac{1}{g(x,T)}\left(\mathcal{A}\f+(\boldsymbol{\varphi} \cdot \nabla)\boldsymbol{\varphi}+\nabla \psi-\mu \Delta \boldsymbol{\varphi}+\alpha \boldsymbol{\varphi}+\beta|\boldsymbol{\varphi}|^{r-1}\boldsymbol{\varphi}\right),
	\end{align}
	over the space $\mathcal{D}$.
	
	The connection between the solvability of the nonlinear operator equation of the second kind \eqref{1i} and the inverse problem \eqref{1a}-\eqref{1d}  is shown in the following theorem:
	\begin{theorem}\label{thm1}
		Let $\T \subset \mathbb{R}^d \ (d=2,3)$ be a torus, $\u_0 \in \H, \ \boldsymbol{\varphi} \in \H^2(\T) \cap \V , \  \nabla \psi \in \G(\T)$ and $g, g_t \in \C(\T \times[0,T])$ satisfy the assumption \eqref{1e}, and let the  conditions \eqref{1g1}-\eqref{1g3}, and  $\alpha, \beta$ and $\mu$ are sufficiently large  (see Remark \ref{rem4.2}), be satisfied. 	 Then, the operator equation \eqref{1i} has a solution lying within $\mathcal{D}$ if and only if	the inverse problem \eqref{1a}-\eqref{1d}  has a solution.
	\end{theorem}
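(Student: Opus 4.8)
The plan is to establish the two implications of the ``if and only if'' separately, the substance lying entirely in the direction that turns a solution of the operator equation \eqref{1i} into a solution of the inverse problem attaining the terminal data \eqref{1d}. Throughout I would use the regularity and \emph{a-priori} estimates promised in Section \ref{sec3} to justify evaluating \eqref{1a} pointwise in time at $t=T$.

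\textbf{From the inverse problem to the operator equation.} Suppose $\{\u,\nabla p,\f\}$ solves \eqref{1a}-\eqref{1d} with $\f\in\mathcal{D}$. Since $\u$ then solves the direct problem \eqref{1a}-\eqref{2a} driven by $\f g$, the quantity $\mathcal{A}\f=\u_t(\cdot,T)$ from \eqref{1h} is well defined. Evaluating \eqref{1a} at $t=T$ and inserting the overdetermination $\u(\cdot,T)=\boldsymbol{\varphi}$, $\nabla p(\cdot,T)=\nabla\psi$ gives
$$\u_t(\cdot,T)-\mu\Delta\boldsymbol{\varphi}+(\boldsymbol{\varphi}\cdot\nabla)\boldsymbol{\varphi}+\alpha\boldsymbol{\varphi}+\beta|\boldsymbol{\varphi}|^{r-1}\boldsymbol{\varphi}+\nabla\psi=\f\,g(\cdot,T).$$
Dividing by $g(\cdot,T)$, which is bounded away from zero by \eqref{1e}, and rearranging yields precisely $\f=\mathcal{B}\f$; membership in $\mathcal{D}$ follows from the bounds of Section \ref{sec3}. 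This direction needs neither \eqref{1g1}-\eqref{1g3} nor periodicity beyond the definition of the spaces.

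\textbf{From the operator equation to the inverse problem.} Conversely, let $\f\in\mathcal{D}$ satisfy $\f=\mathcal{B}\f$ and let $(\u,\nabla p)$ be the unique solution of the direct problem with source $\f g$ and datum $\u_0$, so that $\mathcal{A}\f=\u_t(\cdot,T)$. Only the terminal condition \eqref{1d} remains to be checked. Writing \eqref{1a} at $t=T$ and subtracting the rearranged identity $\f=\mathcal{B}\f$ — in which the common term $\u_t(\cdot,T)=\mathcal{A}\f$ and the source $\f g(\cdot,T)$ cancel — I obtain, with $\w:=\u(\cdot,T)-\boldsymbol{\varphi}$ and $\pi:=p(\cdot,T)-\psi$,
$$-\mu\Delta\w+\big[(\u(\cdot,T)\cdot\nabla)\u(\cdot,T)-(\boldsymbol{\varphi}\cdot\nabla)\boldsymbol{\varphi}\big]+\alpha\w+\beta\big[\mathcal{C}(\u(\cdot,T))-\mathcal{C}(\boldsymbol{\varphi})\big]+\nabla\pi=\mathbf{0},$$
with $\nabla\cdot\w=0$. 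Taking the $\H$-inner product with $\w$, using periodicity to kill $(\nabla\pi,\w)=0$ and to integrate the Laplacian by parts, decomposing the convective difference as $(\u(\cdot,T)\cdot\nabla)\w+(\w\cdot\nabla)\boldsymbol{\varphi}$ and discarding $\big((\u(\cdot,T)\cdot\nabla)\w,\w\big)=0$, I arrive at
$$\mu\|\nabla\w\|_{\L^2}^2+\alpha\|\w\|_{\H}^2+\beta\big(\mathcal{C}(\u(\cdot,T))-\mathcal{C}(\boldsymbol{\varphi}),\w\big)+\big((\w\cdot\nabla)\boldsymbol{\varphi},\w\big)=0,$$
where the damping bracket is nonnegative by the monotonicity consequence of \eqref{30}. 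The heart of the argument is to show that the smallness conditions \eqref{1g1}-\eqref{1g3} force $\w=\mathbf{0}$: I would estimate the cross term $\big((\w\cdot\nabla)\boldsymbol{\varphi},\w\big)=-\big((\w\cdot\nabla)\w,\boldsymbol{\varphi}\big)$ case by case — in 2D for $r\in[1,3]$ by Ladyzhenskaya's and Young's inequalities, absorbing it into $\mu\|\nabla\w\|_{\L^2}^2+\alpha\|\w\|_{\H}^2$ exactly under \eqref{1g1}; for $r>3$ in both dimensions by interpolating it against the coercive damping contribution of \eqref{30}, which produces the threshold \eqref{1g2}; and for $d=3$, $r=3$ under $\beta\mu>1$ from \eqref{1g3}. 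As every remaining term is nonnegative, this yields $\nabla\w=\mathbf{0}$ and hence $\w=\mathbf{0}$, i.e. $\u(\cdot,T)=\boldsymbol{\varphi}$. Feeding $\w=\mathbf{0}$ back into the subtracted equation leaves $\nabla\pi=\mathbf{0}$, that is $\nabla p(\cdot,T)=\nabla\psi$; equivalently one may apply $(I-\P_{\H})$ to \eqref{1a} at $t=T$ and verify the identity using $\u_t(\cdot,T),\Delta\boldsymbol{\varphi},\boldsymbol{\varphi}\in\H$ and $\nabla\psi\in\G(\T)$. Thus both conditions in \eqref{1d} hold and $\{\u,\nabla p,\f\}$ solves the inverse problem.

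I expect the main obstacle to be precisely this case analysis for the convective cross term. Unlike the classical Navier--Stokes setting, here the absorption term $\beta\mathcal{C}(\u)$ must be exploited to close the estimate in the supercritical and critical three-dimensional regimes, and matching the algebra of Young's and interpolation inequalities to the sharp thresholds recorded in \eqref{1g1}-\eqref{1g3} (ultimately traceable to the pointwise identity \eqref{1g}) is the delicate point, while the reverse implication and the recovery of $\nabla p(\cdot,T)$ are routine.
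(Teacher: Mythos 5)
Your proposal is correct and follows essentially the same route as the paper's proof in Appendix \ref{sec5}: both directions are handled identically, with the key step being the subtraction of the PDE at $t=T$ from the rearranged operator identity $\f=\mathcal{B}\f$, yielding a stationary system for the difference $\w=\u(\cdot,T)-\boldsymbol{\varphi}$ (the paper's $\boldsymbol{\varphi}_1-\boldsymbol{\varphi}$), followed by the energy identity exploiting monotonicity of $\mathcal{C}$ and the same case-by-case absorption of the convective cross term under \eqref{1g1}--\eqref{1g3}. The only cosmetic difference is notational (your $\w,\pi$ versus the paper's $\boldsymbol{\varphi}-\boldsymbol{\varphi}_1$, $\psi-\psi_1$), and your recovery of $\nabla p(\cdot,T)=\nabla\psi$ by feeding $\w=\mathbf{0}$ back into the subtracted equation is exactly the paper's concluding step.
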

	\begin{proof}
		See Appendix \ref{sec5}.
	\end{proof}
		\begin{remark}
			For $d=2,3$ and $r\in (3,\infty)$,  the diffusion term $-\mu\Delta\u$ and the nonlinear  damping term $\beta|\u|^{r-1}\u$ dominate the convective term $(\u \cdot\nabla)\u$, and it helps us to obtain the solvability of the operator equation \eqref{1i} without any restriction on the data (cf. \eqref{1g2}). But in the case of $d=2$ and $r\in[1,3]$, such a domination seems to be not possible and  we have to enforce  a restriction on the data (see \eqref{1g1}).
			\end{remark}

	\section{Some Useful Energy Estimates}\label{sec3}\setcounter{equation}{0} 
Here  we derive a number of useful \emph{a-priori} energy estimates for the solutions of the CBF equations \eqref{1a}-\eqref{2a} required to investigate the inverse problem \eqref{1a}-\eqref{1d} (see Appendix \ref{sec6} for a discussion on well-posedness and energy estimates).
\begin{lemma}\label{lemmae1}
	Let $(\u(\cdot),\nabla p(\cdot))$ be the unique solution of the CBF  equations \eqref{1a}-\eqref{2a} and $\u_0 \in \H$. Then, for $r \geq 1$, the following estimate holds:
\begin{align}\label{n-1}
	\sup_{t\in [0,T]}	\|\u(t)\|_{\H}\leq \|\u_0\|_{\H}+\frac{1}{ \alpha}\|g\|_0\|\f\|_{\H}, 
\end{align}
and 
\begin{align}\label{n-2}
&	\mu\int_0^t\|\nabla\u(s)\|_{\H}^2\d s+\beta\int_0^t\|\u(s)\|_{\wi\L^{r+1}}^{r+1}\d s\nonumber\\&\leq \frac{1}{2}\|\u_0\|_{\H}^2+t\|g\|_0\|\f\|_{\H}\left(\|\u_0\|_{\H}+\frac{1}{ \alpha}\|g\|_0\|\f\|_{\H}\right).
\end{align}
Moreover, there exists a time $t_1 \in \big(\frac{T}{8},\frac{2T}{8}\big)$ such that
	\begin{align}
		\|\nabla \u(t_1)\|_{\H}^2 	&\leq \frac{4}{\mu } \left\{ \left(\frac{1}{T}+\frac{\alpha}{8}\right)\|\u_0\|_{\H}^2+\frac{1}{ \alpha}\|g\|_0^2\|\f\|_{\H}^2\right\},\label{n-3}\\
		\| \u(t_1)\|_{\widetilde{\L}^{r+1}}^{r+1} 	&\leq  \frac{4}{\beta }\left\{ \left(\frac{1}{T}+\frac{\alpha}{8}\right)\|\u_0\|_{\H}^2+\frac{1}{ \alpha}\|g\|_0^2\|\f\|_{\H}^2\right\},\label{n-4}
	\end{align}
	holds.
\end{lemma}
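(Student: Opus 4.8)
The plan is to derive all four estimates from a single energy identity obtained by testing the CBF equation \eqref{1a} against $\u$ and against (on a suitable sub-interval) $-\Delta\u$, exactly as one does for the direct problem. First I would take the $\H$-inner product of \eqref{1a} with $\u$. Since we work on the torus, the pressure term drops out by $(\nabla p,\u)=0$ and the convective term vanishes by $((\u\cdot\nabla)\u,\u)=0$; the nonlinear damping and Darcy terms give $\alpha\|\u\|_{\H}^2+\beta\|\u\|_{\wi\L^{r+1}}^{r+1}\geq 0$, while the forcing term $(\f g,\u)$ is bounded by $\|g\|_0\|\f\|_{\H}\|\u\|_{\H}$. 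This yields the differential inequality
\begin{align*}
	\frac{1}{2}\frac{\d}{\d t}\|\u(t)\|_{\H}^2+\mu\|\nabla\u(t)\|_{\H}^2+\alpha\|\u(t)\|_{\H}^2+\beta\|\u(t)\|_{\wi\L^{r+1}}^{r+1}\leq \|g\|_0\|\f\|_{\H}\|\u(t)\|_{\H}.
\end{align*}

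For \eqref{n-1}, I would drop the nonnegative $\mu$- and $\beta$-terms and use $\alpha\|\u\|_{\H}^2\geq 0$ (or rather retain the Darcy term and absorb the right side via Young's inequality $\|g\|_0\|\f\|_{\H}\|\u\|_{\H}\leq \alpha\|\u\|_{\H}^2+\frac{1}{4\alpha}\|g\|_0^2\|\f\|_{\H}^2$, then compare with $\frac{\d}{\d t}\|\u\|_{\H}$). The cleanest route is to write the inequality as $\frac{\d}{\d t}\|\u\|_{\H}\leq \|g\|_0\|\f\|_{\H}-\alpha\|\u\|_{\H}$ after dividing by $\|\u\|_{\H}$, so that $\|\u\|_{\H}$ is bounded by the steady state $\frac{1}{\alpha}\|g\|_0\|\f\|_{\H}$ plus the decaying initial contribution, giving the uniform bound \eqref{n-1}. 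For \eqref{n-2}, I would instead integrate the full inequality over $[0,t]$, keeping the $\mu$ and $\beta$ terms on the left, bounding the forcing integral by $t\|g\|_0\|\f\|_{\H}\sup_{s}\|\u(s)\|_{\H}$ and then substituting the already-established bound \eqref{n-1} for the supremum; discarding the nonnegative $\alpha\int_0^t\|\u\|_{\H}^2$ term and $\frac{1}{2}\|\u(t)\|_{\H}^2\geq 0$ on the left leaves precisely \eqref{n-2}.

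For \eqref{n-3}--\eqref{n-4} the idea is a mean-value/pigeonhole argument on the interval $(\frac{T}{8},\frac{2T}{8})$. Applying \eqref{n-2} with $t=\frac{2T}{8}=\frac{T}{4}$ and using \eqref{n-1} to control the supremum, I obtain a bound on $\mu\int_0^{T/4}\|\nabla\u\|_{\H}^2\d s+\beta\int_0^{T/4}\|\u\|_{\wi\L^{r+1}}^{r+1}\d s$ of the form $\frac{1}{2}\|\u_0\|_{\H}^2+\frac{T}{4}\|g\|_0\|\f\|_{\H}(\|\u_0\|_{\H}+\frac{1}{\alpha}\|g\|_0\|\f\|_{\H})$, which after applying Young's inequality to the cross term collapses into something proportional to $(\frac{1}{T}+\frac{\alpha}{8})\|\u_0\|_{\H}^2+\frac{1}{\alpha}\|g\|_0^2\|\f\|_{\H}^2$ times the length $\frac{T}{8}$ of the sub-interval. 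Since the integrand is nonnegative, there must exist $t_1\in(\frac{T}{8},\frac{2T}{8})$ where the pointwise value is at most the integral over that sub-interval divided by its length $\frac{T}{8}$; choosing $t_1$ to simultaneously control both integrands (one uses that the set where either integrand exceeds twice its average has measure less than half the interval, so a common good point exists) gives \eqref{n-3} and \eqref{n-4} with the stated constants $\frac{4}{\mu}$ and $\frac{4}{\beta}$.

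\textbf{The main obstacle} I anticipate is bookkeeping the constants so that the Young's-inequality splitting of the cross term $t\|g\|_0\|\f\|_{\H}\cdot\frac{1}{\alpha}\|g\|_0\|\f\|_{\H}$ and the term $t\|g\|_0\|\f\|_{\H}\|\u_0\|_{\H}$ recombine exactly into the coefficient $(\frac{1}{T}+\frac{\alpha}{8})$ multiplying $\|u_0\|_\H^2$ and $\frac{1}{\alpha}\|g\|_0^2\|\f\|_{\H}^2$; one must choose the split with the factor $\frac{\alpha}{8}$ (rather than an arbitrary $\frac{\alpha}{2}$) precisely so that the later multiplication by the interval length $\frac{T}{8}$ and division in the mean-value step produces the clean factor $4$ in \eqref{n-3}--\eqref{n-4}. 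The analytic content is routine once the energy inequality is in hand; the delicacy is entirely in matching these constants and in justifying that a single $t_1$ works for both \eqref{n-3} and \eqref{n-4}.
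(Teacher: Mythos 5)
Your derivation of \eqref{n-1} and \eqref{n-2} is exactly the paper's: test \eqref{1a} with $\u$, use $\langle(\u\cdot\nabla)\u,\u\rangle=\langle\nabla p,\u\rangle=0$, pass to the scalar inequality $\frac{\d}{\d t}\|\u(t)\|_{\H}+\alpha\|\u(t)\|_{\H}\leq\|g\|_0\|\f\|_{\H}$ and apply the variation of constants formula, then integrate the energy identity, bound the forcing by $t\|g\|_0\|\f\|_{\H}\sup_s\|\u(s)\|_{\H}$ and insert \eqref{n-1}. Your constant bookkeeping for \eqref{n-3}--\eqref{n-4} is also the right one: with the Young split $\|g\|_0\|\f\|_{\H}\|\u_0\|_{\H}\leq\frac{\alpha}{4}\|\u_0\|_{\H}^2+\frac{1}{\alpha}\|g\|_0^2\|\f\|_{\H}^2$, the right-hand side $R$ of \eqref{n-2} at $t=\frac{T}{4}$ satisfies $R\leq\frac{T}{2}\left[\left(\frac{1}{T}+\frac{\alpha}{8}\right)\|\u_0\|_{\H}^2+\frac{1}{\alpha}\|g\|_0^2\|\f\|_{\H}^2\right]$.

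The one genuine defect is your common-$t_1$ step. The device you describe — each integrand exceeds twice its average on a set of measure less than half of $\left(\frac{T}{8},\frac{2T}{8}\right)$, hence a common good point exists — yields a point where each quantity is at most \emph{twice} its average, i.e.\ it proves \eqref{n-3}--\eqref{n-4} with constants $\frac{8}{\mu}$ and $\frac{8}{\beta}$, not the stated $\frac{4}{\mu}$ and $\frac{4}{\beta}$; as written, your argument does not give the lemma's constants. The repair is cheaper than the doubling trick: apply the averaging to the single function $f(t):=\mu\|\nabla\u(t)\|_{\H}^2+\beta\|\u(t)\|_{\widetilde{\L}^{r+1}}^{r+1}$. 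By \eqref{n-2}, its average over $\left(\frac{T}{8},\frac{2T}{8}\right)$ is at most $\frac{8}{T}R\leq 4\left[\left(\frac{1}{T}+\frac{\alpha}{8}\right)\|\u_0\|_{\H}^2+\frac{1}{\alpha}\|g\|_0^2\|\f\|_{\H}^2\right]$, and an integrable function is bounded by its average on a set of positive measure, so there is a single $t_1$ with $f(t_1)$ below this bound; since both summands of $f$ are nonnegative, \eqref{n-3} and \eqref{n-4} follow simultaneously with the factor $4$. Incidentally, you were right to flag this as the delicate point: the paper itself applies the mean value theorem to $\|\nabla\u\|_{\H}^2$ alone (using $\u\in\C^1([T/8,T];\V)$ from Remark \ref{remb7}) and then asserts that the same $t_1$ also realizes the average of $\|\u\|_{\widetilde{\L}^{r+1}}^{r+1}$, which is unjustified as stated; the sum argument above repairs both your proof and the paper's, and it needs no continuity of $t\mapsto\|\u(t)\|_{\widetilde{\L}^{r+1}}^{r+1}$ at all.
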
 
\begin{proof}
	Let us take the inner product with $\u(\cdot)$ to the equation \eqref{1a} and use the fact that $\langle(\u \cdot \nabla)\u,\u\rangle=0$ and $\langle\nabla p,\u\rangle=0$ to obtain 
	\begin{align}\label{n6}
	&	\frac{1}{2}\frac{\d}{\d t}\|\u(t)\|_{\H}^2+\mu \|\nabla\u(t)\|_{\H}^2+\alpha \|\u(t)\|_{\H}^2+\beta\|\u(t)\|_{\widetilde{\L}^{r+1}}^{r+1}\no\\&=(\f g(t),\u(t))\leq \|g\|_{\mathrm{L}^{\infty}}\|\f\|_{\H}\|\u(t)\|_{\H},
	\end{align}
	for a.e. $t\in[0,T]$, where we have performed the integration by parts over $\T$ and used H\"older's inequality. Thus, it is immediate that 
	\begin{align*}
		\frac{\d}{\d t}\|\u(t)\|_{\H}+\alpha\|\u(t)\|_{\H}\leq \|g\|_{\mathrm{L}^{\infty}}\|\f\|_{\H}.
	\end{align*}
The variation of constants formula yields for all $t\in[0,T]$
\begin{align*}
	\|\u(t)\|_{\H}\leq e^{-\alpha t}\|\u_0\|_{\H}+\frac{1-e^{-\alpha t}}{ \alpha}\|g\|_0\|\f\|_{\H}, 
\end{align*}
and the estimate \eqref{n-1} follows. Integrating \eqref{n6}  from $0$ to $t$ and then using \eqref{n-1}, one can deduce \eqref{n-2}.

Let us now prove \eqref{n-3} and \eqref{n-4}. From Remark \ref{remb7}, we have 
	$$\u \in  \C^1\big([T/8,T];\V\big).$$  By the mean value theorem, there exists a time $t_1 \in \big(\frac{T}{8},\frac{2T}{8}\big)$ such that
	\begin{align*}
		\|\nabla\u(t_1)\|_{\H}^2&=\frac{1}{\frac{2T}{8}-\frac{T}{8}}\int_{T/8}^{2T/8}	\|\nabla\u(t)\|_{\H}^2 \d t= \frac{8}{T} \int_{0}^{2T/8} \|\nabla\u(t)\|_{\H}^2 \d t,
	\end{align*}
	which	leads to \eqref{n-3} by using \eqref{n-2} and Young's inequality. Using the similar arguments as above, we obtain 
	\begin{align*}
		\| \u(t_1)\|_{\widetilde{\L}^{r+1}}^{r+1} =\frac{8}{T}\int_{T/8}^{2T/8} \| \u(t)\|_{\widetilde{\L}^{r+1}}^{r+1}	 \d t\leq\frac{8}{T}\int_{0}^{2T/8} \| \u(t)\|_{\widetilde{\L}^{r+1}}^{r+1}\d t,
	\end{align*}
	and \eqref{n-4} follows from \eqref{n-2} and Young's inequality.
\end{proof}

\begin{lemma}\label{lemmabe2}
	Let $(\u(\cdot),\nabla p(\cdot))$ be the unique solution of the CBF  equations \eqref{1a}-\eqref{2a} and $\u_0 \in \H$. Let $t_1 \in \big(\frac{T}{8},\frac{2T}{8}\big),$ be the same time obtained in Lemma \ref{lemmae1}. Then, for all $t \in [t_1,T]$,
	\begin{itemize}
		\item [$(i)$] 	for $d=2$ and $r \in[1,3],$  we have
		\begin{align}\label{n111}
			\sup_{t\in[t_1,T]}\|\nabla\u(t)\|_{\H}^2 &\leq   \frac{4}{\mu } \left[ \left(\frac{1}{T}+\frac{\alpha}{8}\right)\|\u_0\|_{\H}^2+\frac{1}{\alpha}\|g\|_0^2\|\f\|_{\H}^2\right] +\frac{1}{2\mu\alpha}\|g\|_0^2\|\f\|_{\H}^2, 
		\end{align} 
	and
		\begin{align}\label{n12}
			\int_{t_{1}}^t\|\Delta \u(s)\|_{\H}^2\d s&\leq   \frac{4}{\mu^2 } \left[ \left(\frac{1}{T}+\frac{\alpha}{8}\right)\|\u_0\|_{\H}^2+\frac{1}{ \alpha}\|g\|_0^2\|\f\|_{\H}^2\right]+\frac{t-t_1}{\mu^2}\|g\|_0^2\|\f\|_{\H}^2,
		\end{align} 
		\item [$(ii)$]
		for $d=2,3$ and $r >3$ with $\eta=\frac{2(r-3)}{\mu(r-1)}\left(\frac{4}{\beta \mu  (r-1)}\right)^\frac{2}{r-3}<2\alpha$,  we have 
		\begin{align}\label{n13}
			\nonumber&\sup_{t\in[t_1,t]}\|\nabla \u(t)\|_{\H}^2+\big(2\alpha- \eta\big) \int_{t_1}^t\|\nabla\u(t)\|_{\H}^2\d t+\beta  \int_{t_1}^t \big\||\u(t)|^\frac{r-1}{2}|\nabla\u(t)|\big\|_{\H}^2 \d t \no \\&\leq  \frac{4}{\mu } \left[ \left(\frac{1}{T}+\frac{\alpha}{8}\right)\|\u_0\|_{\H}^2+\frac{1}{\alpha}\|g\|_0^2\|\f\|_{\H}^2\right]+\frac{2(t-t_1)}{\mu}\|g\|_0^2\|\f\|_{\H}^2,
		\end{align}
		\item [$(iii)$] for $d=r=3$ with $ \beta \mu > 1$, we have 
		\begin{align}\label{n14}
			\nonumber&\sup_{t\in[t_1,t]}\|\nabla \u(t)\|_{\H}^2+\mu\int_{t_{1}}^t\|\Delta \u(t)\|_{\H}^2\d t +2\bigg(\beta -\frac{1}{\mu}\bigg)\int_{t_{1}}^t \big\||\u(t)| |\nabla\u(t)|\big\|_{\H}^2 \d t
			\no \\&\leq \frac{4}{\mu } \left[ \left(\frac{1}{T}+\frac{\alpha}{8}\right)\|\u_0\|_{\H}^2+\frac{1}{\alpha}\|g\|_0^2\|\f\|_{\H}^2\right]+\frac{2(t-t_1)}{\mu}\|g\|_0^2\|\f\|_{\H}^2.
		\end{align}
	\end{itemize}
\end{lemma}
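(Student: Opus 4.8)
The plan is to differentiate the enstrophy by taking the $\H$--inner product of the momentum equation \eqref{1a} with $-\Delta\u$ on the torus. Because we work on $\T$ rather than on a bounded domain, the Helmholtz--Hodge projection $\P_{\H}$ commutes with $-\Delta$ and the pressure gradient is annihilated, so that $\langle\nabla p,-\Delta\u\rangle=0$; the regularity $\u\in\C^1([T/8,T];\V)$ recorded in Remark \ref{remb7} makes this computation rigorous on $[t_1,T]$ (otherwise one would run it on Galerkin approximations and pass to the limit). After the pairing, the linear terms produce $\frac12\frac{\d}{\d t}\|\nabla\u\|_{\H}^2$, $\mu\|\Delta\u\|_{\H}^2$ and $\alpha\|\nabla\u\|_{\H}^2$, while the forcing is absorbed through $\langle\f g,-\Delta\u\rangle\le\frac{\mu}{2}\|\Delta\u\|_{\H}^2+\frac{1}{2\mu}\|g\|_0^2\|\f\|_{\H}^2$. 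The damping term is handled by the periodic identity \eqref{1g}, which turns $\beta\langle|\u|^{r-1}\u,-\Delta\u\rangle$ into the two nonnegative quantities $\beta\||\u|^{\frac{r-1}{2}}|\nabla\u|\|_{\H}^2$ and $\beta\frac{r-1}{4}\int_{\T}|\u|^{r-3}|\nabla|\u|^2|^2\d x$; this is exactly where the absence of boundary terms on $\T$ is essential. The whole difficulty is then concentrated in the convective term $\langle(\u\cdot\nabla)\u,\Delta\u\rangle$, and the three cases differ only in how it is dominated.

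For the two regimes where the nonlinear damping is strong enough I would bound the convective term by $\int_{\T}|\u||\nabla\u||\Delta\u|\d x\le\frac{\mu}{2}\|\Delta\u\|_{\H}^2+\frac{1}{2\mu}\int_{\T}|\u|^2|\nabla\u|^2\d x$ and then absorb $\int_{\T}|\u|^2|\nabla\u|^2\d x$ using the damping. When $r>3$ (case $(ii)$), a Young inequality with the conjugate exponents $\frac{r-1}{2}$ and $\frac{r-1}{r-3}$ splits $\frac{1}{2\mu}\int_{\T}|\u|^2|\nabla\u|^2\d x\le\frac{\beta}{2}\||\u|^{\frac{r-1}{2}}|\nabla\u|\|_{\H}^2+\frac{\eta}{2}\|\nabla\u\|_{\H}^2$ with precisely the constant $\eta=\frac{2(r-3)}{\mu(r-1)}\big(\frac{4}{\beta\mu(r-1)}\big)^{\frac{2}{r-3}}$ appearing in the statement; the first piece is swallowed by the damping and the residual $\eta\|\nabla\u\|_{\H}^2$ by $2\alpha\|\nabla\u\|_{\H}^2$ under the hypothesis $\eta<2\alpha$, which yields \eqref{n13}. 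When $r=3$ (case $(iii)$) no Young splitting is needed, since $\int_{\T}|\u|^2|\nabla\u|^2\d x=\||\u||\nabla\u|\|_{\H}^2$ is already the damping quantity; it is absorbed directly, the surviving factor $2(\beta-\frac1\mu)$ being nonnegative precisely under $\beta\mu>1$, which gives \eqref{n14}. In both cases I would then integrate the resulting differential inequality over $[t_1,t]$ and insert the bound \eqref{n-3} for $\|\nabla\u(t_1)\|_{\H}^2$.

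The main obstacle is case $(i)$, $d=2$ and $r\in[1,3]$: here $|\u|^{\frac{r-1}{2}}$ carries a subcritical power, so the damping term can no longer control $\int_{\T}|\u|^2|\nabla\u|^2\d x$, and the dissipation must do the work instead. My plan is to use the two--dimensional tools (Ladyzhenskaya's inequality $\|\u\|_{\widetilde{\L}^4}^2\le\sqrt2\|\u\|_{\H}\|\nabla\u\|_{\H}$ together with $\|\nabla\u\|_{\widetilde{\L}^4}^2\le\sqrt2\|\nabla\u\|_{\H}\|\Delta\u\|_{\H}$) to estimate $|\langle(\u\cdot\nabla)\u,\Delta\u\rangle|\le\|\u\|_{\widetilde{\L}^4}\|\nabla\u\|_{\widetilde{\L}^4}\|\Delta\u\|_{\H}$, and then Young's inequality to place this under $\frac{\mu}{2}\|\Delta\u\|_{\H}^2$ at the cost of a term proportional to $\|\u\|_{\widetilde{\L}^4}^4\|\nabla\u\|_{\H}^2$. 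The delicate point, and the reason a restriction on the data enters (compare \eqref{1g1}, whose $\frac{4}{3}$--power of the $\widetilde{\L}^4$ norm and $\alpha^{1/3}$ weight are tailored to this very step), is that this residual enstrophy term has to be reabsorbed into $2\alpha\|\nabla\u\|_{\H}^2$; once its coefficient is rendered nonnegative one is left with $\frac{\d}{\d t}\|\nabla\u\|_{\H}^2+\mu\|\Delta\u\|_{\H}^2\le\frac1\mu\|g\|_0^2\|\f\|_{\H}^2$, with no quartic Gronwall factor surviving.

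Integrating this last inequality from $t_1$ and taking the supremum over $[t_1,T]$, while feeding in the bounds \eqref{n-3} and \eqref{n-2} from Lemma \ref{lemmae1}, then produces the clean estimates \eqref{n111} and \eqref{n12}. Throughout, the only genuinely global inputs are the $\L^2$--bound and the $\L^2$--in--time control of $\|\nabla\u\|_{\H}$ supplied by Lemma \ref{lemmae1}, the former furnishing the starting value at $t_1$ and the latter being what allows all three differential inequalities to be closed on $[t_1,t]$.
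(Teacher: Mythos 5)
Your treatment of cases $(ii)$ and $(iii)$ follows the paper's proof in all essentials: pair \eqref{1a} with $-\Delta\u$, kill the pressure by periodicity, keep the damping via the identity \eqref{1g}, bound the convective term by $\big\||\u||\nabla\u|\big\|_{\H}\|\Delta\u\|_{\H}$, and for $r>3$ split $\int_{\T}|\u|^2|\nabla\u|^2\d x$ by Young's inequality with exponents $\frac{r-1}{2}$ and $\frac{r-1}{r-3}$, producing exactly the constant $\eta$; the only discrepancies there are constant bookkeeping (the paper uses $\frac{\mu}{4}+\frac{\mu}{4}$ rather than $\frac{\mu}{2}+\frac{\mu}{2}$ so that $\mu\int_{t_1}^t\|\Delta\u\|_{\H}^2\d s$ survives on the left in \eqref{n14}, and the surviving damping factor in case $(iii)$ is $2\beta-\frac{1}{\mu}$ with your splitting, not $2(\beta-\frac{1}{\mu})$).

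Case $(i)$, however, has a genuine gap. The paper does not estimate the convective term at all in this case: it invokes the exact orthogonality $\langle(\u\cdot\nabla)\u,-\Delta\u\rangle=0$, valid for divergence-free fields on the two-dimensional torus (see \eqref{n18}, citing Lemma 3.1 of \cite{Te}), so no smallness of any kind is needed and the clean differential inequality $\frac{\d}{\d t}\|\nabla\u\|_{\H}^2+2\alpha\|\nabla\u\|_{\H}^2\le\frac{1}{\mu}\|g\|_0^2\|\f\|_{\H}^2$ follows immediately. Your plan instead bounds the convective term by Ladyzhenskaya plus Young, leaving a residual of the form $C\mu^{-3}\|\u(t)\|_{\widetilde{\L}^4}^4\|\nabla\u(t)\|_{\H}^2$, and proposes to absorb it into $2\alpha\|\nabla\u\|_{\H}^2$ "under \eqref{1g1}". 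This cannot work: \eqref{1g1} is a condition on the final overdetermination datum $\boldsymbol{\varphi}$, used in the proof of Theorem \ref{thm1} where the relevant quantity really is $\|\boldsymbol{\varphi}\|_{\widetilde{\L}^4}$; here the coefficient involves the time-dependent solution norm $\|\u(t)\|_{\widetilde{\L}^4}$, for which no uniform smallness is available from the hypotheses of the lemma (and the lemma's case $(i)$ carries no smallness hypothesis at all). Without absorption you must Gronwall against $\int\|\u\|_{\widetilde{\L}^4}^4\d t$, which reintroduces precisely the exponential factor that \eqref{n111} does not contain. Moreover, even if the absorption were legitimate, spending the entire term $2\alpha\|\nabla\u\|_{\H}^2$ on it destroys the exponential decay rate $e^{-2\alpha(t-t_1)}$ from the variation-of-constants formula, which is what produces the bounded-in-$T$ contribution $\frac{1}{2\mu\alpha}\|g\|_0^2\|\f\|_{\H}^2$ in \eqref{n111}; your version would yield a bound growing linearly in $T$ instead. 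The fix is simply to use the 2D-torus identity \eqref{n18}, as the paper does.
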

\begin{proof} 
	Taking the inner product with $- \Delta \u(\cdot)$ in \eqref{1a} and integrating the resulting equation over $\T$, we obtain 
	\begin{align}\label{n15}
		\nonumber&\frac{1}{2} \frac{\d}{\d t}\|\nabla\u(t)\|_{\H}^2+\mu\|\Delta\u(t)\|_{\H}^2+\alpha \|\nabla\u(t)\|_{\H}^2+\beta \big\| |\u(t)|^{\frac{r-1}{2}} |\nabla \u(t)| \big\|_{\H}^2\nonumber\\&=(\f g(t),-\Delta \u(t))-\langle(\u(t) \cdot\nabla)\u(t),-\Delta\u(t)\rangle, 
	\end{align}
	for a.e. $t \in [t_1,T]$.	We observe that
	\begin{align*}
		&\int_{\T}(-\Delta \u(x)) \cdot |\u(x)|^{r-1}\u(x) \d x \\
		&=\int_{\T} |\nabla\u(x)|^{r-1}\u(x) \d x + 4\bigg( \frac{r-1}{(r+1)^2}\bigg) \int_{\T}| \nabla|\u(x)|^\frac{r+1}{2}|^r \d x \\
		&=\int_{\T} |\nabla\u(x)|^{r-1}\u(x) \d x +  \frac{r-1}{4} \int_{\T}|\u(x)|^{r-3}| \nabla|\u(x)|^2|^2 \d x.
	\end{align*}
	Moreover, we have the following result (see Lemma 2.1, \cite{MTM6}):
	\begin{align*}
		0 \leq \int_{\T} |\nabla\u(x)|^{2}|\u(x)|^{r-1} \d x	&\leq \int_{\T} |\u(x)|^{r-1}\u(x) \cdot (-\Delta \u(x)) \d x \\&\leq 	r\int_{\T} |\nabla\u(x)|^{2}|\u(x)|^{r-1} \d x.
	\end{align*}
	Using an integration by parts over $\T$ and then using the divergence free condition on the velocity $\u$ (that is, $\nabla \cdot \u=0$) and periodicity of the pressure $p$ in the resulting equation, we obtain
	\begin{align}\label{n16}
		\langle \nabla p,-  \Delta \u \rangle=0.
	\end{align}
	\vskip 0.2 cm
	\noindent\textbf{Case I:} \emph{$d=2$ and $r \in [1,3]$.}  
		We estimate the term $|(\f g,-\Delta \u)|$ using H\"older's and Young's inequalities as
	\begin{align}\label{n17}
		|(\f g,-\Delta \u)|&\leq\|g\|_{\mathrm{L}^{\infty}}\|\f\|_{\H}\|\Delta\u\|_{\H}\leq\frac{\mu}{2}\|\Delta\u\|_{\H}^2+\frac{1}{2\mu}\|g\|_{\mathrm{L}^{\infty}}^2\|\f\|_{\H}^2. 
	\end{align}
	For $d=2$, over a torus, we know that (see Lemma 3.1, \cite{Te,Te1}),
	\begin{align}\label{n18}
		\langle(\u \cdot \nabla) \u,-\Delta \u \rangle=0.
	\end{align}
	Substituting the estimates \eqref{n16}-\eqref{n18} in \eqref{n15},  we deduce 
		\begin{align*}
		 \frac{\d}{\d t}\|\nabla\u(t)\|_{\H}^2+2\alpha \|\nabla\u(t)\|_{\H}^2 \leq \frac{1}{\mu}\|g\|_{\mathrm{L}^{\infty}}^2\|\f\|_{\H}^2. 
	\end{align*}
An application of the variation of constants formula yields for all $t\in[t_1,T]$
\begin{align*}
	\|\nabla\u(t)\|_{\H}^2\leq e^{-2\alpha(t-t_1)} \|\nabla\u(t_1)\|_{\H}^2+\frac{1-e^{-2\alpha (t-t_1)}}{2\mu\alpha}\|g\|_0^2\|\f\|_{\H}^2, 
\end{align*}
and \eqref{n111} follows by using \eqref{n-3}.	Substituting \eqref{n16}-\eqref{n18} in \eqref{n15}, and integrating the resulting estimate from $t_1$ to $t$, we arrive at  
	\begin{align}\label{n19}
		\nonumber&\|\nabla \u(t)\|_{\H}^2+\mu\int_{t_{1}}^t\|\Delta \u(s)\|_{\H}^2\d s+2\beta  \int_{t_{1}}^t\big\||\u(s)|^\frac{r-1}{2}|\nabla \u(s)|\big\|_{\H}^2\d s+2 \alpha \int_{t_{1}}^t\|\nabla\u(s)\|_{\H}^2\d s\nonumber\\&\leq  \|\nabla\u(t_1)\|_{\H}^2+\frac{t-t_1}{\mu}\|g\|_0^2\|\f\|_{\H}^2,
	\end{align}
	for all $t\in[t_1,T]$ and we obtain \eqref{n12} by using the estimate \eqref{n-3}.
	\vskip 0.2 cm
	\noindent\textbf{Case II:} 	\emph{$d=2,3$ and $r > 3$.}   Using H\"older's and Young's inequalities, we estimate the terms 	$|(\f g,-\Delta \u)|$ and $|	\langle(\u \cdot \nabla) \u,-  \Delta \u \rangle|$ as
	\begin{align}
		|(\f g,-\Delta \u)|&\leq\|g\|_{\mathrm{L}^{\infty}}\|\f\|_{\H}\|\Delta\u\|_{\H}\leq\frac{\mu}{4}\|\Delta\u\|_{\H}^2+\frac{1}{\mu}\|g\|_{\mathrm{L}^{\infty}}^2\|\f\|_{\H}^2, \label{n20}\\ 
		|	\langle(\u \cdot \nabla) \u,-  \Delta \u \rangle| &\leq \big\||\u| |\nabla \u|\big\|_{\H} \|\Delta\u\|_{\H}
		\leq  \frac{\mu}{4}\|\Delta\u\|_{\H}^2+\frac{1}{\mu}\big\||\u| |\nabla \u|\big\|_{\H}^2.\label{n21}
	\end{align}
	Again using H\"older's and Young's inequalities, we observe the following estimate for $r >3$ 
	\begin{align*}
		&\int_{\T}|\u(x)|^2|\nabla \u(x)|^2 \d x
		=\int_{\T}|\u(x)|^2|\nabla \u(x)|^\frac{4}{r-1}|\nabla \u(x)|^\frac{2(r-3)}{r-1} \d x\\
		&\leq \left(\int_{\T}|\u(x)|^{r-1}|\nabla \u(x)|^2 \d x\right)^\frac{2}{r-1} \left(\int_{\T}|\nabla \u(x)|^2 \d x\right)^\frac{r-3}{r-1}\\
		&\leq \frac{\beta \mu }{2}\int_{\T}|\u(x)|^{r-1}|\nabla \u(x)|^2 \d x+ \frac{r-3}{r-1}\left(\frac{4}{\beta \mu (r-1)}\right)^\frac{2}{r-3}\int_{\T}|\nabla \u(x)|^2 \d x.
	\end{align*}
		Plugging the above estimate in \eqref{n21} results to 
	\begin{align}\label{n22}
		|	\langle(\u \cdot \nabla) \u,-  \Delta \u \rangle|
		&\leq  \frac{\mu}{4}\|\Delta\u\|_{\H}^2+\frac{\beta }{2}\big\||\u|^\frac{r-1}{2}|\nabla\u| \big\|_{\H}^2+\frac{r-3}{\mu(r-1)}\left(\frac{4}{\beta \mu (r-1)}\right)^\frac{2}{r-3}\|\nabla\u\|_{\H}^2.
	\end{align}
	Substituting the  estimates \eqref{n16}, \eqref{n20} and \eqref{n22} in \eqref{n15} and integrating it from $t_1$ to $t$ leads to
	\begin{align}\label{n23} 
		\nonumber&\|\nabla\u(t)\|_{\H}^2+\big(2\alpha- \eta\big) \int_{t_1}^t\|\nabla\u(s)\|_{\H}^2\d s +\beta \int_{t_1}^t \big\||\u(s)|^\frac{r-1}{2}|\nabla\u(s)|\big\|_{\H}^2 \d s
		\nonumber\\&\leq  \|\nabla\u(t_1)\|_{\H}^2+\frac{2(t-t_1)}{\mu}\|g\|_0^2\|\f\|_{\H}^2,
	\end{align}
	for all $t\in[t_1,T]$, where $\eta=\frac{2(r-3)}{\mu(r-1)}\left(\frac{4}{\beta \mu  (r-1)}\right)^\frac{2}{r-3}$. For $\eta <2\alpha$,  \eqref{n13} follows by using \eqref{n-3} in \eqref{n23}. 
	\vskip 0.2 cm
	\noindent\textbf{Case III:} 	\emph{$d=r=3$.} The  term $\beta  \big\||\u(t)|^\frac{r-1}{2}|\nabla\u(t)|\big\|_{\H}^2$ in \eqref{n15} becomes $\beta  \big\||\u(t)| |\nabla\u(t)|\big\|_{\H}^2$. 
	Substituting the  estimates \eqref{n16}, \eqref{n20} and \eqref{n21} in \eqref{n15}, and integrating it from $t_1$ to $t$, we arrive at  
	\begin{align}\label{n24}
		\nonumber&\|\nabla \u(t)\|_{\H}^2+\mu\int_{t_{1}}^t\|\Delta \u(s)\|_{\H}^2\d s +2 \alpha \int_{t_{1}}^t\|\nabla\u(s)\|_{\H}^2\d s  +2\bigg(\beta -\frac{1}{\mu}\bigg)\int_{t_{1}}^t  \big\||\u(s)| |\nabla\u(s)|\big\|_{\H}^2 \d s\nonumber\\&\leq  \|\nabla\u(t_1)\|_{\H}^2+\frac{2(t-t_1)}{\mu }\|g\|_0^2\|\f\|_{\H}^2,
	\end{align}
	for all $t\in[t_1,T]$  and we immediately get \eqref{n14} by using \eqref{n-3} in \eqref{n24}, provided $\beta\mu > 1$. 
\end{proof}

\begin{lemma}\label{lemmae3}
	Let $(\u(\cdot),\nabla p(\cdot))$ be the unique solution of the CBF  equations \eqref{1a}-\eqref{2a} and $\u_0 \in \H$. Let $t_1\in \big(\frac{T}{8},\frac{2T}{8}\big)$ be the time obtained in Lemma \ref{lemmae1}. Then, for all $t \in [t_1,T]$,
	\begin{itemize}
		\item [$(i)$] for $d=2$ and $r \in [1,3]$, we have
		\begin{align}\label{n25}
			\int_{t_1}^\frac{3T}{8}\|\u_t(t)\|_{\H}^2\d t & \leq \bigg(\frac{K_{11}}{ T}+K_{12}\bigg)\|\u_0\|_{\H}^2 +\bigg(\frac{3T}{4}+K_{13}\bigg)\|g\|_0^2\|\f\|_{\H}^2 \no \\& \quad+C \bigg[\|\u_0\|_{\H}+\frac{1}{\alpha}\|g\|_0\|\f\|_{\H}\bigg]^\frac{2}{3}
			\no \\&\quad\times
			\bigg[ \frac{4}{\mu } \left\{ \left(\frac{1}{T}+\frac{\alpha}{8}\right)\|\u_0\|_{\H}^2+\frac{1}{\alpha}\|g\|_0^2\|\f\|_{\H}^2\right\} +\frac{1}{2\mu\alpha}\|g\|_0^2\|\f\|_{\H}^2\bigg]^\frac{4}{3}\no \\&\quad\times \bigg[ \frac{4}{\mu^2 } \left\{ \left(\frac{1}{T}+\frac{\alpha}{8}\right)\|\u_0\|_{\H}^2+\frac{1}{\alpha}\|g\|_0^2\|\f\|_{\H}^2\right\}+\frac{3T}{8\mu^2}\|g\|_0^2\|\f\|_{\H}^2\bigg],
		\end{align}
	where
	\begin{align}\label{321}
		K_{11}=4+\frac{8}{r+1}, \ \ \
		K_{12}=\frac{5 \alpha}{2}+\frac{\alpha}{r+1}, \  \ \ \text{and}  \ \ \ K_{13}=\frac{6}{\alpha}+\frac{8}{(r+1)\alpha},
	\end{align}

		\item  [$(ii)$] for $d=2,3$ and $r >3$, we have
	\begin{align}\label{n26}
	\int_{t_{1}}^\frac{3T}{8}\| \u_t(t)\|_{\H}^2\d t &\leq \bigg(\frac{K_{21}}{T}+K_{22}T+K_{23}\bigg)\|\u_0\|_{\H}^2+\bigg(K_{24}T+K_{25}\bigg)\|g\|_0^2\|\f\|_{\H}^2\no\\&\quad+\frac{3T}{8\alpha }\|g_t\|_0^2\|\f\|_{\H}^2,
	\end{align}
		where 
		\begin{equation}\label{322}
			\left\{
		\begin{aligned}
		&\gamma=\frac{r-3}{r-1}\bigg(\frac{2}{\beta(r-1)}\bigg)^\frac{2}{r-3}, \ \ \	K_{21}=4+\frac{4}{\mu }+\frac{8}{r+1},  \ \ \   K_{22}=\frac{3\alpha}{4}+\frac{3\alpha \gamma}{32\mu}, \\
			&K_{23}=\bigg(\frac{7}{2}+\frac{1}{r+1}+\frac{1}{2\mu}\bigg)\alpha+\frac{\gamma}{2 \mu}, \ \ \ K_{24}=\frac{3}{4 \alpha}+\frac{3}{4 \mu}+\frac{3\gamma}{4\mu\alpha}, \\
			& \text{and} \ \ \ \ \ 
				K_{25}=\bigg( 7+\frac{4}{r+1}+\frac{2}{\mu}\bigg)\frac{2}{\alpha},
		\end{aligned}
		\right. 
\end{equation}
		\item  [$(iii)$] for $d=r=3$ with $\beta \mu >1$, we have
	\begin{align}\label{n29}
		 \ \	\int_{t_{1}}^\frac{3T}{8}\| \u_t(t)\|_{\H}^2\d t &\leq \bigg(\frac{K_{31}}{T}+K_{32}T+K_{33}\bigg)\|\u_0\|_{\H}^2+\bigg(K_{34}T+K_{35}\bigg)\|g\|_0^2\|\f\|_{\H}^2\no \\&\quad+\frac{3T}{8\alpha}\|g_t\|_0^2\|\f\|_{\H}^2,
	\end{align}
		where 
		\begin{equation}\label{325}
			\left\{
		\begin{aligned}
			&K_{31}=6+\frac{2 }{\beta \mu-1},   \ \ \ K_{32}=\frac{3\alpha}{4}, \ \ \
			K_{33}=\bigg({15}+\frac{1 }{(\beta \mu-1)}\bigg) \frac{ \alpha}{4}, \\ 
		&	K_{34}=\frac{3}{4 \alpha}+\frac{3}{8(\beta \mu-1)},  \ \ \
			\text{and} \ \ \	K_{35}=\bigg( 8+\frac{ 1}{(\beta \mu-1)}\bigg)\frac{2}{ \alpha}.
		\end{aligned}
	\right. 
\end{equation}
	\end{itemize} 
	Moreover, there exists a time $t_2 \in \big(\frac{2T}{8},\frac{3T}{8}\big)$ such that
	\begin{itemize}
		\item  [$(iv)$] for $d=2$ and $r \in [1,3]$, we have
		\begin{align}\label{n32}
			\|\u_t(t_2)\|_{\H}^2
			&\leq  \frac{8}{T}\bigg[ \bigg(\frac{K_{11}}{ T}+K_{12}\bigg)\|\u_0\|_{\H}^2 +\bigg(\frac{3T}{4}+K_{13}\bigg)\|g\|_0^2\|\f\|_{\H}^2 \no \\& \quad+C \bigg[\|\u_0\|_{\H}+\frac{1}{\alpha}\|g\|_0\|\f\|_{\H}\bigg]^\frac{2}{3}
			\no \\&\quad\times
			\bigg[ \frac{4}{\mu } \left\{ \left(\frac{1}{T}+\frac{\alpha}{8}\right)\|\u_0\|_{\H}^2+\frac{1}{\alpha}\|g\|_0^2\|\f\|_{\H}^2\right\} +\frac{1}{2\mu\alpha}\|g\|_0^2\|\f\|_{\H}^2\bigg]^\frac{4}{3}\no \\&\quad\times \bigg[ \frac{4}{\mu^2 } \left\{ \left(\frac{1}{T}+\frac{\alpha}{8}\right)\|\u_0\|_{\H}^2+\frac{1}{\alpha}\|g\|_0^2\|\f\|_{\H}^2\right\}+\frac{3T}{8\mu^2}\|g\|_0^2\|\f\|_{\H}^2\bigg]\bigg],
		\end{align}
		\item  [$(v)$] for $d=2,3$ and $r >3$, we have
		\begin{align}\label{n33}
			\|\u_t(t_2)\|_{\H}^2
			&\leq  \frac{8}{T}\bigg\{\bigg(\frac{K_{21}}{T}+K_{22}T+K_{23}\bigg)\|\u_0\|_{\H}^2+\bigg(K_{24}T+K_{25}\bigg)\|g\|_0^2\|\f\|_{\H}^2\no\\&\quad+\frac{3T}{8\alpha}\|g_t\|_0^2\|\f\|_{\H}^2\bigg\},
		\end{align}
		\item  [$(vi)$] for $d=r=3$ with $\beta \mu >1$, we have
		\begin{align}\label{n34}
			\|\u_t(t_2)\|_{\H}^2
			&\leq  \frac{8}{T} \bigg\{\bigg(\frac{K_{31}}{T}+K_{32}T+K_{33}\bigg)\|\u_0\|_{\H}^2+\bigg(K_{34}T+K_{35}\bigg)\|g\|_0^2\|\f\|_{\H}^2\no\\&\quad+\frac{3T}{8\alpha}\|g_t\|_0^2\|\f\|_{\H}^2\bigg\}.
		\end{align}
	\end{itemize}
\end{lemma}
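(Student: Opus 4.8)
The plan is to estimate $\|\u_t\|_{\H}$ by rewriting the evolution equation \eqref{1a} to isolate $\u_t$ and then taking the $\H$-inner product of $\u_t$ with the equation. First I would project \eqref{1a} onto $\H$ (killing the pressure gradient since $\nabla p \in \G(\T)$ is orthogonal to $\H$) and pair with $\u_t$ to obtain
\begin{align*}
\|\u_t(t)\|_{\H}^2 = (\f g(t),\u_t(t)) - \mu(\Delta\u(t),\u_t(t)) - \alpha(\u(t),\u_t(t)) - \beta(|\u(t)|^{r-1}\u(t),\u_t(t)) - \langle(\u(t)\cdot\nabla)\u(t),\u_t(t)\rangle.
\end{align*}
The terms with $\Delta\u$, $\u$, and $|\u|^{r-1}\u$ paired against $\u_t$ are total time derivatives: $-\mu(\Delta\u,\u_t) = \frac{\mu}{2}\frac{\d}{\d t}\|\nabla\u\|_{\H}^2$, $-\alpha(\u,\u_t) = -\frac{\alpha}{2}\frac{\d}{\d t}\|\u\|_{\H}^2$, and $-\beta(|\u|^{r-1}\u,\u_t) = -\frac{\beta}{r+1}\frac{\d}{\d t}\|\u\|_{\wi\L^{r+1}}^{r+1}$. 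Integrating from $t_1$ to $3T/8$ converts these into boundary terms that are controlled by the estimates \eqref{n-1}, \eqref{n-3}, \eqref{n-4}, and the Lemma \ref{lemmabe2} bounds \eqref{n111}--\eqref{n14}, while the interior values at $t_1$ are handled by \eqref{n-3} and \eqref{n-4}.

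The remaining two terms require care. For the forcing term I would use Cauchy--Schwarz and Young's inequality, $|(\f g,\u_t)| \leq \frac{1}{4}\|\u_t\|_{\H}^2 + \|g\|_0^2\|\f\|_{\H}^2$, absorbing the $\|\u_t\|_{\H}^2$ piece into the left side. For the convective term $\langle(\u\cdot\nabla)\u,\u_t\rangle$ the strategy splits by regime. In Case $(i)$ ($d=2$, $r\in[1,3]$) I would bound $|\langle(\u\cdot\nabla)\u,\u_t\rangle| \leq \|\u\|_{\widetilde{\L}^\infty}\|\nabla\u\|_{\H}\|\u_t\|_{\H}$, invoke the 2D Agmon inequality $\|\u\|_{\widetilde{\L}^\infty}\leq C(\|\u\|_{\L^2} + \|\u\|_{\L^2}^{1/2}\|\Delta\u\|_{\L^2}^{1/2})$, then Young's inequality to again absorb a fraction of $\|\u_t\|_{\H}^2$; the residual factors of $\|\u\|_{\H}$, $\|\nabla\u\|_{\H}$, and $\int\|\Delta\u\|_{\H}^2\d t$ are supplied by the earlier lemmas, producing the $\frac{2}{3}$/$\frac{4}{3}$ exponent structure visible in \eqref{n25}. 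In Cases $(ii)$ and $(iii)$ ($r>3$ or $d=r=3$) I would instead bound $|\langle(\u\cdot\nabla)\u,\u_t\rangle| \leq \||\u||\nabla\u|\|_{\H}\|\u_t\|_{\H}$ and use Young's inequality so that the $\||\u||\nabla\u|\|_{\H}^2$ term is controlled by the weighted gradient-norm integrals $\int\||\u|^{(r-1)/2}|\nabla\u|\|_{\H}^2\d t$ appearing on the left of \eqref{n13}--\eqref{n14}, after splitting via Hölder exactly as in \eqref{n22}; this is what forces the constant $\gamma$ in \eqref{322}.

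To establish $(iv)$--$(vi)$ I would apply the mean value theorem to the integral bounds $(i)$--$(iii)$ on the interval $(2T/8, 3T/8)$ of length $T/8$, exactly mirroring the derivation of \eqref{n-3}--\eqref{n-4} in Lemma \ref{lemmae1}: there exists $t_2\in(2T/8,3T/8)$ with $\|\u_t(t_2)\|_{\H}^2 = \frac{8}{T}\int_{2T/8}^{3T/8}\|\u_t(t)\|_{\H}^2\d t \leq \frac{8}{T}\int_{t_1}^{3T/8}\|\u_t(t)\|_{\H}^2\d t$, and substituting the right-hand sides of \eqref{n25}, \eqref{n26}, \eqref{n29} yields \eqref{n32}, \eqref{n33}, \eqref{n34}. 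The main obstacle will be the convective term in the supercritical regime: I must differentiate \eqref{1a} in time or, more precisely, handle the $g_t$ contribution that appears in \eqref{n26} and \eqref{n29}. This term arises because to reach $3T/8$ rather than stopping at $t_1$ I expect to need the time-differentiated equation or an integration-by-parts in time on the forcing term $(\f g,\u_t) = \frac{\d}{\d t}(\f g,\u) - (\f g_t,\u)$, which produces the $\|g_t\|_0^2\|\f\|_{\H}^2$ term via Young's inequality and the velocity bound \eqref{n-1}. Getting the exact algebra of the constants $K_{ij}$ to match will be tedious but mechanical once the structure above is in place; the genuinely delicate point is ensuring every absorption of $\|\u_t\|_{\H}^2$ leaves a strictly positive coefficient and that the nonlinear convective estimate closes using only the a priori bounds already proven, with no circular dependence on $\|\u_t\|_{\H}$.
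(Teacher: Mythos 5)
Your proposal is, in structure, the paper's own proof: take the inner product of \eqref{1a} with $\u_t$ so that the diffusion, Darcy and damping terms become exact time derivatives (the paper's \eqref{n35}), integrate over $[t_1,3T/8]$ with the endpoint terms controlled by Lemmas \ref{lemmae1} and \ref{lemmabe2}, use Young's inequality on the forcing in case $(i)$ versus the time integration by parts $(\f g,\u_t)=\frac{\d}{\d t}(\f g,\u)-(\f g_t,\u)$ in cases $(ii)$--$(iii)$ (the paper's \eqref{n39}--\eqref{n40}, which is exactly where $\|g_t\|_0$ enters), split the convective term for $r>3$ as in \eqref{n22} to produce $\gamma$, and finish with the mean value theorem on $\big(\frac{2T}{8},\frac{3T}{8}\big)$ for parts $(iv)$--$(vi)$. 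One minor correction: pairing \eqref{1a} with $\u_t$ puts all three time derivatives on the right-hand side with a minus sign (equivalently on the left with a plus sign, as in \eqref{n35}); your displayed identity carries the gradient term with the opposite sign. This is harmless here, since both endpoint values of $\|\nabla\u\|_{\H}^2$ are controlled by Lemma \ref{lemmabe2}, but it should be fixed.

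The substantive point is your case $(i)$ convective estimate. The Agmon route $|\langle(\u\cdot\nabla)\u,\u_t\rangle|\leq\|\u\|_{\wi{\L}^{\infty}}\|\nabla\u\|_{\H}\|\u_t\|_{\H}$ leaves, after Young's inequality, the residual $C\big(\|\u\|_{\H}^2+\|\u\|_{\H}\|\Delta\u\|_{\H}\big)\|\nabla\u\|_{\H}^2$, whose time integral is bounded by
\begin{align*}
CT\sup_{t}\big(\|\u(t)\|_{\H}^2\|\nabla\u(t)\|_{\H}^2\big)+CT^{1/2}\sup_{t}\big(\|\u(t)\|_{\H}\|\nabla\u(t)\|_{\H}^2\big)\bigg(\int_{t_1}^{3T/8}\|\Delta\u(t)\|_{\H}^2\d t\bigg)^{1/2}.
\end{align*}
This does close using the same a priori bounds, but it is \emph{not} the right-hand side of \eqref{n25}: it does not produce the exponents $\frac{2}{3}$ and $\frac{4}{3}$, and the two expressions are not comparable as functions of the data, so your claim that this route yields the structure of \eqref{n25} is incorrect. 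To prove \eqref{n25} as stated you must estimate as in the paper's \eqref{n37}, namely $|\langle(\u\cdot\nabla)\u,\u_t\rangle|\leq\|\u\|_{\wi{\L}^{3}}\|\nabla\u\|_{\wi{\L}^{6}}\|\u_t\|_{\H}\leq C\|\u\|_{\H}^{1/3}\|\nabla\u\|_{\H}^{2/3}\|\Delta\u\|_{\H}\|\u_t\|_{\H}$ (using $\|\nabla\u\|_{\wi{\L}^6}\leq C\|\Delta\u\|_{\H}$), so that Young's inequality leaves the residual $C\|\u\|_{\H}^{2/3}\|\nabla\u\|_{\H}^{4/3}\|\Delta\u\|_{\H}^2$, whose integral gives precisely the triple product in \eqref{n25} via \eqref{n-1}, \eqref{n111} and \eqref{n12}. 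Everything else in your plan matches the paper and is correct.
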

\begin{proof}
	Taking the inner product with $\u_t(\cdot)$ in \eqref{1a} and integrating the resulting equation over $\T$, we obtain 
	\begin{align}\label{n35}
		\nonumber&\|\u_t(t)\|_{\H}^2+\frac{\mu}{2} \frac{\d}{\d t}\|\nabla\u(t)\|_{\H}^2+\frac{\alpha }{2}\frac{\d}{\d t}\|\u(t)\|_{\H}^2+\frac{\beta}{r+1}\frac{\d}{\d t}\|\u(t)\|_{\widetilde{\L}^{r+1}}^{r+1}\nonumber\\&=(\f g(t),\u_t(t))-((\u(t) \cdot\nabla)\u(t),\u_t(t)),
	\end{align}
	where we have used the fact that $\langle\nabla p,\u_t\rangle=0$. For $r\geq 1$, it can be easily seen that
\begin{align*}
	\left(|\u(t)|^{r-1}\u(t),\u_t(t)\right)=\frac{1}{r+1}\frac{\d}{\d t}\|\u(t)\|_{\widetilde{\L}^{r+1}}^{r+1}. 
\end{align*}
	\vskip 0.2 cm
	\noindent\textbf{Case I:} 	\emph{$d=2$ and $r \in [1,3]$.} We estimate the term $|(\f g,\u_t)|$ using H\"older's and Young's inequalities as
	\begin{align}\label{n36}
		|(\f g,\u_t)|\leq\|g\|_{\mathrm{L}^{\infty}}\|\f\|_{\H}\|\u_t\|_{\H}\leq\frac{1}{4}\|\u_t\|_{\H}^2+\|g\|_{\mathrm{L}^{\infty}}^2\|\f\|_{\H}^2. 
	\end{align}
	Making use of H\"older's, Sobolev's and Young's inequalities, we estimate the term  $|((\u \cdot\nabla)\u,\u_t)|$ as
	\begin{align}\label{n37}
		|((\u \cdot\nabla)\u,\u_t)|&\leq\|\u\|_{\widetilde{\L}^3}\|\nabla \u\|_{\widetilde{\L}^6}\|\u_t\|_{\H}\leq C \| \u\|_{\H}^\frac{1}{3}\|\nabla \u\|_{\H}^\frac{2}{3}\|\Delta \u\|_{\H}\|\u_t\|_{\H}\no\\& \leq \frac{1}{4}\|\u_t\|_{\H}^2+C\| \u\|_{\H}^\frac{2}{3}\|\nabla \u\|_{\H}^\frac{4}{3}\|\Delta \u\|_{\H}^2.
	\end{align}
	Substituting the estimates \eqref{n36} and \eqref{n37} in \eqref{n35}, and integrating the resulting estimate from $t_1$ to $\frac{3T}{8}$, we arrive at  
	\begin{align}\label{n38}
	\nonumber&\int_{t_{1}}^\frac{3T}{8}\|\u_t(s)\|_{\H}^2\d s+\mu\|\nabla \u(3T/8)\|_{\H}^2+\alpha\|\u(3T/8)\|_{\H}^2+\frac{2\beta}{r+1}\|\u(3T/8)\|_{\widetilde{\L}^{r+1}}^{r+1}\nonumber\\&\leq \mu \|\nabla\u(t_1)\|_{\H}^2+\alpha\|\u(t_1)\|_{\H}^2+\frac{2\beta}{r+1}\|\u(t_1)\|_{\widetilde{\L}^{r+1}}^{r+1}\nonumber\\&\quad+2\bigg(\frac{3T}{8}-t_1\bigg)\|g\|_0^2\|\f\|_{\H}^2+C\int_{t_1}^\frac{3T}{8}\| \u(s)\|_{\H}^\frac{2}{3}\|\nabla \u(s)\|_{\H}^\frac{4}{3}\|\Delta\u(s)\|_{\H}^2 \d s
		\no \\& \leq \mu \|\nabla\u(t_1)\|_{\H}^2+\alpha\sup_{t\in[0,T]}\|\u(t)\|_{\H}^2+\frac{2\beta}{r+1}\|\u(t_1)\|_{\widetilde{\L}^{r+1}}^{r+1}\no \\&\quad+\frac{3T}{4}\|g\|_0^2\|\f\|_{\H}^2
		+C \sup_{t\in[t_1,3T/8]}\left(\| \u(t)\|_{\H}^\frac{2}{3}\|\nabla \u(t)\|_{\H}^\frac{4}{3}\right)\int_{t_1}^\frac{3T}{8}\|\Delta\u(t)\|_{\H}^2 \d t,
	\end{align}
	for all $t\in[t_1,T]$. 
	Thus, by using the estimates \eqref{n-1}, \eqref{n-3}, \eqref{n-4}, \eqref{n111} and \eqref{n12} in \eqref{n38}, we immediately get \eqref{n25}. 
	\vskip 0.2 cm
	\noindent\textbf{Case II:} 	\emph{$d=2,3$ and $r>3$.}
	We have
	\begin{align}\label{n39}
		(\f g(t),\u_t(t))=\frac{\d}{\d t}(\f g(t),\u(t))-(\f g_{t}(t),\u(t)), 
	\end{align}
	and estimate the term $|(\f g_t,\u)|$ as
	\begin{align}\label{n40}
		|(\f g_t,\u)| \leq \|g_t\|_{\mathrm{L}^\infty}\|\f\|_{\H} \|\u\|_{\H} \leq \frac{\alpha}{2} \|\u\|_{\H}^2+\frac{1}{2 \alpha}\|g_t\|_{\mathrm{L}^\infty}^2\|\f\|_{\H}^2.
	\end{align}
	An estimate similar to \eqref{n22} yields
	\begin{align}\label{n41}
		|	\langle(\u \cdot \nabla) \u, \u_t \rangle|
		\leq  \frac{1}{2}\|\u_t\|_{\H}^2+\frac{\beta }{2}\big\||\u|^\frac{r-1}{2}|\nabla\u| \big\|_{\H}^2+\frac{\gamma}{2}\|\nabla\u\|_{\H}^2,
	\end{align}
	where $\gamma=\frac{r-3}{r-1}\bigg(\frac{2}{\beta(r-1)}\bigg)^\frac{2}{r-3}$. Substituting the  estimates \eqref{n39}-\eqref{n41} in \eqref{n35} and integrating it from $t_1$ to $\frac{3T}{8}$, we obtain
	\begin{align}\label{n42}
		\nonumber&\int_{t_{1}}^\frac{3T}{8}\|\u_t(s)\|_{\H}^2\d s+\mu\|\nabla \u(3T/8)\|_{\H}^2+\alpha\|\u(3T/8)\|_{\H}^2+\frac{2\beta}{r+1}\|\u(3T/8)\|_{\widetilde{\L}^{r+1}}^{r+1}\nonumber\\&\leq \mu \|\nabla\u(t_1)\|_{\H}^2+\alpha\|\u(t_1)\|_{\H}^2+\frac{2\beta}{r+1}\|\u(t_1)\|_{\widetilde{\L}^{r+1}}^{r+1} +\alpha \int_{t_{1}}^\frac{3T}{8} \| \u(s)\|_{\H}^2 \d s\no \\&\quad
		+\frac{\frac{3T}{8}-t_1}{\alpha}\|g_t\|_0^2\|\f\|_{\H}^2 +2 (\f g(t),\u(t)) -2 (\f g(t_{1}),\u(t_1))\no \\& \quad+\beta\int_{t_{1}}^\frac{3T}{8}\big\||\u(s)|^{\frac{r-1}{2}}|\nabla \u(s)|\big\|_{\H}^2 \d s+\gamma\int_{t_{1}}^\frac{3T}{8} \|\nabla \u(s)\|_{\H}^2 \d s
		\nonumber\\&\leq \mu \|\nabla\u(t_1)\|_{\H}^2+\bigg(\alpha+\frac{3 \alpha T}{8}\bigg)\sup_{t\in[0,T]}\|\u(t)\|_{\H}^2+\frac{2\beta}{r+1}\|\u(t_1)\|_{\widetilde{\L}^{r+1}}^{r+1} +\frac{3T}{8\alpha}\|g_t\|_0^2\|\f\|_{\H}^2 
	\no \\&\quad	 +4\|g\|_0\|\f\|_{\H}\|\u_0\|_{\H}+\frac{4}{ \alpha}\|g\|_0^2\|\f\|_{\H}^2 +\beta\int_{t_{1}}^\frac{3T}{8}\big\||\u(s)|^{\frac{r-1}{2}}|\nabla \u(s)|\big\|_{\H}^2 \d s\no \\&\quad+\frac{\gamma}{\mu}\bigg(\frac{1}{2}\|\u_0\|_{\H}^2+\frac{3T}{8}\|g\|_0\|\f\|_{\H}\|\u_0\|_{\H}+\frac{3T}{8}\|g\|_0^2\|\f\|_{\H}^2\bigg)
		\nonumber\\&\leq \mu \|\nabla\u(t_1)\|_{\H}^2+\bigg(\alpha+\frac{3 \alpha T}{8}\bigg)\sup_{t\in[0,T]}\|\u(t)\|_{\H}^2+\frac{2\beta}{r+1}\|\u(t_1)\|_{\widetilde{\L}^{r+1}}^{r+1}
	\no \\&\quad	+\frac{3T}{8\alpha}\|g_t\|_0^2\|\f\|_{\H}^2 
		+\alpha \|\u_0\|_{\H}^2+\frac{8}{\alpha}\|g\|_0^2\|\f\|_{\H}^2 +\beta\int_{t_{1}}^\frac{3T}{8}\big\||\u(s)|^{\frac{r-1}{2}}|\nabla \u(s)|\big\|_{\H}^2 \d s
		 \no \\& \quad+\frac{\gamma}{\mu}\bigg\{\bigg(\frac{1}{2}+\frac{3 \alpha T}{32}\bigg)\|\u_0\|_{\H}^2+\frac{3T}{4\alpha}\|g\|_0^2\|\f\|_{\H}^2\bigg\},
	\end{align}
	for all $t\in[t_1,T]$ and we have used H\"older's and Young's inequalities. One can obtain \eqref{n26} by using the estimates \eqref{n-1}-\eqref{n-4} and \eqref{n13}.
	\vskip 0.2 cm
	\noindent\textbf{Case III:} 	\emph{$d=r=3$.}
	We estimate $|	\langle(\u \cdot \nabla) \u, \u_t \rangle|$ using H\"older's and Young's inequalities as
	\begin{align}\label{n43}
		|	\langle(\u \cdot \nabla) \u, \u_t \rangle|
		\leq  \frac{1}{2}\|\u_t\|_{\H}^2+\frac{1 }{2}\big\||\u||\nabla\u| \big\|_{\H}^2.
	\end{align}
	Substituting the  estimates \eqref{n39}, \eqref{n40} and \eqref{n43} in \eqref{n35} and integrating it from $t_1$ to $\frac{3T}{8}$ results to (cf. \eqref{n42})
	\begin{align}\label{n44}
		\nonumber&\int_{t_{1}}^\frac{3T}{8}\|\u_t(s)\|_{\H}^2\d s+\mu\|\nabla \u(3T/8)\|_{\H}^2+\alpha\|\u(3T/8)\|_{\H}^2+\frac{\beta}{2}\|\u(3T/8)\|_{\widetilde{\L}^{4}}^{4}\nonumber\\&\leq \mu \|\nabla\u(t_1)\|_{\H}^2+\bigg(\alpha +\frac{3T}{8}\bigg)\sup_{t\in[0,T]}\|\u(t)\|_{\H}^2+\frac{\beta}{2}\|\u(\cdot,t_1)\|_{\widetilde{\L}^{4}}^{4} 
		\no \\&\quad+\frac{3T}{8\alpha}\|g_t\|_0^2\|\f\|_{\H}^2 
		+\alpha \|\u_0\|_{\H}^2+\frac{8}{\alpha} \|g\|_0^2\|\f\|_{\H}^2 +\int_{t_{1}}^\frac{3T}{8}\big\||\u(s)||\nabla \u(s)|\big\|_{\H}^2 \d s,
	\end{align}
	for all $t\in[t_1,T]$ and \eqref{n29} follows by using \eqref{n-1}-\eqref{n-4} and \eqref{n14}.
	
	Recalling that $\u \in \C^1\big([T/8,T];\V\big)$ implies \eqref{n32}-\eqref{n34}. By the mean value theorem, there exists a time $t_2 \in \big(\frac{2T}{8},\frac{3T}{8}\big)$ such that
	\begin{align*}
		\|\u_t(t_2)\|_{\H}^2&=\frac{1}{\frac{3T}{8}-\frac{2T}{8}}\int_{2T/8}^{3T/8}	\|\u_t(t)\|_{\H}^2 \d t\leq \frac{8}{T} \int_{t_1}^{3T/8} \|\u_t(t)\|_{\H}^2 \d t,
	\end{align*}
and \eqref{n32} follows by using \eqref{n25}. Applying the similar arguments as above, we get
	\begin{align*}
		\|\u_t(t_2)\|_{\H}^2&=\frac{1}{\frac{3T}{8}-\frac{2T}{8}}\int_{2T/8}^{3T/8}	\|\u_t(t)\|_{\H}^2 \d t\leq \frac{8}{T} \int_{t_1}^{3T/8} \|\u_t(t)\|_{\H}^2 \d t,
	\end{align*}
	which leads to \eqref{n33} and \eqref{n34} from  \eqref{n26} and \eqref{n29}, respectively.
\end{proof}

\begin{lemma}\label{lemmae4}
	Let $(\u(\cdot),\nabla p(\cdot))$ be the unique solution of the CBF  equations \eqref{1a}-\eqref{2a} and $\u_0 \in \H$. Let $t_2 \in \big(\frac{2T}{8},\frac{3T}{8}\big)$ be the same time obtained in Lemma \ref{lemmabe2}. Then
	\begin{itemize}
		\item [$(i)$]
		for  $d=2$ and $r \in[1,3]$, we have 
		\begin{align}\label{n45}
			\sup_{t\in[t_2,T]}\|\u_t(t)\|_{\H}^2
			&\leq \bigg[ \bigg(\frac{K_{11}}{ T}+K_{12}\bigg)\|\u_0\|_{\H}^2 +\bigg(\frac{3T}{4}+K_{13}\bigg)\|g\|_0^2\|\f\|_{\H}^2 \no \\& \quad+C \bigg[\|\u_0\|_{\H}+\frac{1}{\alpha}\|g\|_0\|\f\|_{\H}\bigg]^\frac{2}{3}
			\no \\&\quad\times
			\bigg[ \frac{4}{\mu } \left\{ \left(\frac{1}{T}+\frac{\alpha}{8}\right)\|\u_0\|_{\H}^2+\frac{1}{\alpha}\|g\|_0^2\|\f\|_{\H}^2\right\} +\frac{1}{2\mu\alpha}\|g\|_0^2\|\f\|_{\H}^2\bigg]^\frac{4}{3}\no \\&\quad\times \bigg[ \frac{4}{\mu^2 } \left\{ \left(\frac{1}{T}+\frac{\alpha}{8}\right)\|\u_0\|_{\H}^2+\frac{1}{\alpha}\|g\|_0^2\|\f\|_{\H}^2\right\}+\frac{3T}{8\mu^2}\|g\|_0^2\|\f\|_{\H}^2\bigg]\bigg]\no \\& \quad\times\bigg[\frac{8}{T}+ \frac{8}{\mu^2 } \left\{ \left(\frac{1}{T}+\frac{\alpha}{8}\right)\|\u_0\|_{\H}^2+\frac{1}{ \alpha}\|g\|_0^2\|\f\|_{\H}^2\right\}+\frac{1}{\mu^2\alpha}\|g\|_0^2\|\f\|_{\H}^2\bigg]\no \\& \quad+ \frac{T}{ \alpha}\|g_t\|_{0}^2\|\f\|_{\H}^2,
		\end{align}
		where $K_{1i}, \ i=1,2,3,$  are defined in Lemma \ref{lemmae3},
		\item [$(ii)$]
		for $d=2,3$ and $ r > 3$ with $\eta^*=\frac{r-3}{ \mu (r-1)}\left(\frac{2}{\beta \mu (r-1)}\right)^\frac{2}{r-3}<\mu \lambda_1+\alpha$, we have 
		\begin{align}\label{n46}
			\sup_{t\in[t_2,T]}\|\u_t(t)\|_{\H}^2&\leq \frac{8}{T}\bigg\{ \bigg(\frac{K_{21}}{T}+K_{22}T+K_{23}\bigg)\|\u_0\|_{\H}^2+\bigg(K_{24}T+K_{25}\bigg)\|g\|_0^2\|\f\|_{\H}^2\no \\&\quad+\frac{3T}{8\alpha }\|g_t\|_0^2\|\f\|_{\H}^2\bigg\}+ \frac{1}{ \alpha(\alpha-\eta^*)}\|g_t\|_{0}^2\|\f\|_{\H}^2,
		\end{align}
		where $\gamma$ and $K_{2i}, \ i=1,\ldots,5,$  are defined in Lemma \ref{lemmae3},
		\item [$(iii)$]
		for $d=r=3$ with $\beta \mu > 1$, we have 
		\begin{align}\label{n47}
			\sup_{t\in[t_2,T]}\|\u_t(t)\|_{\H}^2	&\leq  \frac{8}{T} \bigg\{\bigg(\frac{K_{31}}{T}+K_{32}T+K_{33}\bigg)\|\u_0\|_{\H}^2+\bigg(K_{34}T+K_{35}\bigg)\|g\|_0^2\|\f\|_{\H}^2\no\\&\quad+\frac{3T}{8\alpha}\|g_t\|_0^2\|\f\|_{\H}^2\bigg\}+ \frac{1}{ \alpha^2}\|g_t\|_{0}^2\|\f\|_{\H}^2,
		\end{align}
		where $K_{3i}, \ i=1,\ldots,5,$  are defined in Lemma \ref{lemmae3}. 
	\end{itemize}
\end{lemma}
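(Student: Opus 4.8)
The plan is to gain one further level of time regularity on top of Lemmas \ref{lemmae1}--\ref{lemmae3}: those already control $\sup\|\u\|_{\H}$, $\sup_{[t_1,T]}\|\nabla\u\|_{\H}^2$, $\int\|\Delta\u\|_{\H}^2\d s$, and, crucially, a point value $\|\u_t(t_2)\|_{\H}^2$ via the mean value theorem. To upgrade this to a \emph{supremum} bound on $\|\u_t\|_{\H}$ over $[t_2,T]$, I would differentiate the CBF system \eqref{1a} in $t$ and test the equation for $\w:=\u_t$ against $\u_t$ itself, recording
\begin{align*}
\frac{1}{2}\frac{\d}{\d t}\|\u_t\|_{\H}^2+\mu\|\nabla\u_t\|_{\H}^2+\alpha\|\u_t\|_{\H}^2+\beta\langle\mathcal{C}'(\u)\u_t,\u_t\rangle=(\f g_t,\u_t)-\langle(\u_t\cdot\nabla)\u,\u_t\rangle,
\end{align*}
where the pressure term drops since $\nabla\cdot\u_t=0$, the term $\langle(\u\cdot\nabla)\u_t,\u_t\rangle$ vanishes by the antisymmetry of the trilinear form on $\T$, and $\beta\langle\mathcal{C}'(\u)\u_t,\u_t\rangle\geq0$ by the monotonicity \eqref{30} (so it may be discarded, or kept as the good term $\beta\||\u|^{\frac{r-1}{2}}\u_t\|_{\H}^2$ exactly as in the commented display preceding \eqref{n46}). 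Only the forcing $(\f g_t,\u_t)$ and the residual trilinear term $\langle(\u_t\cdot\nabla)\u,\u_t\rangle$ require genuine work.

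For the forcing I would use Cauchy--Schwarz and Young, $|(\f g_t,\u_t)|\leq\tfrac{\delta}{2}\|\u_t\|_{\H}^2+\tfrac{1}{2\delta}\|g_t\|_0^2\|\f\|_{\H}^2$, absorbing the first term into the $\alpha$-coercivity. The trilinear term is where the three regimes separate. For $r>3$ (Case (ii)) and $d=r=3$ (Case (iii)) I would use $\langle(\u_t\cdot\nabla)\u,\u_t\rangle=-\langle(\u_t\cdot\nabla)\u_t,\u\rangle$, bound it by $\||\u||\u_t|\|_{\H}\|\nabla\u_t\|_{\H}$, and split by Young into $\tfrac{\mu}{2}\|\nabla\u_t\|_{\H}^2+\tfrac{1}{2\mu}\int_{\T}|\u|^2|\u_t|^2\d x$; the last integral is then interpolated between the damping integral $\int_{\T}|\u|^{r-1}|\u_t|^2\d x$ and $\|\u_t\|_{\H}^2$ by the same H\"older/Young computation that produced \eqref{n22}, swallowing the damping part into $\beta\||\u|^{\frac{r-1}{2}}\u_t\|_{\H}^2$ and leaving precisely $\eta^\ast\|\u_t\|_{\H}^2$ (respectively, being absorbed completely when $\beta\mu>1$). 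This is exactly where the standing conditions $\eta^\ast<\mu\lambda_1+\alpha$ and $\beta\mu>1$ enter, forcing a strictly positive effective coefficient on $\|\u_t\|_{\H}^2$. For $d=2$, $r\in[1,3]$ (Case (i)), the damping is too weak, so I would instead keep $\langle(\u_t\cdot\nabla)\u,\u_t\rangle$ and estimate it by Ladyzhenskaya's inequality as $\|\u_t\|_{\widetilde{\L}^4}^2\|\nabla\u\|_{\H}\leq\sqrt2\|\u_t\|_{\H}\|\nabla\u_t\|_{\H}\|\nabla\u\|_{\H}\leq\tfrac{\mu}{2}\|\nabla\u_t\|_{\H}^2+\tfrac1\mu\|\nabla\u\|_{\H}^2\|\u_t\|_{\H}^2$, producing a coefficient $\tfrac1\mu\|\nabla\u\|_{\H}^2$ that is already controlled by \eqref{n111}.

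With the bad terms absorbed, I would reach a differential inequality of the schematic form $\tfrac{\d}{\d t}\|\u_t\|_{\H}^2+2\kappa\|\u_t\|_{\H}^2\leq a(t)\|\u_t\|_{\H}^2+\tfrac1\alpha\|g_t\|_0^2\|\f\|_{\H}^2$ on $[t_2,T]$, with $\kappa>0$ fixed by the case hypotheses and $a(t)$ either zero (Cases (ii)--(iii)) or $\tfrac2\mu\|\nabla\u\|_{\H}^2$ (Case (i), after clearing the factor $\tfrac12$). Integrating from $t_2$ and inserting $\|\u_t(t_2)\|_{\H}^2$ from \eqref{n32}--\eqref{n34} as the starting value, the constant-coefficient part yields the forcing contributions $\tfrac{1}{\alpha(\alpha-\eta^\ast)}\|g_t\|_0^2\|\f\|_{\H}^2$, $\tfrac{1}{\alpha^2}\|g_t\|_0^2\|\f\|_{\H}^2$, or $\tfrac{T}{\alpha}\|g_t\|_0^2\|\f\|_{\H}^2$ as recorded; in Case (i) the extra coefficient $\tfrac2\mu\sup_{[t_1,T]}\|\nabla\u\|_{\H}^2$ multiplies the already-controlled $\int_{t_1}^{3T/8}\|\u_t\|_{\H}^2\d t$ of \eqref{n25}, and together with the mean value inequality $\|\u_t(t_2)\|_{\H}^2\leq\tfrac8T\int_{t_1}^{3T/8}\|\u_t\|_{\H}^2\d t$ this reproduces the multiplicative structure of \eqref{n45}. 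The main obstacle, and the only delicate point, is the \emph{justification of the time differentiation}: $\u_{tt}$ need not exist classically, so I would perform the computation on a Galerkin approximation (or via difference quotients in $t$), where \eqref{29}--\eqref{30} are legitimate, derive the bounds uniformly, and pass to the limit, invoking the regularity $\u\in\C^1([T/8,T];\V)$ from Remark \ref{remb7} to make the endpoint evaluations and the monotone treatment of $\mathcal{C}'(\u)$ rigorous; secondarily, ensuring that the interpolation in Cases (ii)--(iii) leaves a coefficient strictly dominated by $\mu\lambda_1+\alpha$ (resp.\ fully absorbed when $\beta\mu>1$) is precisely what conditions \eqref{1g2}--\eqref{1g3} together with the largeness of $\alpha,\beta,\mu$ in Remark \ref{rem4.2} are designed to guarantee.
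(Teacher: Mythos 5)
Your proposal is correct and follows essentially the same route as the paper's own proof: differentiate \eqref{1a} in time, test with $\u_t$, use the cancellations $\langle(\u\cdot\nabla)\u_t,\u_t\rangle=0$, $\langle\nabla p_t,\u_t\rangle=0$ and the nonnegativity of $(\mathcal{C}'(\u)\u_t,\u_t)$, estimate the remaining trilinear term exactly as you describe (Ladyzhenskaya's inequality for $d=2$, $r\in[1,3]$; the \eqref{n22}-type interpolation producing $\eta^*\|\u_t\|_{\H}^2$ for $r>3$; direct Young absorption under $\beta\mu>1$ for $d=r=3$), and then integrate/apply the variation of constants formula from $t_2$ with the starting values \eqref{n32}--\eqref{n34}, which in Case (i) yields the same multiplicative combination of \eqref{n111}, \eqref{n25} and \eqref{n32} appearing in \eqref{n45}. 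Your additional remarks on justifying the time differentiation via Galerkin approximation, and the observation that the effective coercivity constant is really $\alpha-\eta^*>0$ (Poincar\'e being unavailable here), are extra care consistent with, not divergent from, the paper's argument.
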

\begin{proof} 
	Applying $\partial / \partial t$ to the equation \eqref{1a}, we find
	\begin{align}\label{n48}
		\u_{tt}-\mu \Delta \u_t+(\u \cdot \nabla)\u_t+(\u_t \cdot \nabla)\u+\alpha \u_t+\beta  \mathcal{C}'(\u)\u_t+\nabla p_t=\f g_t,
	\end{align}
	where $\mathcal{C}'(\cdot)$ is defined in \eqref{29}.	Multiplying both sides of \eqref{n48} by  $\u_t(\cdot)$ and integrating over $\T$, we deduce
	\begin{align}\label{n49}
		&\frac{1}{2}\frac{\d}{\d t}\|\u_t(t)\|_{\H}^2+\mu\|\nabla\u_t(t)\|_{\H}^2+\alpha \|\u_t(t)\|_{\H}^2+\beta( \mathcal{C}'(\u)\u_t(t),\u_t(t))\nonumber\\&=(\f g_t(t),\u_t(t))-\big((\u_t(t) \cdot \nabla)\u(t),\u_t(t)\big),
	\end{align}
	where we have used the fact that $\langle(\u \cdot \nabla)\u_t,\u_t\rangle=0$ and $\langle\nabla p_t,\u_t\rangle=0$. Applying H\"older's and Young's inequalities, we obtain
\begin{align}\label{n50}
	|(\f g_t,\u_t)| 
	\leq \frac{\alpha}{2}\|\u_t\|_{\H}^2+\frac{1}{2 \alpha}\|g_t\|_{\mathrm{L}^{\infty}}^2\|\f\|_{\H}^2.
\end{align}
	\vskip 0.2 cm
	\noindent\textbf{Case I:} \emph{$d=2$ and $r \in [1,3]$.} 
		Making use of H\"older's, Ladyzhenskaya's and Young's inequalities, we find
	\begin{align}\label{n51}
		\big|	\big((\u_t \cdot \nabla)\u,\u_t\big)\big| &\leq \|\u_t\|_{\widetilde{\L}^4}^2 \|\nabla\u\|_{\H} \leq \sqrt{2}\|\u_t\|_{\H}\|\nabla\u_t\|_{\H}\|\nabla\u\|_{\H}\no\\& \leq \frac{\mu}{2}\|\nabla\u_t\|_{\H}^2+\frac{1}{\mu}\|\u_t\|_{\H}^2\|\nabla\u\|_{\H}^2.
	\end{align}
	From \eqref{29}, we get 
	\begin{align}\label{n52}
		\big(\mathcal{C}'(\u)\u_t,\u_t\big)\no &= \left(|\u|^{r-1}\u_t+(r-1)\frac{\u}{|\u|^{3-r}}(\u\cdot\u_t),\u_t\right) \\&= \big\||\u|^\frac{r-1}{2}|\u_t|\big\|_{\H}^2+ (r-1)\bigg\|\frac{1}{|\u|^\frac{3-r}{2}}(\u \cdot \u_t)\bigg\|_{\H}^2,
	\end{align}  
	where in the final term, the norm is zero whenever $\u=\mathbf{0}$.	
	Plugging the relations \eqref{n50}-\eqref{n52} in \eqref{n49}, and  integrating the resulting relation from $t_2$ to $t$ results to
	\begin{align}\label{n53}
		&\|\u_t(t)\|_{\H}^2+\mu\int_{t_2}^t\|\nabla\u_t(s)\|_{\H}^2\d s+\alpha \int_{t_2}^t\|\u_t(s)\|_{\H}^2\d s\no\\&\quad +2\beta \int_{t_2}^t\big\||\u(s)|^\frac{r-1}{2}|\u_t(s)|\big\|_{\H}^2 \d s+2\beta(r-1)\int_{t_2}^t\bigg\|\frac{1}{|\u(s)|^\frac{3-r}{2}}\big(\u(s) \cdot \u_t(s)\big)\bigg\|_{\H}^2\d s\nonumber\\&
		\leq \|\u_t(t_2)\|_{\H}^2+ \frac{t-t_2}{ \alpha}\|g_t\|_{0}^2\|\f\|_{\H}^2+\frac{2}{\mu}\int_{t_2}^t\|\u_t(s)\|_{\H}^2\|\nabla\u(s)\|_{\H}^2\d s
		\nonumber\\&
		\leq \|\u_t(t_2)\|_{\H}^2+ \frac{T}{ \alpha}\|g_t\|_{0}^2\|\f\|_{\H}^2+\frac{2}{\mu}\sup_{t\in[t_2,T]}\|\nabla\u(t)\|_{\H}^2 \int_{t_1}^\frac{3T}{8}\|\u_t(s)\|_{\H}^2 \d s,
	\end{align} 
	for all $t\in[t_2,T]$.  Thus, from \eqref{n53}, one reaches at \eqref{n45} by using the estimates \eqref{n12}, \eqref{n25} and \eqref{n32}.
	\vskip 0.2 cm
	\noindent\textbf{Case II:} 	\emph{$d=2,3$ and $r > 3$.} 
		An estimate similar to \eqref{n22} gives the estimate
	\begin{align}\label{n54}
		\big|	\big((\u_t \cdot \nabla)\u,\u_t\big)\big|&	\leq \frac{\mu}{2}\|\nabla \u_t\|_{\H}^2 +\frac{\beta }{2}\big\||\u|^\frac{r-1}{2}|\u_t|\big\|_{\H}^2+ \frac{r-3}{2\mu(r-1)}\left(\frac{2}{\beta \mu (r-1)}\right)^\frac{2}{r-3}\|\u_t\|_{\H}^2.
	\end{align}
 From \eqref{29}, we get 
	\begin{align}\label{n55}
		\big(\mathcal{C}'(\u)\u_t,\u_t\big)&=\big(|\u|^{r-1}\u_t+(r-1)\u|\u|^{r-3}(\u\cdot\u_t),\u_t\big)\no\\& =  \big\||\u|^\frac{r-1}{2}|\u_t|\big\|_{\H}^2+ (r-1)\big\||\u|^\frac{r-3}{2}(\u \cdot \u_t)\big\|_{\H}^2.
	\end{align}
	Substituting the estimates \eqref{n50}, \eqref{n54} and \eqref{n55} in \eqref{n49}, we deduce 
	\begin{align}\label{n56}
		&\frac{\d}{\d t} \|\u_t(t)\|_{\H}^2+\mu \|\nabla\u_t(t)\|_{\H}^2+\big(\alpha-\eta^*\big)\|\u_t(t)\|_{\H}^2+\beta\big\||\u(t)|^\frac{r-1}{2}|\u_t(t)|\big\|_{\H}^2\no \\& \quad+2\beta(r-1)\big\||\u|^\frac{r-3}{2}(\u \cdot \u_t)\big\|_{\H}^2 \leq \frac{1}{ \alpha}\|g_t\|_{\mathrm{L}^\infty}^2\|\f\|_{\H}^2,
	\end{align}
	where $\eta^*=\frac{r-3}{ \mu (r-1)}\left(\frac{2}{\beta \mu (r-1)}\right)^\frac{2}{r-3}$.
	An application of the variation of constants formula in \eqref{n56} gives for all $t \in[t_2,T]$
	\begin{align}\label{n57}
		\|\u_t(t)\|_{\H}^2 &\leq e^{-(\alpha-\eta^* )(t-t_2)} \|\u_t(t_2)\|_{\H}^2  +\frac{1- e^{-(\alpha-\eta^* )(t-t_2)}}{\alpha (\alpha-\eta^*)}\|g_t\|_{0}^2\|\f\|_{\H}^2\no \\&
		\leq \|\u_t(t_2)\|_{\H}^2  +\frac{1}{\alpha (\alpha-\eta^*)}\|g_t\|_{0}^2\|\f\|_{\H}^2,
	\end{align}
	which leads to \eqref{n46} by using \eqref{n33}, provided $\eta^*<\alpha$.
	\vskip 0.2 cm
	\noindent\textbf{Case III:} \emph{$d=r=3$ and $\beta \mu > 1$.} Applying H\"older's and Young's inequalities, we obtain
	\begin{align}\label{n58}
		\big|	\big((\u_t \cdot \nabla)\u,\u_t\big)\big| \leq \frac{\mu}{2}\|\nabla \u_t\|_{\H}^2 + \frac{1}{2\mu}\big\||\u||\u_t|\big\|_{\H}^2.
	\end{align}
	For $r=3$, the equality \eqref{n55} becomes 
	\begin{align}\label{n59}
		\big(\mathcal{C}'(\u)\u_t,\u_t\big)=  \big\||\u||\u_t|\big\|_{\H}^2+ 2\big\|(\u \cdot \u_t)\big\|_{\H}^2.
	\end{align}
	Plugging the relations \eqref{n50}, \eqref{n58} and \eqref{n59} in \eqref{n49}, we have 
	\begin{align}\label{n60}
		&\frac{\d}{\d t}\|\u_t(t)\|_{\H}^2+\alpha \|\u_t(t)\|_{\H}^2+2\bigg(\beta -\frac{1}{2\mu}\bigg)\big\||\u(t)||\u_t(t)|\big\|_{\H}^2  
		\leq  \frac{1}{ \alpha}\|g_t\|_{0}^2\|\f\|_{\H}^2,
	\end{align} 
	for all $t\in[t_2,T]$. Applying the variation of constants formula in \eqref{n60}, we get
	\begin{align}\label{n61}
		\|\u_t(t)\|_{\H}^2 &\leq e^{-\alpha(t-t_2)} \|\u_t(t_2)\|_{\H}^2  +\frac{1- e^{-\alpha (t-t_2)}}{\alpha^2}\|g_t\|_{0}^2\|\f\|_{\H}^2\no \\&
		\leq \|\u_t(t_2)\|_{\H}^2  +\frac{1}{\alpha^2}\|g_t\|_{0}^2\|\f\|_{\H}^2,
	\end{align}
 for all $t \in[t_2,T]$ and \eqref{n47} follows by using \eqref{n34} in \eqref{n61}, provided $\beta\mu > 1$.
\end{proof}

\section{Proof of Theorem \ref{thm2} $(i)$}\label{sec4}\setcounter{equation}{0}
The energy estimates mentioned in Section \ref{sec3} and Appendix \ref{sec6} allow us to prove the existence and uniqueness of a solution to the inverse problem \eqref{1a}-\eqref{1d} as well as  to establish the stability of the solution. To show the existence of a solution to the inverse problem \eqref{1a}-\eqref{1d}, we use  Theorem \ref{thm1}  to prove that the nonlinear operator $\mathcal{B}$ has a fixed point in $\mathcal{D}$, which follows from an application of the well-known  Tikhonov fixed point theorem.

	Subsequently, arguments for the existence of the solution of the inverse problem \eqref{1a}-\eqref{1d} are based on the works \cite{JF,PV1}, where the existence of the solution of the inverse problem for 2D NSE has been examined by exploiting Tikhonov's fixed point theorem.
	\begin{theorem}[Tikhonov's fixed point theorem, \cite{NS}]\label{thmS}
	Let $\mathcal{D}$ be a non-empty bounded closed convex subset of a separable reflexive Banach space $\X$ and let $\mathcal{B} : \mathcal{D} \to \mathcal{D}$ be a weakly continuous mapping (that is, if $\u_n \in \mathcal{D}, \ \u_n \rightharpoonup \u$ weakly in $\X$, then $\mathcal{B}\u_n \rightharpoonup\mathcal{B} \u$ weakly in $\X$ as well). Then, $\mathcal{B}$ has at least one fixed point in $\mathcal{D}$.
\end{theorem}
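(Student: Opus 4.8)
The plan is to prove the theorem by transporting the problem to the weak topology of $\X$, where $\mathcal{D}$ becomes compact, and then invoking the Schauder--Tychonoff fixed point theorem for locally convex spaces. I would equip $\X$ with its weak topology, under which $\X$ is a Hausdorff locally convex topological vector space. First I would verify that $\mathcal{D}$ is weakly compact: since $\mathcal{D}$ is convex and norm-closed, Mazur's theorem shows it is weakly closed; since $\X$ is reflexive, Kakutani's theorem gives that the closed balls of $\X$ are weakly compact, and as $\mathcal{D}$ is bounded it is contained in such a ball, so $\mathcal{D}$ is a weakly closed subset of a weakly compact set and hence weakly compact. Thus $\mathcal{D}$ is a nonempty convex weakly compact subset of the locally convex space $(\X,\mathrm{weak})$.

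The second step is to reconcile the two notions of continuity. The hypothesis supplies only \emph{sequential} weak continuity of $\mathcal{B}$, whereas the fixed-point machinery needs continuity with respect to the weak topology as a topological space. Here the separability assumption is essential: for a reflexive space, $\X$ separable forces $\X^*$ separable (since $\X^{\ast\ast}=\X$ is separable and separability of a dual descends to the predual), and consequently the weak topology restricted to the bounded set $\mathcal{D}$ is metrizable, say by a metric $d_w$. On the metric space $(\mathcal{D},d_w)$ sequential continuity and topological continuity coincide, so $\mathcal{B}\colon(\mathcal{D},d_w)\to(\mathcal{D},d_w)$ is continuous; moreover $(\mathcal{D},d_w)$ is compact, which also follows directly from the Eberlein--\v{S}mulian theorem.

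With these facts in hand the conclusion follows from the Schauder--Tychonoff theorem: a continuous self-map of a nonempty compact convex subset of a Hausdorff locally convex topological vector space has a fixed point. Applying it to $\mathcal{B}$ on the weakly compact convex set $\mathcal{D}\subset(\X,\mathrm{weak})$ yields a point $\u\in\mathcal{D}$ with $\mathcal{B}\u=\u$. Should a self-contained argument be preferred, I would instead run the standard proof of Schauder--Tychonoff in this concrete setting: cover the weakly compact set $\mathcal{D}$ by finitely many weak neighbourhoods, use a subordinate partition of unity to build finite-dimensional Schauder approximations $\mathcal{B}_n$ mapping into the convex hull of finitely many points of $\mathcal{D}$, apply Brouwer's fixed point theorem to each $\mathcal{B}_n$ on the resulting finite-dimensional simplex to obtain approximate fixed points $\u_n$, and extract (by weak compactness) a weakly convergent subsequence whose limit is a genuine fixed point of $\mathcal{B}$.

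The main obstacle is precisely the mismatch between the sequential weak continuity that is assumed and the topological weak continuity that the fixed-point theorems demand; bridging this gap is exactly what the separability hypothesis provides, via the metrizability of the weak topology on the bounded set $\mathcal{D}$. Everything else---weak compactness from reflexivity plus convexity, and the reduction to Brouwer if a from-scratch proof is wanted---is routine, but this continuity-reconciliation step is the one that cannot be skipped and is the reason the theorem is formulated for \emph{separable reflexive} spaces rather than for arbitrary Banach spaces.
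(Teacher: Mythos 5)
Your proof is correct, but note that the paper itself does not prove this statement at all: Theorem \ref{thmS} is quoted as Tikhonov's fixed point theorem with a citation to \cite{NS} and is then used as a black box in the existence argument of Section \ref{sec4}. So there is no internal proof to compare yours against; what you have supplied is a correct, self-contained derivation of the cited result. Your chain of reasoning is the standard one and each link is sound: Mazur's theorem plus Kakutani's characterization of reflexivity give weak compactness of $\mathcal{D}$; separability together with reflexivity gives separability of $\X^*$ (since $(\X^*)^*=\X^{**}=\X$ is separable and separability of a dual passes to the predual), hence metrizability of the weak topology on the bounded set $\mathcal{D}$; on a metrizable space sequential continuity coincides with continuity, so the hypothesis upgrades to genuine topological weak continuity; and the Schauder--Tychonoff theorem on the locally convex space $(\X,\mathrm{weak})$ then produces the fixed point. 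Your fallback sketch (Schauder approximations, Brouwer, weak-compactness extraction) is also the standard route if one wants to avoid quoting Schauder--Tychonoff.

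One refinement: your closing claim that the continuity-reconciliation step \emph{cannot} be skipped, and that this is the reason the theorem requires separable reflexive spaces, is slightly too strong. By the Eberlein--\v{S}mulian theorem, weakly compact subsets of an arbitrary Banach space are angelic, and on a compact angelic space every sequentially closed set is closed; consequently a sequentially weakly continuous self-map of a weakly compact convex set is automatically weakly continuous, and the conclusion holds with reflexivity alone. Separability is what makes the bridge elementary (via metrizability of the weak topology on bounded sets), which is presumably why the formulation in \cite{NS} includes it, but it is a convenience rather than a necessity. This is a comment on sharpness only and does not affect the correctness of your argument.
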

\subsection{Existence}
We commence proving Theorem \ref{thm2} (i) by ensuring that the nonlinear operator $\mathcal{B}$ defined in \eqref{1i} fulfills all the assumptions given in Theorem \ref{thmS}. The following remark is crucial in this work and the subsequent lemma demonstrates that the operator $\mathcal{B}$ maps $\mathcal{D}$ into itself. 

	\begin{remark}\label{rem4.2}
	Let us specify the choice of $M$ for defining the closed ball $\mathcal{D}$. For $d=2,3$ and $r >3$,  we define $M$ by
\begin{align*}
			&M=M(\alpha,\mu)\nonumber\\&:= \frac{\bigg(\frac{8K_{21}}{T^2}+8K_{22}+\frac{8K_{23}}{T}\bigg)^\frac{1}{2}\|\u_0\|_{\H}+\big\|(\boldsymbol{\varphi}\cdot \nabla)\boldsymbol{\varphi}+\nabla \psi-\mu \Delta\boldsymbol{\varphi}+\alpha \boldsymbol{\varphi}+ \beta |\boldsymbol{\varphi}|^{r-1}\boldsymbol{\varphi}\big\|_{\H}} {g_T-\bigg\{\bigg(8K_{24}+\frac{8K_{25}}{T}\bigg)^\frac{1}{2}\|g\|_0+\bigg(\frac{3}{\alpha }+ \frac{1}{ \alpha(\alpha-\eta^*)}\bigg)^\frac{1}{2}\|g_t\|_{0}\bigg\}},
		\end{align*}
where $K_{2i}, \ i=1,\ldots,5,$  are defined in Lemma \ref{lemmae3} (see \eqref{322}), and $\eta^*=\frac{r-3}{\mu(r-1)}\left(\frac{2}{\beta \mu  (r-1)}\right)^\frac{2}{r-3}$, and $\alpha,\beta $ and $\mu$ is taken sufficiently large such that 
	\begin{align}\label{41}
		g_T>\bigg(8K_{24}+\frac{8K_{25}}{T}\bigg)^\frac{1}{2}\|g\|_0+\bigg(\frac{3}{\alpha}+ \frac{1}{ \alpha(\alpha-\eta^*)}\bigg)^\frac{1}{2}\|g_t\|_{0}.
	\end{align}
Similarly, for $d=r=3$ with $ \beta \mu > 1$, we can define $M$ by 
\begin{align*}
&M=M(\alpha,\mu)\nonumber\\&:= \frac{\bigg(\frac{8K_{31}}{T^2}+8K_{32}+\frac{8K_{33}}{T}\bigg)^\frac{1}{2}\|\u_0\|_{\H}+\big\|(\boldsymbol{\varphi}\cdot \nabla)\boldsymbol{\varphi}+\nabla \psi-\mu \Delta\boldsymbol{\varphi}+\alpha \boldsymbol{\varphi}+ \beta |\boldsymbol{\varphi}|^{r-1}\boldsymbol{\varphi}\big\|_{\H}} {g_T-\bigg\{\bigg(8K_{34}+\frac{8K_{35}}{T}\bigg)^\frac{1}{2}\|g\|_0+\bigg(\frac{3}{\alpha }+ \frac{1}{ \alpha^2}\bigg)^\frac{1}{2}\|g_t\|_{0}\bigg\}},
\end{align*}
where $K_{3i}, \ i=1,\ldots,5,$  are defined in Lemma \ref{lemmae3} (see \eqref{325}),  and $\alpha,\beta $ and $\mu$ are taken sufficiently large such that
\begin{align*}
	g_{T}> \bigg(8K_{34}+\frac{8K_{35}}{T}\bigg)^\frac{1}{2}\|g\|_0+\bigg(\frac{3}{\alpha}+ \frac{1}{ \alpha^2}\bigg)^\frac{1}{2}\|g_t\|_{0}.
\end{align*} 
It is worth emphasizing that $M$ is large if $\alpha, \beta$ and $\mu$ are  large, which indicates that the unknown part $\f(\cdot)$ of the source could be large. A similar choice of $M$ has to be made in the case of $d=2$ and $r\in[1,3]$ also (cf. \eqref{4.7} below). 
\end{remark}
\begin{lemma}\label{lemma4}
		Let $\u_0 \in \H, \ \boldsymbol{\varphi} \in \H^2(\T) \cap \V $, $\nabla \psi \in \G(\T)$, $g, g_t \in \C(\T \times [0,T])$ satisfy assumption \eqref{1e}. If $\alpha,\beta$ and $\mu$ are  sufficiently large so that  the conditions given in Remark \ref{rem4.2} (cf. \eqref{41}) are  satisfied, then the operator $\mathcal{B}$ maps $\mathcal{D}$ into itself. 
\end{lemma}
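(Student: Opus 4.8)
The plan is to verify the self-map property $\mathcal{B}(\mathcal{D})\subseteq\mathcal{D}$ by showing $\|\mathcal{B}\f\|_{\H}\leq M$ for every $\f\in\mathcal{D}$, i.e. whenever $\|\f\|_{\H}\leq M$. Fix such an $\f$, solve the direct problem \eqref{1a}-\eqref{2a} with source $\f g$ to obtain the pair $(\u,\nabla p)$, and recall that $(\mathcal{A}\f)(\cdot)=\u_t(\cdot,T)$. Taking the $\H$-norm in the definition \eqref{1i} and using the lower bound $|g(x,T)|\geq g_T>0$ from \eqref{1e}, I would first reduce, by the triangle inequality, to
\[
\|\mathcal{B}\f\|_{\H}\leq\frac{1}{g_T}\Big(\|\u_t(\cdot,T)\|_{\H}+\big\|(\boldsymbol{\varphi}\cdot\nabla)\boldsymbol{\varphi}+\nabla\psi-\mu\Delta\boldsymbol{\varphi}+\alpha\boldsymbol{\varphi}+\beta|\boldsymbol{\varphi}|^{r-1}\boldsymbol{\varphi}\big\|_{\H}\Big).
\]
The second term is a fixed constant $D$ depending only on the data $\boldsymbol{\varphi},\nabla\psi,\mu,\alpha,\beta$; it is finite because $\boldsymbol{\varphi}\in\H^2(\T)\cap\V$ controls each summand, the damping contribution through the embedding $\H^2(\T)\hookrightarrow\widetilde{\L}^{r+1}$ valid in both dimensions.

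The crux is the estimate for $\|\u_t(\cdot,T)\|_{\H}\leq\sup_{t\in[t_2,T]}\|\u_t(t)\|_{\H}$, which is exactly what Lemma \ref{lemmae4} provides. For the supercritical cases $d=2,3,\ r>3$ (estimate \eqref{n46}) and $d=r=3$ (estimate \eqref{n47}), the right-hand side is quadratic in $\|\f\|_{\H}$; taking square roots and using the subadditivity of $\sqrt{\cdot}$ linearizes it to the form $A+B\|\f\|_{\H}$, where $A$ collects the $\|\u_0\|_{\H}$ contribution and $B$ collects the $\|g\|_0$ and $\|g_t\|_0$ contributions. A direct comparison shows these $A$, $B$ are precisely the quantities appearing in Remark \ref{rem4.2}. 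Inserting $\|\f\|_{\H}\leq M$ then yields
\[
\|\mathcal{B}\f\|_{\H}\leq\frac{A+BM+D}{g_T}.
\]
Condition \eqref{41} guarantees $g_T-B>0$, and the prescribed value $M=(A+D)/(g_T-B)$ in Remark \ref{rem4.2} is tailored exactly so that the right-hand side equals $M$; hence $\|\mathcal{B}\f\|_{\H}\leq M$, i.e. $\mathcal{B}\f\in\mathcal{D}$.

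The principal obstacle is the two-dimensional subcritical range $d=2,\ r\in[1,3]$. There the relevant bound \eqref{n45} on $\|\u_t(\cdot,T)\|_{\H}^2$ is not quadratic but genuinely superquadratic in $\|\f\|_{\H}$, being a product of fractional powers of factors each quadratic in $\|\f\|_{\H}$, so square-rooting no longer delivers a bound linear in $\|\f\|_{\H}$ and the algebra above cannot close the self-map argument directly. I would treat this by exploiting that the coefficients of the high-order terms carry negative powers of $\mu$ and $\alpha$ (the factors $1/\mu,\ 1/\mu^2,\ 1/\alpha$), so that for $\alpha,\mu$ large—together with the structural restriction \eqref{1g1}—these contributions are dominated; the threshold $M$ must then be fixed through the more delicate implicit relation cited as \eqref{4.7} rather than the explicit formula of the supercritical case.

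Finally, I would note for completeness that $\mathcal{B}\f$ does lie in $\H$, i.e. is divergence free: $\u_t(\cdot,T)$ inherits the divergence-free property of $\u$, and the construction \eqref{1i} is consistent with the Helmholtz--Hodge decomposition $\L^2(\T)=\H\oplus\G(\T)$, so that any implicit Leray projection onto $\H$ only decreases the $\H$-norm and thus leaves all the estimates above intact. Assembling the three cases then gives $\mathcal{B}(\mathcal{D})\subseteq\mathcal{D}$.
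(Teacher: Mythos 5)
Your proposal is correct and follows essentially the same route as the paper: the reduction via \eqref{B111}, the use of Lemma \ref{lemmae4} at $t=T$, the square-root subadditivity to linearize the bound in $\|\f\|_{\H}$ for $r>3$ and $d=r=3$, and the identification of the resulting coefficients with $A$, $B$ in Remark \ref{rem4.2} so that \eqref{41} and the definition of $M$ close the self-map inequality. Your explicit acknowledgment that the case $d=2$, $r\in[1,3]$ is superquadratic in $\|\f\|_{\H}$ and must be closed through the implicit relation \eqref{4.7} (with $\alpha,\mu$ large absorbing the negative-power coefficients) matches the paper's own, equally implicit, treatment of that case, so no gap remains.
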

\begin{proof}
	Let $\f \in \mathcal{D}$. From \eqref{1i}, we have
	\begin{align}\label{B111}
		\| \mathcal{B}\f\|_{\H}\leq \frac{1}{g_T}\bigg\{\|\u_t(T)\|_{\H}+\|(\boldsymbol{\varphi}\cdot \nabla)\boldsymbol{\varphi}+\nabla \psi-\mu \Delta\boldsymbol{\varphi}+\alpha \boldsymbol{\varphi}+ \beta |\boldsymbol{\varphi}|^{r-1}\boldsymbol{\varphi}\|_{\H}\bigg\}.
	\end{align}
\vskip 0.2 cm 
\noindent\textbf{Case I:} \emph{$d=2$ and $r \in [1,3]$.} At the final time $t=T$, from  \eqref{n45}, we obtain
\begin{align}\label{B114}
	\|\u_t(T)\|_{\H}&\leq\bigg[ \bigg(\frac{K_{11}}{ T}+K_{12}\bigg)\|\u_0\|_{\H}^2 +\bigg(\frac{3T}{4}+K_{13}\bigg)\|g\|_0^2\|\f\|_{\H}^2 \no \\& \quad+C \bigg\{\|\u_0\|_{\H}+\frac{1}{\alpha}\|g\|_0\|\f\|_{\H}\bigg\}^\frac{2}{3}
	\no \\&\quad\times
	\bigg\{ \frac{4}{\mu } \left\{ \left(\frac{1}{T}+\frac{\alpha}{8}\right)\|\u_0\|_{\H}^2+\frac{1}{\alpha}\|g\|_0^2\|\f\|_{\H}^2\right\} +\frac{1}{2\mu\alpha}\|g\|_0^2\|\f\|_{\H}^2\bigg\}^\frac{4}{3}\no \\&\quad\times \bigg\{ \frac{4}{\mu^2 } \left\{ \left(\frac{1}{T}+\frac{\alpha}{8}\right)\|\u_0\|_{\H}^2+\frac{1}{\alpha}\|g\|_0^2\|\f\|_{\H}^2\right\}+\frac{3T}{8\mu^2}\|g\|_0^2\|\f\|_{\H}^2\bigg\}\bigg]^\frac{1}{2}\no \\& \quad\times\bigg[\frac{8}{T}+ \frac{8}{\mu^2 } \left\{ \left(\frac{1}{T}+\frac{\alpha}{8}\right)\|\u_0\|_{\H}^2+\frac{1}{ \alpha}\|g\|_0^2\|\f\|_{\H}^2\right\}+\frac{1}{\mu^2\alpha}\|g\|_0^2\|\f\|_{\H}^2\bigg]^\frac{1}{2}\no \\& \quad+ \sqrt{\frac{T}{ \alpha}}\|g_t\|_{0}\|\f\|_{\H},
\end{align}
where $K_{1i}, \ i=1,2,3,$  are defined in Lemma \ref{lemmae3} (see \eqref{321}).
 Plugging the relation \eqref{B114} in \eqref{B111} gives
\begin{align}\label{4.7}
	\|\mathcal{B} \f\|_{\H}&\leq \frac{1}{g_T} \bigg[ \bigg[ \bigg(\frac{K_{11}}{ T}+K_{12}\bigg)\|\u_0\|_{\H}^2 +\bigg(\frac{3T}{4}+K_{13}\bigg)\|g\|_0^2\|\f\|_{\H}^2 \no \\& \quad+\bigg\{\|\u_0\|_{\H}+\frac{1}{\alpha}\|g\|_0\|\f\|_{\H}\bigg\}^\frac{2}{3}
	\no \\&\quad\times
	\bigg\{ \frac{4}{\mu } \left\{ \left(\frac{1}{T}+\frac{\alpha}{8}\right)\|\u_0\|_{\H}^2+\frac{1}{\alpha}\|g\|_0^2\|\f\|_{\H}^2\right\} +\frac{1}{2\mu\alpha}\|g\|_0^2\|\f\|_{\H}^2\bigg\}^\frac{4}{3}\no \\&\quad\times \bigg\{ \frac{4}{\mu^2 } \left\{ \left(\frac{1}{T}+\frac{\alpha}{8}\right)\|\u_0\|_{\H}^2+\frac{1}{\alpha}\|g\|_0^2\|\f\|_{\H}^2\right\}+\frac{3T}{8\mu^2}\|g\|_0^2\|\f\|_{\H}^2\bigg\}\bigg]^\frac{1}{2}\no \\& \quad\times\bigg[\frac{8}{T}+ \frac{8}{\mu^2 } \left\{ \left(\frac{1}{T}+\frac{\alpha}{8}\right)\|\u_0\|_{\H}^2+\frac{1}{ \alpha}\|g\|_0^2\|\f\|_{\H}^2\right\}+\frac{1}{\mu^2\alpha}\|g\|_0^2\|\f\|_{\H}^2\bigg]^\frac{1}{2}\no \\& \quad+ \sqrt{\frac{T}{ \alpha}}\|g_t\|_{0}\|\f\|_{\H}+\big\|(\boldsymbol{\varphi}\cdot \nabla)\boldsymbol{\varphi}+\nabla \psi-\mu \Delta\boldsymbol{\varphi}+\alpha \boldsymbol{\varphi}+ \beta |\boldsymbol{\varphi}|^{r-1}\boldsymbol{\varphi}\big\|_{\H}\bigg] \leq M.
\end{align}
	\vskip 0.2 cm 
\noindent\textbf{Case II:} \emph{$d=2,3$ and $r >3$.} 
	At the final time $t=T$, from  \eqref{n46}, we deduce 
\begin{align}\label{B112}
	\|\u_t(T)\|_{\H}&\leq   \bigg(\frac{8K_{21}}{T^2}+8K_{22}+\frac{8K_{23}}{T}\bigg)^\frac{1}{2}\|\u_0\|_{\H}+\bigg(8K_{24}+\frac{8K_{25}}{T}\bigg)^\frac{1}{2}\|g\|_0\|\f\|_{\H}\no \\&\quad+\bigg(\frac{3}{\alpha }+ \frac{1}{ \alpha(\alpha-\eta^*)}\bigg)^\frac{1}{2}\|g_t\|_{0}\|\f\|_{\H},
\end{align}
where $K_{2i}, \ i=1,\ldots,5,$  are defined in Lemma \ref{lemmae3} (see \eqref{322}) and  $ \gamma=\frac{r-3}{r-1}\bigg(\frac{2}{\beta(r-1)}\bigg)^\frac{2}{r-3}.$
Substituting \eqref{B112} in \eqref{B111}, we infer 
\begin{align}\label{m1}
	\|\mathcal{B} \f\|_{\H}&\leq \frac{1}{g_T} \bigg[ \bigg(\frac{8K_{21}}{T^2}+8K_{22}+\frac{8K_{23}}{T}\bigg)^\frac{1}{2}\|\u_0\|_{\H}+\bigg(8K_{24}+\frac{8K_{25}}{T}\bigg)^\frac{1}{2}\|g\|_0\|\f\|_{\H}\no \\&\quad+\bigg(\frac{3}{\alpha}+ \frac{1}{ \alpha(\alpha-\eta^*)}\bigg)^\frac{1}{2}\|g_t\|_{0}\|\f\|_{\H} \no \\&\quad+\big\|(\boldsymbol{\varphi}\cdot \nabla)\boldsymbol{\varphi}+\nabla \psi-\mu \Delta\boldsymbol{\varphi}+\alpha \boldsymbol{\varphi}+ \beta |\boldsymbol{\varphi}|^{r-1}\boldsymbol{\varphi}\big\|_{\H}\bigg] \leq M.
\end{align}
\vskip 0.2 cm 
\noindent\textbf{Case III:} \emph{$d=r=3$ with $\beta \mu >1$.} 
At the final time $t=T$ in \eqref{n47}, from \eqref{B111}, we deduce 
\begin{align}\label{B113}
\|\mathcal{B} \f\|_{\H}&\leq \frac{1}{g_T} \bigg[ \bigg(\frac{8K_{31}}{T^2}+8K_{32}+\frac{8K_{33}}{T}\bigg)^\frac{1}{2}\|\u_0\|_{\H}+\bigg(8K_{34}+\frac{8K_{35}}{T}\bigg)^\frac{1}{2}\|g\|_0\|\f\|_{\H}\no \\&\quad+\bigg(\frac{3}{\alpha }+ \frac{1}{ \alpha^2}\bigg)^\frac{1}{2}\|g_t\|_{0}\|\f\|_{\H} +\big\|(\boldsymbol{\varphi}\cdot \nabla)\boldsymbol{\varphi}+\nabla \psi-\mu \Delta\boldsymbol{\varphi}+\alpha \boldsymbol{\varphi}+ \beta |\boldsymbol{\varphi}|^{r-1}\boldsymbol{\varphi}\big\|_{\H}\bigg] \no\\& \leq M,
\end{align}
where $K_{3i}, \ i=1,\ldots, 5,$  are defined in Lemma \ref{lemmae3} (see \eqref{325}) and this proves Lemma \ref{lemma4}.
\end{proof}
	The following lemma proves that there is a solution to the inverse problem \eqref{1a}-\eqref{1d}.
\begin{lemma}\label{lem4.4}
	Let the assumptions of Lemma \ref{lemma4} hold true. Then, $\mathcal{B}$ is weakly continuous from $\mathcal{D}$ into itself. 
\end{lemma}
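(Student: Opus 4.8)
The plan is to reduce the weak continuity of $\mathcal{B}$ to that of the map $\f \mapsto \mathcal{A}\f = \u_t(\cdot,T)$. Indeed, in the definition \eqref{1i} the terms $(\boldsymbol{\varphi}\cdot\nabla)\boldsymbol{\varphi}+\nabla\psi-\mu\Delta\boldsymbol{\varphi}+\alpha\boldsymbol{\varphi}+\beta|\boldsymbol{\varphi}|^{r-1}\boldsymbol{\varphi}$ do not depend on $\f$, and multiplication by the fixed factor $1/g(\cdot,T)$, which lies in $\mathrm{L}^\infty(\T)$ by \eqref{1e}, is weakly continuous on $\H$: if $\h_n\rightharpoonup\h$ in $\H$, then for every $\boldsymbol{\phi}\in\H$ one has $(\h_n/g(\cdot,T),\boldsymbol{\phi})=(\h_n,\boldsymbol{\phi}/g(\cdot,T))\to(\h,\boldsymbol{\phi}/g(\cdot,T))$. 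So I would fix $\f_n\rightharpoonup\f$ in $\H$; since $\mathcal{D}$ is a closed ball it is weakly closed, hence $\f\in\mathcal{D}$. Let $\u_n$ and $\u$ denote the unique solutions of the direct problem \eqref{1a}-\eqref{2a} with sources $\f_n g$ and $\f g$, respectively, whose existence and uniqueness are guaranteed by the well-posedness results of Appendix \ref{sec6}. It then suffices, by the standard subsequence principle together with the uniform bound $\|\mathcal{B}\f_n\|_{\H}\le M$, to show that every subsequence admits a further subsequence along which $\mathcal{A}\f_n\rightharpoonup\mathcal{A}\f$ in $\H$.

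Next I would collect uniform-in-$n$ bounds. Because $\|\f_n\|_{\H}\le M$ for all $n$, the right-hand sides of all the estimates in Lemmas \ref{lemmae1}--\ref{lemmae4} are bounded independently of $n$. This yields: $\u_n$ bounded in $\mathrm{L}^\infty(0,T;\H)\cap\mathrm{L}^2(0,T;\V)\cap\mathrm{L}^{r+1}(0,T;\widetilde{\L}^{r+1})$ (Lemma \ref{lemmae1}); on the interval $[t_1,T]$, $\nabla\u_n$ bounded in $\mathrm{L}^\infty(t_1,T;\H)$ and $\Delta\u_n$ in $\mathrm{L}^2(t_1,T;\H)$ (Lemma \ref{lemmabe2}); and $\u_{n,t}$ bounded in $\mathrm{L}^2(t_1,T;\H)$ and, by Lemma \ref{lemmae4}, in $\mathrm{L}^\infty(t_2,T;\H)$, with $\u_{n,t}(T)$ bounded in $\H$. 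From the equation \eqref{1a} one also reads off a uniform bound on $\u_{n,t}$ in the dual space $\mathrm{L}^2(0,T;\V')+\mathrm{L}^{(r+1)/r}(0,T;\widetilde{\L}^{(r+1)/r})$.

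Then comes the compactness step. By the Aubin--Lions--Simon lemma, the bound on $\u_n$ in $\mathrm{L}^2(0,T;\V)$ together with the bound on $\u_{n,t}$ in the dual gives a subsequence (not relabelled) with $\u_n\to\u$ strongly in $\mathrm{L}^2(0,T;\H)$ and a.e.\ in $\T\times(0,T)$, and, using the $\mathrm{L}^{r+1}$ bound, strongly in $\mathrm{L}^{r+1}(0,T;\widetilde{\L}^{r+1})$ as well. These strong convergences let me pass to the limit in the nonlinear terms $(\u_n\cdot\nabla)\u_n\to(\u\cdot\nabla)\u$ and $|\u_n|^{r-1}\u_n\to|\u|^{r-1}\u$ in the distributional sense, while the linear terms and the source $\f_n g\rightharpoonup\f g$ pass to the limit by weak convergence. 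Hence the limit $\u$ solves \eqref{1a}-\eqref{2a} with source $\f g$; by uniqueness of the direct solution it coincides with the solution attached to $\f$, so the identification of the limit is independent of the chosen subsequence.

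The hard part will be the final-time trace, namely upgrading the above to $\mathcal{A}\f_n=\u_{n,t}(\cdot,T)\rightharpoonup\u_t(\cdot,T)=\mathcal{A}\f$ in $\H$, since convergence of space--time integrals does not by itself control values at the single instant $t=T$. To handle this I would work on $[t_2,T]$, where the extra regularity $\u_n\in\C^1([T/8,T];\V)$ of Remark \ref{remb7} holds, and exploit the uniform bounds on $\u_{n,t}$ in $\mathrm{L}^\infty(t_2,T;\H)$ and (by integrating the differential inequalities behind Lemma \ref{lemmae4}) in $\mathrm{L}^2(t_2,T;\V)$. A further time-differentiation of \eqref{1a}, as in \eqref{n48}, followed by an energy/duality estimate analogous to those in Lemma \ref{lemmae4}, provides a uniform bound on $\u_{n,tt}$ in $\mathrm{L}^2(t_2,T;\V')$. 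Aubin--Lions--Simon then gives $\u_{n,t}\to\u_t$ in $\C([t_2,T];\H_{\mathrm{w}})$, the space of weakly continuous $\H$-valued functions, so that evaluation at $t=T$ is legitimate and $\u_{n,t}(T)\rightharpoonup\u_t(T)$ in $\H$. Combining this with the reduction of the first paragraph yields $\mathcal{B}\f_n\rightharpoonup\mathcal{B}\f$, which, by Lemma \ref{lemma4}, establishes that $\mathcal{B}$ is weakly continuous from $\mathcal{D}$ into itself.
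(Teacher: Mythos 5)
Your proposal is correct, but it takes a genuinely different route from the paper's own proof, which is far shorter and leans on the final overdetermination data. The paper evaluates \eqref{1a} at $t=T$ and writes $(\u_k)_t(T)=\f_k g(\cdot,T)+\mu\Delta\boldsymbol{\varphi}-(\boldsymbol{\varphi}\cdot\nabla)\boldsymbol{\varphi}-\alpha\boldsymbol{\varphi}-\beta|\boldsymbol{\varphi}|^{r-1}\boldsymbol{\varphi}-\nabla\psi$ (its equation \eqref{A}), so that $\mathcal{A}\f_k$ differs from $\f_k g(\cdot,T)$ by a single vector independent of $k$; weak convergence of $\mathcal{A}\f_k$ is then read off from $\f_k\rightharpoonup\f$, using only the uniform bound on $\|(\u_k)_t(T)\|_{\H}$, the Banach--Alaoglu theorem, and the regularity $\u_t\in\C([\epsilon_2,T];\V)$ of Remark \ref{remb7} to make sense of the trace. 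You never invoke the overdetermination data: you identify the limit through the full compactness machinery (uniform energy bounds, Aubin--Lions, a.e.\ convergence in the nonlinearities, uniqueness of the direct problem, and a subsequence principle) and obtain the trace at $t=T$ from convergence of $\u_{n,t}$ in $\C([t_2,T];\H_{\mathrm{w}})$, legitimized by the uniform bounds on $\u_{n,t}$ and $\u_{n,tt}$. What each approach buys: the paper's identity \eqref{A} makes the conclusion essentially one line, but it presupposes $\u_k(T)=\boldsymbol{\varphi}$ and $\nabla p_k(T)=\nabla\psi$ for the direct-problem solutions, which holds only at a fixed point of $\mathcal{B}$, not for arbitrary $\f_k\in\mathcal{D}$; for general $\f_k$ the equation at $t=T$ contains the $k$-dependent terms $\mu\Delta\u_k(T)-(\u_k(T)\cdot\nabla)\u_k(T)-\alpha\u_k(T)-\beta|\u_k(T)|^{r-1}\u_k(T)-\nabla p_k(T)$, and controlling their limits is exactly the work your compactness argument performs. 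Your route is therefore longer but self-contained, and it does not rest on that identity.

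Two small points to tighten. First, strong convergence $\u_n\to\u$ in $\mathrm{L}^{r+1}(0,T;\wi{\L}^{r+1})$ does not follow from boundedness in that space plus a.e.\ convergence (this only yields strong convergence in $\mathrm{L}^{q}$ for $q<r+1$); but you do not need it, since boundedness of $|\u_n|^{r-1}\u_n$ in $\mathrm{L}^{\frac{r+1}{r}}\big(0,T;\wi{\L}^{\frac{r+1}{r}}\big)$ together with a.e.\ convergence already gives weak convergence of the damping term, which suffices for the distributional limit passage. Second, you need not re-derive a bound on $\u_{n,tt}$ in $\mathrm{L}^2(t_2,T;\V')$ by hand: Lemma \ref{lemb7} provides a bound on $\u_{tt}$ in $\mathrm{L}^{2}(\epsilon_2,T;\H)$ depending only on $\|\u_0\|_{\H}$, $\|\f_n\|_{\H}\leq M$, $\|g\|_0$ and $\|g_t\|_0$, hence uniform in $n$, and this stronger bound can be quoted directly.
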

\begin{proof}
	It is sufficient to prove that the nonlinear operator $\mathcal{A}: \mathcal{D} \to \H$ is weakly continuous. Let $\{\u_k\}_{k \in \N}$ be the sequences of the solutions to the direct problem \eqref{1a}-\eqref{2a} corresponding to the external forcing $\{\f_kg\}_{k \in \N}$. At the final time $t=T$, from \eqref{1a}, we have
	\begin{align}\label{A}
		(\u_k)_t( T)=\boldsymbol{f}_kg(T)+\mu \Delta \boldsymbol{\varphi}-(\boldsymbol{\varphi}\cdot\nabla)\boldsymbol{\varphi}-\alpha \boldsymbol{\varphi}- \beta|\boldsymbol{\varphi}|^{r-1}\boldsymbol{\varphi}-\nabla \psi.
	\end{align}
	  Let the sequence $\{\f_k\}_{k \in \N}$  in $\mathcal{D}$ be such that  $	\f_k \rightharpoonup  \f \text{ weakly in } \H$ (that is, $(\f_k-\f, \w) \rightarrow 0 \ \ \text{in} \ \  \H$-norm for all $\w \in \H$).
We know that the spaces $\H, \V$ and $\wi{\L}^{r+1}$ are reflexive.
Making use of the energy estimates (cf. \eqref{bb1}, \eqref{bb17}) and the Banach-Alaoglu theorem, we can extract subsequences $\{\u_{k_l}\}$ of $\{\u_k\}$,  such that (for simplicity, we denote the index $k_l$ by $k$) $ \u_k \rightharpoonup \u\textrm{ weakly in
		} \H,$  for all $t\in[0,T]$. From equation \eqref{A}, for all $\w \in \H$, we have
		\begin{align*}
	&\big|((\u_k)_t(T), \w)| \\&\leq \bigg\{\|g\|_{0}\|\f_k\|_{\H}+\big\|(\boldsymbol{\varphi}\cdot \nabla)\boldsymbol{\varphi}+\nabla \psi-\mu \Delta\boldsymbol{\varphi}+\alpha \boldsymbol{\varphi}+ \beta |\boldsymbol{\varphi}|^{r-1}\boldsymbol{\varphi}\big\|_{\H}\bigg\}\|\w\|_{\H} <\infty.
\end{align*}
Thus, it is immediate that (cf. \eqref{bb22}  and Remark \ref{remb7} below)
\begin{align*}
	(\u_k)_t(T) \rightharpoonup \u_t(T)\textrm{ weakly in
	} \H.
\end{align*}
On passing the limit $k \rightarrow \infty$ in \eqref{A}, we have
\begin{align*}
	\mathcal{A} \f_k=(\u_k)_t(T) \rightharpoonup \u_t(T)=\mathcal{A} \f \textrm{ weakly in } \H.
\end{align*}
So, the operator $\mathcal{A}$ is weakly continuous. Thus, the operator $\mathcal{B}$ is weakly continuous. 
\end{proof}

\begin{proof}[Proof of Theorem \ref{thm2} $(i)$]
From Lemma \ref{lem4.4}, we know that  the operator $\mathcal{B}$ is weakly continuous.	Hence, $\mathcal{B}$ has a fixed point in $\mathcal{D}$. From Theorem \ref{thm1}, we infer that the inverse problem \eqref{1a}-\eqref{1d} has a solution.
	\end{proof}
\begin{example}\label{ex4.5}
	For $g\equiv 1$, $\u_0=\mathbf{0}$, and
	\begin{align*} 
		&\big\|(\boldsymbol{\varphi}\cdot \nabla)\boldsymbol{\varphi}+\nabla \psi-\mu \Delta\boldsymbol{\varphi}+\alpha \boldsymbol{\varphi}+ \beta |\boldsymbol{\varphi}|^{r-1}\boldsymbol{\varphi}\big\|_{\H} <(1-8K_{24})^\frac{1}{2}M,
	\end{align*}
 with $8K_{24}<1$,  that is, $\frac{1}{\alpha}+\frac{\gamma}{\mu\alpha}+\frac{1}{ \mu}<\frac{1}{6}$, then
 the condition \eqref{m1} becomes $$\frac{1}{ T} \leq \frac{\big(1- 8K_{24}\big)M^2-\big\|(\boldsymbol{\varphi}\cdot \nabla)\boldsymbol{\varphi}+\nabla \psi-\mu \Delta\boldsymbol{\varphi}+\alpha \boldsymbol{\varphi}+ \beta |\boldsymbol{\varphi}|^{r-1}\boldsymbol{\varphi}\big\|_{\H}^2}{8M^2K_{25}} ,$$
	 where $\ K_{24}$ and $K_{25}$ are defined in Lemma \ref{lemmae3} (see \eqref{322}). 
	 Therefore,  for any 
	\begin{align*} 
		T\geq \frac{8M^2K_{25}}{\big(1- 8K_{24}\big)M^2-\big\|(\boldsymbol{\varphi}\cdot \nabla)\boldsymbol{\varphi}+\nabla \psi-\mu \Delta\boldsymbol{\varphi}+\alpha \boldsymbol{\varphi}+ \beta |\boldsymbol{\varphi}|^{r-1}\boldsymbol{\varphi}\big\|_{\H}^2},
	\end{align*}
and under the condition \eqref{1g2} for $d=2,3$ and $r\in(3,\infty)$,  there exists a solution of the inverse problem \eqref{1a}-\eqref{1d}, for $d=2,3$ and $r\in (3,\infty)$.

Similarly, for $d=r=3$,  $g\equiv 1$, $\u_0=\mathbf{0}$, and
\begin{align*}
	\big\|(\boldsymbol{\varphi}\cdot \nabla)\boldsymbol{\varphi}+\nabla \psi-\mu \Delta\boldsymbol{\varphi}+\alpha \boldsymbol{\varphi}+ \beta |\boldsymbol{\varphi}|^{r-1}\boldsymbol{\varphi}\big\|_{\H}<(1-8K_{34})^\frac{1}{2}M, 
\end{align*}
 with $8K_{34}<1$, that is, $\frac{1}{ \alpha}+\frac{1}{2(\beta \mu-1)}<\frac{1}{6}$,  the condition \eqref{B113} becomes   $$\frac{1}{ T} \leq \frac{\big(1- 8K_{34}\big)M^2-\big\|(\boldsymbol{\varphi}\cdot \nabla)\boldsymbol{\varphi}+\nabla \psi-\mu \Delta\boldsymbol{\varphi}+\alpha \boldsymbol{\varphi}+ \beta |\boldsymbol{\varphi}|^{r-1}\boldsymbol{\varphi}\big\|_{\H}^2}{8M^2K_{35}} $$
 where $ K_{34}$ and $K_{35}$ are defined in Lemma \ref{lemmae3} (see \eqref{325}). 
 Therefore,  for any 
 \begin{align*} 
 T\geq \frac{8M^2K_{35}}{\big(1- 8K_{34}\big)M^2-\big\|(\boldsymbol{\varphi}\cdot \nabla)\boldsymbol{\varphi}+\nabla \psi-\mu \Delta\boldsymbol{\varphi}+\alpha \boldsymbol{\varphi}+ \beta |\boldsymbol{\varphi}|^{r-1}\boldsymbol{\varphi}\big\|_{\H}^2},
 \end{align*}
  there exists a solution of the inverse problem \eqref{1a}-\eqref{1d}, for $d=r=3$.
\end{example}

\section{Proof of Theorem \ref{thm2} $(ii)$}\label{sec4b}\setcounter{equation}{0}
	In the previous section, we have proved the existence of a solution  $\{\u,\nabla p,\f\}$ to the inverse problem \eqref{1a}-\eqref{1d}. To obtain results on the uniqueness and stability, we first provide some supporting lemmas. Let $\{\u_i,\nabla p_i,\f_i\}$ $(i=1,2)$ be the solutions of the inverse problem \eqref{1a}-\eqref{1d} corresponding to the given data $\{\u_{0i},\boldsymbol{\varphi}_i,\nabla\psi_i,g_i\}$ $(i=1,2)$ and set 
\begin{align*}
	& \ \ \ \  \ \ \  \ \ \ \u:=\u_1-\u_2, \ \ \ \ \ \ \ \nabla p:=\nabla (p_1-p_2), \ \ \ \  \ \ \f:=\f_1-\f_2, \\& \u_0:=\u_{01}-\u_{02}, \ \ \ \boldsymbol{\boldsymbol{\varphi}}:=\boldsymbol{\varphi}_1-\boldsymbol{\varphi}_2, \ \ \ \nabla\psi:=\nabla(\psi_1-\psi_2), \ \ \  g:=g_1-g_2.
\end{align*}

Later discussions of the uniqueness and stability of the solution to the inverse problem \eqref{1a}-\eqref{1d} are based on the paper \cite{JF}, in which the authors demonstrate  the uniqueness and stability of the solution to the inverse problem for NSE in two dimensions.
	
 In this section, we assume that $\alpha,\beta$ and $\mu$ are  sufficiently large so that  the conditions given in Remark \ref{rem4.2} (cf. \eqref{41}) are  satisfied.

\begin{lemma}\label{lemmaS1}
	Let	$\u_{0i} \in \H$, $g_i, (g_i)_t \in \C(\T\times [0,T])$ satisfy assumption \eqref{1e} and $\f _i\in \H$, for $i=1,2$. For $r \geq 1, \ \text{in}\ 2D$, and for $ r \geq 3, \ \text{in} \ 3D$ ($\beta\mu\geq 1$ for $r=3$), the following estimate holds:
	\begin{align}\label{S1}
		&\sup_{t\in[0,T]}\|\u(t)\|_{\H}^2+\mu \int_0^T\|\nabla\u(t)\|_{\H}^2\d t+\alpha \int_0^T\|\u(t)\|_{\H}^2\d t +\frac{\beta}{2^{r-1}}\int_0^T\|\u(t)\|_{\widetilde{\L}^{r+1}}^{r+1}\d t \nonumber\\&\leq C \big\{\|\u_0\|_{\H}^2+ \|\f\|_{\H}^2+\|g\|_0^2\big\}.
	\end{align}
Moreover, there exists a time $t_3 \in \big(\frac{3T}{8},\frac{4T}{8}\big)$ such that
\begin{align}\label{S2}
\| \nabla \u(t_3)\|_{\H}^2 	\leq  C \big\{\|\u_0\|_{\H}^2+ \|\f\|_{\H}^2+\|g\|_0^2\big\}
\end{align}
holds and $C$ depends on the input data,  $\mu,\alpha,\beta,r$ and $T$.
\end{lemma}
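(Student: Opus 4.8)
The plan is to derive an evolution inequality for the difference $\u=\u_1-\u_2$ and close it with Grönwall's lemma. Subtracting the two copies of \eqref{1a} and using the algebraic identities $(\u_1\cdot\nabla)\u_1-(\u_2\cdot\nabla)\u_2=(\u_1\cdot\nabla)\u+(\u\cdot\nabla)\u_2$ and $\f_1g_1-\f_2g_2=\f_1g+\f g_2$, the difference solves
\[
\u_t-\mu\Delta\u+(\u_1\cdot\nabla)\u+(\u\cdot\nabla)\u_2+\alpha\u+\beta\big(|\u_1|^{r-1}\u_1-|\u_2|^{r-1}\u_2\big)+\nabla p=\f_1g+\f g_2 .
\]
Taking the $\H$-inner product with $\u$ and using $\langle\nabla p,\u\rangle=0$ and $\langle(\u_1\cdot\nabla)\u,\u\rangle=0$ (as $\nabla\cdot\u_1=0$), I obtain
\[
\tfrac12\tfrac{\d}{\d t}\|\u\|_{\H}^2+\mu\|\nabla\u\|_{\H}^2+\alpha\|\u\|_{\H}^2+\beta\langle|\u_1|^{r-1}\u_1-|\u_2|^{r-1}\u_2,\u\rangle=-\langle(\u\cdot\nabla)\u_2,\u\rangle+(\f_1g+\f g_2,\u).
\]

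For the damping term I would invoke the monotonicity lower bound $\langle|\u_1|^{r-1}\u_1-|\u_2|^{r-1}\u_2,\u\rangle\geq\tfrac12\||\u_1|^{\frac{r-1}{2}}\u\|_{\H}^2+\tfrac12\||\u_2|^{\frac{r-1}{2}}\u\|_{\H}^2\geq 2^{-(r-1)}\|\u\|_{\widetilde{\L}^{r+1}}^{r+1}$, the last step using $|\u_1|^{r-1}+|\u_2|^{r-1}\geq 2^{-(r-1)}|\u|^{r-1}$ pointwise. The convective term is the crux and is handled case by case. For $d=2,\ r\in[1,3]$, integrating by parts gives $|\langle(\u\cdot\nabla)\u_2,\u\rangle|=|\langle(\u\cdot\nabla)\u,\u_2\rangle|\leq\|\u\|_{\widetilde{\L}^4}\|\nabla\u\|_{\H}\|\u_2\|_{\widetilde{\L}^4}$; Ladyzhenskaya's and Young's inequalities then yield $\tfrac{\mu}{2}\|\nabla\u\|_{\H}^2+\tfrac{C}{\mu}\|\u_2\|_{\H}^2\|\nabla\u_2\|_{\H}^2\|\u\|_{\H}^2$, where the Grönwall factor $\|\u_2\|_{\H}^2\|\nabla\u_2\|_{\H}^2$ lies in $\mathrm{L}^1(0,T)$ since $\u_2\in\mathrm{L}^\infty(0,T;\H)\cap\mathrm{L}^2(0,T;\V)$. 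For $d=2,3,\ r>3$, Hölder's inequality on $\int_{\T}|\u|^2|\u_2|^2\d x$ via the splitting $|\u|^2|\u_2|^2=(|\u_2|^{r-1}|\u|^2)^{\frac{2}{r-1}}(|\u|^2)^{\frac{r-3}{r-1}}$ gives $|\langle(\u\cdot\nabla)\u,\u_2\rangle|\leq\|\nabla\u\|_{\H}\||\u_2|^{\frac{r-1}{2}}\u\|_{\H}^{\frac{2}{r-1}}\|\u\|_{\H}^{\frac{r-3}{r-1}}$, and Young's inequality with exponents $2,\ r-1,\ \tfrac{2(r-1)}{r-3}$ produces $\tfrac{\mu}{2}\|\nabla\u\|_{\H}^2+\epsilon\beta\||\u_2|^{\frac{r-1}{2}}\u\|_{\H}^2+C_\epsilon\|\u\|_{\H}^2$; the weighted term is absorbed into the monotonicity contribution, leaving a positive multiple of $\|\u\|_{\widetilde{\L}^{r+1}}^{r+1}$ on the left. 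For $d=r=3$ I would instead use $|\langle(\u\cdot\nabla)\u,\u_2\rangle|\leq\tfrac{\mu}{2}\|\nabla\u\|_{\H}^2+\tfrac{1}{2\mu}\||\u||\u_2|\|_{\H}^2$ and absorb $\tfrac{1}{2\mu}\||\u_2|\u\|_{\H}^2$ into the damping term $\tfrac{\beta}{2}\||\u_2|\u\|_{\H}^2$, which is exactly where $\beta\mu\geq1$ is used.

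The forcing term is elementary: $|(\f_1g+\f g_2,\u)|\leq(\|\f_1\|_{\H}\|g\|_0+\|\f\|_{\H}\|g_2\|_0)\|\u\|_{\H}\leq\tfrac{\alpha}{2}\|\u\|_{\H}^2+C(\|g\|_0^2+\|\f\|_{\H}^2)$, using $\|\f_1\|_{\H}\leq M$ and boundedness of $\|g_2\|_0$. Collecting the estimates and moving the absorbed dissipation to the left produces a differential inequality of the form $\tfrac{\d}{\d t}\|\u\|_{\H}^2+\mu\|\nabla\u\|_{\H}^2+\alpha\|\u\|_{\H}^2+\tfrac{\beta}{2^{r-1}}\|\u\|_{\widetilde{\L}^{r+1}}^{r+1}\leq\kappa(t)\|\u\|_{\H}^2+C(\|g\|_0^2+\|\f\|_{\H}^2)$ with $\kappa\in\mathrm{L}^1(0,T)$ (a constant in the cases $r\geq3$, the integrable factor above for $d=2,\ r\in[1,3]$); Grönwall's lemma and integration over $[0,T]$ then give \eqref{S1}. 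Finally \eqref{S2} follows from \eqref{S1}: since $\u_1,\u_2\in\C^1([T/8,T];\V)$ (Remark \ref{remb7}), the map $t\mapsto\|\nabla\u(t)\|_{\H}^2$ is continuous on $[T/8,T]$, so the mean value theorem for integrals yields $t_3\in(\tfrac{3T}{8},\tfrac{4T}{8})$ with $\|\nabla\u(t_3)\|_{\H}^2=\tfrac{8}{T}\int_{3T/8}^{4T/8}\|\nabla\u\|_{\H}^2\d t\leq\tfrac{8}{T}\int_0^T\|\nabla\u\|_{\H}^2\d t$, and the last integral is controlled by \eqref{S1}.

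I expect the main obstacle to be the convective term in the supercritical case $r>3$: one must extract from $\langle(\u\cdot\nabla)\u,\u_2\rangle$ precisely a weighted dissipation $\||\u_2|^{\frac{r-1}{2}}\u\|_{\H}^2$ (absorbable by the monotonicity of the damping) together with a term proportional to $\|\u\|_{\H}^2$ carrying a \emph{time-independent} constant. The naive bound involving $\|\u_2\|_{\widetilde{\L}^{2(r+1)/(r-1)}}^{2(r+1)/(r-1)}$ must be avoided, since it yields a term free of $\u$ and hence cannot deliver continuous dependence on the data. Care is also needed in splitting the damping monotonicity so that, after absorption of the convective contribution, the coefficient in front of $\|\u\|_{\widetilde{\L}^{r+1}}^{r+1}$ remains positive; tracking these constants is what produces the stated $\tfrac{\beta}{2^{r-1}}$, and in the critical case $r=3$ positivity of the residual coefficient is exactly the hypothesis $\beta\mu\geq1$.
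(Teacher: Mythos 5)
Your proposal is correct and follows essentially the same route as the paper: subtract the two systems, test with $\u$, use the monotonicity bound \eqref{C3} together with the pointwise inequality $|\u|^{r-1}\leq 2^{r-2}(|\u_1|^{r-1}+|\u_2|^{r-1})$ to produce the $\widetilde{\L}^{r+1}$ term, handle the convective term case by case (Ladyzhenskaya plus Gr\"onwall with an integrable factor for $d=2$, $r\in[1,3]$; weighted H\"older--Young absorbed into the damping for $r>3$; direct absorption using $\beta\mu\geq 1$ for $d=r=3$), and finish with the mean value theorem on $[3T/8,4T/8]$ via $\u\in\C^1([T/8,T];\V)$. The only differences are cosmetic (you split the forcing as $\f_1 g+\f g_2$ instead of $\f g_1+\f_2 g$, and in 2D you integrate the convective term by parts first), and your closing remarks about where the absorption can fail match the structure of the paper's argument exactly.
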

\begin{proof}
	Subtracting the equations for $\{\u_i,\nabla p_i, \f_i\} \ (i=1,2)$, we find 
	\begin{align}\label{S3}
		&\u_t-\mu \Delta \u+(\u_1 \cdot \nabla)\u\nonumber+(\u \cdot \nabla)\u_2+\alpha \u\\&\quad+\beta \left(|\u_1|^{r-1}\u_1-|\u_2|^{r-1}\u_2\right)+\nabla p=\f g_1+\f_2 g,
	\end{align}
for a.e. $t\in[0,T]$ in $\H$. Multiplying both sides of \eqref{S3} by $\u(\cdot)$ and integrating the resulting equation over $\T$, we obtain
	\begin{align}\label{S4}
		&\frac{1}{2}\frac{\d}{\d t}\|\u(t)\|_{\H}^2+\mu \|\nabla \u(t)\|_{\H}^2+\alpha\|\u(t)\|_{\H}^2\nonumber\\&=\big((\f g_1(t)+\f_2 g(t),\u(t)\big) -\big((\u(t) \cdot \nabla)\u_2(t),\u(t)\big)\nonumber\\&\quad-\beta \left(|\u_1(t)|^{r-1}\u_1(t)-|\u_2(t)|^{r-1}\u_2(t),\u(t)\right).
	\end{align}
 We estimate $|(\f g_1+\f_2 g,\u)|$ using  H\"older's and Young's inequalities as
	\begin{align}\label{S5}
		|(\f g_1+\f_2 g,\u)|\leq\frac{1}{2 \alpha}\big(\|\f\|_{\H}\|g_1\|_{\mathrm{L}^\infty}+\|\f_2\|_{\H}\|g\|_{\mathrm{L}^\infty}\big)^2+\frac{\alpha}{2}\|\u\|_{\H}^2.
	\end{align}
For $r\geq 1$, we have (see Sec. 2.4, \cite{MTM4,MTM6})
\begin{align}\label{C3}
	&\beta\left(|\u_1|^{r-1}\u_1-|\u_2|^{r-1}\u_2,\u_1-\u_2\right)\no\\&\geq \frac{\beta}{2}\big\||\u_1|^{\frac{r-1}{2}}|\u_1-\u_2|\big\|_{\H}^2+\frac{\beta}{2}\big\||\u_2|^{\frac{r-1}{2}}|\u_1-\u_2|\big\|_{\H}^2.
\end{align}
 It is important to note that
\begin{align*}
	\|\u_1-\u_2\|_{\widetilde{\L}^{r+1}}^{r+1}&=\int_{\T}|\u_1(x)-\u_2(x)|^{r-1} |\u_1(x)-\u_2(x)|^{2} \d x \nonumber\\&\leq 2^{r-2} \int_{\T}\left(|\u_1(x)|^{r-1}+|\u_2(x)|^{r-1}\right) |\u_1(x)-\u_2(x)|^{2} \d x
	\nonumber\\& \leq  2^{r-2}\left( \big\||\u_1|^{\frac{r-1}{2}}|\u_1-\u_2|\big\|_{\H}^2+\big\||\u_2|^{\frac{r-1}{2}}|\u_1-\u_2|\big\|_{\H}^2\right).
\end{align*}
From the above inequality, we have
\begin{align}\label{C41}
	\frac{2^{2-r}\beta}{4}\|\u_1-\u_2\|_{\widetilde{\L}^{r+1}}^{r+1}\leq \frac{\beta}{4}\||\u_1|^{\frac{r-1}{2}}|\u_1-\u_2|\|_{\H}^2+\frac{\beta}{4}\||\u_2|^{\frac{r-1}{2}}|\u_1-\u_2|\|_{\H}^2.
\end{align}
Gathering both the estimates \eqref{C3} and \eqref{C41}, we obtain
\begin{align}\label{C611}
	&\beta\left(|\u_1|^{r-1}\u_1-|\u_2|^{r-1}\u_2,\u_1-\u_2\right) \geq \frac{\beta}{2^r}\|\u_1-\u_2\|_{\widetilde{\L}^{r+1}}^{r+1}.
	\end{align}
	\vskip 0.2 cm 
\noindent\textbf{Case I:} \emph{$d=2$ and $r \in [1,3]$.} 
Applying H\"older's, Ladyzhenskaya's and Young's inequalities, one gets
\begin{align}\label{S6}
	|((\u \cdot \nabla)\u_2,\u)| &\leq\|\u\|_{\widetilde{\L}^{4}}^2\|\nabla\u_2\|_{\H}\leq\sqrt{2}\|\u\|_{\H}\|\nabla\u\|_{\H}\|\nabla\u_2\|_{\H}\no \\&\leq\frac{\mu}{2}\|\nabla \u\|_{\H}^2+\frac{1}{\mu}\|\u\|_{\H}^2\|\nabla\u_2\|_{\H}^2.
\end{align}
Plugging the estimates \eqref{S5}, \eqref{C611} and \eqref{S6} in \eqref{S4}, and integrating the resulting estimate from $0$ to $t$, we obtain
\begin{align}\label{S8}
	&\|\u(t)\|_{\H}^2+\mu \int_0^t\|\nabla\u(s)\|_{\H}^2\d  s+\alpha\int_0^t\|\u(s)\|_{\H}^2\d s +\frac{\beta}{2^{r-1}}\int_0^t\|\u(s)\|_{\widetilde{\L}^{r+1}}^{r+1}\d s \nonumber\\& \leq \|\u_0\|_{\H}^2+ \frac{t}{\alpha}\big(\|\f\|_{\H}\|g_1\|_0+\|\f_2\|_{\H}\|g\|_0\big)^2+\frac{2}{\mu}\int_0^t\|\nabla\u_2(s)\|_{\H}^2 \|\u(s)\|_{\H}^2 \d s,
\end{align} 
for all $t \in [0,T]$. An application of Gronwall's inequality in \eqref{S8} gives 
\begin{align*}
	\|\u(t)\|_{\H}^2\leq \exp \left(\frac{2}{\mu}\int_0^T\|\nabla\u_2(t)\|_{\H}^2\d t\right)\left\{\|\u_0\|_{\H}^2+ \frac{T}{\alpha}\big(\|\f\|_{\H}\|g_1\|_0+\|\f_2\|_{\H}\|g\|_0\big)^2\right\},
\end{align*}
for all $t \in [0,T]$. Thus, from \eqref{S8}, it is immediate that
\begin{align*}
	&\sup_{t\in[0,T]}\|\u(t)\|_{\H}^2+\mu \int_0^T\|\nabla\u(t)\|_{\H}^2\d t +\alpha \int_0^T\|\u(t)\|_{\H}^2\d t +\frac{\beta}{2^{r-1}}\int_0^T\|\u(t)\|_{\widetilde{\L}^{r+1}}^{r+1}\d t  \no \\&\leq \exp \left\{\frac{1}{\mu^2}\left(\|\u_{02}\|_{\H}^2+\frac{T}{\alpha}\|\f_2\|_{\H}^2\|g_2\|_{0}^2\right)\right\}\left\{\|\u_0\|_{\H}^2+ \frac{T}{\alpha}\big(\|\f\|_{\H}\|g_1\|_0+\|\f_2\|_{\H}\|g\|_0\big)^2\right\} \\ &
	\leq C\big(\mu,\alpha, \beta,\|\u_{02}\|_{\H},\|\f_2\|_{\H},\|g_1\|_{0},\|g_2\|_{0},T\big)\left\{\|\u_0\|_{\H}^2+ \|\f\|_{\H}^2+\|g\|_0^2\right\},
\end{align*} 
and \eqref{S1} follows.
	\vskip 0.2 cm 
\noindent\textbf{Case II:} \emph{$d=2,3$ and $r >3$.} 
 An estimate similar to \eqref{n22} yields 
\begin{align}\label{C4}
	|\left((\u \cdot \nabla)\u_2,\u \right)|&\leq
	\|\nabla\u\|_{\H}\||\u||\u_2|\|_{\H} \leq \frac{\mu }{2}\|\nabla\u\|_{\H}^2+\frac{1}{2 \mu}\||\u||\u_2|\|_{\H}^2
\no \\&	\leq \frac{\mu }{2}\|\nabla\u\|_{\H}^2+\frac{\beta}{4}\big\||\u_2|^{\frac{r-1}{2}}|\u|\big\|_{\H}^2+\frac{r-3}{2\mu(r-1)}\left(\frac{4}{\beta\mu (r-1)}\right)^{\frac{2}{r-3}}\|\u\|_{\H}^2,
\end{align}
for $r>3$. Combining \eqref{C3}, \eqref{C41} and \eqref{C4}, we get
\begin{align}\label{C5}
	&\beta\left(|\u_1|^{r-1}\u_1-|\u_2|^{r-1}\u_2,\u \right) +\left((\u \cdot \nabla)\u_2,\u \right)\nonumber\\&\geq \frac{\beta}{2}\big\||\u_1|^{\frac{r-1}{2}}|\u|\big\|_{\H}^2+\frac{\beta}{4}\big\||\u_2|^{\frac{r-1}{2}}|\u|\big\|_{\H}^2-\frac{r-3}{2\mu(r-1)}\left(\frac{4}{\beta\mu (r-1)}\right)^{\frac{2}{r-3}}\|\u\|_{\H}^2-\frac{\mu }{2}\|\nabla\u\|_{\H}^2 \no\\& \geq
	\frac{\beta}{2^r}\|\u\|_{\widetilde{\L}^{r+1}}^{r+1}
	-\frac{r-3}{2\mu(r-1)}\left(\frac{4}{\beta\mu (r-1)}\right)^{\frac{2}{r-3}}\|\u\|_{\H}^2-\frac{\mu }{2}\|\nabla\u\|_{\H}^2.
\end{align}
Plugging the estimates \eqref{S5} and \eqref{C5} in \eqref{S4}, and integrating it from $0$ to $t$, we deduce 
\begin{align}\label{C6}
	&\|\u(t)\|_{\H}^2+\mu \int_0^t\|\nabla\u(s)\|_{\H}^2\d  s+\alpha\int_0^t\|\u(s)\|_{\H}^2\d s +\frac{\beta}{2^{r-1}}\int_0^t\|\u(s)\|_{\widetilde{\L}^{r+1}}^{r+1}\d s \nonumber\\& \leq \|\u_0\|_{\H}^2+ \frac{t}{\alpha}\big(\|\f\|_{\H}\|g_1\|_0+\|\f_2\|_{\H}\|g\|_0\big)^2\no \\&\quad+\frac{r-3}{\mu(r-1)}\left(\frac{4}{\beta\mu (r-1)}\right)^{\frac{2}{r-3}}\int_0^t \|\u(s)\|_{\H}^2 \d s,
\end{align} 
for all $t \in [0,T]$.	By the virtue of Gronwall's inequality in \eqref{C6}, followed by taking supremum on both sides
over time from $0$ to $T$
 easily leads to \eqref{S1}. \
\vskip 0.2 cm 
\noindent\textbf{Case III:} \emph{$d=r=3$ with $ \beta \mu \geq 1$.} From \eqref{C3}, we get
	\begin{align}\label{C7}
	\beta\left(|\u_1|^2\u_1-|\u_2|^2\u_2,\u\right)\geq \frac{\beta}{2}\big\||\u_1||\u|\big\|_{\H}^2+\frac{\beta}{2}\big\||\u_2||\u|\big\|_{\H}^2.
\end{align}
We use the Cauchy-Schwarz and Young's inequalities to obtain
\begin{align}\label{C8}
	|\left((\u \cdot \nabla)\u_2,\u \right)|\leq \|\nabla\u\|_{\H}\big\||\u_2||\u|\big\|_{\H} \leq \frac{\mu}{2} \|\nabla\u\|_{\H}^2+\frac{1}{2\mu }\big\||\u_2||\u|\big\|_{\H}^2.
\end{align}
  Gathering \eqref{C7} and \eqref{C8}, we have
 \begin{align}\label{C9}
 	&\beta\left(|\u_1|^2\u_1-|\u_2|^2\u_2,\u\right)+\left((\u \cdot \nabla)\u_2,\u \right)\no\\&\geq \frac{\beta}{2}\big\||\u_1||\u| \big\|_{\H}^2+\bigg(\frac{\beta}{2}-\frac{1}{2\mu }\bigg)\big\||\u_2||\u| \big\|_{\H}^2-\frac{\mu}{2} \|\nabla\u\|_{\H}^2
 	\no\\& \geq \frac{1}{2}\left(\beta-\frac{1}{\mu }\right)\|\u\|_{\widetilde{\L}^4}^4-\frac{\mu}{2} \|\nabla\u\|_{\H}^2.
 \end{align}
	Plugging the estimates \eqref{S5} and \eqref{C9} in \eqref{S4}, and integrating it from $0$ to $t$, we infer  
\begin{align*}
	&\|\u(t)\|_{\H}^2+\mu\int_0^t\|\nabla\u(s)\|_{\H}^2\d  s+\alpha\int_0^t\|\u(s)\|_{\H}^2\d s +\left(\beta-\frac{1}{\mu}\right)\int_0^t\|\u(s)\|_{\widetilde{\L}^{4}}^{4}\d s \\&\leq \|\u_0\|_{\H}^2+ \frac{t}{\alpha}\big(\|\f\|_{\H}\|g_1\|_0+\|\f_2\|_{\H}\|g\|_0\big)^2,
\end{align*} 
and \eqref{S1} follows, provided $\beta \mu >1$.\

Since $\u \in \C^1([T/8,T];\V)$ (see Remark \ref{remb7}), by the mean value theorem, there exists a time $t_3 \in \big(\frac{3T}{8},\frac{4T}{8}\big)$ such that
\begin{align*}
	\|\nabla\u(t_3)\|_{\H}^2=\frac{1}{\frac{4T}{8}-\frac{3T}{8}}\int_{3T/8}^{4T/8}	\|\nabla\u(t)\|_{\H}^2 \d t \leq \frac{8}{T} \int_0^T \|\nabla\u(t)\|_{\H}^2 \d t,
\end{align*}
which leads to \eqref{S2} from \eqref{S1} and it completes the proof.
\end{proof}

	\begin{lemma}\label{3.5}
	Let	the assumptions of Lemma \ref{lemmaS1} hold true and let $t_3 \in \big(\frac{3T}{8},\frac{4T}{8}\big)$ be the same time obtained in Lemma \ref{lemmaS1}. For $r \geq 1 $, in 2D, and for $ r \geq 3$, in 3D, we have  
	\begin{align}\label{s1}
		&\sup_{t\in[t_3,T]}\|\nabla\u(t)\|_{\H}^2+\int_{t_3}^T\|\u_t(t)\|_{\H}^2\d t +\mu^2 \int_{t_3}^T\|\Delta\u(t)\|_{\H}^{2} \d t \no\\&\leq  C\big\{\|\u_0\|_{\H}^2+ \|\f\|_{\H}^2+\|g\|_0^2\big\}.
	\end{align}
Moreover, there exists a time $t_4 \in \big(\frac{4T}{8},\frac{5T}{8}\big)$ such that
\begin{align}\label{s2}
	\|\u_t(t_4)\|_{\H}^2 \leq C
\big\{\|\u_0\|_{\H}^2+ \|\f\|_{\H}^2+\|g\|_0^2\big\}
\end{align}
 holds and $C$ depends on the input data,  $\mu,\alpha,\beta,r$ and $T$.
\end{lemma}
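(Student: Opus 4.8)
The plan is to carry out, on the difference equation \eqref{S3}, the same two-step regularity bootstrap that was used for the direct problem in Lemmas \ref{lemmabe2} and \ref{lemmae3}, except that the time integration now starts at $t_3$ and the data at $t_3$ is supplied by \eqref{S2} while the lower-order integrals come from \eqref{S1}. First I would test \eqref{S3} with $-\Delta\u(\cdot)$. On the torus the pressure drops out, $\langle\nabla p,-\Delta\u\rangle=0$ exactly as in \eqref{n16}, and the linear part yields $\frac{1}{2}\frac{\d}{\d t}\|\nabla\u\|_{\H}^2+\mu\|\Delta\u\|_{\H}^2+\alpha\|\nabla\u\|_{\H}^2$. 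After estimating and absorbing the remaining terms this produces $\sup_{[t_3,T]}\|\nabla\u\|_{\H}^2$ and $\mu^2\int_{t_3}^T\|\Delta\u\|_{\H}^2\d t$. To recover $\int_{t_3}^T\|\u_t\|_{\H}^2\d t$ I would either test \eqref{S3} with $\u_t$, where $\langle\nabla p,\u_t\rangle=0$ since $\u_t$ is divergence free, or apply the Helmholtz--Hodge projection $\P_{\H}$ to \eqref{S3} to remove $\nabla p$ and bound $\|\u_t\|_{\H}$ pointwise in time by $\mu\|\Delta\u\|_{\H}$ plus the other terms, then integrate using the $\int_{t_3}^T\|\Delta\u\|_{\H}^2\d t$ bound just obtained. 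As in every earlier estimate the computation splits into Case I ($d=2$, $r\in[1,3]$), Case II ($d=2,3$, $r>3$) and Case III ($d=r=3$).

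The forcing term $(\f g_1+\f_2 g,\cdot)$ is harmless by Cauchy--Schwarz and Young, contributing exactly the $\|\f\|_{\H}^2$ and $\|g\|_0^2$ on the right of \eqref{s1}. The convective contributions $((\u_1\cdot\nabla)\u,\cdot)$ and $((\u\cdot\nabla)\u_2,\cdot)$ I would treat by Hölder together with Ladyzhenskaya's inequality in Case I and with the interpolation behind \eqref{n22} and \eqref{C4} in Cases II and III, in each case using Agmon-type $\widetilde{\L}^\infty$ (or $\widetilde{\L}^4$) bounds on $\u_1$ and $\u_2$ and Young's inequality to absorb the top-order factor $\|\Delta\u\|_{\H}$ (or $\|\u_t\|_{\H}$), leaving a coefficient times $\|\nabla\u\|_{\H}^2$.

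The main obstacle is the nonlinear damping difference $\beta\big(|\u_1|^{r-1}\u_1-|\u_2|^{r-1}\u_2,\,\cdot\,\big)$ tested against $-\Delta\u$ or $\u_t$. Unlike in Lemma \ref{lemmaS1}, where pairing against $\u$ produced the favourable monotonicity bound \eqref{C611}, no coercivity is available here and the term must be bounded in absolute value. Writing the difference in the mean-value form $|\u_1|^{r-1}\u_1-|\u_2|^{r-1}\u_2=\int_0^1\mathcal{C}'\big(\theta\u_1+(1-\theta)\u_2\big)\u\,\d\theta$ with $\mathcal{C}'$ as in \eqref{29}, it is dominated pointwise by $C\big(|\u_1|^{r-1}+|\u_2|^{r-1}\big)|\u|$, so that Cauchy--Schwarz and Young absorb the factor $\|\Delta\u\|_{\H}$ and leave $C\big(\|\u_1\|_{\widetilde{\L}^\infty}^{2(r-1)}+\|\u_2\|_{\widetilde{\L}^\infty}^{2(r-1)}\big)\|\u\|_{\H}^2$. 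This is precisely where the regularity of the direct problem is indispensable: through Agmon's inequality together with the $\H^2$-type a priori bounds for $\u_1,\u_2$ on $[t_1,T]$ from Lemma \ref{lemmabe2} (and the further energy estimates of Section \ref{sec3} and Appendix \ref{sec6}), these $\widetilde{\L}^\infty$ norms are integrable in time to the required power, after which Gronwall's inequality together with \eqref{S1} and \eqref{S2} closes the bound \eqref{s1}.

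Finally, \eqref{s2} follows from \eqref{s1} by the now-familiar selection argument: since $\u\in\C^1([T/8,T];\V)$, the map $t\mapsto\|\u_t(t)\|_{\H}^2$ is continuous on $\big(\frac{4T}{8},\frac{5T}{8}\big)\subset[t_3,T]$, so the mean-value theorem for integrals gives a time $t_4\in\big(\frac{4T}{8},\frac{5T}{8}\big)$ with $\|\u_t(t_4)\|_{\H}^2=\frac{8}{T}\int_{4T/8}^{5T/8}\|\u_t(t)\|_{\H}^2\d t\le\frac{8}{T}\int_{t_3}^T\|\u_t(t)\|_{\H}^2\d t$, and the right-hand side is controlled by \eqref{s1}, which yields \eqref{s2}.
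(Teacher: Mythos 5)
Your proposal is correct and follows essentially the same route as the paper: the paper tests the difference equation \eqref{S3} with the single combined multiplier $\u_t-\mu\Delta\u$ (so that $\|\u_t\|_{\H}^2$, $\mu\frac{\d}{\d t}\|\nabla\u\|_{\H}^2$ and $\mu^2\|\Delta\u\|_{\H}^2$ all appear at once), whereas you split this into tests against $-\Delta\u$ and $\u_t$ separately, which is only a bookkeeping difference. All the essential ingredients match — vanishing of the pressure terms on the torus, Cauchy--Schwarz/Young for the forcing, H\"older--Gagliardo--Nirenberg (Ladyzhenskaya in 2D) for the convective differences, the Taylor/mean-value form of $\mathcal{C}(\u_1)-\mathcal{C}(\u_2)$ controlled through Agmon's inequality and the $\H^2$ regularity of the direct solutions from Lemmas \ref{lemmabe2} and \ref{lemb5}, closure via \eqref{S1}--\eqref{S2}, and the mean-value selection of $t_4$ using $\u\in\C^1([T/8,T];\V)$.
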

\begin{proof}
	Taking the inner product with $\u_t(\cdot)-\mu \Delta \u(\cdot)$ in the equation \eqref{S3} and integrating over $\T$, we obtain
	\begin{align}\label{s3}
		&\|\u_t(t)\|_{\H}^2+ \mu\frac{\d}{\d t} \|\nabla\u(t)\|_{\H}^2+\mu^2\|\Delta\u(t)\|_{\H}^2+\frac{\alpha}{2}\frac{\d}{\d t} \|\u(t)\|_{\H}^2+\alpha \mu \|\nabla\u(t)\|_{\H}^2\no\\&=\big(\f g_1(t)+\f_2 g(t),\u_t(t)\big) + \big(\f g_1(t)+\f_2 g(t),-\mu\Delta\u(t)\big)\nonumber\\&\quad-\big((\u(t) \cdot \nabla)\u_2(t),\u_t(t)\big)-\big((\u(t) \cdot \nabla)\u_2(t),-\mu\Delta\u(t)\big)\nonumber\\&\quad-\big((\u_1(t) \cdot \nabla)\u(t),\u_t(t)\big)-\big((\u_1(t) \cdot \nabla)\u(t),-\mu\Delta\u(t)\big)\nonumber\\&\quad-\beta \left(|\u_1(t)|^{r-1}\u_1(t)-|\u_2(t)|^{r-1}\u_2(t),\u_t(t)\right)\nonumber\\&\quad-\beta   \left(|\u_1(t)|^{r-1}\u_1(t)-|\u_2(t)|^{r-1}\u_2(t),-\mu\Delta\u(t) \right) =:\sum_{j=1}^8I_j,
	\end{align}
	where we have used the fact $\langle\nabla p,  \u_t\rangle=0$ and $\langle\nabla p, \Delta \u\rangle=0$. Next, we estimate each $I_j$'s $(j=1,\ldots,8)$ separately as follows:
	Making use of the Cauchy-Schwarz and Young's inequalities, we estimate $I_1$ and $I_2$ as
	\begin{align}
		I_1&\leq2\big(\|\f\|_{\H}\|g_1\|_{\mathrm{L}^\infty}+\|\f_2\|_{\H}\|g\|_{\mathrm{L}^\infty}\big)^2+\frac{1}{8}\|\u_t\|_{\H}^2, \label{s4}\\
		I_2&\leq3\big(\|\f\|_{\H}\|g_1\|_{\mathrm{L}^\infty}+\|\f_2\|_{\H}\|g\|_{\mathrm{L}^\infty}\big)^2+\frac{\mu^2}{12}\|\Delta\u\|_{\H}^2.\label{s5}
\end{align}
\vskip 0.2 cm
\noindent\textbf{Case I:} \emph{$d=2$ and $ r \geq 1$.}
 We apply H\"older's, Gagliardo-Nirenberg's and Young's inequalities to estimate $I_3$ and $I_4$ as
	 \begin{align}
	 	I_3 &\leq \|\u\|_{\widetilde{\L}^{3}} \|\nabla\u_2\|_{\widetilde{\L}^{6}}\|\u_t\|_{\H}\leq C\|\u\|_{\H}^\frac{2}{3}\|\nabla\u\|_{\H}^\frac{1}{3}\|\Delta\u_2\|_{\H}\|\u_t\|_{\H}\no\\&\leq \frac{1}{8}\|\u_t\|_{\H}^2+C \|\u\|_{\H}^\frac{4}{3}\|\nabla\u\|_{\H}^\frac{2}{3} \|\Delta\u_2\|_{\H}^2, \label{s6}\\
	 		I_4 &\leq \mu \|\u\|_{\widetilde{\L}^{3}} \|\nabla\u_2\|_{\widetilde{\L}^{6}}\|\Delta\u\|_{\H}\leq C\mu\|\u\|_{\H}^\frac{2}{3}\|\nabla\u\|_{\H}^\frac{1}{3}\|\Delta\u_2\|_{\H}\|\Delta\u\|_{\H}\no\\& \leq
	 	\frac{\mu^2}{12}	\|\Delta\u\|_{\H}^2+C \|\u\|_{\H}^\frac{4}{3}\|\nabla\u\|_{\H}^\frac{2}{3} \|\Delta\u_2\|_{\H}^2.\label{s7}
	 \end{align}
Similarly, we estimate $I_5$ and $I_6$ as
	\begin{align}
		I_5 &\leq \|\u_1\|_{\widetilde{\L}^{4}} \|\nabla\u\|_{\widetilde{\L}^{4}}\|\u_t\|_{\H}\leq C\|\u_1\|_{\H}^\frac{1}{2}\|\nabla\u_1\|_{\H}^\frac{1}{2}\|\u\|_{\H}^\frac{1}{4}\|\u\|_{\H_p^2}^\frac{3}{4} \|\u_t\|_{\H}\nonumber\\&\leq \frac{1}{8}\|\u_t\|_{\H}^2+C \|\u_1\|_{\H}\|\nabla\u_1\|_{\H}\|\u\|_{\H}^\frac{1}{2}\left(\|\u\|_{\H}^\frac{3}{2}+\|\Delta\u\|_{\H}^\frac{3}{2}\right)
		\nonumber\\&\leq \frac{1}{8}\|\u_t\|_{\H}^2+\frac{\mu^2}{12}\|\Delta\u\|_{\H}^{2}+C\|\u_1\|_{\H}\|\nabla\u_1\|_{\H}\|\u\|_{\H}^2+C\|\u_1\|_{\H}^4\|\nabla\u_1\|_{\H}^4\|\u\|_{\H}^2, \label{s8}\\
		I_6 &\leq \mu \|\u_1\|_{\widetilde{\L}^{4}} \|\nabla\u\|_{\widetilde{\L}^{4}}\|\Delta\u\|_{\H}\leq C\mu\|\u_1\|_{\H}^\frac{1}{2}\|\nabla\u_1\|_{\H}^\frac{1}{2}\|\u\|_{\H}^\frac{1}{4}\left(\|\u\|_{\H}^\frac{3}{4}+\|\Delta\u\|_{\H}^\frac{3}{4}\right)\|\Delta\u\|_{\H}\no\\& \leq
	\frac{\mu^2}{12}	\|\Delta\u\|_{\H}^2+C \|\u_1\|_{\H}\|\nabla\u_1\|_{\H}\|\u\|_{\H}^\frac{1}{2}\left(\|\u\|_{\H}^\frac{3}{2}+\|\Delta\u\|_{\H}^\frac{3}{2}\right)
	\no \\& \leq 	\frac{\mu^2}{6}	\|\Delta\u\|_{\H}^2+C\|\u_1\|_{\H}\|\nabla\u_1\|_{\H}\|\u\|_{\H}^2+C\|\u_1\|_{\H}^4\|\nabla\u_1\|_{\H}^4\|\u\|_{\H}^2
	.\label{s9} 	
	\end{align}
 Using Taylor's formula (Theorem 7.9.1, \cite{PGC}), Agmon's and Young's inequalities, we obtain (cf. \cite{MTM6})
		\begin{align}\label{s10}
		I_7&\leq\beta \left\|\mathcal{C}(\u_1)-\mathcal{C}(\u_2)\right\|_{\H}\|\u_t\|_{\H}\nonumber\\& \leq\beta\bigg\|\int_0^1\mathcal{C}'\big(\theta\u_1+(1-\theta)\u_2\big)(\u_1-\u_2)\d \theta\bigg\|_{\H}\|\u_t\|_{\H}\nonumber\\&\leq C \sup_{0<\theta<1}\big\|(\u_1-\u_2)|\theta\u_1+(1-\theta)\u_2|^{r-1} \no\\&\quad+(r-1)\big(\theta\u_1+(1-\theta)\u_2\big)\big|\theta\u_1+(1-\theta)\u_2\big|^{r-3} \no\\& \quad\times\big((\theta\u_1+(1-\theta)\u_2) \cdot (\u_1-\u_2)\big) \big\|_{\H} \|\u_t\|_{\H} \nonumber\\&\leq C\left(\|\u_1\|_{\widetilde{\L}^\infty}+\|\u_2\|_{\widetilde{\L}^\infty}\right)^{r-1}\|\u\|_{\H} \|\u_t\|_{\H}
		\nonumber\\&\leq C\left(\|\u_1\|_{\H}+\|\u_1\|_{\H}^\frac{1}{2}\|\Delta\u_1\|_{\H}^\frac{1}{2}+\|\u_2\|_{\H}+\|\u_2\|_{\H}^\frac{1}{2}\|\Delta\u_2\|_{\H}^\frac{1}{2}\right)^{r-1}\|\u\|_{\H}\|\u_t\|_{\H}
		\nonumber\\&\leq C \left(\|\u_1\|_{\H}+\|\u_1\|_{\H}^\frac{1}{2}\|\Delta\u_1\|_{\H}^\frac{1}{2}+\|\u_2\|_{\H}+\|\u_2\|_{\H}^\frac{1}{2}\|\Delta\u_2\|_{\H}^\frac{1}{2}\right)^{2(r-1)}\|\u\|_{\H}^2+\frac{1}{8}\|\u_t\|_{\H}^2.
	\end{align}
 Using the similar arguments as above, it can be easily deduced that
	\begin{align}\label{s11}
		I_8&\leq\beta \mu \left\|\mathcal{C}(\u_1)-\mathcal{C}(\u_2)\right\|_{\H}\|\Delta\u\|_{\H}\nonumber\\&\leq C\sup_{0<\theta<1} \big\||\theta\u_1+(1-\theta)\u_2|^{r-1}\big\|_{\widetilde{\L}^\infty}\|\u_1-\u_2\|_{\H}\|\Delta\u\|_{\H}
		\no\\&\leq \frac{\mu^2}{12}\|\Delta\u\|_{\H}^2+ C\left(\|\u_1\|_{\H}+\|\u_1\|_{\H}^\frac{1}{2}\|\Delta\u_1\|_{\H}^\frac{1}{2}+\|\u_2\|_{\H}+\|\u_2\|_{\H}^\frac{1}{2}\|\Delta\u_2\|_{\H}^\frac{1}{2}\right)^{2(r-1)}\|\u\|_{\H}^2.
		\end{align}
Substituting the estimates \eqref{s4}-\eqref{s11} in \eqref{s3} and integrating the resulting estimate from $t_3$ to $t$, we find 
	\begin{align}\label{s14}
	&\int_{t_3}^t\|\u_t(s)\|_{\H}^2 \d s+ 2\mu \|\nabla\u(t)\|_{\H}^2+\mu^2\int_{t_3}^t\|\Delta\u(s)\|_{\H}^2 \d s \no\\&\leq
	2\mu \|\nabla\u(t_3)\|_{\H}^2+\alpha \|\u(t_3)\|_{\H}^2 +20(t-t_3)\big(\|\f\|_{\H}^2\|g_1\|_0^2+\|\f_2\|_{\H}^2\|g\|_0^2\big) 
	\no \\& \quad+ C (t-t_3)^\frac{2}{3}\sup_{t\in[t_3,T]}\left(\|\u(t)\|_{\H}^\frac{4}{3} \|\Delta\u_2(t)\|_{\H}^2\right)\bigg( \int_{t_3}^t\|\nabla\u(s)\|_{\H}^2 \d s\bigg)^\frac{1}{3}
		\no\\&\quad+C (t-t_3)\sup_{t\in[t_3,T]}\bigg(\|\u_1(t)\|_{\H}\|\nabla\u_1(t)\|_{\H}+\|\u_1(t)\|_{\H}^4\|\nabla\u_1(t)\|_{\H}^4 \bigg) \sup_{t\in[t_3,T]}\|\u(t)\|_{\H}^2
	\no	\\& \quad +C \sup_{t\in[t_3,T]} \left(\|\u_1(t)\|_{\H}+\|\u_1(t)\|_{\H}^\frac{1}{2}\|\Delta\u_1(t)\|_{\H}^\frac{1}{2}+\|\u_2(t)\|_{\H}+\|\u_2(t)\|_{\H}^\frac{1}{2}\|\Delta\u_2(t)\|_{\H}^\frac{1}{2}\right)^{2(r-1)}
	\no\\& \quad \times (t-t_3)\sup_{t\in[t_3,T]} \|\u(t)\|_{\H}^2
		\end{align}
for all $t \in [t_3,T]$.  Using the estimates obtained in Lemmas \ref{lemmabe2}, \ref{lemb5} and \ref{lemmaS1}, from \eqref{s14}, we immediately obtain \eqref{s1}.
	 \vskip 0.2 cm 
	  \noindent\textbf{Case II:} \emph{$d=3$ and $r \geq3$.}  We estimate $I_3$ and $I_4$ 
	  using H\"older's, Gagliardo-Nirenberg's and Young's inequalities as
	  \begin{align}
	  	I_3 &\leq \|\u\|_{\widetilde{\L}^{3}} \|\nabla\u_2\|_{\widetilde{\L}^{6}}\|\u_t\|_{\H}\leq C\|\u\|_{\H}^\frac{1}{2}\|\nabla\u\|_{\H}^\frac{1}{2}\|\Delta\u_2\|_{\H}\|\u_t\|_{\H}\no\\&\leq \frac{1}{8}\|\u_t\|_{\H}^2+C \|\u\|_{\H}\|\nabla\u\|_{\H} \|\Delta\u_2\|_{\H}^2, \label{s20}\\
	  	I_4 &\leq \mu \|\u\|_{\widetilde{\L}^{3}} \|\nabla\u_2\|_{\widetilde{\L}^{6}}\|\Delta\u\|_{\H}
	  	\leq C\mu\|\u\|_{\H}^\frac{1}{2}\|\nabla\u\|_{\H}^\frac{1}{2}\|\Delta\u_2\|_{\H}\|\Delta\u\|_{\H}\no\\& \leq
	  	\frac{\mu^2}{12}	\|\Delta\u\|_{\H}^2+C\|\u\|_{\H}\|\nabla\u\|_{\H} \|\Delta\u_2\|_{\H}^2
	  	.\label{21}
	  \end{align}
	 Similarly, we compute
	\begin{align}
		I_5 &\leq \|\u_1\|_{\widetilde{\L}^{4}} \|\nabla\u\|_{\widetilde{\L}^{4}}\|\u_t\|_{\H}\leq C\|\u_1\|_{\H}^\frac{1}{4}\|\nabla\u_1\|_{\H}^\frac{3}{4}\|\u\|_{\H}^\frac{1}{8}\|\u\|_{\H_p^2}^\frac{7}{8} \|\u_t\|_{\H}\nonumber\\&\leq \frac{1}{8}\|\u_t\|_{\H}^2+C \|\u_1\|_{\H}^\frac{1}{2}\|\nabla\u_1\|_{\H}^\frac{3}{2}\|\u\|_{\H}^\frac{1}{4}\left(\|\u\|_{\H}^\frac{7}{4}+\|\Delta\u\|_{\H}^\frac{7}{4}\right)
		\nonumber\\&\leq \frac{1}{8}\|\u_t\|_{\H}^2+\frac{\mu^2}{12}\|\Delta\u\|_{\H}^{2}+C\|\u_1\|_{\H}^\frac{1}{2}\|\nabla\u_1\|_{\H}^\frac{3}{2}\|\u\|_{\H}^2+C\|\u_1\|_{\H}^4\|\nabla\u_1\|_{\H}^{12} \|\u\|_{\H}^2, \label{s22}\\
		I_6 &\leq \mu \|\u_1\|_{\widetilde{\L}^{4}} \|\nabla\u\|_{\widetilde{\L}^{4}}\|\Delta\u\|_{\H}\leq C\mu\|\u_1\|_{\H}^\frac{1}{4}\|\nabla\u_1\|_{\H}^\frac{3}{4}\|\u\|_{\H}^\frac{1}{8}\left(\|\u\|_{\H}^\frac{7}{8}+\|\Delta\u\|_{\H}^\frac{7}{8}\right)\|\Delta\u\|_{\H}
		\no\\& \leq
		\frac{\mu^2}{6}	\|\Delta\u\|_{\H}^2+C\|\u_1\|_{\H}^\frac{1}{2}\|\nabla\u_1\|_{\H}^\frac{3}{2}\|\u\|_{\H}^2+C\|\u_1\|_{\H}^4\|\nabla\u_1\|_{\H}^{12} \|\u\|_{\H}^2.\label{s23} 	
	\end{align}
 Using the similar arguments as  \eqref{s10}, we deduce 
\begin{align}\label{s24}
	I_7&\leq\beta \left\|\mathcal{C}(\u_1)-\mathcal{C}(\u_2)\right\|_{\H}\|\u_t\|_{\H}\no\\&\leq C\left(\|\u_1\|_{\widetilde{\L}^\infty}+\|\u_2\|_{\widetilde{\L}^\infty}\right)^{r-1}\|\u\|_{\H} \|\u_t\|_{\H}
	\nonumber\\&\leq C\left(\|\u_1\|_{\H}^\frac{1}{4}\|\u_1\|_{\H_p^2}^\frac{3}{4}+\|\u_2\|_{\H}^\frac{1}{4}\|\u_2\|_{\H_p^2}^\frac{3}{4}\right)^{r-1}\|\u\|_{\H}\|\u_t\|_{\H}
	\nonumber\\&\leq C \left(\|\u_1\|_{\H}+\|\u_1\|_{\H}^\frac{1}{4}\|\Delta\u_1\|_{\H}^\frac{3}{4}+\|\u_2\|_{\H}+\|\u_2\|_{\H}^\frac{1}{4}\|\Delta\u_2\|_{\H}^\frac{3}{4}\right)^{2(r-1)}\|\u\|_{\H}^2+\frac{1}{8}\|\u_t\|_{\H}^2, 
\end{align}
and similarly, we obtain
\begin{align}\label{s25}
	I_8&\leq\beta \mu \left\|\mathcal{C}(\u_1)-\mathcal{C}(\u_2)\right\|_{\H}\|\Delta\u\|_{\H}\nonumber\\&\leq C\sup_{0<\theta<1} \big\||\theta\u_1+(1-\theta)\u_2|^{r-1}\big\|_{\widetilde{\L}^\infty}\|\u_1-\u_2\|_{\H}\|\Delta\u\|_{\H}
	\no\\&\leq \frac{\mu^2}{12}\|\Delta\u\|_{\H}^2+ C \left(\|\u_1\|_{\H}+\|\u_1\|_{\H}^\frac{1}{4}\|\Delta\u_1\|_{\H}^\frac{3}{4}+\|\u_2\|_{\H}+\|\u_2\|_{\H}^\frac{1}{4}\|\Delta\u_2\|_{\H}^\frac{3}{4}\right)^{2(r-1)}\|\u\|_{\H}^2.
\end{align}
  Substituting the estimates \eqref{s4}, \eqref{s5} and \eqref{s20}-\eqref{s25} in \eqref{s3} and integrating it from $t_3$ to $t$, we find 
  \begin{align}\label{s26}
  	&\int_{t_3}^t\|\u_t(s)\|_{\H}^2 \d s+ 2\mu \|\nabla\u(t)\|_{\H}^2+\mu^2\int_{t_3}^t\|\Delta\u(s)\|_{\H}^2 \d s \no\\&\leq
  	2\mu \|\nabla \u(t_3)\|_{\H}^2+\alpha \|\u(t_3)\|_{\H}^2+ 20(t-t_3)\big(\|\f\|_{\H}^2\|g_1\|_0^2+\|\f_2\|_{\H}^2\|g\|_0^2\big) \no\\&\quad+C (t-t)^\frac{1}{2}\sup_{t\in[t_3,T]} \big(\|\u(t)\|_{\H} \|\Delta\u_2(t)\|_{\H}^2\big)\bigg(\int_{t_3}^t\|\nabla \u(s)\|_{\H}^2\d s\bigg)^\frac{1}{2}
  	\no\\&\quad+C(t-t_3)\sup_{t\in[t_3,T]}\bigg(\|\u_1(t)\|_{\H}^\frac{1}{2}\|\nabla\u_1(t)\|_{\H}^\frac{3}{2}+\|\u_1(t)\|_{\H}^{4}\|\nabla\u_1(t)\|_{\H}^{12}\bigg)\sup_{t\in[t_3,T]}\|\u(t)\|_{\H}^{2}
  	\no \\& \quad+ C \sup_{t\in[t_3,T]} \left(\|\u_1(t)\|_{\H}+\|\u_1(t)\|_{\H}^\frac{1}{4}\|\Delta\u_1(t)\|_{\H}^\frac{3}{4}+\|\u_2(t)(t)\|_{\H}+\|u_2(t)\|_{\H}^\frac{1}{4}\|\Delta\u_2(t)\|_{\H}^\frac{3}{4}\right)^{2(r-1)}
  	\no \\& \quad \times (t-t_3)\sup_{t\in[t_3,T]}\|\u(t)\|_{\H}^2
  \end{align}
  for all $t \in [t_3,T]$. Thus, from \eqref{s26}, we immediately have  \eqref{s1} by using the estimates obtained in Lemmas \ref{lemmabe2}, \ref{lemb5} and \ref{lemmaS1}.
 
 Recalling that $\u \in \C^1([T/8,T],\V)$, there exists a time $t_4 \in \big(\frac{4T}{8},\frac{5T}{8}\big)$ such that
	 \begin{align*}
	 	\|\u_t(t_4)\|_{\H}^2=\frac{1}{\frac{5T}{8}-\frac{4T}{8}} \int_{4T/8}^{5T/8}\|\u_t(t)\|_{\H}^2 \d t 
	 	\leq \frac{8}{T}\int_{t_3}^T\|\u_t(t)\|_{\H}^2 \d t,
	 \end{align*}
 which implies \eqref{s2} by \eqref{s1} and completes the proof.
\end{proof}
	The following lemma proves the stability of the vector-valued function $\f$ of the solution of the inverse problem.
\begin{lemma}\label{lemmaSf}
	Let $\u_{0i} \in \H, \ \boldsymbol{\varphi}_{i} \in \H^2(\T) \cap \V $, $\nabla \psi_{i} \in \G(\T)$, $g_i, (g_{i})_t \in \C\left(\T \times [0,T]\right)$ satisfy the assumption \eqref{1e}, and $\f_{i} \in \H$, for $i=1,2$. For $r \geq 1 $, in 2D, and for $r \geq 3$, in 3D, we have  
	\begin{align}\label{S21}
		\|\f\|_{\H}\leq C\big(\|\u_0\|_{\H}+\|g\|_0+\|g_t\|_0+\|\nabla \boldsymbol{\varphi}\|_{\H}+\|\nabla \psi-\mu \Delta \boldsymbol{\varphi}\|_{\H}\big),
	\end{align}
	where $C$ depends on the input data,  $\mu,\alpha,\beta,r$ and $T$.
\end{lemma}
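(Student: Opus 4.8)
The plan is to recover $\f$ algebraically from the two copies of \eqref{1a} evaluated at the terminal time $t=T$, where the overdetermination data $\u_i(\cdot,T)=\boldsymbol{\varphi}_i$ and $\nabla p_i(\cdot,T)=\nabla\psi_i$ are available, and then to bound every resulting term by the quantities appearing in \eqref{S21}. First I would evaluate \eqref{1a} in $\H$ at $t=T$ for each $i=1,2$, insert $\u_i(\cdot,T)=\boldsymbol{\varphi}_i$, $\nabla p_i(\cdot,T)=\nabla\psi_i$, subtract, and use $\f_1 g_1(\cdot,T)-\f_2 g_2(\cdot,T)=\f\, g_1(\cdot,T)+\f_2\, g(\cdot,T)$ to isolate
\begin{align*}
	\f\, g_1(\cdot,T)&=\u_t(\cdot,T)+\big(\nabla\psi-\mu\Delta\boldsymbol{\varphi}\big)+\alpha\boldsymbol{\varphi}+\big[(\boldsymbol{\varphi}_1\cdot\nabla)\boldsymbol{\varphi}_1-(\boldsymbol{\varphi}_2\cdot\nabla)\boldsymbol{\varphi}_2\big]\\
	&\quad+\beta\big(|\boldsymbol{\varphi}_1|^{r-1}\boldsymbol{\varphi}_1-|\boldsymbol{\varphi}_2|^{r-1}\boldsymbol{\varphi}_2\big)-\f_2\, g(\cdot,T),
\end{align*}
where $\u_t(\cdot,T)=(\u_1)_t(\cdot,T)-(\u_2)_t(\cdot,T)$. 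Taking the $\L^2(\T)$-norm and using the pointwise lower bound $|g_1(\cdot,T)|\geq g_T$ from \eqref{1e} yields
\begin{align*}
	g_T\|\f\|_{\H}&\leq\|\u_t(T)\|_{\H}+\|\nabla\psi-\mu\Delta\boldsymbol{\varphi}\|_{\L^2}+\alpha\|\boldsymbol{\varphi}\|_{\H}+\big\|(\boldsymbol{\varphi}_1\cdot\nabla)\boldsymbol{\varphi}_1-(\boldsymbol{\varphi}_2\cdot\nabla)\boldsymbol{\varphi}_2\big\|_{\H}\\
	&\quad+\beta\big\||\boldsymbol{\varphi}_1|^{r-1}\boldsymbol{\varphi}_1-|\boldsymbol{\varphi}_2|^{r-1}\boldsymbol{\varphi}_2\big\|_{\H}+\|g\|_0\|\f_2\|_{\H}.
\end{align*}

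Next I would dispose of the data-dependent terms. Since $\f_2\in\mathcal{D}$, the bound $\|\f_2\|_{\H}\leq M$ (Lemma \ref{lemma4}) makes the last term $\leq CM\|g\|_0$. Splitting $(\boldsymbol{\varphi}_1\cdot\nabla)\boldsymbol{\varphi}_1-(\boldsymbol{\varphi}_2\cdot\nabla)\boldsymbol{\varphi}_2=(\boldsymbol{\varphi}\cdot\nabla)\boldsymbol{\varphi}_1+(\boldsymbol{\varphi}_2\cdot\nabla)\boldsymbol{\varphi}$ and using H\"older's inequality, the Gagliardo--Nirenberg and Agmon inequalities, and the regularity $\boldsymbol{\varphi}_i\in\H^2(\T)\cap\V$, the convective difference is bounded by $C\big(\|\nabla\boldsymbol{\varphi}\|_{\H}+\|\boldsymbol{\varphi}\|_{\H}\big)$; a Taylor-expansion argument identical to \eqref{s10}--\eqref{s11} bounds the damping difference by $C\big(\|\u_1\|_{\widetilde{\L}^\infty}+\|\u_2\|_{\widetilde{\L}^\infty}\big)^{r-1}\|\boldsymbol{\varphi}\|_{\H}$ with the $\widetilde{\L}^\infty$-factors finite by Agmon's inequality and $\boldsymbol{\varphi}_i\in\H^2(\T)$. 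The low-order norm $\|\boldsymbol{\varphi}\|_{\H}$ and the term $\alpha\|\boldsymbol{\varphi}\|_{\H}$ are then absorbed into the admissible data norm $\|\nabla\boldsymbol{\varphi}\|_{\H}$, so that altogether these contributions produce only $\|\nabla\boldsymbol{\varphi}\|_{\H}$ and $\|\nabla\psi-\mu\Delta\boldsymbol{\varphi}\|_{\L^2}$ on the right of \eqref{S21}.

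The one genuinely nontrivial ingredient is the control of $\|\u_t(T)\|_{\H}$. For this I would run, for the difference system \eqref{S3}, the same cascade already carried out in Lemmas \ref{lemmaS1} and \ref{3.5} together with a time-differentiated estimate of the type in Lemma \ref{lemmae4}: differentiate \eqref{S3} in $t$, test with $\u_t$, use the monotonicity \eqref{30} of $\mathcal{C}'$, the cancellation $\langle(\u_1\cdot\nabla)\u_t,\u_t\rangle=0$, and Young's inequality on the forcing $\partial_t(\f g_1+\f_2 g)=\f\,(g_1)_t+\f_2\, g_t$, and then apply the variation-of-constants formula from the intermediate time $t_4\in\big(\frac{4T}{8},\frac{5T}{8}\big)$ of Lemma \ref{3.5} up to $T$. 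This gives
\begin{align*}
	\|\u_t(T)\|_{\H}\leq C\big(\|\u_0\|_{\H}+\|g\|_0+\|g_t\|_0\big)+\kappa\,\|\f\|_{\H},
\end{align*}
where every $\|\f\|_{\H}$-proportional contribution (entering through the forcing terms $\f g_1$ and $\f\,(g_1)_t$ in the successive estimates) carries a coefficient $\kappa$ that is a product of $\|g_1\|_0$ or $\|(g_1)_t\|_0$ with negative powers of $\alpha,\beta,\mu$.

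The hard part, and the reason the largeness hypotheses on $\alpha,\beta,\mu$ are indispensable, is exactly this absorption. After inserting the bound for $\|\u_t(T)\|_{\H}$ and the nonlinear $\boldsymbol{\varphi}$-difference estimates into the inequality for $g_T\|\f\|_{\H}$, one collects all $\|\f\|_{\H}$-terms on the left to reach $(g_T-\kappa)\|\f\|_{\H}\leq C(\text{data differences})$; by the conditions of Remark \ref{rem4.2} (cf. \eqref{41}) one has $\kappa<g_T$, so dividing by $g_T-\kappa>0$ removes the circular dependence on $\|\f\|_{\H}$ and yields \eqref{S21}. I expect the careful bookkeeping ensuring $\kappa<g_T$, rather than any single inequality, to be the principal obstacle; the remaining term-by-term bounds are routine consequences of the inequalities in Subsection \ref{sub2.1} and the energy estimates already established.
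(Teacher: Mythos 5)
Your proposal follows essentially the same route as the paper's proof: isolate $\f\,g_1(\cdot,T)$ from the difference system \eqref{S3} evaluated at $t=T$, bound the convective and damping differences in $\boldsymbol{\varphi}$ via Taylor's formula and Agmon's inequality, and control $\|\u_t(T)\|_{\H}$ by differentiating \eqref{S3} in time, testing with $\u_t$, and integrating from the intermediate time $t_4$ using the previously established energy lemmas. The only distinction is that you make the final absorption $(g_T-\kappa)\|\f\|_{\H}\leq C(\text{data differences})$ explicit, whereas the paper simply plugs \eqref{S29} (whose right-hand side still carries $C\|\f\|_{\H}$) into \eqref{S30} under the standing largeness assumptions of Remark \ref{rem4.2}; your version is the more careful bookkeeping of the same argument.
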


\begin{proof}
	Differentiating \eqref{S3} with respect to time $t$ and then multiplying by $\u_t(\cdot)$ to the resulting equation, we obtain
	\begin{align}\label{S22}
		&\frac{1}{2}\frac{\d}{\d t}\|\u_t(t)\|_{\H}^2+ \mu \|\nabla\u_t(t)\|_{\H}^2+\alpha\|\u_t(t)\|_{\H}^2\nonumber\\&=((\f g_{1t}(t)+\f_2 g_t(t),\u_t(t)) -((\u_{1t}(t) \cdot \nabla)\u(t),\u_t(t))\nonumber\\&\quad-((\u_t(t) \cdot \nabla)\u_2(t),\u_t(t))-((\u(t) \cdot \nabla)\u_{2t}(t),\u_t(t))\nonumber\\&\quad-\beta  \big(\mathcal{C}'(\u_1)\u_{1t}(t)-\mathcal{C}'(\u_2)\u_{2t}(t),\u_t (t)\big)=:\sum_{j=1}^5E_j.
	\end{align}
	Next, we estimate each $E_j$'s $(j=1,\ldots,5)$ separately as follows:
	Using H\"older's and Young's inequalities, we get
	\begin{align}\label{S23}
		E_1\leq\frac{1}{2 \alpha}\left(\|\f\|_{\H}\|g_{1t}\|_{\mathrm{L}^\infty}+\|\f_2\|_{\H}\|g_t\|_{\mathrm{L}^\infty}\right)^2+\frac{\alpha}{2}\|\u_t\|_{\H}^2.
	\end{align}
\vskip 0.2 cm
\noindent\textbf{Case I:} \emph{$d=2$ and $r \geq 1$.}
Making use of H\"older's,  Ladyzhenskaya's and Young's inequalities, we obtain the following estimate:
\begin{align}
	E_2 \nonumber&\leq \|\u_{1t}\|_{\widetilde{\L}^4} \|\nabla\u_t\|_{\H}\|\u\|_{\widetilde{\L}^4}\leq \sqrt{2} \|\u_{1t}\|_{\H}^\frac{1}{2} \|\nabla\u_{1t}\|_{\H}^\frac{1}{2}\|\nabla\u_t\|_{\H}\|\u\|_{\H}^\frac{1}{2} \|\nabla\u\|_{\H}^\frac{1}{2}\\&\leq \frac{\mu}{6}\|\nabla\u_t\|_{\H}^2+ C \|\u_{1t}\|_{\H} \|\nabla\u_{1t}\|_{\H}\|\u\|_{\H}\|\nabla\u\|_{\H},\label{S24}\\
	E_3 &\leq  \|\nabla\u_2\|_{\H}\|\u_t\|_{\widetilde{\L}^4}^2\leq \sqrt{2} \|\nabla\u_2\|_{\H}\|\u_t\|_{\H}\|\nabla\u_t\|_{\H} \leq \frac{\mu}{6}\|\nabla\u_t\|_{\H}^2+\frac{3}{\mu} \|\nabla\u_2\|_{\H}^2\|\u_t\|_{\H}^2, \label{S27} \\
	E_4& \leq \|\u\|_{\widetilde{\L}^4} \|\nabla\u_{t}\|_{\H}\|\u_{2t}\|_{\widetilde{\L}^4}\leq \frac{\mu}{6}\|\nabla\u_t\|_{\H}^2+ C  \|\u_{2t}\|_{\H} \|\nabla\u_{2t}\|_{\H}\|\u\|_{\H}\|\nabla\u\|_{\H}.	\label{S25}
\end{align}
	Applying Taylor's formula and H\"older's, Agmon's and Young's  inequalities, one can deduce 
\begin{align}\label{S26}
	E_5	&=-\beta\left(\mathcal{C}'(\u_1)\u_{1t}-\mathcal{C}'(\u_2)\u_{2t},\u_{1t}-\u_{2t} \right)\nonumber\\&=-\beta \big\{(\mathcal{C}'(\u_1)(\u_{1t}-\u_{2t}),\u_{1t}-\u_{2t})+\big((\mathcal{C}'(\u_1)-\mathcal{C}'(\u_2))\u_{2t},\u_{1t}-\u_{2t}\big)\big\}
	\nonumber\\&\leq-\beta \big((\mathcal{C}'(\u_1)-\mathcal{C}'(\u_2))\u_{2t},\u_{1t}-\u_{2t}\big)
	\nonumber\\& \leq \beta \sup_{0<\theta<1}\| \mathcal{C}''(\theta\u_1+(1-\theta)\u_2) (\u \otimes\u_{2t})\|_{\H}\|\u_{1t}-\u_{2t}\|_{\H}
	\nonumber\\& \leq C
	\left(\|\u_1\|_{\widetilde{\L}^\infty}+\|\u_2\|_{\widetilde{\L}^\infty}\right)^{r-2}\|\u\|_{\widetilde{\L}^4}\|\u_{2t}\|_{\widetilde{\L}^4}\|\u_{1t}-\u_{2t}\|_{\H}
	\nonumber\\& \leq C
	\left(\|\u_1\|_{\H}^\frac{1}{2}\|\u_1\|_{\H_p^2}^\frac{1}{2}+\|\u_2\|_{\H}^\frac{1}{2}\|\u_2\|_{\H_p^2}^\frac{1}{2}\right)^{r-2}\|\u\|_{\H}^\frac{1}{2}\|\nabla\u\|_{\H}^\frac{1}{2}\|\u_{2t}\|_{\H}^\frac{1}{2}\|\nabla\u_{2t}\|_{\H}^\frac{1}{2}\|\u_{1t}-\u_{2t}\|_{\H}
\nonumber\\&
	\leq  C\left(\|\u_1\|_{\H}+\|\u_1\|_{\H}^\frac{1}{2}\|\Delta\u_1\|_{\H}^\frac{1}{2}+\|\u_2\|_{\H}+\|\u_2\|_{\H}^\frac{1}{2}\|\Delta\u_2\|_{\H}^\frac{1}{2}\right)^{2(r-2)}\|\u\|_{\H}\|\nabla\u\|_{\H}
	\no \\& \quad+\|\u_{2t}\|_{\H}\|\nabla\u_{2t}\|_{\H}
	\|\u_{t}\|_{\H}^2,
\end{align}
for $r\geq 3$. Note that for the case $r=2$, \eqref{S26} holds. For $r=2$, using H\"older's and Young's inequalities, we estimate $E_5$ as
\begin{align}\label{S271}
	E_5 &=-\beta\left(\mathcal{C}'(\u_1)\u_{1t}-\mathcal{C}'(\u_2)\u_{2t},\u_{1t}-\u_{2t} \right)
	\nonumber\\&\leq-\beta \big((\mathcal{C}'(\u_1)-\mathcal{C}'(\u_2))\u_{2t},\u_{1t}-\u_{2t}\big)
	\nonumber\\& \leq -\beta \langle(|\u_1|-|\u_2|)\u_{2t},\u_t \rangle-\beta \left\langle\frac{\u_1}{|\u_1|}(\u_1 \cdot \u_{2t})-\frac{\u_2}{|\u_2|}(\u_2 \cdot \u_{2t}),\u_t \right \rangle
	\no\\& \leq 2\beta \|\u_1-\u_2\|_{\widetilde{\L}^4} \|\u_{2t}\|_{\widetilde{\L}^4}\|\u_t\|_{\H}
	\no \\& \leq C \|\u_{2t}\|_{\H}\|\nabla\u_{2t}\|_{\H}
	\|\u_{t}\|_{\H}^2+C\|\u\|_{\H}\|\nabla\u\|_{\H}.
\end{align}
 Plugging the estimates \eqref{S23}-\eqref{S271} in \eqref{S22}, and integrating the resulting estimate from $t_4$ to $t$, we find 
	\begin{align}\label{S28}
		&\|\u_t(t) \|_{\H}^2+\mu \int_{t_4}^t \|\nabla\u_t (s)\|_{\H}^2\d s+\alpha\int_{t_4}^t \|\u_t (s)\|_{\H}^2\d s \nonumber\\&\leq \|\u_t(t_4) \|_{\H}^2+\frac{(t-t_4)}{\alpha}\big(\|\f\|_{\H}\|g_{1t}\|_0+\|\f_2\|_{\H}\|g_t\|_0\big)^2 \nonumber\\&\quad+C\int_{t_4}^t \big(\|\u_{1t}(s)\|_{\H} \|\nabla\u_{1t}(s)\|_{\H}+\|\u_{2t}(s)\|_{\H} \|\nabla\u_{2t}(s)\|_{\H}\big)\|\u(s)\|_{\H}\|\nabla\u(s)\|_{\H} \d s\no \\&\quad+\frac{6}{\mu}\int_{t_4}^t\|\nabla\u_2(s)\|_{\H}^2\|\u_t(s)\|_{\H}^2 \d s+C \int_{t_4}^t\|\u_{2t}(s)\|_{\H}\|\nabla\u_{2t}(s)\|_{\H}
		\|\u_{t}(s)\|_{\H}^2 \d s\nonumber\\&\quad+C\int_{t_4}^t   \left(\|\u_1(s)\|_{\H}+\|\u_1(s)\|_{\H}^\frac{1}{2}\|\Delta\u_1(s)\|_{\H}^\frac{1}{2}+\|\u_2(s)\|_{\H}+\|\u_2(s)\|_{\H}^\frac{1}{2}\|\Delta\u_2(s)\|_{\H}^\frac{1}{2}\right)^{2(r-2)}
		\nonumber\\&\quad \times\|\u(s)\|_{\H}\|\nabla\u(s)\|_{\H}\d s
 \nonumber\\&\leq \|\u_t(t_4) \|_{\H}^2+\frac{t-t_4}{ \alpha }\big(\|\f\|_{\H}\|g_{1t}\|_0+\|\f_2\|_{\H}\|g_t\|_0\big)^2 +C\sup_{t\in[t_4,T]}\big(\|\u(t)\|_{\H}\|\nabla\u(t)\|_{\H}\big) \nonumber\\&\quad \times (t-t_4)\sup_{t\in[t_4,T]}\big(\|\u_{1t}(t)\|_{\H} \|\nabla\u_{1t}(t)\|_{\H}+\|\u_{2t}(t)\|_{\H} \|\nabla\u_{2t}(t)\|_{\H} \big) \nonumber\\&\quad +C \sup_{t\in[t_4,T]}\big(\|\nabla\u_2(t)\|_{\H}^2+\|\u_{2t}(t)\|_{\H}\|\nabla\u_{2t}(t)\|_{\H}\big)\int_{t_4}^t\|\u_{t}(s)\|_{\H}^2\d s\nonumber\\&\quad+C  \sup_{t\in[t_4,T]}\left(\|\u_1(t)\|_{\H}+\|\u_1(t)\|_{\H}^\frac{1}{2}\|\Delta\u_1(t)\|_{\H}^\frac{1}{2}+\|\u_2(t)\|_{\H}+\|\u_2(t)\|_{\H}^\frac{1}{2}\|\Delta\u_2(t)\|_{\H}^\frac{1}{2}\right)^{2(r-2)}
 \no \\& \quad \times (t-t_4) \sup_{t\in[t_4,T]}\big(\|\u(t)\|_{\H}\|\nabla\u(t)\|_{\H}\big),
	\end{align}
	for all $t\in[t_4,T]$. Using the energy estimates obtained in Lemmas \ref{lemb5}, \ref{lemb6}, \ref{lemmabe2} and \ref{3.5}  in the inequality \eqref{S28}, it can be easily seen that 
	\begin{align*}
		\sup_{t\in[t_4,T]}\|\u_t(t) \|_{\H}^2 \leq C \left( \|\u_0\|_{\H}^2+\|\f\|_{\H}^2+\|g\|_0^2+\|g_t\|_0^2 \right),
	\end{align*}
	and as a result, we get
	\begin{align}\label{S29}
		\|\u_t(T) \|_{\H}\leq C\big(\|\u_0\|_{\H}+\|\f\|_{\H}+\|g\|_0+\|g_t\|_0\big).
	\end{align}
	Using the final overdetermination data in \eqref{S3}, one can easily deduce 
	\begin{align*}
		\f g_1+\f_2 g&=\u_t(T)+(\boldsymbol{\varphi}_1 \cdot \nabla)\boldsymbol{\varphi}\nonumber+(\boldsymbol{\varphi} \cdot \nabla)\boldsymbol{\varphi}_2-\mu \Delta \boldsymbol{\varphi}+\nabla \psi\\&\quad+\beta \left(|\boldsymbol{\varphi}_1|^{r-1}\boldsymbol{\varphi}_1-|\boldsymbol{\varphi}_2|^{r-1}\boldsymbol{\varphi}_2\right),
	\end{align*}
	which leads to
	\begin{align}\label{S30}
		g_T\|\f\|_{\H}\leq\|\f g_1\|_{\H}&\leq \|\f_2\|_{\H}\|g\|_0+ \|\u_t(T)\|_{\H}+\|\boldsymbol{\varphi}_1\|_{\widetilde{\L}^\infty}\|\nabla \boldsymbol{\varphi}\|_{\H}+\|\boldsymbol{\varphi}\|_{\widetilde{\L}^3}\|\nabla \boldsymbol{\varphi}_2\|_{\widetilde{\L}^6}\nonumber\\&\quad+\|\nabla \psi-\mu \Delta \boldsymbol{\varphi}\|_{\L^2}+C(\|\boldsymbol{\varphi}_1\|_{\widetilde{\L}^{\infty}}+\|\boldsymbol{\varphi}_2\|_{\widetilde{\L}^{\infty}})^{r-1}\|\boldsymbol{\varphi}\|_{\H}
		\nonumber\\&\leq \|\f_2\|_{\H}\|g\|_0+ \|\u_t(T)\|_{\H}+\left(\|\boldsymbol{\varphi}_1\|_{\H}+\|\boldsymbol{\varphi}_1\|_{\H}^\frac{1}{2}\|\Delta\boldsymbol{\varphi}_1\|_{\H}^\frac{1}{2}\right)\|\nabla \boldsymbol{\varphi}\|_{\H}\nonumber\\&\quad+\|\boldsymbol{\varphi}\|_{\H}^\frac{2}{3}\|\nabla\boldsymbol{\varphi}\|_{\H}^\frac{1}{3}\|\Delta \boldsymbol{\varphi}_2\|_{\H}+\|\nabla \psi-\mu \Delta \boldsymbol{\varphi}\|_{\L^2}\no \\& \quad+C\left(\|\boldsymbol{\varphi}_1\|_{\H}+\|\boldsymbol{\varphi}_1\|_{\H}^\frac{1}{2}\|\Delta\boldsymbol{\varphi}_2\|_{\H}^\frac{1}{2}+\|\boldsymbol{\varphi}_1\|_{\H}+\|\boldsymbol{\varphi}_2\|_{\H}^\frac{1}{2}\|\Delta\boldsymbol{\varphi}_2\|_{\H}^\frac{1}{2}\right)^{r-1}\|\boldsymbol{\varphi}\|_{\H}
		\nonumber\\&\leq
		\|\u_t(T)\|_{\H}+C\big(\| \nabla\boldsymbol{\varphi}\|_{\H}+\|\nabla \psi-\mu \Delta \boldsymbol{\varphi}\|_{\L^2}+\|g\|_0\big),
	\end{align}
where we have used Sobolev's inequality in the above relation.	Plugging the relation \eqref{S29} in \eqref{S30}, we get
	\begin{align*}
		\|\f\|_{\H}\leq C\left(\|\u_0\|_{\H}+\|g\|_0+\|g_t\|_0+\|\nabla \boldsymbol{\varphi}\|_{\H}+\|\nabla \psi-\mu \Delta \boldsymbol{\varphi}\|_{\L^2}\right),
	\end{align*}
	which is \eqref{S21}. For $r=1$, a calculation similar to \eqref{S271} and the use of the same arguments as above lead to the required result. 
		\vskip 0.2 cm
	\noindent\textbf{Case II:} \emph{$d=3$ and $r \geq 3$.} Using H\"older's, Gagliardo-Nirenberg's and Young's inequalities, we obtain the following estimate:
	\begin{align}
		E_2 \nonumber&\leq \|\u_{1t}\|_{\widetilde{\L}^4} \|\nabla\u_t\|_{\H}\|\u\|_{\widetilde{\L}^4}\leq C \|\u_{1t}\|_{\H}^\frac{1}{4} \|\nabla\u_{1t}\|_{\H}^\frac{3}{4}\|\nabla\u_t\|_{\H}\|\u\|_{\H}^\frac{1}{4} \|\nabla\u\|_{\H}^\frac{3}{4}\\&\leq \frac{\mu}{6}\|\nabla\u_t\|_{\H}^2+ C \|\u_{1t}\|_{\H}^\frac{1}{2} \|\nabla\u_{1t}\|_{\H}^\frac{3}{2}\|\u\|_{\H}^\frac{1}{2}\|\nabla\u\|_{\H}^\frac{3}{2},\label{S241}\\
		E_3 &\leq  \|\nabla\u_2\|_{\H}\|\u_t\|_{\widetilde{\L}^4}^2\leq C \|\nabla\u_2\|_{\H}\|\u_t\|_{\H}^\frac{1}{2}\|\nabla\u_t\|_{\H}^\frac{3}{2} \leq \frac{\mu}{6}\|\nabla\u_t\|_{\H}^2+C \|\nabla\u_2\|_{\H}^4\|\u_t\|_{\H}^2, \label{S2711} \\
		E_4& \leq \|\u\|_{\widetilde{\L}^4} \|\nabla\u_{t}\|_{\H}\|\u_{2t}\|_{\widetilde{\L}^4}\leq \frac{\mu}{6}\|\nabla\u_t\|_{\H}^2+  C \|\u_{2t}\|_{\H}^\frac{1}{2} \|\nabla\u_{2t}\|_{\H}^\frac{3}{2}\|\u\|_{\H}^\frac{1}{2}\|\nabla\u\|_{\H}^\frac{3}{2}.	\label{S251}
	\end{align}
A calculation similar to \eqref{S26} gives
\begin{align}\label{S261}
	E_5	&=-\beta\left(\mathcal{C}'(\u_1)\u_{1t}-\mathcal{C}'(\u_2)\u_{2t},\u_{1t}-\u_{2t} \right)
	\nonumber\\& \leq C
	\left(\|\u_1\|_{\widetilde{\L}^\infty}+\|\u_2\|_{\widetilde{\L}^\infty}\right)^{r-2}\|\u\|_{\widetilde{\L}^4}\|\u_{2t}\|_{\widetilde{\L}^4}\|\u_{1t}-\u_{2t}\|_{\H}
	\nonumber\\& \leq C
	\left(\|\u_1\|_{\H}^\frac{1}{4}\|\u_1\|_{\H_p^2}^\frac{3}{4}+\|\u_1\|_{\H}^\frac{1}{4}\|\u_1\|_{\H_p^2}^\frac{3}{4}\right)^{r-2}\|\u\|_{\H}^\frac{1}{4}\|\nabla\u\|_{\H}^\frac{3}{4}\|\u_{2t}\|_{\H}^\frac{1}{4}\|\nabla\u_{2t}\|_{\H}^\frac{3}{4}\|\u_{1t}-\u_{2t}\|_{\H}
	\nonumber\\&
	\leq C\left(\|\u_1\|_{\H}+\|\u_1\|_{\H}^\frac{1}{4}\|\Delta\u_1\|_{\H}^\frac{3}{4}+\|\u_2\|_{\H}+\|\u_2\|_{\H}^\frac{1}{4}\|\Delta\u_2\|_{\H}^\frac{3}{4}\right)^{2(r-2)}\|\u\|_{\H}^\frac{1}{2}\|\nabla\u\|_{\H}^\frac{3}{2}
	\no \\& \quad+C \|\u_{2t}\|_{\H}^\frac{1}{2}\|\nabla\u_{2t}\|_{\H}^\frac{3}{2}
	\|\u_{t}\|_{\H}^2,
\end{align}
for $r\geq 3$.
Plugging the estimates \eqref{S23}, \eqref{S241}-\eqref{S261} in \eqref{S22}, and integrating it from $t_4$ to $t$, we find
	\begin{align}\label{S36}
		&\|\u_t(t) \|_{\H}^2+\mu \int_{t_4}^t \|\u_t (s)\|_{\V}^2\d s+\alpha\int_{t_4}^t \|\u_t (s)\|_{\H}^2\d s 
		\nonumber\\&\leq \|\u_t(t_4) \|_{\H}^2+\frac{t-t_4}{ \alpha }\big(\|\f\|_{\H}\|g_{1t}\|_0+\|\f_2\|_{\H}\|g_t\|_0\big)^2 +C\sup_{t\in[t_4,T]}\left(\|\u(t)\|_{\H}^\frac{1}{2}\|\nabla\u(t)\|_{\H}^\frac{3}{2}\right)
		\nonumber\\&\quad \times (t-t_4)
		\sup_{t\in[t_4,T]}\left(\|\u_{1t}(t)\|_{\H}^\frac{1}{2} \|\nabla\u_{1t}(t)\|_{\H}^\frac{3}{2}+\|\u_{2t}(t)\|_{\H}^\frac{1}{2} \|\nabla\u_{2t}(t)\|_{\H}^\frac{3}{2} \right) \nonumber\\&\quad +C \sup_{t\in[t_4,T]}\big(\|\nabla\u_2(t)\|_{\H}^4+\|\u_{2t}(t)\|_{\H}^\frac{1}{2}\|\nabla\u_{2t}(t)\|_{\H}^\frac{3}{2}\big)\int_{t_4}^t\|\u_{t}(s)\|_{\H}^2\d s\nonumber\\&\quad+C  \sup_{t\in[t_4,T]}\left(\|\u_1(t)\|_{\H}+\|\u_1(t)\|_{\H}^\frac{1}{4}\|\Delta\u_1(t)\|_{\H}^\frac{3}{4}+\|\u_2(t)\|_{\H}+\|\u_2(t)\|_{\H}^\frac{1}{4}\|\Delta\u_2(t)\|_{\H}^\frac{3}{4}\right)^{2(r-2)} \no\\& \quad \times (t-t_4) \sup_{t\in[t_4,T]}\left(\|\u(t)\|_{\H}^\frac{1}{2}\|\nabla\u(t)\|_{\H}^\frac{3}{2}\right),
	\end{align}
	for all $t \in [t_4,T]$. 
	Using the energy estimates obtained in Lemmas \ref{lemb5}-\ref{lemb6}, \ref{lemmabe2} and \ref{3.5} in the above inequality, one can easily deduce 
	\begin{align*}
		\sup_{t\in[t_4,T]}\|\u_t(t) \|_{\H}^2 \leq C \left( \|\u_0\|_{\H}^2+\|\f\|_{\H}^2+\|g\|_0^2+\|g_t\|_0^2 \right),
	\end{align*}
	and thus, we obtain
	\begin{align}\label{S37}
		\|\u_t(T) \|_{\H}\leq C\big(\|\u_0\|_{\H}+\|\f\|_{\H}+\|g\|_0+\|g_t\|_0\big).
	\end{align}
 By using the similar arguments as $d=2$ and $r \geq 1$, we finally obtain the relation \eqref{S21} (cf. \eqref{S30}).
	\end{proof}

\begin{proof}[Proof of part (ii) of Theorem \ref{thm2}] {
		\emph{Stability of the pressure gradient $\nabla p$:} The equation \eqref{S3} can be used to establish the stability of the pressure gradient $\nabla p$.
			Taking divergence on both the sides of \eqref{S3} and making the use of divergence free condition on $\f_{i}$ for $i=1,2$, we get 
		\begin{align}\label{d}
			p &=(-\Delta)^{-1}\big[\nabla \cdot \big\{(\u_1 \cdot \nabla)\u+(\u \cdot \nabla)\u_2+\beta \left(|\u_1|^{r-1}\u_1-|\u_2|^{r-1}\u_2\right)\big\} \big],
		\end{align}
		in the weak sense. 
			\vskip 0.2 cm
		\noindent\textbf{Case II:} \emph{$d=2$ and $r \geq 1$.}
		Taking gradient on both sides in the equation \eqref{d} and then applying H\"older's inequality and Taylor's formula, we deduce 
		\begin{align*}
			\|\nabla p\|_{\H^{-1}} &\leq  C\left( \| (\u_1 \cdot \nabla)\u\|_{\V'}+\| (\u \cdot \nabla)\u_2\|_{\V'}+\left\| |\u_1|^{r-1}\u_1-|\u_2|^{r-1}\u_2\right\|_{\V'}\right) \\& \leq C\left( \| \u_1 \otimes\u\|_{\H}+\| \u \otimes \u_2\|_{\H}+\left\| |\u_1|^{r-1}\u_1-|\u_2|^{r-1}\u_2\right\|_{\wi{\L}^\frac{r+1}{r}}\right) \\& \leq C\left( \| \u_1\|_{\wi{\L}^4}+\| \u_2\|_{\wi{\L}^4}\right)\|\u\|_{\wi{\L}^4}+\|\u_1-\u_2\|_{\wi{\L}^{r+1}}\left(\| \u_1\|_{\wi{\L}^{r+1}}^{r-1}+\| \u_1\|_{\wi{\L}^{r+1}}^{r-1}\right),
		\end{align*}
		where we have used  the embedding $\V \subset \wi{\L}^{r+1} \subset \H \equiv \H' \subset \wi{\L}^\frac{r+1}{r} \subset \V'$. Taking the $(\frac{r+1}{r})^\text{th}$ power on both sides in the above relation and then integrating it from $0$ to $T$, followed by using H\"older's inequality, we arrive at
		\begin{align*}
		&	\int_{0}^{T}\|\nabla p(t)\|_{\H^{-1}}^\frac{r+1}{r} \d t \nonumber\\&\leq C\int_0^T\| \u_1(t)\|_{\widetilde{\L}^4}^\frac{r+1}{r}\|\u(t)\|_{\widetilde{\L}^4}^\frac{r+1}{r}\d t+C \int_0^T\| \u_2(t)\|_{\wi{\L}^4}^\frac{r+1}{r}\|\u(t)\|_{\wi{\L}^4}^\frac{r+1}{r}\d t\\&\quad+C\int_0^T\|\u_1(t)\|_{\wi{\L}^{r+1}}^\frac{(r-1)(r+1)}{r} \|\u(t)\|_{\wi{\L}^{r+1}}^\frac{r+1}{r}\d t\\&\quad+C\int_0^T\|\u_2(t)\|_{\wi{\L}^{r+1}}^\frac{(r-1)(r+1)}{r} \|\u(t)\|_{\wi{\L}^{r+1}}^\frac{r+1}{r}\d t
			\\& \leq CT^\frac{r-1}{2r}\bigg(\int_0^T\| \u(t)\|_{\widetilde{\L}^4}^4\d t\bigg)^\frac{r+1}{4r}\bigg[\bigg(\int_0^T\| \u_1(t)\|_{\widetilde{\L}^4}^{4}\d t\bigg)^\frac{r+1}{4r}
			+\bigg(\int_0^T\| \u_2(t)\|_{\widetilde{\L}^4}^4\d t\bigg)^\frac{r+1}{4r}\bigg]
			\\&\quad+C\bigg(\int_0^T\| \u(t)\|_{\widetilde{\L}^{r+1}}^{r+1}\d t\bigg)^\frac{1}{r}\bigg[\bigg(\int_0^T\| \u_1(t)\|_{\widetilde{\L}^{r+1}}^{r+1}\d t\bigg)^\frac{r-1}{r}+\bigg(\int_0^T\| \u_2(t)\|_{\widetilde{\L}^{r+1}}^{r+1}\d t\bigg)^\frac{r-1}{r}\bigg].
		\end{align*}
		Using the energy estimates obtained in Lemmas \ref{lemb1} and \ref{lemmaS1} in the above inequality, we obtain
		\begin{align*}
		\bigg(	\int_0^T\| \nabla p(t) \|_{\H^{-1}}^\frac{r+1}{r} \d t \bigg)^\frac{r}{r+1}	& \leq C\left(\|\u_0\|_{\H}+ \|g\|_0+\|\f\|_{\H} \right)+C\left(\|\u_0\|_{\H}^{\frac{2}{r+1}}+ \|g\|_0^{\frac{2}{r+1}}+\|\f\|_{\H}^{\frac{2}{r+1}} \right)
			\\& \leq C\left(\|\u_0\|_{\H}^{\frac{2}{r+1}}+ \|g\|_0^{\frac{2}{r+1}}+	\|g_t\|_0^{\frac{2}{r+1}}+\|\nabla \boldsymbol{\varphi}\|_{\H}^{\frac{2}{r+1}}+\|\nabla \psi-\mu \Delta \boldsymbol{\varphi}\|_{\L^2}^{\frac{2}{r+1}}\right),
		\end{align*}
		where $C$ depends on the input data,  $\mu,\alpha,\beta,r$ and $T$, and we have used Lemma  \ref{lemmaSf}.
		\vskip 0.2 cm
		\noindent\textbf{Case II:} \emph{$d=3$ and $r \geq 3$.}
	The stability of the pressure gradient can be justified using  Taylor's formula, H\"older's and interpolation inequalities in \eqref{d}, and then energy estimates obtained in the Lemmas \ref{lemb1}, \ref{lemmaS1} and \ref{lemmaSf} as follows: 	For all $\Psi \in \mathrm{L}^2(0,T;\V) \cap\mathrm{L}^{r+1}(0,T;\wi{\L}^{r+1}) $ 
		\begin{align*}
		&	\bigg|\int_0^T \langle\nabla p(t),\Psi (t) \rangle \d t \bigg| \\&\leq \bigg|\int_0^T \langle (\u_1(t) \cdot\nabla) \u(t),\Psi (t) \rangle \d t \bigg|+\bigg|\int_0^T \langle (\u(t) \cdot\nabla) \u_2(t),\Psi (t) \rangle \d t \bigg| \\&\quad+\bigg|\int_0^T \big\langle (|\u_1(t)|^{r-1}\u_1(t)-|\u_2(t)|^{r-1}\u_2(t)),\Psi (t) \big\rangle \d t \bigg|
			\\& \leq \int_0^T \left(\| \u_1(t)\|_{\wi{\L}^\frac{2(r+1)}{r-1}}+\| \u_2(t)\|_{\wi{\L}^\frac{2(r+1)}{r-1}}\right)\|\u(t)\|_{\wi{\L}^{r+1}}\|\nabla\Psi(t)\|_{\H}\d t \\&\quad+ C\int_0^T\|(\u_1-\u_2)(t)\|_{\wi{\L}^{r+1}}\bigg(\| \u_1(t)\|_{\wi{\L}^{r+1}}+\| \u_2(t)\|_{\wi{\L}^{r+1}}\bigg)^{r-1}\| \Psi(t)\|_{\wi{\L}^{r+1}}
			\\& \leq \int_0^T \left(\| \u_1(t)\|_{\H}^\frac{r-3}{r-1}\| \u_1(t)\|_{\wi{\L}^{r+1}}^\frac{2}{r-1}+\| \u_2(t)\|_{\H}^\frac{r-3}{r-1}\| \u_2(t)\|_{\wi{\L}^{r+1}}^\frac{2}{r-1}\right)\|\u(t)\|_{\wi{\L}^{r+1}}\|\nabla\Psi(t)\|_{\H}\d t \\&\quad+ C\int_0^T\|(\u_1-\u_2)(t)\|_{\wi{\L}^{r+1}}\left(\| \u_1(t)\|_{\wi{\L}^{r+1}}^{r-1}+\| \u_2(t)\|_{\wi{\L}^{r+1}}^{r-1}\right)\| \Psi(t)\|_{\wi{\L}^{r+1}}
			\\& \leq C\bigg\{\bigg(\int_0^T\|\u_1(t)\|_{\H}^2 \d t\bigg)^\frac{r-3}{2(r-1)} \bigg(\int_0^T \big(\| \u_1(t)\|_{\wi{\L}^{r+1}}^{r+1} \d t\bigg)^\frac{2}{r^{2}-1} \bigg(\int_0^T \| \u(t)\|_{\wi{\L}^{r+1}}^{r+1}\big) \d t \bigg)^\frac{1}{r+1}
			\\&\quad+\bigg(\int_0^T\|\u_2(t)\|_{\H}^2 \d t\bigg)^\frac{r-3}{2(r-1)} \bigg(\int_0^T \big(\| \u_2(t)\|_{\wi{\L}^{r+1}}^{r+1} \d t\bigg)^\frac{2}{r^{2}-1} \bigg(\int_0^T \| \u(t)\|_{\wi{\L}^{r+1}}^{r+1}\big) \d t \bigg)^\frac{1}{r+1}\bigg\}
			\\&\qquad \times
			\bigg(\int_0^T \| \Psi(t)\|_{\V}^2 \d t \bigg)^\frac{1}{2}
			\\& \quad+C \bigg(\int_0^T \big(\| \u_1(t)\|_{\wi{\L}^{r+1}}^{r+1}+\| \u_2(t)\|_{\wi{\L}^{r+1}}^{r+1}\big) \d t \bigg)^\frac{r-1}{r+1} \bigg(\int_0^T \| \u(t)\|_{\wi{\L}^{r+1}}^{r+1}\big) \d t \bigg)^\frac{1}{r+1}\\&\quad\times \bigg(\int_0^T \| \Psi(t)\|_{\wi{\L}^{r+1}}^{r+1} \d t \bigg)^\frac{1}{r+1}
			\\& \leq C\bigg(\mu,\alpha, \beta,r,T,\|\u_{0i}\|_{\H},\|\f_i\|_{\H},\|g_i\|_{0},T\bigg)\bigg(\int_0^T \| \u(t)\|_{\wi{\L}^{r+1}}^{r+1}\big) \d t \bigg)^\frac{1}{r+1}
				\\& \qquad \times
			\bigg\{\bigg(\int_0^T \| \Psi(t)\|_{\V}^2 \d t \bigg)^\frac{1}{2}+\bigg(\int_0^T \| \Psi(t)\|_{\wi{\L}^{r+1}}^{r+1} \d t \bigg)^\frac{1}{r+1}\bigg\},
		\end{align*}
for $i=1,2$. Thus, it is immediate that $$\nabla p \in \mathrm{L}^2(0,T;\H^{-1})+\mathrm{L}^\frac{r+1}{r}\big(0,T;{\L}^\frac{r+1}{r}\big)\subset\mathrm{L}^\frac{r+1}{r}\big(0,T;\H^{-1}+{\L}^\frac{r+1}{r}\big).$$
	and 	\begin{align*}
		&\|\nabla p \|_{\mathrm{L}^\frac{r+1}{r}\big(0,T;\H^{-1}+{\L}^\frac{r+1}{r}\big)}\leq C\|\nabla( p_1-p_2) \|_{\mathrm{L}^2(0,T;\H^{-1})+\mathrm{L}^\frac{r+1}{r}\big(0,T;{\L}^\frac{r+1}{r}\big)}\nonumber\\&\leq C\left(\|\u_{01}-\u_{02}\|_{\H}^{\frac{2}{r+1}}+\|g_1-g_2\|_0^{\frac{2}{r+1}}+\|(g_1-g_2)_t\|_0^{\frac{2}{r+1}}\right.\nonumber\\&\quad\left.+\| \nabla (\boldsymbol{\varphi}_1-\boldsymbol{\varphi}_2)\|_{\H}^{\frac{2}{r+1}}+\| \nabla (\psi_1-\psi_2)-\mu \Delta(\boldsymbol{\varphi}_1-\boldsymbol{\varphi}_2)\|_{\L^2}^{\frac{2}{r+1}}\right).
	\end{align*}

From the stability estimate of the pressure gradient $\nabla p$ and Lemmas \ref{lemmaS1}-\ref{lemmaSf}, we can see that the solution depends continuously on the data. The H\"older type stability of the solution $\{\u,\nabla  p, \f\}$ given in \eqref{1k1} and \eqref{1k} can be established in a similar way. }
\end{proof}

	\begin{appendix}
	\renewcommand{\thesection}{\Alph{section}}
	\numberwithin{equation}{section}
		\section{Proof of Theorem \ref{thm1}}\label{sec5}\setcounter{equation}{0}
	\begin{proof}[Proof of Theorem \ref{thm1}]
		Let us assume that the nonlinear operator equation \eqref{1i} has a solution within the domain $\mathcal{D}$, denoted as $\f$. By substituting $\f$ into equation \eqref{1a}, we employ the system \eqref{1a}-\eqref{2a} to find a pair of functions $(\u,\nabla p)$ as the solution to the direct problem corresponding to the divergence-free external forcing function $\boldsymbol{F}(x,t)=\f(x)g(x,t)$.
		
		To establish that $\u$ and $\nabla p$ satisfy the overdetermination  condition \eqref{1d}, we consider $$\u(x,T)=\boldsymbol{\varphi}_1(x) \ \ \ \ \text{and} \ \ \ \ \nabla p(x,T)=\nabla\psi_1(x),  \ \ \ \ x \ \in \ \Omega,$$
		and
		\begin{align}\label{3a}
			\u_t(x,T)-\mu \Delta \boldsymbol{\varphi}_1+(\boldsymbol{\varphi}_1\cdot\nabla)\boldsymbol{\varphi}_1+\alpha \boldsymbol{\varphi}_1+\beta|\boldsymbol{\varphi}_1|^{r-1}\boldsymbol{\varphi}_1+\nabla \psi_1=\boldsymbol{f}g(x,T).
		\end{align}
		On the other hand, \eqref{1i} implies that
		\begin{align}\label{3b}
			\mathcal{A}\f-\mu \Delta \boldsymbol{\varphi}+(\boldsymbol{\varphi}\cdot\nabla)\boldsymbol{\varphi}+\alpha \boldsymbol{\varphi}+\beta|\boldsymbol{\varphi}|^{r-1}\boldsymbol{\varphi}+\nabla \psi=\boldsymbol{f}g(x,T).
		\end{align}
	By employing \eqref{1h} in  \eqref{3b} and then combining \eqref{3a} and \eqref{3b}, we can readily confirm that the functions $(\boldsymbol{\varphi}-\boldsymbol{\varphi}_1)$ and $\nabla(\psi-\psi_1)$ satisfy the following system of equations:
		\begin{equation}\label{3c}
			\left\{
			\begin{aligned}
				-\mu \Delta (\boldsymbol{\varphi}-\boldsymbol{\varphi}_1)+\alpha(\boldsymbol{\varphi}-\boldsymbol{\varphi}_1)+((\boldsymbol{\varphi}-\boldsymbol{\varphi}_1)\cdot\nabla)\boldsymbol{\varphi}+((\boldsymbol{\varphi}_1&\cdot\nabla)(\boldsymbol{\varphi}-\boldsymbol{\varphi}_1)\\+\beta(|\boldsymbol{\varphi}|^{r-1}\boldsymbol{\varphi}-|\boldsymbol{\varphi}_1|^{r-1}\boldsymbol{\varphi}_1)+\nabla (\psi-\psi_1)&=\boldsymbol{0}, \ \ \text{in} \ \Omega, \\ \nabla \cdot  (\boldsymbol{\varphi}-\boldsymbol{\varphi}_1)&=0, \ \  \text{in} \ \Omega, \\
				\boldsymbol{\varphi}-\boldsymbol{\varphi}_1&=\boldsymbol{0}, \ \  \text{on} \ \partial\Omega.
			\end{aligned}
			\right.
		\end{equation}
		Multiplying with $(\boldsymbol{\varphi}-\boldsymbol{\varphi}_1)$ to the first equation in \eqref{3c}, we get
		\begin{align}\label{3d}
			\mu\|\nabla (\boldsymbol{\varphi}-\boldsymbol{\varphi}_1)\|_{\H}^2+\alpha \| \boldsymbol{\varphi}-\boldsymbol{\varphi}_1\|_{\H}^2&=-(((\boldsymbol{\varphi}-\boldsymbol{\varphi}_1)\cdot\nabla)\boldsymbol{\varphi},\boldsymbol{\varphi}-\boldsymbol{\varphi}_1)\nonumber\\&\quad-\beta(|\boldsymbol{\varphi}|^{r-1}\boldsymbol{\varphi}-|\boldsymbol{\varphi}_1|^{r-1}\boldsymbol{\varphi}_1,\boldsymbol{\varphi}-\boldsymbol{\varphi}_1).
		\end{align}
		For $r\geq 1$, we have (see Section 2.4 \cite{MTM4,MTM6})
		\begin{align}
			\beta\left(|\boldsymbol{\varphi}|^{r-1}\boldsymbol{\varphi}-|\boldsymbol{\varphi}_1|^{r-1}\boldsymbol{\varphi}_1,\boldsymbol{\varphi}-\boldsymbol{\varphi}_1\right)&\geq \frac{\beta}{2}\||\boldsymbol{\varphi}|^\frac{r-1}{2}(\boldsymbol{\varphi}-\boldsymbol{\varphi}_1)\|_{\H}^2+\frac{\beta}{2}\||\boldsymbol{\varphi}_1|^\frac{r-1}{2}(\boldsymbol{\varphi}-\boldsymbol{\varphi}_1)\|_{\H}^2  \label{3e} \\&\geq\frac{\beta}{2}\||\boldsymbol{\varphi}|^\frac{r-1}{2}(\boldsymbol{\varphi}-\boldsymbol{\varphi}_1)\|_{\H}^2 \geq0.\label{3e1}
		\end{align}
		\vskip 0.2 cm
		\noindent\textbf{Case I:} \emph{$d=2$ and $r\in[1,3]$.}
		Applying H\"older's, Ladyzhenskaya's and Young's inequalities, we obtain
		\begin{align}\label{3f}
			\nonumber	\big|\big(((\boldsymbol{\varphi}-\boldsymbol{\varphi}_1)\cdot\nabla)\boldsymbol{\varphi},\boldsymbol{\varphi}-\boldsymbol{\varphi}_1\big)\big| &\leq\|\nabla (\boldsymbol{\varphi}-\boldsymbol{\varphi}_1)\|_{\H}\| \boldsymbol{\varphi}-\boldsymbol{\varphi}_1\|_{\widetilde{\L}^4}\| \boldsymbol{\varphi}\|_{\widetilde{\L}^4} \nonumber\\&\leq (2)^\frac{1}{4}\|\nabla (\boldsymbol{\varphi}-\boldsymbol{\varphi}_1)\|_{\H}^\frac{3}{2}\| \boldsymbol{\varphi}-\boldsymbol{\varphi}_1\|_{\H}^\frac{1}{2}\| \boldsymbol{\varphi}\|_{\widetilde{\L}^4}
			\nonumber\\&\leq \frac{\alpha}{2}\| \boldsymbol{\varphi}-\boldsymbol{\varphi}_1\|_{\H}^2+\frac{3}{4 \alpha^\frac{1}{3}}\|\nabla (\boldsymbol{\varphi}-\boldsymbol{\varphi}_1)\|_{\H}^2\| \boldsymbol{\varphi}\|_{\widetilde{\L}^4}^\frac{4}{3}.
		\end{align}
		Substituting \eqref{3e1} and \eqref{3f} in \eqref{3d}, we obtain
		\begin{align}\label{a8}
			\bigg(\mu-\frac{3}{4 \alpha^\frac{1}{3}}\| \boldsymbol{\varphi}\|_{\widetilde{\L}^4}^\frac{4}{3}\bigg)\|\nabla(\boldsymbol{\varphi}-\boldsymbol{\varphi}_1)\|_{\H}^2+\frac{\alpha}{2}\|\boldsymbol{\varphi}-\boldsymbol{\varphi}_1\|_{\H}^2\leq0.
		\end{align}
		If $\frac{3}{4 \alpha^\frac{1}{3}}\| \boldsymbol{\varphi}\|_{\widetilde{\L}^4}^\frac{4}{3}\leq  \mu$ and $\alpha>0$, then $\|\nabla(\boldsymbol{\varphi}-\boldsymbol{\varphi}_1)\|_{\H}=0$ and $\|\boldsymbol{\varphi}-\boldsymbol{\varphi}_1\|_{\H}=0$, so that  $\boldsymbol{\varphi}=\boldsymbol{\varphi}_1$ and $\nabla\psi=\nabla\psi_1$, by using \eqref{3c}.
		\vskip 0.2 cm
		\noindent\textbf{Case II:} 	\emph{$d=2,3$ and $r>3$.}
		A calculation similar to \eqref{n22} gives
		\begin{align}\label{3j}
			\big|\big(((\boldsymbol{\varphi}-\boldsymbol{\varphi}_1)\cdot\nabla)(\boldsymbol{\varphi}-\boldsymbol{\varphi}_1),\boldsymbol{\varphi}\big)\big|& \leq\|\nabla (\boldsymbol{\varphi}-\boldsymbol{\varphi}_1)\|_{\H}\|\boldsymbol{\varphi} (\boldsymbol{\varphi}-\boldsymbol{\varphi}_1)\|_{\H}
			\no \\& \leq \frac{\mu}{2}\|\nabla (\boldsymbol{\varphi}-\boldsymbol{\varphi}_1)\|_{\H}^2+\frac{\beta}{2}\||\boldsymbol{\varphi}|^\frac{r-1}{2}(\boldsymbol{\varphi}-\boldsymbol{\varphi}_1)\|_{\H}^2\no \\&\quad +\frac{r-3}{2 \mu(r-1)}\left(\frac{2}{\beta\mu (r-1)}\right)^{\frac{2}{r-3}}\|\boldsymbol{\varphi}-\boldsymbol{\varphi}_1\|_{\H}^2,
		\end{align}
	for $r>3$. 
		Substituting \eqref{3e1} and \eqref{3j} in \eqref{3d}, we deduce
		\begin{align*}
			&\frac{\mu }{2}\|\nabla(\boldsymbol{\varphi}-\boldsymbol{\varphi}_1)\|_{\H}^2+\bigg(\alpha-\frac{r-3}{2\mu(r-1)}\left(\frac{2}{\beta\mu (r-1)}\right)^{\frac{2}{r-3}}\bigg)\|\boldsymbol{\varphi}-\boldsymbol{\varphi}_1\|_{\H}^2 \leq 0.
		\end{align*}
		If $\frac{r-3}{2\mu(r-1)}\left(\frac{2}{\beta\mu (r-1)}\right)^{\frac{2}{r-3}}<\alpha$ and $\mu >0$, then $\|\boldsymbol{\varphi}-\boldsymbol{\varphi}_1\|_{\H}=0$, so that
		$\boldsymbol{\varphi}=\boldsymbol{\varphi}_1$ and $\nabla\psi=\nabla\psi_1$.
		\vskip 0.3cm
		\noindent
		\vskip 0.2 cm
		\noindent\textbf{Case III:} \emph{$d=r=3$.} From \eqref{3e1}, we obtain
		\begin{align}\label{3k}
			\beta(|\boldsymbol{\varphi}|^2\boldsymbol{\varphi}-|\boldsymbol{\varphi}_1|^2\boldsymbol{\varphi}_1,\boldsymbol{\varphi}-\boldsymbol{\varphi}_1)\geq \frac{\beta}{2}\||\boldsymbol{\varphi}|(\boldsymbol{\varphi}-\boldsymbol{\varphi}_1)\|_{\H}^2,
		\end{align}
		 \begin{align}\label{3l}
		\text{and} \ \ \ \ 	\nonumber\big|\big(((\boldsymbol{\varphi}-\boldsymbol{\varphi}_1)\cdot\nabla)(\boldsymbol{\varphi}-\boldsymbol{\varphi}_1),\boldsymbol{\varphi}\big)\big| &\leq\|\nabla (\boldsymbol{\varphi}-\boldsymbol{\varphi}_1)\|_{\H}\|\boldsymbol{\varphi} (\boldsymbol{\varphi}-\boldsymbol{\varphi}_1)\|_{\H}\\&\leq\frac{1}{2\beta}\|\nabla (\boldsymbol{\varphi}-\boldsymbol{\varphi}_1)\|_{\H}^2+\frac{\beta}{2}\|\boldsymbol{\varphi} (\boldsymbol{\varphi}-\boldsymbol{\varphi}_1)\|_{\H}^2.
		\end{align}
		Gathering the estimates \eqref{3k} and \eqref{3l}, and then substituting it in \eqref{3d}, we get
		\begin{align}
			\left(\mu-\frac{1}{2\beta}\right)\| \nabla(\boldsymbol{\varphi}-\boldsymbol{\varphi}_1)\|_{\H}^2 +\alpha\| \boldsymbol{\varphi}-\boldsymbol{\varphi}_1\|_{\H}^2\leq 0.
		\end{align}
		For $2\beta\mu\geq 1$ and $\alpha>0$, we get  $\|\nabla(\boldsymbol{\varphi}-\boldsymbol{\varphi}_1)\|_{\H}=0$ and $\|\boldsymbol{\varphi}-\boldsymbol{\varphi}_1\|_{\H}=0$, and hence, we find
		$\boldsymbol{\varphi}=\boldsymbol{\varphi}_1$ and $\nabla\psi=\nabla\psi_1$.
		\vskip 0.3cm
		On the other hand, let us suppose that the inverse problem \eqref{1a}-\eqref{1d} admits a solution, denoted as $\{\u,\nabla p,\f\}$. Utilizing the elliptic regularity of the solution, such as the Cattabriga regularity theorem (see \cite{LCa,Te1}, etc. or Corollary 2, \cite{KT2}) (see Appendix \ref{sec6} also for regularity results on the solution), when we evaluate the system \eqref{1a} at $t=T$, we obtain
		\begin{align}
			\nonumber &\u_t(x,T)-\mu \Delta\u(x,T)+(\u(x,T)\cdot\nabla)\u(x,T)+\alpha \u(x,T)\\&\quad+\beta|\u(x,T)|^{r-1}\u(x,T)+\nabla p(x,T)=\boldsymbol{f}(x)g(x,T).
		\end{align}
		From the given final overdetermination data \eqref{1d} and the definition of the operator $\mathcal{A}$, it follows that
		\begin{align*}
			\nonumber\mathcal{A}\f&-\mu \Delta \boldsymbol{\varphi}+(\boldsymbol{\varphi}\cdot\nabla)\boldsymbol{\varphi}+\alpha \boldsymbol{\varphi}+ \beta|\boldsymbol{\varphi}|^{r-1}\boldsymbol{\varphi}+\nabla \psi=\boldsymbol{f}g(x,T),
		\end{align*}
	and hence 
		\begin{align*}
			\f=\frac{1}{g(x,T)}\left(\mathcal{A}\f-\mu \Delta \boldsymbol{\varphi}+(\boldsymbol{\varphi}\cdot\nabla)\boldsymbol{\varphi}+\alpha \boldsymbol{\varphi}+\beta|\boldsymbol{\varphi}|^{r-1}\boldsymbol{\varphi}+\nabla \psi\right)=\mathcal{B}\f.
		\end{align*}
	Therefore, we can conclude that the nonlinear operator equation \eqref{1i} possesses a solution, thereby completing the proof.
	\end{proof}
	
	\section{A-priori Energy Estimates}\label{sec6}\setcounter{equation}{0}
	In this section, we obtain a number of \emph{a-priori} estimates  for the solutions of the system   \eqref{1a}-\eqref{2a}, since the existence, uniqueness  and regularity of generalized solutions for the system \eqref{1a}-\eqref{2a} are known (cf. \cite{KWH,KT2,PAM,MTM4}, etc.). These estimates are necessary  to prove the existence, uniqueness, and stability of the solution of our inverse problem (proof of Theorem \ref{thm2}, see sections \ref{sec4} and \ref{sec4b} above). To obtain the energy estimates for the solutions of the CBF equations \eqref{1a}-\eqref{2a}, we assume that $\u_0 \in \H,$ $g, g_t \in \C(\T \times[0,T])$ satisfy assumption \eqref{1e} and $\f \in \H$.
	
	\begin{lemma}\label{lemb1}
		Let $(\u(\cdot),\nabla p(\cdot))$ be the unique solution of the CBF  equations \eqref{1a}-\eqref{2a} and $\u_0 \in \H$. Then, for $r \geq 1$, the following estimate holds for all $t \in [0,T]$:
		\begin{align}\label{bb1}
			&\sup_{s\in[0,t]}\|\u(s)\|_{\H}^2+2\mu \int_0^t\|\nabla\u(s)\|_{\H}^2\d s+ \alpha\int_0^t\|\u(s)\|_{\H}^2\d s+2\beta\int_0^t\|\u(s)\|_{\widetilde{\L}^{r+1}}^{r+1}\d s \no \\ &\leq \|\u_0\|_{\H}^2+\frac{t}{\alpha}\|g\|_0^2\|\f\|_{\H}^2.
		\end{align}
	\end{lemma}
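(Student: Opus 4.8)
The plan is to derive the basic energy identity and then close it with Young's inequality. First I would take the $\H$ inner product of the momentum equation \eqref{1a} with $\u(\cdot)$. The pressure term drops out since $\langle\nabla p,\u\rangle=0$ (the velocity is divergence free and, after an integration by parts over $\T$, the periodicity eliminates the boundary contribution), and the convective term vanishes because $\langle(\u\cdot\nabla)\u,\u\rangle=0$ for the same reasons. This is precisely the identity \eqref{n6}, namely
\[
\tfrac12\tfrac{\d}{\d t}\|\u(t)\|_{\H}^2+\mu\|\nabla\u(t)\|_{\H}^2+\alpha\|\u(t)\|_{\H}^2+\beta\|\u(t)\|_{\widetilde{\L}^{r+1}}^{r+1}=(\f g(t),\u(t)),
\]
valid for a.e.\ $t\in[0,T]$ and for every $r\geq1$.

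Next I would estimate the forcing term by Cauchy--Schwarz in the space variable followed by Young's inequality, using $\|g(\cdot,t)\|_{\mathrm{L}^\infty}\le\|g\|_0$:
\[
(\f g(t),\u(t))\le\|g\|_0\|\f\|_{\H}\|\u(t)\|_{\H}\le\tfrac{\alpha}{2}\|\u(t)\|_{\H}^2+\tfrac{1}{2\alpha}\|g\|_0^2\|\f\|_{\H}^2.
\]
The term $\tfrac{\alpha}{2}\|\u(t)\|_{\H}^2$ is then absorbed by the damping term $\alpha\|\u(t)\|_{\H}^2$ already present on the left; multiplying the resulting inequality by $2$ gives the differential inequality
\[
\tfrac{\d}{\d t}\|\u(t)\|_{\H}^2+2\mu\|\nabla\u(t)\|_{\H}^2+\alpha\|\u(t)\|_{\H}^2+2\beta\|\u(t)\|_{\widetilde{\L}^{r+1}}^{r+1}\le\tfrac{1}{\alpha}\|g\|_0^2\|\f\|_{\H}^2.
\]
Finally I would integrate this over $[0,s]$ for an arbitrary $s\in[0,t]$. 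Since the four terms on the left are nonnegative and the right-hand side integrates to $\tfrac{s}{\alpha}\|g\|_0^2\|\f\|_{\H}^2\le\tfrac{t}{\alpha}\|g\|_0^2\|\f\|_{\H}^2$, I obtain
\[
\|\u(s)\|_{\H}^2+2\mu\int_0^s\|\nabla\u\|_{\H}^2\d\tau+\alpha\int_0^s\|\u\|_{\H}^2\d\tau+2\beta\int_0^s\|\u\|_{\widetilde{\L}^{r+1}}^{r+1}\d\tau\le\|\u_0\|_{\H}^2+\tfrac{t}{\alpha}\|g\|_0^2\|\f\|_{\H}^2
\]
for every $s\in[0,t]$. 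Because the three time integrals are nondecreasing in their upper limit and the right-hand side is nondecreasing in $t$, taking the supremum over $s\in[0,t]$ in the first term while evaluating the integrals at $s=t$ yields \eqref{bb1}.

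The argument is entirely routine, so there is no genuine analytic obstacle beyond the estimates already used in the body of the paper. The only point that deserves care is the justification that the weak solution $\u$ may be tested against itself, so that the energy relation \eqref{n6} holds with \emph{equality} rather than a mere inequality; this is precisely what is guaranteed by the known existence--uniqueness--regularity theory for the CBF equations recalled at the start of this appendix, which places $\u$ in $\mathrm{L}^{\infty}(0,T;\H)\cap\mathrm{L}^{2}(0,T;\V)\cap\mathrm{L}^{r+1}(0,T;\widetilde{\L}^{r+1})$ with $\u_t$ in the appropriate dual space, making all the duality pairings above well defined and the energy equality legitimate.
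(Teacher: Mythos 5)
Your proof is correct and follows essentially the same route as the paper's: take the $\H$-inner product with $\u$, use $\langle(\u\cdot\nabla)\u,\u\rangle=\langle\nabla p,\u\rangle=0$, estimate the forcing by H\"older and Young with the $\frac{\alpha}{2}$-split so the damping term absorbs $\frac{\alpha}{2}\|\u\|_{\H}^2$, and integrate in time. Your write-up merely spells out the absorption, the supremum step, and the justification for testing with $\u$ itself, all of which the paper leaves implicit.
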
 
	\begin{proof}
Taking the inner product with $ \u(\cdot)$ in \eqref{1a} and integrating the resulting equation over $\T$, we obtain 
	\begin{align}\label{bb2}
			\frac{1}{2}\frac{\d}{\d t}\|\u(t)\|_{\H}^2+\mu \|\nabla\u(t)\|_{\H}^2+\alpha \|\u(t)\|_{\H}^2+\beta\|\u(t)\|_{\widetilde{\L}^{r+1}}^{r+1}=(\f g(t),\u(t)),
	\end{align}
	for a.e. $t\in[0,T]$. Using H\"older's and Young's inequalities, we have
	\begin{align}\label{bb3}
		|(\f g,\u)| \leq \|g\|_{\mathrm{L}^{\infty}}\|\f\|_{\H}\|\u\|_{\H}\leq \frac{\alpha}{2}\|\u\|_{\H}^2+\frac{1}{2 \alpha}\|g\|_{\mathrm{L}^{\infty}}^2\|\f\|_{\H}^2.
	\end{align}
Substituting \eqref{bb3} in \eqref{bb2} and integrating it from $0$ to $t$, we deduce the estimate \eqref{bb1}.
	\end{proof}

	\begin{lemma}\label{lemb2}
		Let $(\u(\cdot),\nabla p(\cdot))$ be the unique solution of the CBF  equations \eqref{1a}-\eqref{2a} and $\u_0 \in \H$.  Then, for all $t\in[\epsilon,T]$ and for any $0<\epsilon<T$,
			\begin{itemize}
			\item [$(i)$] 	for $d=2$ and $r \in[1,3],$   we have
		\begin{align}\label{bb4}
			&	\sup_{s\in[\epsilon,t]}\|\nabla\u(s)\|_{\H}^2 + \mu \int_{\epsilon}^t\|\Delta\u(s)\|_{\H}^2\d s+2\beta \int_{\epsilon}^t \big\||\u(s)|^\frac{r-1}{2}|\nabla \u(s)|\big\|_{\H}^2\d s
			\nonumber\\&\leq \frac{1}{2\mu}
			\bigg\{ \frac{1}{t}\|\u_0\|_{\H}^2+\bigg(\frac{1}{\alpha}+t\bigg)\|g\|_0^2\|\f\|_{\H}^2\bigg\},
		\end{align}
		\item [$(ii)$] 	for $d=2,3$ and $r >3,$ we have
	\begin{align}\label{bb5}
		&	\sup_{s\in[\epsilon,t]}\|\nabla\u(s)\|_{\H}^2 + \mu \int_{\epsilon}^t\|\Delta\u(s)\|_{\H}^2\d s+\beta \int_{\epsilon}^t \big\||\u(s)|^\frac{r-1}{2}|\nabla \u(s)|\big\|_{\H}^2\d s
		\nonumber\\&\leq \bigg(\frac{1}{2\mu t}+\frac{\eta}{2\mu}\bigg)
	\|\u_0\|_{\H}^2+\bigg(\frac{1}{2\mu \alpha}+\frac{t}{\mu}+\frac{\eta  t}{2\mu \alpha}\bigg)\|g\|_0^2\|\f\|_{\H}^2,
	\end{align}
		\item [$(iii)$] 	for $d=r=3$ with $\beta \mu > 1,$   we have
	\begin{align}\label{bb6}
		&	\sup_{s\in[\epsilon,t]}\|\nabla\u(s)\|_{\H}^2 + \mu \int_{\epsilon}^t\|\Delta\u(s)\|_{\H}^2\d s+2\bigg(\beta-\frac{1}{\mu}\bigg) \int_{\epsilon}^t \big\||\u(s)||\nabla \u(s)|\big\|_{\H}^2\d s
		\nonumber\\&\leq \frac{1}{2\mu}
		\bigg\{ \frac{1}{t}\|\u_0\|_{\H}^2+\bigg(\frac{1}{\alpha}+2t\bigg)\|g\|_0^2\|\f\|_{\H}^2\bigg\}.
	\end{align}
\end{itemize}
	\end{lemma}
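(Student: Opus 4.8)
The plan is to reproduce the higher-order energy computation of Lemma \ref{lemmabe2}, but now over an arbitrary subinterval $[\epsilon,T]$ and for an initial datum lying only in $\H$. First I would take the $\H$-inner product of \eqref{1a} with $-\Delta\u(\cdot)$ and integrate over $\T$, which yields precisely identity \eqref{n15}, together with the pressure relation $\langle\nabla p,-\Delta\u\rangle=0$ from \eqref{n16} and the key algebraic identity \eqref{1g} for $\int_{\T}(-\Delta\u)\cdot|\u|^{r-1}\u\,\d x$ that keeps the damping contribution $\beta\||\u|^{\frac{r-1}{2}}|\nabla\u|\|_{\H}^2$ nonnegative. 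The three cases then differ only in the treatment of the convective term $\langle(\u\cdot\nabla)\u,-\Delta\u\rangle$, and everything needed for that is already recorded earlier in the paper.

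For $d=2$ and $r\in[1,3]$ the convective term vanishes identically by \eqref{n18}, so after the forcing bound \eqref{n17} I obtain a clean differential inequality $\frac{\d}{\d s}\|\nabla\u\|_{\H}^2+\mu\|\Delta\u\|_{\H}^2+2\alpha\|\nabla\u\|_{\H}^2+2\beta\||\u|^{\frac{r-1}{2}}|\nabla\u|\|_{\H}^2\le\frac{1}{\mu}\|g\|_0^2\|\f\|_{\H}^2$, in which I simply discard the nonnegative $2\alpha\|\nabla\u\|_{\H}^2$. For $d=2,3$ and $r>3$ I would instead invoke the interpolation estimate \eqref{n22} to control the convective term by $\frac{\mu}{4}\|\Delta\u\|_{\H}^2+\frac{\beta}{2}\||\u|^{\frac{r-1}{2}}|\nabla\u|\|_{\H}^2+\frac{\eta}{2}\|\nabla\u\|_{\H}^2$; here the damping term absorbs half of the mixed norm, while the residual $\eta\|\nabla\u\|_{\H}^2$ is kept on the right-hand side rather than dropped (this is the source of the $\eta$-terms in \eqref{bb5}). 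For $d=r=3$ the analogous bound \eqref{n21} leaves the coefficient $2(\beta-\frac{1}{\mu})$ on the mixed-norm term, which is exactly why the hypothesis $\beta\mu>1$ is required in \eqref{bb6}.

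The crux is that, since $\u_0\in\H$ only, $\|\nabla\u(0)\|_{\H}$ is uncontrolled and the inequality cannot be integrated from $s=0$. To bypass this I would integrate the differential inequality from an interior time $\sigma$ up to $s$, and then average in $\sigma$ over $(0,t)$, using the global bound $\int_0^t\|\nabla\u(\sigma)\|_{\H}^2\,\d\sigma\le\frac{1}{2\mu}\|\u_0\|_{\H}^2+\frac{t}{2\mu\alpha}\|g\|_0^2\|\f\|_{\H}^2$ from Lemma \ref{lemb1} (equivalently, extracting a mean-value time $\sigma_0\in(0,t)$ at which $\|\nabla\u(\sigma_0)\|_{\H}^2$ is no larger than this time-average). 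This averaging is precisely the device that produces the $\frac{1}{t}$ prefactor on $\|\u_0\|_{\H}^2$ and lets us trade $\V$-regularity of the data for a mild singularity as $t\downarrow 0$; the same Lemma \ref{lemb1} bound then controls the leftover $\eta\int\|\nabla\u\|_{\H}^2$ in case $(ii)$, giving the $\frac{\eta}{2\mu}\|\u_0\|_{\H}^2$ and $\frac{\eta t}{2\mu\alpha}\|g\|_0^2\|\f\|_{\H}^2$ contributions.

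I expect the main obstacle to be exactly this low-regularity bookkeeping, not any new inequality: one must simultaneously retain the supremum $\sup_{s\in[\epsilon,t]}\|\nabla\u(s)\|_{\H}^2$, the dissipation integral $\mu\int_\epsilon^t\|\Delta\u\|_{\H}^2$, and the damping integral on the left while matching the stated constants, and care is needed so that averaging the initial slice over $(0,t)$ does not corrupt the retained dissipation and damping integrals. Since all the pointwise forcing, convective, and algebraic estimates are already available from Lemma \ref{lemmabe2} and the basic bound from Lemma \ref{lemb1}, the remaining work is to carry out the integration-and-averaging step cleanly in each of the three regimes and read off \eqref{bb4}, \eqref{bb5} and \eqref{bb6}.
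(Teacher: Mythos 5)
Your proposal is correct and follows essentially the same route as the paper's proof: the same inner product with $-\Delta\u$, the same case-by-case treatment of the convective term via \eqref{n18}, \eqref{n22} and \eqref{n21}, and the same device of integrating the resulting inequality over the starting time and invoking Lemma \ref{lemb1} to produce the $\frac{1}{t}$-prefactor on $\|\u_0\|_{\H}^2$. The paper carries out your ``averaging in $\sigma$'' step literally, integrating the pointwise bound $\|\nabla\u(t)\|_{\H}^2\le\|\nabla\u(\epsilon)\|_{\H}^2+\frac{t-\epsilon}{\mu}\|g\|_0^2\|\f\|_{\H}^2$ with respect to $\epsilon$ over $(0,t)$, exactly as you describe.
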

	\begin{proof} 
		Taking the inner product with $- \Delta \u(\cdot)$ in \eqref{1a} and integrating it over $\T$, we obtain 
		\begin{align}\label{bb7}
			\nonumber&\frac{1}{2} \frac{\d}{\d t}\|\nabla\u(t)\|_{\H}^2+\mu\|\Delta\u(t)\|_{\H}^2+\alpha \|\nabla\u(t)\|_{\H}^2+\beta \big\| |\u(t)|^{\frac{r-1}{2}} |\nabla \u(t)| \big\|_{\H}^2\nonumber\\&=(\f g(t),-\Delta \u(t))-((\u(t) \cdot\nabla)\u(t),-\Delta\u(t)),
		\end{align}
	for a.e. $t \in [\epsilon,T],$ for some $0<\epsilon< T$, where we have used the fact that
	$\langle \nabla p,-  \Delta \u \rangle=0$ (see Lemma \ref{lemmabe2}).
		\vskip 0.2 cm
		\noindent\textbf{Case I:} 	\emph{$d=2$ and $r \in [1,3]$.}
		Substituting the estimates \eqref{n17} and \eqref{n18} in \eqref{bb7}, and integrating the resulting estimate from $\epsilon>0$ to $t$, we arrive at  
		\begin{align}\label{bb8}
			\nonumber&\|\nabla \u(t)\|_{\H}^2+\mu\int_{\epsilon}^t\|\Delta \u(s)\|_{\H}^2\d s+2\beta  \int_{\epsilon}^t\big\||\u(s)|^\frac{r-1}{2}|\nabla \u(s)|\big\|_{\H}^2\d s\nonumber\\&\leq  \|\nabla\u(\epsilon)\|_{\H}^2+\frac{t-\epsilon}{\mu}\|g\|_0^2\|\f\|_{\H}^2,
		\end{align}
		for all $t\in[\epsilon,T]$.	From the above relation, we have
		\begin{align*}
			\|\nabla \u(t)\|_{\H}^2 \leq  \|\nabla\u(\epsilon)\|_{\H}^2+\frac{t-\epsilon}{\mu}\|g\|_0^2\|\f\|_{\H}^2.
		\end{align*}
		Integrating the above estimate over $\epsilon$ from $0$ to $t$, we deduce 	for all $t\in[\epsilon,T]$
		\begin{align*}
			\|\nabla \u(t)\|_{\H}^2
			&\leq \frac{1}{t}\bigg( \int_{0}^{t}\|\nabla\u(\epsilon)\|_{\H}^2 \d \epsilon+\|g\|_0^2\|\f\|_{\H}^2\int_{0}^t\frac{t-\epsilon}{\mu}\d \epsilon\bigg),
		\end{align*}
	which leads to \eqref{bb4} from \eqref{bb8} by using \eqref{bb1}.
		\vskip 0.2 cm
		\noindent\textbf{Case II:} 	\emph{$d=3$ and $r > 3$.} 
		Substituting the  estimates \eqref{n20} and \eqref{n22} in \eqref{bb7}, and integrating it from $\epsilon$ to $t$, we have the following estimate:  
		\begin{align}\label{bb9} 
			\nonumber&\|\nabla \u(t)\|_{\H}^2+\mu\int_{\epsilon}^t\|\Delta \u(s)\|_{\H}^2\d s+\beta  \int_{\epsilon}^t \big\||\u(s)|^\frac{r-1}{2}|\nabla\u(s)|\big\|_{\H}^2 \d  s\nonumber\\&\leq  \|\nabla\u(\epsilon)\|_{\H}^2+\frac{2(t-\epsilon)}{\mu}\|g\|_0^2\|\f\|_{\H}^2+ \eta\int_{\epsilon}^t\|\nabla \u(s)\|_{\H}^2 \d s,
		\end{align}
		for all $t\in[\epsilon,T]$,  where $\eta=\frac{2(r-3)}{\mu(r-1)}\left(\frac{4}{\beta \mu  (r-1)}\right)^\frac{2}{r-3}$. Using the estimate \eqref{bb1} in \eqref{bb9}, we get
		\begin{align*}
			\|\nabla \u(t)\|_{\H}^2 \leq  \|\nabla\u(\epsilon)\|_{\H}^2+\frac{2(t-\epsilon)}{\mu}\|g\|_0^2\|\f\|_{\H}^2+\frac{\eta}{2\mu}\bigg(\|\u_0\|_{\H}^2+\frac{t}{\alpha}\|g\|_0^2\|\f\|_{\H}^2\bigg),
		\end{align*}
		for all $t\in[\epsilon,T]$. Integrating the above estimate over $\epsilon$ from $0$ to $t$ and then using \eqref{bb1} in it, we find
		\begin{align*}
			\|\nabla \u(t)\|_{\H}^2
			&\leq \frac{1}{t}\bigg\{ \int_{0}^{t}\|\nabla\u(\epsilon)\|_{\H}^2 \d \epsilon+\frac{t^2}{\mu}\|g\|_0^2\|\f\|_{\H}^2+\frac{\eta t}{2\mu}\bigg(\|\u_0\|_{\H}^2+\frac{t}{\alpha}\|g\|_0^2\|\f\|_{\H}^2\bigg)\bigg\},
		\end{align*}
		and, from \eqref{bb9}, one can arrive at \eqref{bb5} by using \eqref{bb1}.
		\vskip 0.2 cm
		\noindent\textbf{Case III:} \emph{$d=r=3$ with $\beta \mu > 1$.} The  term $\beta  \big\||\u(t)|^\frac{r-1}{2}|\nabla\u(t)|\big\|_{\H}^2$ in \eqref{bb7} becomes $\beta  \big\||\u(t)| |\nabla\u(t)|\big\|_{\H}^2$.	Substituting the  estimates \eqref{n20}-\eqref{n21} in \eqref{bb7}, and integrating it from $\epsilon$ to $t$, we obtain  
		\begin{align}\label{bb10} 
			\nonumber&\|\nabla \u(t)\|_{\H}^2+\mu\int_{\epsilon}^t\|\Delta \u(s)\|_{\H}^2\d s +2\bigg(\beta -\frac{1}{\mu}\bigg)\int_{\epsilon}^t  \big\||\u(s)| |\nabla\u(s)|\big\|_{\H}^2 \d s\nonumber\\&\leq  \|\nabla\u(\epsilon)\|_{\H}^2+\frac{2(t-\epsilon)}{\mu }\|g\|_0^2\|\f\|_{\H}^2,
		\end{align}
		for all $t\in[\epsilon,T]$. A calculation similar to $d=2$ and $r \in [1,3]$ yields the estimate \eqref{bb6}, provided $\beta\mu > 1$.
	\end{proof}
	\begin{lemma}\label{lemb3}
		Let $(\u(\cdot),\nabla p(\cdot))$ be the unique solution of the CBF  equations \eqref{1a}-\eqref{2a} and $\u_0 \in \H$.  Then, for all $t\in[\epsilon,T]$ and for any $0<\epsilon<T$,
		\begin{itemize}
			\item [$(i)$] 	for $d=2$ and $r \in[1,3],$  we have
		\begin{align}\label{bb11}
			 \int_{\epsilon}^t\|\u_t(t)\|_{\H}^2\d t
			&\leq 
		 \frac{N_{11}}{t}\|\u_0\|_{\H}^2+(N_{12}+t)\|g\|_0^2\|\f\|_{\H}^2 \no \\&\quad+\frac{C}{2 \mu^3}\bigg\{\frac{1}{t^2}\|\u_0\|_{\H}^4+2\bigg(\frac{1}{\alpha^2}+t^2\bigg)	\|g\|_0^4\|\f\|_{\H}^4\bigg\},
		\end{align}
	where
	\begin{align*}
		N_{11}=\frac{3}{2}+\frac{1}{r+1} \ \ \ \text{and} \ \ \ N_{12}=\frac{N_{11}}{\alpha},
	\end{align*}
\item [$(ii)$] 	for $d=2,3$ and $r >3,$  we have
		\begin{align}\label{bb12}
			\int_{\epsilon}^t\|\u_t(s)\|_{\H}^2\d s \leq \bigg(\frac{N_{21}}{t}+N_{22}\bigg)\|\u_0\|_{\H}^2+\bigg(N_{23}t+N_{24}\bigg)\|g\|_0^2\|\f\|_{\H}^2,
		\end{align}
	where
	\begin{align*}
		&N_{21}=\frac{3}{2}+\frac{1}{r+1}+\frac{1}{2\mu}, \ \ \ N_{22}=\frac{\gamma^*+\eta}{2 \mu},\ \gamma^*=\frac{2(r-3)}{r-1}\left(\frac{4}{\beta(r-1)}\right)^\frac{2}{r-3}, \\&
		N_{23}=1+\frac{1}{\mu}+\frac{\gamma^*+\eta}{2\mu \alpha} \ \ \ \text{and} \ \ \ N_{24}=\frac{N_{21}}{\alpha},
	\end{align*}
\item [$(iii)$] 	for $d=r=3$ with $\beta \mu >1,$  we have
	\begin{align}\label{bb13}
	\int_{\epsilon}^t\|\u_t(s)\|_{\H}^2\d s \leq \frac{N_{31}}{t}\|\u_0\|_{\H}^2+\bigg(N_{32}t+N_{33}\bigg)\|g\|_0^2\|\f\|_{\H}^2,
\end{align}
where
\begin{align*}
	N_{31}=\frac{7}{4}+\frac{1}{2(\beta\mu-1)}, \ \ \ N_{32}=1+\frac{1}{\beta\mu-1} 
 \ \ \ \text{and} \ \ \  N_{33}=\frac{N_{31}}{\alpha}.
\end{align*}
\end{itemize}
	\end{lemma}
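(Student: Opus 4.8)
The plan is to run, for a general lower endpoint $\epsilon$, the same $\u_t$-testing computation that produced Lemma \ref{lemmae3}, only now replacing the mean-value-theorem times $t_1,t_2$ by the free parameter $\epsilon$ and feeding in the regularizing bounds of Lemmas \ref{lemb1} and \ref{lemb2} in place of \eqref{n-1}--\eqref{n14}. Concretely, I would take the inner product of \eqref{1a} with $\u_t$ and integrate over $\T$ to obtain the identity \eqref{n35}, in which $\langle\nabla p,\u_t\rangle=0$ and $\beta(|\u|^{r-1}\u,\u_t)=\frac{\beta}{r+1}\frac{\d}{\d t}\|\u\|_{\widetilde{\L}^{r+1}}^{r+1}$. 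The forcing term $(\f g,\u_t)$ and the convective term $((\u\cdot\nabla)\u,\u_t)$ are then estimated exactly as before: in Case~I ($d=2$, $r\in[1,3]$) by \eqref{n36} and the Gagliardo--Nirenberg bound \eqref{n37}, which converts the convective term into $C\|\u\|_{\H}^{2/3}\|\nabla\u\|_{\H}^{4/3}\|\Delta\u\|_{\H}^2$; in Case~II ($d=2,3$, $r>3$) by a direct Young estimate of $(\f g,\u_t)$ together with a damping-dominates-convection inequality of the type \eqref{n22}/\eqref{n41}, with the Young split tuned so that the surviving gradient coefficient equals the $\gamma^*$ of the statement (producing in addition a $\frac{\beta}{2}\||\u|^{(r-1)/2}|\nabla\u|\|_{\H}^2$ term); and in Case~III ($d=r=3$, $\beta\mu>1$) by \eqref{n43}.

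After absorbing the $\|\u_t\|_{\H}^2$ contributions into the left-hand side, I would integrate the resulting differential inequality from $\epsilon$ to $t$. This leaves on the right-hand side (i) the endpoint quantities $\|\nabla\u(\epsilon)\|_{\H}^2$, $\|\u(\epsilon)\|_{\H}^2$ and $\|\u(\epsilon)\|_{\widetilde{\L}^{r+1}}^{r+1}$ coming from the three exact-derivative terms, (ii) a forcing integral of size $\lesssim t\|g\|_0^2\|\f\|_{\H}^2$, and (iii) in Case~I the superlinear integral $\int_\epsilon^t\|\u\|_{\H}^{2/3}\|\nabla\u\|_{\H}^{4/3}\|\Delta\u\|_{\H}^2\,\d s$, while in Cases~II--III the spare integrals $\int_\epsilon^t\||\u|^{(r-1)/2}|\nabla\u|\|_{\H}^2\,\d s$ and $\int_\epsilon^t\|\nabla\u\|_{\H}^2\,\d s$. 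Each of these is now bounded purely in terms of the data: the endpoint gradient and the $\int\|\Delta\u\|_{\H}^2$ factor by the smoothing estimates \eqref{bb4}--\eqref{bb6}, which is precisely where the $1/t$ weights are generated (and, after multiplying two such bounds, the $1/t^2$ weight in \eqref{bb11} arises); the $\|\u(\epsilon)\|_{\H}^2$ and the remaining gradient/damping integrals by \eqref{bb1} and \eqref{bb4}--\eqref{bb6}; and the $\widetilde{\L}^{r+1}$ endpoint term, which accounts for the $\tfrac{1}{r+1}$ appearing in $N_{11}$ and $N_{21}$, by the same regularization applied to the $\widetilde{\L}^{r+1}$-energy of \eqref{bb2}. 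In Case~I one pulls $\sup_{[\epsilon,t]}\big(\|\u\|_{\H}^{2/3}\|\nabla\u\|_{\H}^{4/3}\big)$ out of the superlinear integral and retains $\int_\epsilon^t\|\Delta\u\|_{\H}^2\,\d s$, which produces the quartic term $\frac{C}{2\mu^3}\{\,t^{-2}\|\u_0\|_{\H}^4+\cdots\}$; in Cases~II--III the convective term is fully absorbed by the damping, so no quartic term survives and the bounds \eqref{bb12}, \eqref{bb13} stay quadratic in the data.

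The main obstacle is the uniform (in $\epsilon$) control of the boundary contributions at the lower endpoint together with the superlinear integral in Case~I. Because $\u_0$ lies only in $\H$, the quantities $\|\nabla\u(\epsilon)\|_{\H}^2$, $\|\u(\epsilon)\|_{\widetilde{\L}^{r+1}}^{r+1}$ and $\int_\epsilon^t\|\Delta\u\|_{\H}^2\,\d s$ are not controlled by the initial data alone; they must be handled through the parabolic smoothing estimates of Lemma \ref{lemb2}, whose derivation already averages the differential inequality in the left endpoint and thereby supplies the decisive $1/t$ gain. Obtaining the constants $N_{1i},N_{2i},N_{3i}$ and the exact powers of $t$ in \eqref{bb11}--\eqref{bb13} is then a matter of carefully tracking the Young's-inequality splits (with $\gamma^*$ and $\eta$ as in the statement and in Lemma \ref{lemb2}) and collecting the contributions; the structure is identical to the proof of Lemma \ref{lemmae3}, so no new idea beyond the interplay of \eqref{n35} with \eqref{bb1}--\eqref{bb6} is required.
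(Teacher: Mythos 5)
Your proposal is correct and follows essentially the same route as the paper's proof: testing \eqref{1a} with $\u_t$ to get \eqref{bb14}, using the identical per-case estimates (\eqref{n36}--\eqref{n37} for $d=2$, $r\in[1,3]$; the direct Young bound \eqref{bb16} together with the $\gamma^*$-split \eqref{bb17} for $r>3$; \eqref{bb16} and \eqref{bb19} for $d=r=3$), integrating from $\epsilon$ to $t$, and controlling the superlinear term (whence the quartic $\frac{C}{2\mu^3}\{\cdot\}$ contribution in Case I) and the spare damping/gradient integrals through Lemmas \ref{lemb1}--\ref{lemb2}. The only cosmetic difference is that for the endpoint terms $\mu\|\nabla\u(\epsilon)\|_{\H}^2+\alpha\|\u(\epsilon)\|_{\H}^2+\frac{2\beta}{r+1}\|\u(\epsilon)\|_{\widetilde{\L}^{r+1}}^{r+1}$ the paper re-runs the left-endpoint averaging inside the proof (integrating the resulting inequality over $\epsilon\in(0,t)$ and invoking \eqref{bb1}) instead of quoting sup-bounds from Lemma \ref{lemb2} plus a separate $\widetilde{\L}^{r+1}$ regularization as you suggest, which is precisely the mechanism you identified as the source of the $1/t$ weights.
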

	\begin{proof} 
		Taking the inner product with $\u_t(\cdot)$ in \eqref{1a} and integrating the resulting equation over $\T$, we arrive at 
		\begin{align}\label{bb14}
			\nonumber&\|\u_t(t)\|_{\H}^2+\frac{\mu}{2} \frac{\d}{\d t}\|\nabla\u(t)\|_{\H}^2+\frac{\alpha }{2}\frac{\d}{\d t}\|\u(t)\|_{\H}^2+\frac{\beta}{r+1}\frac{\d}{\d t}\|\u(t)\|_{\widetilde{\L}^{r+1}}^{r+1}\nonumber\\&=(\f g(t),\u_t(t))-((\u(t) \cdot\nabla)\u(t),\u_t(t)),
		\end{align}
	for a.e. $t\in[\epsilon,T]$, where we have used the fact that $\langle\nabla p,\u_t\rangle=0$. 
	\vskip 0.2 cm
	\noindent\textbf{Case I:} 	\emph{$d=2$ and $r \in [1,3]$.} 
		Substituting the estimates \eqref{n36} and \eqref{n37} in \eqref{bb14} and integrating it from $\epsilon$ to $t$, we obtain  
		\begin{align}\label{bb15}
			\nonumber&\int_{\epsilon}^t\|\u_t(s)\|_{\H}^2\d s+\mu\|\nabla \u(t)\|_{\H}^2+\alpha\|\u(t)\|_{\H}^2+\frac{2\beta}{r+1}\|\u(t)\|_{\widetilde{\L}^{r+1}}^{r+1}
			\nonumber\\&\leq \mu \|\nabla\u(\epsilon)\|_{\H}^2+\alpha\|\u(\epsilon)\|_{\H}^2+\frac{2\beta}{r+1}\|\u(\epsilon)\|_{\widetilde{\L}^{r+1}}^{r+1}+2(t-\epsilon)\|g\|_0^2\|\f\|_{\H}^2\nonumber\\&\quad+C\sup_{s\in[\epsilon,t]}\left(\| \u(s)\|_{\H}^\frac{2}{3}\|\nabla \u(s)\|_{\H}^\frac{4}{3}\right)\int_{\epsilon}^t\|\Delta\u(s)\|_{\H}^2 \d s,
		\end{align}
		for all $t\in[\epsilon,T]$. Using the estimate \eqref{bb4} in \eqref{bb15}, we immediately get
		\begin{align*}
			\mu\|\nabla \u(t)\|_{\H}^2 	&\leq \mu \|\nabla\u(\epsilon)\|_{\H}^2+\alpha\|\u(\epsilon)\|_{\H}^2+\frac{2\beta}{r+1}\|\u(\epsilon)\|_{\widetilde{\L}^{r+1}}^{r+1}\nonumber\\&\quad+2(t-\epsilon)\|g\|_0^2\|\f\|_{\H}^2+\frac{C}{2 \mu^3}\bigg\{\frac{1}{t^2}\|\u_0\|_{\H}^4+2\bigg(\frac{1}{\alpha^2}+t^2\bigg)	\|g\|_0^4\|\f\|_{\H}^4\bigg\}.
		\end{align*}
		Integrating the above estimate over $\epsilon$ from $0$ to $t$, we obtain
		\begin{align*}
			\mu\|\nabla \u(t)\|_{\H}^2
			&	\leq \frac{1}{ t}  \bigg[\mu \int_{0}^{t}\|\nabla\u(\epsilon)\|_{\H}^2 \d \epsilon+\alpha \int_{0}^{t}\|\u(\epsilon)\|_{\H}^2 \d \epsilon+\frac{2\beta}{r+1}\int_0^t\|\u(\epsilon)\|_{\widetilde{\L}^{r+1}}^{r+1}\d \epsilon\\&\quad
			+t^2\|g\|_0^2\|\f\|_{\H}^2+\frac{C t}{2 \mu^3}\bigg\{\frac{1}{t^2}\|\u_0\|_{\H}^4+2\bigg(\frac{1}{\alpha^2}+t^2\bigg)	\|g\|_0^4\|\f\|_{\H}^4\bigg\}\bigg].
		\end{align*}
		Thus, from \eqref{bb15}, we immediately derive \eqref{bb11} by using \eqref{bb1}.
		\vskip 0.2 cm
		\noindent\textbf{Case II:} 	\emph{$d=2,3$ and $r > 3$.} Using H\"older's and Young's inequalities, we have
		\begin{align}\label{bb16}
			|(\f g,\u_t)| \leq \|g\|_{\mathrm{L}^\infty}\|\f\|_{\H} \|\u_t\|_{\H} \leq \frac{1}{4} \|\u_t\|_{\H}^2+\|g\|_{\mathrm{L}^\infty}^2\|\f\|_{\H}^2.
		\end{align}
		An estimate similar to \eqref{n22} yields
		\begin{align}\label{bb17}
			|	\langle(\u \cdot \nabla) \u, \u_t \rangle|
			\leq  \frac{1}{4}\|\u_t\|_{\H}^2+\frac{\beta }{2}\big\||\u|^\frac{r-1}{2}|\nabla\u| \big\|_{\H}^2+\frac{\gamma^*}{2}\|\nabla\u\|_{\H}^2,
		\end{align}
		where $\gamma^*=\frac{2(r-3)}{r-1}\left(\frac{4}{\beta(r-1)}\right)^\frac{2}{r-3}$. Substituting the estimates \eqref{bb16} and \eqref{bb17} in \eqref{bb14} and integrating it from $\epsilon$ to $t$, we deduce 
			\begin{align}\label{bb18}
			\nonumber&\int_{\epsilon}^t\|\u_t(s)\|_{\H}^2\d s+\mu\|\nabla \u(t)\|_{\H}^2+\alpha\|\u(t)\|_{\H}^2+\frac{2\beta}{r+1}\|\u(t)\|_{\widetilde{\L}^{r+1}}^{r+1}\nonumber\\&\leq \mu \|\nabla\u(\epsilon)\|_{\H}^2+\alpha\|\u(\epsilon)\|_{\H}^2+\frac{2\beta}{r+1}\|\u(\epsilon)\|_{\widetilde{\L}^{r+1}}^{r+1}+2(t-\epsilon)\|g\|_0^2\|\f\|_{\H}^2\nonumber\\&\quad+\beta  \int_{\epsilon}^t \big\||\u(s)|^\frac{r-1}{2}|\nabla\u(s)|\big\|_{\H}^2 \d  s +\gamma^*\int_{\epsilon}^t\|\nabla \u(s)\|_{\H}^2 \d s,
		\end{align}
		for all $t\in[\epsilon,T]$. We use the energy estimates \eqref{bb1} and \eqref{bb5} in \eqref{bb18} to obtain
		\begin{align*}
			\mu\|\nabla \u(t)\|_{\H}^2	\nonumber&\leq \mu \|\nabla\u(\epsilon)\|_{\H}^2+\alpha\|\u(\epsilon)\|_{\H}^2+\frac{2\beta}{r+1}\|\u(\epsilon)\|_{\widetilde{\L}^{r+1}}^{r+1}+2(t-\epsilon)\|g\|_0^2\|\f\|_{\H}^2 \\& \quad+\bigg(\frac{1}{2\mu t}+\frac{\gamma^*+\eta}{2\mu}\bigg)
			\|\u_0\|_{\H}^2+\bigg(\frac{1}{2\mu \alpha}+\frac{t}{\mu}+\frac{(\gamma^*+\eta) t}{2\mu \alpha}\bigg)\|g\|_0^2\|\f\|_{\H}^2.
		\end{align*}
		Integrating the above estimate over $\epsilon$ from $0$ to $t$, we get
	\begin{align*}
		\mu\|\nabla \u(t)\|_{\H}^2
		&	\leq \frac{1}{ t}  \bigg\{\mu \int_{0}^{t}\|\nabla\u(\epsilon)\|_{\H}^2 \d \epsilon+\alpha\int_{0}^{t}\|\u(\epsilon)\|_{\H}^2 \d \epsilon+\frac{2\beta}{r+1}\int_0^t\|\u(\epsilon)\|_{\widetilde{\L}^{r+1}}^{r+1}\d \epsilon\\&\quad
		+t^2\|g\|_0^2\|\f\|_{\H}^2+t\bigg(\frac{1}{2\mu t}+\frac{\gamma^*+\eta}{2\mu}\bigg)
		\|\u_0\|_{\H}^2\\&\quad+t\bigg(\frac{1}{2\mu \alpha}+\frac{t}{\mu}+\frac{(\gamma^*+\eta) t}{2\mu \alpha}\bigg)\|g\|_0^2\|\f\|_{\H}^2\bigg\}.
	\end{align*}
From \eqref{bb18}, one can reach \eqref{bb12} by using the energy estimate \eqref{bb1}.
\vskip 0.2 cm
\noindent\textbf{Case III:} 	\emph{$d=r=3$ with $\beta \mu >1$.} Using H\"older's and Young's inequalities, we have
	\begin{align}\label{bb19}
	|	\langle(\u \cdot \nabla) \u, \u_t \rangle|
	\leq  \frac{1}{4}\|\u_t\|_{\H}^2+\big\||\u||\nabla\u| \big\|_{\H}^2.
\end{align}
Substituting the estimates \eqref{bb16} and \eqref{bb19} in \eqref{bb14} and integrating it from $\epsilon$ to $t$, we deduce
\begin{align}\label{bb20}
	\nonumber&\int_{\epsilon}^t\|\u_t(s)\|_{\H}^2\d s+\mu\|\nabla \u(t)\|_{\H}^2+\alpha\|\u(t)\|_{\H}^2+\frac{\beta}{2}\|\u(t)\|_{\widetilde{\L}^{4}}^{4}\nonumber\\&\leq \mu \|\nabla\u(\epsilon)\|_{\H}^2+\alpha\|\u(\epsilon)\|_{\H}^2+\frac{\beta}{2}\|\u(\epsilon)\|_{\widetilde{\L}^{4}}^{4}+2(t-\epsilon)\|g\|_0^2\|\f\|_{\H}^2\nonumber\\&\quad+2  \int_{\epsilon}^t \big\||\u(s)||\nabla\u(s)|\big\|_{\H}^2 \d  s,
\end{align}
for all $t\in[\epsilon,T]$. Using  \eqref{bb6} in \eqref{bb20} results in
\begin{align*}
	\mu\|\nabla \u(t)\|_{\H}^2	\nonumber&\leq \mu \|\nabla\u(\epsilon)\|_{\H}^2+\alpha\|\u(\epsilon)\|_{\H}^2+\frac{\beta}{2}\|\u(\epsilon)\|_{\widetilde{\L}^{4}}^{4}+2(t-\epsilon)\|g\|_0^2\|\f\|_{\H}^2 \\& \quad+\frac{1}{2(\beta\mu-1)}
	\bigg\{ \frac{1}{t}\|\u_0\|_{\H}^2+\bigg(\frac{1}{\alpha}+2t\bigg)\|g\|_0^2\|\f\|_{\H}^2\bigg\}.
\end{align*}
	Integrating the above estimate over $\epsilon$ from $0$ to $t$, we get
\begin{align*}
	\mu\|\nabla \u(t)\|_{\H}^2
	&	\leq \frac{1}{ t}  \bigg\{\mu \int_{0}^{t}\|\nabla\u(\epsilon)\|_{\H}^2 \d \epsilon+\alpha \int_{0}^{t}\|\u(\epsilon)\|_{\H}^2 \d \epsilon+\frac{\beta}{2}\int_0^t\|\u(\epsilon)\|_{\widetilde{\L}^{4}}^{4}\d \epsilon\\&\quad
	+t^2\|g\|_0^2\|\f\|_{\H}^2+\frac{t}{2(\beta\mu-1)}
	\bigg\{ \frac{1}{t}\|\u_0\|_{\H}^2+\bigg(\frac{1}{\alpha}+2t\bigg)\|g\|_0^2\|\f\|_{\H}^2\bigg\},
\end{align*}
which easily leads to \eqref{bb13} by \eqref{bb1}.
	\end{proof}
	\begin{lemma}\label{lemb4}
		Let $(\u(\cdot),\nabla p(\cdot))$ be the unique solution of the CBF  equations \eqref{1a}-\eqref{2a} and $\u_0 \in \H$. Then, for all $t\in[\epsilon_1,T]$ and for any $0<\epsilon<\epsilon_1<T$,
		\begin{itemize}
			\item [$(i)$] 	for $d=2$ and $r \in[1,3],$   we have
		\begin{align}\label{bb21}
			&\sup_{s\in[\epsilon_1,t]}\|\u_t(s)\|_{\H}^2+\mu\int_{\epsilon_1}^t\|\nabla\u_t(t)\|_{\H}^2\d t +\beta \int_{\epsilon_1}^t\big\||\u(t)|^\frac{r-1}{2}|\u_t(t)|\big\|_{\H}^2 \d t
			\no\\&	\leq C\bigg(1+\frac{1}{(t-\epsilon)^3}\bigg) \big(1+\|\u_0\|_{\H}^6\big)+ C\bigg(1+\frac{1}{(t-\epsilon)^3}\bigg) \bigg(1+\|\f\|_{\H}^6\big(\|g\|_0^6+\|g_t\|_0^6\big)\bigg),
\end{align}
		\item [$(ii)$] 	for $d=2,3$ and $r >3,$   we have
		\begin{align}\label{bb22}
			&\sup_{s\in[\epsilon_1,t]}\|\u_t(s)\|_{\H}^2+\mu\int_{\epsilon_1}^t\|\nabla\u_t(s)\|_{\H}^2\d s +\beta \int_{\epsilon_1}^t\big\||\u(s)|^\frac{r-1}{2}|\u_t(s)|\big\|_{\H}^2 \d s
			\no\\&	\leq C\bigg(1+\frac{1}{(t-\epsilon)^2}\bigg) \|\u_0\|_{\H}^2+C\bigg(1+\frac{1}{t-\epsilon}\bigg)\|\f\|_{\H}^2\big(\|g\|_0^2+\|g_t\|_0^2\big),
		\end{align}
	\item [$(iii)$] 	for $d=r=3$ with $\beta \mu >1,$  we have
	\begin{align}\label{bb221}
		&\sup_{s\in[\epsilon_1,t]}\|\u_t(s)\|_{\H}^2+\mu\int_{\epsilon_1}^t\|\nabla\u_t(s)\|_{\H}^2\d s +\beta \int_{\epsilon_1}^t\big\||\u(s)|^\frac{r-1}{2}|\u_t(s)|\big\|_{\H}^2 \d s
		\no\\&	\leq \frac{C}{(t-\epsilon)^2} \|\u_0\|_{\H}^2+C\bigg(1+\frac{1}{t-\epsilon}\bigg)\|\f\|_{\H}^2\big(\|g\|_0^2+\|g_t\|_0^2\big).
	\end{align}
	\end{itemize}
	\end{lemma}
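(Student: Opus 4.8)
The plan is to mirror the proof of Lemma \ref{lemmae4}, replacing the fixed mean-value times $t_2,T$ by the arbitrary pair $0<\epsilon<\epsilon_1<t$ and combining the resulting differential inequality with the time-averaging device already exploited in Lemmas \ref{lemb2} and \ref{lemb3}. First I would differentiate \eqref{1a} in time to obtain \eqref{n48}, take the inner product with $\u_t$, and use $\langle(\u\cdot\nabla)\u_t,\u_t\rangle=0$, $\langle\nabla p_t,\u_t\rangle=0$ together with the nonnegativity $(\mathcal{C}'(\u)\u_t,\u_t)\geq 0$ from \eqref{30} to arrive at \eqref{n49}. The forcing term is controlled by \eqref{n50}, so the entire argument reduces to estimating the single bad term $\big((\u_t\cdot\nabla)\u,\u_t\big)$ in each regime.

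For $d=2,3$ and $r>3$ (Case (ii)) and for $d=r=3$ with $\beta\mu>1$ (Case (iii)), the strong damping absorbs the convective term: using the bound \eqref{n54} (respectively \eqref{n58}) I would dominate $\big|((\u_t\cdot\nabla)\u,\u_t)\big|$ by $\tfrac{\mu}{2}\|\nabla\u_t\|_{\H}^2+\tfrac{\beta}{2}\big\||\u|^{(r-1)/2}|\u_t|\big\|_{\H}^2$ plus a coercive multiple of $\|\u_t\|_{\H}^2$ that is consumed by the $\alpha\|\u_t\|_{\H}^2$ term, as in \eqref{n56} (this is exactly where the coercivity $\alpha-\eta^*>0$, respectively $\beta\mu>1$, enters). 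Integrating the resulting inequality from $\epsilon_1$ to $t$ leaves the boundary term $\|\u_t(\epsilon_1)\|_{\H}^2$, which I would remove by integrating once more in $\epsilon_1$ over $(\epsilon,t)$ and bounding the time integral $\int_\epsilon^t\|\u_t(s)\|_{\H}^2\,\d s$ through Lemma \ref{lemb3}, while the damping integral on the right is absorbed using Lemma \ref{lemb2}. This averaging produces the factors $\big(1+\tfrac{1}{(t-\epsilon)^2}\big)$ and $\big(1+\tfrac{1}{t-\epsilon}\big)$ and the quadratic data dependence recorded in \eqref{bb22} and \eqref{bb221}.

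The main obstacle is Case (i), $d=2$, $r\in[1,3]$, where no damping is available to absorb the convective term. Here I would instead invoke the Ladyzhenskaya-type bound \eqref{n51}, so that after cancelling the $\|\nabla\u_t\|_{\H}^2$ contributions the differential inequality takes the form
\[
\frac{\d}{\d t}\|\u_t(t)\|_{\H}^2 \leq \frac{2}{\mu}\|\nabla\u(t)\|_{\H}^2\,\|\u_t(t)\|_{\H}^2+\frac{1}{\alpha}\|g_t\|_0^2\|\f\|_{\H}^2,
\]
which must be closed by Gronwall's inequality rather than by a sign condition. The Gronwall factor $\exp\big(\tfrac{2}{\mu}\int_{\epsilon_1}^t\|\nabla\u(s)\|_{\H}^2\,\d s\big)$ is bounded uniformly via Lemma \ref{lemb1}, and the seed $\|\u_t(\epsilon_1)\|_{\H}^2$ is again averaged away using part (i) of Lemma \ref{lemb3}.

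Tracking the powers is the delicate point. The Case (i) estimate of Lemma \ref{lemb3} already carries a $\tfrac{1}{t^2}\|\u_0\|_{\H}^4$ term, inherited from the Agmon/Gagliardo--Nirenberg interpolation of the convective term in \eqref{n37}; composing this with a second $\epsilon_1$-average over an interval of length $t-\epsilon$ raises the singularity to $\tfrac{1}{(t-\epsilon)^3}$ and the data exponent to the sixth power, exactly as stated in \eqref{bb21}. I expect the bookkeeping of these interpolation exponents, together with the verification that every constant depends only on $\mu,\alpha,\beta,r,T$ and the norms of the data, to be the only genuinely laborious part; the structural steps are identical to those already carried out in Lemmas \ref{lemmae4} and \ref{lemb3}.
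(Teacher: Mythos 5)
Your Cases (ii) and (iii) follow the paper's structure (differentiate \eqref{1a}, test with $\u_t$, integrate from $\epsilon_1$ to $t$, average the seed $\|\u_t(\epsilon_1)\|_{\H}^2$ over $\epsilon_1\in(\epsilon,t)$ using Lemma \ref{lemb3}), but Case (i) contains a genuine gap. Closing the inequality by Gronwall produces the factor $\exp\big(\tfrac{2}{\mu}\int_{\epsilon_1}^{t}\|\nabla\u(s)\|_{\H}^2\,\d s\big)$, and the only available bound on the exponent, from Lemma \ref{lemb1}, is $\tfrac{1}{\mu^{2}}\big(\|\u_0\|_{\H}^2+\tfrac{T}{\alpha}\|g\|_{0}^2\|\f\|_{\H}^2\big)$. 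The resulting estimate is therefore \emph{exponential} in $\|\u_0\|_{\H}$, $\|\f\|_{\H}$ and $\|g\|_{0}$, and no constant $C$ depending only on $\mu,\alpha,\beta,r,T$ can convert it into the stated bound \eqref{bb21}, which is a polynomial of degree six in the data ($e^{x}$ is not dominated by $C(1+x^{3})$). So your bookkeeping claim that the Gronwall route ``raises the data exponent to the sixth power, exactly as stated'' is incorrect. The paper avoids Gronwall altogether: in the integrated inequality \eqref{bb28} it bounds $\tfrac{2}{\mu}\int_{\epsilon_1}^{t}\|\u_t(s)\|_{\H}^2\|\nabla\u(s)\|_{\H}^2\,\d s$ by $\tfrac{2}{\mu}\sup_{s\in[\epsilon_1,t]}\|\nabla\u(s)\|_{\H}^2\cdot\int_{\epsilon}^{t}\|\u_t(s)\|_{\H}^2\,\d s$, then inserts \eqref{bb4} (quadratic in the data) for the supremum and \eqref{bb11} (quadratic plus quartic in the data) for the time integral; it is precisely this product structure that yields the sixth powers and, after the $\epsilon_1$-average of the seed, the $(t-\epsilon)^{-3}$ singularity in \eqref{bb21}. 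Your Case (i) must be rewritten with this sup-times-integral argument in place of Gronwall.

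Two further points. In Case (ii) you absorb the convective remainder into the damping ``exactly where the coercivity $\alpha-\eta^{*}>0$ enters,'' but Lemma \ref{lemb4}(ii) is stated with no hypothesis relating $\alpha$ and $\eta^{*}$; the paper needs none, because in \eqref{bb31} it simply keeps $(\alpha+\eta^{*})\int_{\epsilon_1}^{t}\|\u_t(s)\|_{\H}^2\,\d s$ on the right-hand side and controls that integral unconditionally by \eqref{bb12} (only Case (iii) genuinely uses a sign condition, namely $\beta\mu>1$, which is part of the hypothesis there, so your treatment of (iii) is fine). Finally, your reduced differential inequality in Case (i) discards the dissipation terms $\mu\|\nabla\u_t\|_{\H}^2$ and $\beta\big\||\u|^{\frac{r-1}{2}}|\u_t|\big\|_{\H}^2$, which appear on the left-hand side of the conclusions \eqref{bb21}--\eqref{bb221}; after the pointwise bound on $\sup_{s}\|\u_t(s)\|_{\H}^2$ is obtained, you must return to the integrated inequality to recover these integrals, as the paper does.
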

	\begin{proof}
		Applying $\partial / \partial t$ to the equation \eqref{1a}, we find 
		\begin{align}\label{bb23}
			\u_{tt}-\mu \Delta \u_t+(\u \cdot \nabla)\u_t+(\u_t \cdot \nabla)\u+\alpha \u_t+\beta  \mathcal{C}'(\u)\u_t+\nabla p_t=\f g_t,
		\end{align}
		where $\mathcal{C}'(\cdot)$ is defined in \eqref{29}.	Multiplying both sides of \eqref{bb23} by  $\u_t(\cdot)$ and then integrating over $\T$, we deduce 
		\begin{align}\label{bb24}
			&\frac{1}{2}\frac{\d}{\d t}\|\u_t(t)\|_{\H}^2+\mu\|\nabla\u_t(t)\|_{\H}^2+\alpha \|\u_t(t)\|_{\H}^2+\beta( \mathcal{C}'(\u)\u_t(t),\u_t(t))\nonumber\\&=(\f g_t(t),\u_t(t))-\big((\u_t(t) \cdot \nabla)\u(t),\u_t(t)\big),
		\end{align}
		for a.e. $t\in[\epsilon_1,T]$, for some $0<\epsilon <\epsilon_1 \leq T$.
		\vskip 0.2 cm
		\noindent\textbf{Case I:} 	\emph{$d=2$ and $r \in [1,3]$.}
			Plugging the relations \eqref{n50}-\eqref{n52} in \eqref{bb24}, and  integrating the resulting relation from $\epsilon_1$ to $t$ leads to
		\begin{align}\label{bb28}
			&\|\u_t(t)\|_{\H}^2+\mu\int_{\epsilon_1}^t\|\nabla\u_t(s)\|_{\H}^2\d s+2\beta \int_{\epsilon_1}^t\big\||\u(s)|^\frac{r-1}{2}|\u_t(s)|\big\|_{\H}^2 \d s\no\\&\quad+2\beta(r-1)\int_{\epsilon_1}^t\bigg\|\frac{1}{|\u|^\frac{3-r}{2}}\big(\u(s) \cdot \u_t(s)\big)\bigg\|_{\H}^2\d s\no\\&
			\leq \|\u_t(\epsilon_1)\|_{\H}^2+ \frac{t-\epsilon_1}{ \alpha}\|g_t\|_{0}^2\|\f\|_{\H}^2+\frac{2}{\mu}\sup_{s\in[\epsilon_1,t]}\|\nabla\u(s)\|_{\H}^2\int_{\epsilon}^t\|\u_t(s)\|_{\H}^2\d s,
		\end{align} 
		for all $t\in[\epsilon_1,T]$. Using \eqref{bb4} and \eqref{bb11} in \eqref{bb28}, we get 
		\begin{align*}
			\|\u_t(t)\|_{\H}^2 &\leq  \|\u_t(\epsilon_1)\|_{\H}^2+ \frac{t-\epsilon_1}{ \alpha}\|g_t\|_{0}^2\|\f\|_{\H}^2+\frac{1}{\mu^2}
			\bigg\{ \frac{1}{t}\|\u_0\|_{\H}^2+\bigg(\frac{1}{\alpha}+t\bigg)\|g\|_0^2\|\f\|_{\H}^2\bigg\}\\&\qquad\times\bigg[ \frac{N_{11}}{t}\|\u_0\|_{\H}^2+(N_{12}+t)\|g\|_0^2\|\f\|_{\H}^2 \no \\&\quad+\frac{C}{2 \mu^3}\bigg\{\frac{1}{t^2}\|\u_0\|_{\H}^4+2\bigg(\frac{1}{\alpha^2}+t^2\bigg)	\|g\|_0^4\|\f\|_{\H}^4\bigg\}\bigg],
		\end{align*}
	 where $N_{2i}, \ i=1,2,$  are defined in Lemma \ref{lemb3}. Integrating the above estimate over $\epsilon_1$ from $\epsilon$ to $t$, we arrive at
		\begin{align*}
			\|\u_t(t)\|_{\H}^2 &\leq \frac{1}{t-\epsilon}\bigg[\ \int_\epsilon^t\|\u_t(\epsilon_1)\|_{\H}^2\d \epsilon_1+ \|g_t\|_{0}^2\|\f\|_{\H}^2\int_\epsilon^t\frac{t-\epsilon_1}{ \alpha}\d \epsilon_1\\&\quad 
		+\frac{ (t-\epsilon)}{\mu^2}
		\bigg\{ \frac{1}{t}\|\u_0\|_{\H}^2+\bigg(\frac{1}{\alpha}+t\bigg)\|g\|_0^2\|\f\|_{\H}^2\bigg\}\times\bigg[ \frac{N_{11}}{t}\|\u_0\|_{\H}^2\\&\quad+(N_{12}+t)\|g\|_0^2\|\f\|_{\H}^2 +\frac{C}{2 \mu^3}\bigg\{\frac{1}{t^2}\|\u_0\|_{\H}^4+2\bigg(\frac{1}{\alpha^2}+t^2\bigg)	\|g\|_0^4\|\f\|_{\H}^4\bigg\}\bigg]\bigg].
		\end{align*}
		Thus, by using \eqref{bb11}, from \eqref{bb28}, we have 
		\begin{align*}
			&\sup_{t\in[\epsilon_1,T]}\|\u_t(t)\|_{\H}^2+\mu\int_{\epsilon_1}^t\|\nabla\u_t(t)\|_{\H}^2\d t+2\beta \int_{\epsilon_1}^t\left\||\u(t)|^\frac{r-1}{2}|\u_t(t)|\right\|_{\H}^2 \d t\\& \leq
			\frac{1}{t-\epsilon}\bigg[	 \frac{N_{11}}{t}\|\u_0\|_{\H}^2+(N_{12}+t)\|g\|_0^2\|\f\|_{\H}^2 +\frac{C}{2 \mu^3}\bigg\{\frac{1}{t^2}\|\u_0\|_{\H}^4+2\bigg(\frac{1}{\alpha^2}+t^2\bigg)	\|g\|_0^4\|\f\|_{\H}^4\bigg\}\bigg]\\&\quad+\frac{t-\epsilon}{2\alpha}\|g_t\|_0^2\|\f\|_{\H}^2+\frac{ 1}{\mu^2}
			\bigg\{ \frac{1}{t}\|\u_0\|_{\H}^2+\bigg(\frac{1}{\alpha}+t\bigg)\|g\|_0^2\|\f\|_{\H}^2\bigg\}\times\bigg[ \frac{N_{11}}{t}\|\u_0\|_{\H}^2\\&\quad+(N_{12}+t)\|g\|_0^2\|\f\|_{\H}^2 +\frac{C}{2 \mu^3}\bigg\{\frac{1}{t^2}\|\u_0\|_{\H}^4+2\bigg(\frac{1}{\alpha^2}+t^2\bigg)	\|g\|_0^4\|\f\|_{\H}^4\bigg\}\bigg],
		\end{align*}
	which leads to \eqref{bb21}.
		\vskip 0.2 cm
		\noindent\textbf{Case II:} 	\emph{$d=2,3$ and $r > 3$.} 	Plugging the relations \eqref{n50}, \eqref{n54} and \eqref{n55} in \eqref{bb24}, and integrating it from $\epsilon_1$ to $t$, we deduce 
		\begin{align}\label{bb31}
			&\|\u_t(t)\|_{\H}^2+\mu\int_{\epsilon_1}^t\|\nabla\u_t(s)\|_{\H}^2\d s+\beta \int_{\epsilon_1}^t\big\||\u(s)|^\frac{r-1}{2}|\u_t(s)|\big\|_{\H}^2 \d s \no\\&\quad+2 \beta(r-1)\int_{\epsilon_1}^t\big\||\u(s)|^\frac{r-3}{2}(\u(s) \cdot \u_t(s))\big\|_{\H}^2\d s\nonumber\\&
			\leq \|\u_t(\epsilon_1)\|_{\H}^2+ \frac{t-\epsilon_1}{ \alpha}\|g_t\|_{0}^2\|\f\|_{\H}^2+(\alpha+\eta^*)\int_{\epsilon_1}^t\|\u_t(s)\|_{\H}^2\d s,
		\end{align} 
		for all $t\in[\epsilon_1,T]$, 	where $\eta^*=\frac{r-3}{ \mu (r-1)}\left(\frac{2}{\beta \mu (r-1)}\right)^\frac{2}{r-3}$. Using the estimate \eqref{bb12} in \eqref{bb31}, we find
		\begin{align*}
			\|\u_t(t)\|_{\H}^2 &\leq  \|\u_t(\epsilon_1)\|_{\H}^2+ \frac{t-\epsilon_1}{ \alpha}\|g_t\|_{0}^2\|\f\|_{\H}^2+(\alpha+\eta^*)\bigg(\frac{N_{21}}{t}+N_{22}\bigg)\|\u_0\|_{\H}^2\no \\& \quad+(\alpha+\eta^*)\big(N_{23}t+N_{24}\big)\|g\|_0^2\|\f\|_{\H}^2,
		\end{align*}
	 where $N_{2i}, \ i=1,\ldots, 4,$  are defined in Lemma \ref{lemb3}. Integrating the above estimate over $\epsilon_1$ from $\epsilon$ to $t$ and then using the estimate \eqref{bb12} in it, we obtain
	\begin{align*}
		\|\u_t(t)\|_{\H}^2 &\leq \frac{1}{t-\epsilon}\bigg[\ \int_\epsilon^t\|\u_t(\epsilon_1)\|_{\H}^2\d \epsilon_1+ \|g_t\|_{0}^2\|\f\|_{\H}^2\int_\epsilon^t\frac{t-\epsilon_1}{ \alpha}\d \epsilon_1\no \\& \quad+(\alpha+\eta^*)(t-\epsilon)\bigg\{\bigg(\frac{N_{21}}{t}+N_{22}\bigg)\|\u_0\|_{\H}^2+\big(N_{23}t+N_{24}\big)\|g\|_0^2\|\f\|_{\H}^2\bigg\}\bigg] 
		\\& \leq
		\bigg(\frac{N_{21}}{(t-\epsilon)^2}+
		\frac{N_{22}+(\alpha+\eta^*)N_{21}}{t-\epsilon}+(\alpha+\eta^*) N_{22}\bigg) \|\u_0\|_{\H}^2+\frac{t}{2 \alpha} \|g_t\|_{0}^2\|\f\|_{\H}^2\\&\quad+\bigg(\frac{N_{23}t}{t-\epsilon}+\frac{N_{24}}{t-\epsilon}+(\alpha+\eta^*) N_{23} t+(\alpha+\eta^*) N_{24}\bigg)\|g\|_0^2\|\f\|_{\H}^2,
	\end{align*}
which give rise to \eqref{bb22}.
		\vskip 0.2 cm
		\noindent\textbf{Case III:} \emph{$d=r=3$ with $ \beta \mu > 1$.}
		Plugging the relations \eqref{n50}, \eqref{n58} and \eqref{n59} in \eqref{bb24} and then integrating it from $\epsilon_1$ to $t$, we obtain the following estimate:
		\begin{align}\label{bb34}
			\no	&\|\u_t(t)\|_{\H}^2+\mu\int_{\epsilon_1}^t\|\nabla\u_t(s)\|_{\H}^2\d s +2 \beta\int_{\epsilon_1}^t\left\|(\u(s) \cdot\u_t(s))\right\|_{\H}^2 \d s\nonumber\\&\quad+2\bigg(\beta -\frac{1}{2\mu}\bigg)\int_{\epsilon_1}^t\big\||\u(s)||\u_t(s)|\big\|_{\H}^2 \d s
	+	\leq \|\u_t(\epsilon_1)\|_{\H}^2+ \frac{t-\epsilon_1}{ \alpha}\|g_t\|_{0}^2\|\f\|_{\H}^2,
		\end{align} 
		for all $t\in[\epsilon_1,T]$. From \eqref{bb34}, we get
		\begin{align*}
			\|\u_t(t)\|_{\H}^2 &\leq  \|\u_t(\epsilon_1)\|_{\H}^2+ \frac{t-\epsilon_1}{ \alpha}\|g_t\|_{0}^2\|\f\|_{\H}^2.
		\end{align*}
	 Integrating the above estimate over $\epsilon_1$ from $\epsilon$ to $t$ and then using the estimate \eqref{bb13} in it, we obtain
	\begin{align*}
		\|\u_t(t)\|_{\H}^2 &\leq \frac{1}{t-\epsilon} \int_\epsilon^t\|\u_t(\epsilon_1)\|_{\H}^2\d \epsilon_1+\frac{t-\epsilon}{2 \alpha
		} \|g_t\|_{0}^2\|\f\|_{\H}^2
		\no \\& \leq \frac{1}{t-\epsilon}\bigg\{ \frac{N_{31}}{t}\|\u_0\|_{\H}^2+\bigg(N_{32}t+N_{33}\bigg)\|g\|_0^2\|\f\|_{\H}^2\bigg\}+\frac{t-\epsilon}{2 \alpha} \|g_t\|_{0}^2\|\f\|_{\H}^2,
	\end{align*} 
		where $N_{3i}, \ i=1,2,3,$  are defined in Lemma \ref{lemmae3} and  \eqref{bb221} follows, provided $\beta\mu > 1$, which completes the proof.
	\end{proof}
	The next lemma provides the regularity of the solution of the CBF  equations \eqref{1a}-\eqref{2a}. 
	\begin{lemma}\label{lemb5}
		Let $\u_0\in \H$ and  $\boldsymbol{F}(x,t):=\f(x)g(x,t)\in\mathrm{W}^{1,2}(0,T;\H)$ (which implies $\boldsymbol{F}\in\C([0,T];\H)$ also).  Then, for all $t\in[\epsilon_1,T]$ and for any $0<\epsilon<\epsilon_1<T$,
		\begin{itemize}
			\item [$(i)$] 	for $d=2,3$ and $r >3,$ and for $d=r=3$ with $\beta \mu >1$,   we have
		\begin{align}\label{bb35}
			&\sup_{t\in[\epsilon_1,T]}\left(\|\Delta\u(t)\|_{\H}^2+\|\nabla p(t)\|_{\L^2}^2\right)\no\\&\leq  C\bigg(1+\frac{1}{(t-\epsilon)^2}\bigg) \|\u_0\|_{\H}^2+C\bigg(1+\frac{1}{t-\epsilon}\bigg)\|\f\|_{\H}^2\big(\|g\|_0^2+\|g_t\|_0^2\big),
		\end{align}
			\item [$(ii)$] 	for $d=2$ and $r \in[1,3],$   we have
		\begin{align}\label{bb211}
			&\sup_{t\in[\epsilon_1,T]}\left(\|\Delta\u(t)\|_{\H}^2+\|\nabla p(t)\|_{\L^2}^2\right)
			\no\\&\leq C\bigg(1+\frac{1}{(t-\epsilon)^3}\bigg) \big(1+\|\u_0\|_{\H}^6\big)+ C\bigg(1+\frac{1}{(t-\epsilon)^3}\bigg) \bigg(1+\|\f\|_{\H}^6\big(\|g\|_0^6+\|g_t\|_0^6\big)\bigg).
		\end{align}
		\end{itemize}
	\end{lemma}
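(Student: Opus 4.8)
The plan is to freeze time $t\in[\epsilon_1,T]$ and read the momentum balance \eqref{1a} as a stationary Stokes system, then invoke elliptic regularity. Writing $\h(t):=\f g(t)-\u_t(t)-(\u(t)\cdot\nabla)\u(t)-\alpha\u(t)-\beta|\u(t)|^{r-1}\u(t)$, equation \eqref{1a} becomes $-\mu\Delta\u(t)+\nabla p(t)=\h(t)$ with $\nabla\cdot\u(t)=0$. On the torus the Helmholtz--Hodge projection $\P_{\H}$ commutes with $\Delta$ and annihilates $\nabla p$, so that $-\mu\Delta\u(t)=\P_{\H}\h(t)$ and $\nabla p(t)=(\I-\P_{\H})\h(t)$; since $\P_{\H}$ is an $\L^2$-contraction this gives immediately $\mu\|\Delta\u(t)\|_{\H}\le\|\h(t)\|_{\L^2}$ and $\|\nabla p(t)\|_{\L^2}\le\|\h(t)\|_{\L^2}$ (equivalently, the Cattabriga regularity theorem quoted above). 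The entire statement \eqref{bb35}--\eqref{bb211} therefore reduces to bounding $\sup_{t\in[\epsilon_1,T]}\|\h(t)\|_{\L^2}^2$, i.e. to an $\L^2$ estimate of each of the five summands of $\h$.

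The three summands that are not genuinely nonlinear in $\u$ are immediate: $\|\f g(t)\|_{\L^2}\le\|g\|_0\|\f\|_{\H}$, the zeroth-order term $\alpha\|\u(t)\|_{\H}$ is controlled by \eqref{bb1}, and the decisive term $\sup_{t\in[\epsilon_1,T]}\|\u_t(t)\|_{\H}$ is precisely what Lemma \ref{lemb4} delivers --- via \eqref{bb22}/\eqref{bb221} in the regime of \eqref{bb35} and via \eqref{bb21} in the regime of \eqref{bb211}. This already imports the announced time-singularities and power structure: the quadratic $(1+(t-\epsilon)^{-2})\|\u_0\|_{\H}^2$ profile for $r\ge3$, and the heavier $(1+(t-\epsilon)^{-3})(1+\|\u_0\|_{\H}^6)$ profile for $d=2,\ r\in[1,3]$. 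It remains only to verify that the two nonlinear pieces of $\h$ do not exceed these.

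For the convective term I would use $\|(\u\cdot\nabla)\u(t)\|_{\L^2}\le\big\||\u||\nabla\u|\big\|_{\H}$ and split by regime. When $d=2,\ r\in[1,3]$ I apply Agmon's inequality $\|\u\|_{\widetilde{\L}^\infty}\le C\|\u\|_{\H}^{1/2}\big(\|\u\|_{\H}^{1/2}+\|\Delta\u\|_{\H}^{1/2}\big)$, so that $\big\||\u||\nabla\u|\big\|_{\H}\le\|\u\|_{\widetilde{\L}^\infty}\|\nabla\u\|_{\H}$ carries a factor $\|\Delta\u\|_{\H}^{1/2}$; a Young splitting then absorbs a small multiple of $\|\Delta\u\|_{\H}$ into the left-hand side and leaves the remainder $C\|\u\|_{\H}\|\nabla\u\|_{\H}^2$, which, fed with \eqref{bb1} and \eqref{bb4}, reproduces exactly the sixth-power, $(t-\epsilon)^{-3}$ behaviour of \eqref{bb211}. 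When $r\ge3$ (including $d=r=3$ with $\beta\mu>1$) I instead invoke the damping-assisted bound $\int_{\T}|\u|^2|\nabla\u|^2\,\d x\le C\int_{\T}|\u|^{r-1}|\nabla\u|^2\,\d x+C\|\nabla\u\|_{\H}^2$ established in the computation preceding \eqref{n22}; its right-hand side is governed by the dissipative quantities of \eqref{bb5}/\eqref{bb6}, avoiding any $\Delta\u$ altogether and keeping all powers quadratic. The Forchheimer term is treated along the same dichotomy: writing $\big\||\u|^{r-1}\u\big\|_{\L^2}^2=\|\u\|_{\widetilde{\L}^{2r}}^{2r}$ and using that $\H^1\hookrightarrow\L^6$ controls $\|\u\|_{\widetilde{\L}^{2r}}$ through the Dirichlet energy of $|\u|^{(r+1)/2}$, one bounds it by $\|\u\|_{\widetilde{\L}^{r+1}}^{r+1}$ and $\big\||\u|^{(r-1)/2}|\nabla\u|\big\|_{\H}^2$, both furnished by Lemmas \ref{lemb1}, \ref{lemb2} and \ref{lemb3}. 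The pressure needs no extra work, since $\nabla p=(\I-\P_{\H})\h$ obeys the same estimate as $\mu\Delta\u$.

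The main obstacle is the convective nonlinearity. In $\L^2$ it is exactly critical for the elliptic estimate, and the naive Agmon bound makes the right-hand side depend on $\|\Delta\u\|_{\H}$ itself, so the argument only closes after a carefully calibrated Young absorption that returns a small multiple of $\|\Delta\u\|_{\H}$ to the left-hand side. This absorption is also the precise origin of the dichotomy in the statement: for $r>3$ the super-linear Forchheimer dissipation supplies a $\Delta$-free control of $|\u||\nabla\u|$ and preserves the clean quadratic estimate \eqref{bb35}, whereas in the low-dissipation range $d=2,\ r\in[1,3]$ no such bound is available and the Agmon route unavoidably manufactures the sixth powers and the stronger $(t-\epsilon)^{-3}$ singularity of \eqref{bb211}. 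A secondary bookkeeping point is that the borrowed sup-bounds all live on a shrunken interval $[\epsilon_1,T]$ with $\epsilon<\epsilon_1<T$, so the two auxiliary time-shift parameters must be propagated consistently to land on the stated $(t-\epsilon)$ powers.
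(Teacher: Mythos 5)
Your reduction via the Helmholtz--Hodge projection (which does commute with $\Delta$ on $\T$) is sound as far as it goes, but part $(i)$ does not close. The step that fails is the claim that the convective and Forchheimer terms are ``governed by the dissipative quantities of \eqref{bb5}/\eqref{bb6}'': your scheme needs $\sup_{t\in[\epsilon_1,T]}\|\h(t)\|_{\L^2}^2$, hence \emph{pointwise-in-time} $\L^2$ bounds on $(\u\cdot\nabla)\u$ and $\beta|\u|^{r-1}\u$, whereas Lemma \ref{lemb2} controls the damping dissipation only in time-integrated form, $\int_{\epsilon}^t\big\||\u(s)|^{\frac{r-1}{2}}|\nabla\u(s)|\big\|_{\H}^2\,\d s$ (similarly $\int_\epsilon^t\|\Delta\u(s)\|_{\H}^2\,\d s$); no lemma in the paper gives $\sup_t\big\||\u(t)|^{\frac{r-1}{2}}|\nabla\u(t)|\big\|_{\H}^2$. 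The same missing sup-bound underlies your treatment of $\big\||\u|^{r-1}\u\big\|_{\L^2}=\|\u\|_{\widetilde{\L}^{2r}}^{r}$, which for $d=3$, $r>3$ is not reachable from $\H^1$-control alone. Nor can you fall back on the Agmon-plus-Young absorption you use in part $(ii)$: in 3D that leaves a remainder of order $\|\u\|_{\H}\|\nabla\u\|_{\H}^4$, i.e.\ fifth and sixth powers of the data, incompatible with the quadratic right-hand side asserted in \eqref{bb35}. (Part $(ii)$ is a different story: there sixth powers and the stronger singularity are exactly what \eqref{bb211} claims, and in 2D the Forchheimer term is subcritical since $2r\le 6$, so your argument for $(ii)$ can be made to work.)

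The missing idea for $(i)$ is to \emph{not} move the damping term to the right-hand side. Test \eqref{1a} at frozen $t$ with $-\Delta\u(t)$: the pressure drops out ($\langle\nabla p,\Delta\u\rangle=0$ on the torus), the damping term contributes the \emph{positive} quantity $\beta\big\||\u|^{\frac{r-1}{2}}|\nabla\u|\big\|_{\H}^2$ through the identity \eqref{1g}, and the convective term is then absorbed, pointwise in time, by precisely this freshly generated dissipation plus $C\|\nabla\u\|_{\H}^2$ (the interpolation computation preceding \eqref{n22}, valid for $r>3$, and the condition $\beta\mu>1$ for $r=3$). What survives on the right is only $\f g$, $\u_t$ and $\alpha\u$, all of which have sup-in-time bounds with quadratic data dependence from Lemmas \ref{lemb1}, \ref{lemb2} and \ref{lemb4}; the pressure is then recovered from the projection complement using the now-available pointwise dissipation bound. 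This is exactly what the paper's (purely citational) proof encapsulates: for $(i)$ it invokes maximal elliptic regularity for the monotone elliptic operator containing $\beta\mathcal{C}(\u)$ (Theorem 4.2 of \cite{KT2}), where the nonlinearity stays inside the operator and is never estimated in $\L^2$ on the right, and for $(ii)$ the $m$-accretive operator theory of \cite{VB,VBSS}.
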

Using the maximal elliptic regularity to the elliptic boundary value problem obtained from \eqref{1a}, we get the estimate \eqref{bb35} (see Theorem 4.2, \cite{KT2}).	The estimate for the case of $d=2, \ r\in[1,3]$ can be obtained by using the $m$-accretive quantization of the linear and nonlinear operators (cf. Theorems 1.6 and 1.8 in Chapter 4, \cite{VB} for the abstract theory and Section 5, \cite{VBSS} for 2D NSE).
	\begin{lemma}\label{lemb6}
		Let $(\u(\cdot),\nabla p(\cdot))$ be the unique solution of the CBF  equations \eqref{1a}-\eqref{2a} and $\u_0 \in \H$.  Then,  for all $t\in[\epsilon_2,T]$ and for any $0<\epsilon<\epsilon_1<\epsilon_2 < T$,
			\begin{itemize}
			\item [$(i)$] 	for $d=2$ and $r >3,$   we have
			\begin{align}\label{bb36}
				&\sup_{t\in[\epsilon_2,T]}\|\nabla\u_t(t)\|_{\H}^2 +\int_{\epsilon_2}^T\|\Delta\u_t(t)\|_{\H}^2\d t\no\\&\leq  C\bigg\{\bigg(1+\frac{1}{(t-\epsilon_{1})^2}\bigg) \|\u_0\|_{\H}^2+\bigg(1+\frac{1}{(t-\epsilon_{1})}\bigg)\|\f\|_{\H}^2\big(\|g\|_0^2+\|g_t\|_0^2\big)\bigg\}^{\frac{r+1}{2}},
			\end{align}
			\item [$(ii)$] 	for $d=2$ and $r \in[1,3],$   we have
			\begin{align}\label{bb2111}
					&\sup_{t\in[\epsilon_2,T]}\|\nabla\u_t(t)\|_{\H}^2 +\int_{\epsilon_2}^T\|\Delta\u_t(t)\|_{\H}^2\d t
				\no\\&\leq C\bigg\{\bigg(1+\frac{1}{(t-\epsilon_1)^3}\bigg) \big(1+\|\u_0\|_{\H}^6\big)+ \bigg(1+\frac{1}{(t-\epsilon_1)^3}\bigg) \bigg(1+\|\f\|_{\H}^6\big(\|g\|_0^6+\|g_t\|_0^6\big)\bigg)\bigg\}^{\frac{r+1}{2}},
			\end{align}
			\item [$(iii)$] 	for $d=3$ and $r \geq3,$   we have
		\begin{align}\label{bb2112}
			&\sup_{t\in[\epsilon_2,T]}\|\nabla\u_t(t)\|_{\H}^2 +\int_{\epsilon_2}^T\|\Delta\u_t(t)\|_{\H}^2\d t\no\\&\leq  C\bigg\{\bigg(1+\frac{1}{(t-\epsilon_{1})^2}\bigg) \|\u_0\|_{\H}^2+\bigg(1+\frac{1}{(t-\epsilon_{1})}\bigg)\|\f\|_{\H}^2\big(\|g\|_0^2+\|g_t\|_0^2\big)\bigg\}^{\frac{3r+1}{4}}.
		\end{align}
		\end{itemize}
	\end{lemma}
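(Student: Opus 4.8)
The plan is to differentiate the CBF system in time and run a higher-order energy estimate using $-\Delta\u_t$ as the multiplier, which is the natural choice for controlling $\|\nabla\u_t\|_{\H}$ and $\|\Delta\u_t\|_{\H}$ simultaneously. First I would start from the time-differentiated equation \eqref{bb23}, take the $\H$-inner product with $-\Delta\u_t(\cdot)$ and integrate over $\T$. The pressure contribution $\langle\nabla p_t,-\Delta\u_t\rangle$ vanishes, since $-\Delta\u_t$ remains divergence free and periodic on the torus, and the three linear terms produce $\frac{1}{2}\frac{\d}{\d t}\|\nabla\u_t\|_{\H}^2+\mu\|\Delta\u_t\|_{\H}^2+\alpha\|\nabla\u_t\|_{\H}^2$ on the left-hand side. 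What remains on the right is the forcing term $\langle\f g_t,-\Delta\u_t\rangle$, the two convective contributions $\langle(\u\cdot\nabla)\u_t,\Delta\u_t\rangle$ and $\langle(\u_t\cdot\nabla)\u,\Delta\u_t\rangle$, and the damping contribution $\beta\langle\mathcal{C}'(\u)\u_t,\Delta\u_t\rangle$ with $\mathcal{C}'(\cdot)$ as in \eqref{29}.

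Next I would estimate each term so as to absorb every factor of $\|\Delta\u_t\|_{\H}^2$ into the dissipation. By H\"older's and Young's inequalities, the forcing term contributes $C\|g_t\|_0^2\|\f\|_{\H}^2$. For the first convective term I would use $|\langle(\u\cdot\nabla)\u_t,\Delta\u_t\rangle|\leq\|\u\|_{\widetilde{\L}^\infty}\|\nabla\u_t\|_{\H}\|\Delta\u_t\|_{\H}$, producing the Gr\"onwall-type coefficient $C\|\u\|_{\widetilde{\L}^\infty}^2\|\nabla\u_t\|_{\H}^2$; for the second, H\"older with Ladyzhenskaya's inequality (in $2$D) or Gagliardo--Nirenberg's inequality (in $3$D) followed by Young's inequality yields a source term of the form $C\|\u_t\|_{\H}^2\|\nabla\u\|_{\H}^2\|\Delta\u\|_{\H}^2$. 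For the damping term I would exploit $\|\mathcal{C}'(\u)\u_t\|_{\H}\leq C\|\u\|_{\widetilde{\L}^\infty}^{r-1}\|\u_t\|_{\H}$, which after Young's inequality gives the crucial source $C\|\u\|_{\widetilde{\L}^\infty}^{2(r-1)}\|\u_t\|_{\H}^2$. Collecting these, I obtain a differential inequality for $\|\nabla\u_t\|_{\H}^2$ with Gr\"onwall coefficient $\|\u\|_{\widetilde{\L}^\infty}^2$ and the indicated sources.

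To close the argument, I would invoke the mean value theorem on the integral bound for $\int\|\nabla\u_t\|_{\H}^2$ already established in Lemma \ref{lemb4} to select a time $\epsilon_2\in(\epsilon_1,T)$ at which $\|\nabla\u_t(\epsilon_2)\|_{\H}^2$ is controlled, then integrate the differential inequality from $\epsilon_2$ and apply Gr\"onwall's lemma. The exponential factor $\exp\big(C\int\|\u\|_{\widetilde{\L}^\infty}^2\,\d t\big)$ is only a constant, because $\int\|\u\|_{\widetilde{\L}^\infty}^2\,\d t$ is bounded via Agmon's inequality together with the $\sup$-norm and $\int\|\Delta\u\|_{\H}^2$ bounds of Lemmas \ref{lemb1} and \ref{lemb2}. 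Feeding in the a priori estimates for $\|\u_t\|_{\H}$ (Lemmas \ref{lemb3}, \ref{lemb4}) and for $\|\Delta\u\|_{\H}$ (Lemma \ref{lemb5}) then delivers \eqref{bb36}, \eqref{bb2111} and \eqref{bb2112}, with the $d=2,\ r\in[1,3]$ case inheriting the cubed regularity bound \eqref{bb211} and hence the different bracket.

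The main obstacle is the damping term and the bookkeeping of exponents that pins down the precise powers $\frac{r+1}{2}$ (for $d=2$) and $\frac{3r+1}{4}$ (for $d=3$). The mechanism is that $\sup\|\u\|_{\widetilde{\L}^\infty}^{2(r-1)}$ must be estimated by Agmon's inequality: in two dimensions $\|\u\|_{\widetilde{\L}^\infty}\leq C\|\u\|_{\H}^{1/2}\|\u\|_{\H^2_{\mathrm p}}^{1/2}$ gives $\sup\|\u\|_{\widetilde{\L}^\infty}^{2(r-1)}\leq C(\,\cdot\,)^{(r-1)/2}$ after using Lemma \ref{lemb5}, which when multiplied by the factor $\int\|\u_t\|_{\H}^2$ of first order produces the exponent $\frac{r-1}{2}+1=\frac{r+1}{2}$; in three dimensions $\|\u\|_{\widetilde{\L}^\infty}\leq C\|\u\|_{\H}^{1/4}\|\u\|_{\H^2_{\mathrm p}}^{3/4}$ replaces $\frac{r-1}{2}$ by $\frac{3(r-1)}{4}$, yielding $\frac{3r+1}{4}$. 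The remaining source terms contribute only lower powers of the same bracket and are therefore dominated, so the delicate point is to verify that the damping term is indeed the top-order contribution and that all Agmon exponents combine exactly as stated.
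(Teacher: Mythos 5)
Your proposal follows essentially the same route as the paper's own proof: differentiate \eqref{1a} in time, test the resulting equation \eqref{bb23} with $-\Delta\u_t$, estimate the forcing, the two convective terms and the damping term by H\"older, Ladyzhenskaya/Gagliardo--Nirenberg, Agmon and Young, and close using the a priori bounds of the earlier appendix lemmas. Your minor deviations are all legitimate. (a) You set $\langle\nabla p_t,-\Delta\u_t\rangle=0$; this is valid on the torus (it is the same integration-by-parts fact as \eqref{n16}, since $\Delta\u_t$ is divergence free and periodic), whereas the paper keeps this term as $I_5$ and bounds it through the pressure Poisson equation \eqref{bb43}, which only reproduces terms of the same shape. (b) You close with Gr\"onwall using the coefficient $\|\u\|_{\widetilde{\L}^\infty}^2$, whose time integral is finite by Agmon together with Lemmas \ref{lemb1} and \ref{lemb2}; the paper instead leaves $C\sup_t\big(\|\u\|_{\H}^2+\|\u\|_{\H}\|\Delta\u\|_{\H}\big)\int\|\nabla\u_t\|_{\H}^2\d s$ on the right of \eqref{bb45} and bounds the integral directly by Lemma \ref{lemb4}, so no Gr\"onwall is needed. (c) You select a good starting time by the mean value theorem, while the paper averages the pointwise inequality over the starting time $\epsilon_2\in(\epsilon_1,t)$; these are interchangeable, but note that $\epsilon_2$ is \emph{prescribed} in the statement, so your intermediate time must be chosen inside $(\epsilon_1,\epsilon_2)$ rather than being ``the'' $\epsilon_2$.

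The one substantive problem is your exponent bookkeeping in the last paragraph. Writing $\mathrm{K}$ for the quantity in braces on the right-hand side of \eqref{bb36}, Agmon's inequality gives
\begin{align*}
\|\u\|_{\widetilde{\L}^\infty}^{2(r-1)}\leq C\,\|\u\|_{\H}^{r-1}\|\u\|_{\H_p^2}^{r-1},
\end{align*}
and \emph{both} factors are of the size $\mathrm{K}^{(r-1)/2}$: $\sup_t\|\u(t)\|_{\H}^2$ is controlled by $\mathrm{K}$ through Lemma \ref{lemb1} (it involves $\|\u_0\|_{\H}^2$ and $\|\f\|_{\H}^2\|g\|_0^2$, which sit inside $\mathrm{K}$), and $\sup_t\|\u(t)\|_{\H_p^2}^2$ is controlled by $\mathrm{K}$ through Lemma \ref{lemb5}. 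Hence the damping coefficient is of order $\mathrm{K}^{r-1}$, not $\mathrm{K}^{(r-1)/2}$ (in 2D) or $\mathrm{K}^{3(r-1)/4}$ (in 3D); your derivation tacitly treats $\|\u\|_{\H}$ as an absolute constant, which it is not unless constants are allowed to absorb the data. Multiplying by $\sup_t\|\u_t(t)\|_{\H}^2\leq C\mathrm{K}$ (Lemma \ref{lemb4}), the top-order contribution is $\mathrm{K}^{r}$ in both dimensions, which for $r>1$ is strictly weaker than the claimed powers $\mathrm{K}^{(r+1)/2}$ and $\mathrm{K}^{(3r+1)/4}$. In fairness, the paper's own proof has exactly the same looseness: substituting Lemmas \ref{lemb1}, \ref{lemb4} and \ref{lemb5} into \eqref{bb45} also yields the power $r$, so the stated exponents do not follow from the paper's argument either. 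Since the lemma is invoked downstream only with constants that may depend on the input data, the discrepancy is harmless for the rest of the paper, but as a proof of the lemma \emph{as stated} this step is a gap in your write-up, inherited from, rather than introduced beyond, the paper's.
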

	\begin{proof}
		Taking the inner product with $-\Delta\u_t(\cdot)$ in the equation \eqref{bb23}, we find 
		\begin{align}\label{bb38}
			\nonumber&\frac{1}{2}\frac{\d}{\d t}\|\nabla\u_t(t)\|_{\H}^2+ \mu\|\Delta\u_t(t)\|_{\H}^2+\alpha \|\nabla\u_t(t)\|_{\H}^2\\&=(\f g_t(t),-\Delta\u_t(t))-((\u_t(t) \cdot\nabla)\u(t),-\Delta\u_t(t))-((\u(t) \cdot \nabla)\u_t(t),-\Delta\u_t(t))\nonumber\\&\quad-\beta \big(\mathcal{C}'(\u)\u_t(t),-\Delta\u_t(t)\big)-(\nabla p_t,-\Delta \u_t(t))=:\sum_{i=1}^{5} I_i,
		\end{align}
		for a.e. $t\in[\epsilon_2, T]$, for some $0<\epsilon_2\leq T$.	Using H\"older's and Young's inequalities, we estimate $I_1$ as 
		\begin{align}\label{bb39}
			I_1 \leq	|(\f g_t,-\Delta\u_t)|\leq\|g_t\|_{\mathrm{L}^{\infty}}\|\f\|_{\H}\|\Delta\u_t\|_{\H}\leq\frac{7}{2\mu}\|g_t\|_{\mathrm{L}^{\infty}}^2\|\f\|_{\H}^2+\frac{\mu }{14}\|\Delta\u_t\|_{\H}^2.
		\end{align}
	 \vskip 0.2 cm
	\noindent\textbf{Case I:} 	\emph{$d=2$ and $r > 3$.}
		Using H\"older's, Gagliardo-Nirenberg's and Young's inequalities, we estimates $I_2$ as 
		\begin{align}\label{bb40}
			I_2 &\leq	|((\u_t \cdot\nabla)\u,-\Delta\u_t)| \leq  \|\u_t\|_{\widetilde{\L}^4} \|\nabla\u\|_{\widetilde{\L}^4} \|\Delta\u_t\|_{\H} \nonumber\\&\leq
			C\|\u_t\|_{\H}^\frac{1}{2}\|\nabla\u_t\|_{\H}^\frac{1}{2}\|\u\|_{\H}^\frac{1}{4}\|\u\|_{\H_p^2}^\frac{3}{4}\|\Delta\u_t\|_{\H}
			 \nonumber\\&\leq
			\frac{\mu}{14}\|\Delta\u_t\|_{\H}^2+C\|\u_t\|_{\H}\|\nabla\u_t\|_{\H}\|\u\|_{\H}^\frac{1}{2}\left(\|\u\|_{\H}^\frac{3}{2}+\|\Delta\u\|_{\H}^\frac{3}{2}\right)
			\no \\& \leq \frac{\mu}{14}\|\Delta\u_t\|_{\H}^2+\frac{\alpha}{4}\|\nabla\u_t\|_{\H}^2+C\left(\|\u\|_{\H}^4+\|\u\|_{\H}\|\Delta\u\|_{\H}^3\right)\|\u_t\|_{\H}^2.
		\end{align}
	 Using H\"older's, Agmon's and Young's inequalities, we have
	 	\begin{align}\label{bb41}
	 	I_3 \leq	|((\u \cdot\nabla)\u_t,-\Delta\u_t)|&\leq\|\u\|_{\widetilde{\L}^\infty} \|\nabla\u_t\|_{\H}\|\Delta\u_t\|_{\H}
	 	\no\\&\leq\|\u\|_{\H}^\frac{1}{2}\|\u\|_{\H_p^2}^\frac{1}{2} \|\nabla\u_t\|_{\H}\|\Delta\u_t\|_{\H}
	 	\no\\&\leq\|\u\|_{\H}^\frac{1}{2}\left(\|\u\|_{\H}^\frac{1}{2}+\|\Delta\u\|_{\H}^\frac{1}{2}\right) \|\nabla\u_t\|_{\H}\|\Delta\u_t\|_{\H}
	 	\no\\ &\leq
	 	C\left(\|\u\|_{\H}^2+\|\u\|_{\H}\|\Delta\u\|_{\H}\right)\|\nabla\u_t\|_{\H}^2+\frac{\mu}{14}\|\Delta\u_t\|_{\H}^2,
	 \end{align}
		\begin{align}\label{bb42}
			I_4&\leq	\beta\big|\big(\mathcal{C}'(\u)\u_t,-\Delta\u_t\big)\big| \no \\&= \left\{\begin{array}{cc}\left\{\begin{array}{cc}\beta\left|\left(|\u|^{r-1}\u_t+(r-1)\frac{\u}{|\u|^{3-r}}(\u\cdot\u_t),-\Delta\u_t\right)\right|,&\text{ for }\u\neq\mathbf{0},\\
					\mathbf{0},&\text{ for }\u=\mathbf{0},\end{array} \right.  \text{ for } 1\leq r<3,\\ \beta\big|\big(|\u|^{r-1}\u_t+(r-1)\u|\u|^{r-3}(\u\cdot\u_t),-\Delta\u_t\big) \big|, \ \ \ \text{ for }r\geq 3\end{array}\right.\no \\ &\leq C\|\u\|_{\widetilde{\L}^\infty}^{r-1}\|\u_t\|_{\H}\|\Delta\u_t\|_{\H} \leq C\|\u\|_{\H}^{\frac{r-1}{2}}\left(\|\u\|_{\H}^{\frac{r-1}{2}}+\|\Delta\u\|_{\H}^{\frac{r-1}{2}}\right)\|\u_t\|_{\H}\|\Delta\u_t\|_{\H}	\nonumber\\&\leq C \left(\|\u\|_{\H}^{2(r-1)}+\|\u\|_{\H}^{r-1}\|\Delta\u\|_{\H}^{r-1}\right)\|\u_t\|_{\H}^2+\frac{\mu}{14}\|\Delta\u_t\|_{\H}^2.
		\end{align}
		Taking divergence on both sides of \eqref{bb23}, we get
		\begin{align}\label{bb43}
			-\Delta p_t&=\nabla \cdot\big[\f g_t+ (\u \cdot \nabla)\u_t+ (\u_t \cdot \nabla)\u+\beta \mathcal{C}'(\u)\u_t\big]\nonumber\\
			p_t&=(-\Delta)^{-1}\big[\nabla \cdot \big\{(\u \cdot \nabla)\u_t+ (\u_t \cdot \nabla)\u+\beta \mathcal{C}'(\u)\u_t\big\}\big],
		\end{align}
		in the weak sense and we have used $\nabla \cdot \f=0$. Taking gradient on both sides in \eqref{bb43} and then using H\"older's and Young's inequalities, we estimate the term $I_{5}$ as
		\begin{align}\label{bb44}
			I_{5}&\leq	|(\nabla p_t,-\Delta \u_t)| \leq \|\nabla p_t\|_{\H}\|\Delta \u_t\|_{\H} \nonumber\\&\leq C\big( \|(\u \cdot \nabla)\u_t\|_{\H}+\|(\u_t \cdot \nabla)\u\|_{\H} +\beta \|\mathcal{C}'(\u)\u_t\|_{\H}\big)\|\Delta \u_t\|_{\H} \nonumber\\&\leq \frac{3\mu}{14}\|\Delta \u_t\|_{\H}^2+\frac{\alpha}{4}\|\nabla\u_t\|_{\H}^2+C\left(\|\u\|_{\H}^4+\|\u\|_{\H}\|\Delta\u\|_{\H}^3\right)\|\u_t\|_{\H}^2\no \\&\quad+C\left(\|\u\|_{\H}^2+\|\u\|_{\H}\|\Delta\u\|_{\H}\right)\|\nabla\u_t\|_{\H}^2 +C \left(\|\u\|_{\H}^{2(r-1)}+\|\u\|_{\H}^{r-1}\|\Delta\u\|_{\H}^{r-1}\right)\|\u_t\|_{\H}^2,
		\end{align}
		where we have used the estimates \eqref{bb40}-\eqref{bb42} in the final  inequality.
		Substituting the estimates \eqref{bb39}-\eqref{bb42} and \eqref{bb44} in \eqref{bb38}, and then integrating the resulting estimate from $\epsilon_2$ to $t$, we arrive at
		\begin{align}\label{bb45}
			&\|\nabla\u_t(t)\|_{\H}^2+\mu \int_{\epsilon_2}^t\|\Delta \u(s)\|_{\H}^2 \d s+2\alpha  \int_{\epsilon_2}^t \|\nabla\u_t(s)\|_{\H}^2\d s \nonumber\\&\leq 	\|\nabla\u_t(\epsilon_2)\|_{\H}^2+C(t-\epsilon_2)\|g_t\|_0^2\|\f\|_{\H}^2+C(t-\epsilon_2)\sup_{t\in[\epsilon_2,T]}\left(\|\u(t)\|_{\H}^4+\|\u(t)\|_{\H}\|\Delta\u(t)\|_{\H}^3\right)
			\no\\ & \quad \times \sup_{t\in[\epsilon_2,T]}\|\u_t(t)\|_{\H}^2 +C\sup_{t\in[\epsilon_2,T]}\left(\|\u(t)\|_{\H}^2+\|\u(t)\|_{\H}\|\Delta\u(t)\|_{\H}\right)\int_{\epsilon_2}^t\|\nabla\u_t(s)\|_{\H}^2 \d s
			\nonumber\\&\quad	 +C(t-\epsilon_2)\sup_{t\in[\epsilon_2,T]}\left(\|\u(t)\|_{\H}^{2(r-1)}+\|\u(t)\|_{\H}^{r-1}\|\Delta\u(t)\|_{\H}^{r-1}\right)\sup_{t\in[\epsilon_2,T]}\|\u_t(t)\|_{\H}^2,
		\end{align}
		for all $t \in [\epsilon_2,T]$.  Using the energy estimates obtained in Lemmas \ref{lemb1}, \ref{lemb4} and \ref{lemb5} in \eqref{bb45}, we obtain the following estimate:
		\begin{align*}
			\|\nabla\u_t(t)\|_{\H}^2 &\leq	\|\nabla\u_t(\epsilon_2)\|_{\H}^2\\&+C\bigg\{\bigg(1+\frac{1}{(t-\epsilon)^2}\bigg) \|\u_0\|_{\H}^2+\bigg(1+\frac{1}{t-\epsilon}\bigg)\|\f\|_{\H}^2\big(\|g\|_0^2+\|g_t\|_0^2\big)\bigg\}^{\frac{r+1}{2}},
		\end{align*}
		for all $t \in [\epsilon_2,T]$. Integrating the above estimate over $\epsilon_2$ from $\epsilon_1$ to $t$ and then using the energy estimate obtained in Lemma \ref{lemb4}, we deduce 
		\begin{align}\label{bb46}
			\|\nabla\u_t(t)\|_{\H}^2&\leq \frac{1}{ t-\epsilon_1}\int_{\epsilon_1}^t\|\nabla\u_t(\epsilon_2)\|_{\H}^2\d\epsilon_2\no\\&\quad+C\bigg\{\bigg(1+\frac{1}{(t-\epsilon)^2}\bigg) \|\u_0\|_{\H}^2+\bigg(1+\frac{1}{t-\epsilon}\bigg)\|\f\|_{\H}^2\big(\|g\|_0^2+\|g_t\|_0^2\big)\bigg\}^{\frac{r+1}{2}}
			\no\\&\leq C\bigg\{\bigg(1+\frac{1}{(t-\epsilon_{1})^2}\bigg) \|\u_0\|_{\H}^2+\bigg(1+\frac{1}{(t-\epsilon_{1})}\bigg)\|\f\|_{\H}^2\big(\|g\|_0^2+\|g_t\|_0^2\big)\bigg\}^{\frac{r+1}{2}},
		\end{align}
	 for any $0<\epsilon<\epsilon_1<\epsilon_2<T$.	Thus, from \eqref{bb45}, it is immediate that
		\begin{align}\label{bb47}
			&\sup_{t\in[\epsilon_2,T]}	\|\nabla\u_t(t)\|_{\H}^2+\mu\int_{\epsilon_2}^T\|\Delta\u_t(t)\|_{\H}^2 \d t\no\\&\leq C\bigg\{\bigg(1+\frac{1}{(t-\epsilon_{1})^2}\bigg) \|\u_0\|_{\H}^2+\bigg(1+\frac{1}{(t-\epsilon_{1})}\bigg)\|\f\|_{\H}^2\big(\|g\|_0^2+\|g_t\|_0^2\big)\bigg\}^{\frac{r+1}{2}},
		\end{align}
		for all $t \in [\epsilon_2,T]$ and $\Delta \u_t \in \mathrm{L}^2(\epsilon_2,T;\H)$.  One can infer that $\u_t\in\mathrm{L}^2(\epsilon_2,T;\H_p^2 \cap \V)$ (cf. subsection \ref{sub2.1}).
		\vskip 0.2 cm
			\noindent\textbf{Case II:} 	\emph{$d=2$ and $r \in [1,3]$.}
			Substituting the estimates \eqref{bb39}-\eqref{bb42} and \eqref{bb44} in \eqref{bb38}, and by using the similar arguments as $d=2$ and $r > 3$, we obtain the required estimate \eqref{bb2111} and $\u_t\in\mathrm{L}^2(\epsilon_2,T;\H_p^2 \cap \V)$.
			\vskip 0.2 cm
			\noindent\textbf{Case III:} 	\emph{$d=3$ and $r \geq 3$.} Using H\"older's, Gagliardo-Nirenberg's and Young's inequalities, we have
			\begin{align}\label{bb401}
				I_2 &\leq	|((\u_t \cdot\nabla)\u,-\Delta\u_t)| \leq  \|\u_t\|_{\widetilde{\L}^4} \|\nabla\u\|_{\widetilde{\L}^4} \|\Delta\u_t\|_{\H} \nonumber\\&\leq
				C\|\u_t\|_{\H}^\frac{1}{4}\|\nabla\u_t\|_{\H}^\frac{3}{4}\|\u\|_{\H}^\frac{1}{8}\|\u\|_{\H_p^2}^\frac{7}{8}\|\Delta\u_t\|_{\H}
				\no \\& \leq \frac{\mu}{14}\|\Delta\u_t\|_{\H}^2+C\|\u_t\|_{\H}^\frac{1}{2}\|\nabla\u_t\|_{\H}^\frac{3}{2}\|\u\|_{\H}^\frac{1}{4}\left(\|\u\|_{\H}^{\frac{7}{4}}+\|\Delta\u\|_{\H}^{\frac{7}{4}}\right)
					\no \\& \leq \frac{\mu}{14}\|\Delta\u_t\|_{\H}^2+\frac{\alpha}{4}\|\nabla\u_t\|_{\H}^{2}+C\left(\|\u\|_{\H}^8+\|\u\|_{\H}\|\Delta\u\|_{\H}^{7}\right)\|\u_t\|_{\H}^{2}.
			\end{align}
	Making the use of H\"older's, Agmon's and Young's inequalities, we compute
			\begin{align}\label{bb411}
				I_3 \leq	|((\u \cdot\nabla)\u_t,-\Delta\u_t)|&\leq\|\u\|_{\widetilde{\L}^\infty} \|\nabla\u_t\|_{\H}\|\Delta\u_t\|_{\H}
				\no\\&\leq C \|\u\|_{\H}^{\frac{1}{4}}\left(\|\u\|_{\H}^{\frac{3}{4}}+\|\Delta\u\|_{\H}^{\frac{3}{4}}\right) \|\nabla\u_t\|_{\H}\|\Delta\u_t\|_{\H}
				\no\\ &\leq
				C \left(\|\u\|_{\H}^2+\|\u\|_{\H}^{\frac{1}{2}}\|\Delta\u\|_{\H}^{\frac{3}{2}}\right)\|\nabla\u_t\|_{\H}^2+\frac{\mu}{14}\|\Delta\u_t\|_{\H}^2.
			\end{align}
		A calculation similar to \eqref{bb42} gives 
			\begin{align}\label{bb421}
				I_4&\leq	\beta\big|\big(\mathcal{C}'(\u)\u_t,-\Delta\u_t\big)\big| 	\leq C\|\u\|_{\widetilde{\L}^\infty}^{r-1}\|\u_t\|_{\H}\|\Delta\u_t\|_{\H} \leq C\|\u\|_{\H}^{\frac{r-1}{4}}\|\u\|_{\H_p^2}^{\frac{3(r-1)}{4}}\|\u_t\|_{\H}\|\Delta\u_t\|_{\H}
				\no \\&  \leq C\|\u\|_{\H}^{\frac{r-1}{4}}\left(\|\u\|_{\H}^{\frac{3(r-1)}{4}}+\|\Delta\u\|_{\H}^{\frac{3(r-1)}{4}}\right)\|\u_t\|_{\H}\|\Delta\u_t\|_{\H}
				\nonumber\\&\leq C \left(\|\u\|_{\H}^{2(r-1)}+\|\u\|_{\H}^{\frac{r-1}{2}}\|\Delta\u\|_{\H^2}^{\frac{3(r-1)}{2}}\right)\|\u_t\|_{\H}^2+\frac{\mu}{14}\|\Delta\u_t\|_{\H}^2,
			\end{align}
 An estimate similar to \eqref{bb44} yields
		\begin{align}\label{bb422}
			I_{5} &\leq \frac{3\mu}{14}\|\Delta \u_t\|_{\H}^2+\frac{\alpha}{4}\|\nabla\u_t\|_{\H}^{2}+C\|\u_t\|_{\H}^{2}\|\u\|_{\H}\|\u\|_{\H^2}^{7}+C\left(\|\u\|_{\H}^2+\|\u\|_{\H}^{\frac{1}{2}}\|\Delta\u\|_{\H}^{\frac{3}{2}}\right)\|\nabla\u_t\|_{\H}^2\no \\& \quad+C \left(\|\u\|_{\H}^{2(r-1)}+\|\u\|_{\H}^{\frac{r-1}{2}}\|\Delta\u\|_{\H^2}^{\frac{3(r-1)}{2}}\right)\|\u_t\|_{\H}^2,
		\end{align}
where we have used the estimates \eqref{bb401}-\eqref{bb421}.	Substituting \eqref{bb39}, \eqref{bb401}-\eqref{bb422} in \eqref{bb38}, and by using the similar arguments as $d=2$ and $r > 3$, we obtain \eqref{bb2112} and $\u_t\in\mathrm{L}^2(\epsilon_2,T;\H^2_p \cap \V)$.
	\end{proof}
	\begin{lemma}\label{lemb7}
		Let $(\u(\cdot),\nabla p(\cdot))$ be the unique solution of the CBF  equations \eqref{1a}-\eqref{2a}.  Then,  for all $t\in[\epsilon_2,T]$ and for any $0<\epsilon<\epsilon_1<\epsilon_2 < T$,
			\begin{itemize}
			\item [$(i)$] 	for $d=2$ and $r >3,$   we have
			\begin{align}\label{bb53}
				&	\sup_{t\in[\epsilon_2,T]}\|\nabla\u_t(t)\|_{\H}^2 +\int_{\epsilon_2}^T\|\u_{tt}(t)\|_{\H}^2\d t\no\\&\leq  C\bigg\{\bigg(1+\frac{1}{(t-\epsilon_1)^2}\bigg) \|\u_0\|_{\H}^2+\bigg(1+\frac{1}{(t-\epsilon_1)}\bigg)\|\f\|_{\H}^2\big(\|g\|_0^2+\|g_t\|_0^2\big)\bigg\}^{\frac{r+1}{2}},
			\end{align}
			\item [$(ii)$] 	for $d=2$ and $r \in[1,3],$   we have
			\begin{align}\label{bb21111}
				&	\sup_{t\in[\epsilon_2,T]}\|\nabla\u_t(t)\|_{\H}^2 +\int_{\epsilon_2}^T\|\u_{tt}(t)\|_{\H}^2\d t
				\no\\&\leq  C\bigg\{\bigg(1+\frac{1}{(t-\epsilon_1)^3}\bigg) \big(1+\|\u_0\|_{\H}^6\big)+ \bigg(1+\frac{1}{(t-\epsilon_1)^3}\bigg) \bigg(1+\|\f\|_{\H}^6\big(\|g\|_0^6+\|g_t\|_0^6\big)\bigg)\bigg\}^{\frac{r+1}{2}}.
			\end{align}
			\item [$(iii)$] 	for $d=3$ and $r \geq3,$    we have
		\begin{align}\label{bb21112}
			&\sup_{t\in[\epsilon_2,T]}\|\nabla\u_t(t)\|_{\H}^2 +\int_{\epsilon_2}^T\|\u_{tt}(t)\|_{\H}^2\d t\no\\&\leq  C\bigg\{\bigg(1+\frac{1}{(t-\epsilon_{1})^2}\bigg)  \|\u_0\|_{\H}^2+\bigg(1+\frac{1}{(t-\epsilon_{1})}\bigg)\|\f\|_{\H}^2\big(\|g\|_0^2+\|g_t\|_0^2\big)\bigg\}^{\frac{3r+1}{4}}.
		\end{align}
		\end{itemize}
	\end{lemma}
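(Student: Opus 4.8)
The plan is to follow the template of the proof of Lemma \ref{lemb6} verbatim, but with the test function $-\Delta\u_t$ replaced by $\u_{tt}$; the essential simplification is that the pressure term now disappears cleanly. First I would take the inner product of the differentiated equation \eqref{bb23} with $\u_{tt}(\cdot)$ and integrate over $\T$. Since $\nabla\cdot\u_{tt}=\partial_t^2(\nabla\cdot\u)=0$, the field $\u_{tt}$ lies in $\H$ and is orthogonal to $\nabla p_t\in\G(\T)$, so $\langle\nabla p_t,\u_{tt}\rangle=0$; moreover $\langle(\u\cdot\nabla)\u_t,\u_{tt}\rangle$ need not vanish, but the diffusion and damping contributions $-\mu(\Delta\u_t,\u_{tt})$ and $\alpha(\u_t,\u_{tt})$ produce exact time derivatives. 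This yields
\begin{align*}
\|\u_{tt}(t)\|_{\H}^2+\frac{\mu}{2}\frac{\d}{\d t}\|\nabla\u_t(t)\|_{\H}^2+\frac{\alpha}{2}\frac{\d}{\d t}\|\u_t(t)\|_{\H}^2&=(\f g_t(t),\u_{tt}(t))-\big((\u(t)\cdot\nabla)\u_t(t),\u_{tt}(t)\big)\\&\quad-\big((\u_t(t)\cdot\nabla)\u(t),\u_{tt}(t)\big)-\beta\big(\mathcal{C}'(\u)\u_t(t),\u_{tt}(t)\big).
\end{align*}
The leading term $\|\u_{tt}\|_{\H}^2$ on the left is precisely the quantity whose time integral is sought, and it will absorb the $\u_{tt}$ factor present in every term on the right. (Rigorously this computation is justified by the regularity $\u_t\in\mathrm{L}^2(\epsilon_2,T;\H_p^2\cap\V)$ established in Lemma \ref{lemb6}, or formally at the Galerkin level.)

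Next I would estimate each term on the right by H\"older's, Agmon's, Gagliardo-Nirenberg's and Young's inequalities, in exact analogy with the bounds for $I_1,I_2,I_3,I_4$ in Lemma \ref{lemb6}. One has $|(\f g_t,\u_{tt})|\leq\frac18\|\u_{tt}\|_{\H}^2+C\|g_t\|_0^2\|\f\|_{\H}^2$; for the convective terms one writes $|((\u\cdot\nabla)\u_t,\u_{tt})|\leq\|\u\|_{\widetilde{\L}^\infty}\|\nabla\u_t\|_{\H}\|\u_{tt}\|_{\H}$ and $|((\u_t\cdot\nabla)\u,\u_{tt})|\leq\|\u_t\|_{\widetilde{\L}^4}\|\nabla\u\|_{\widetilde{\L}^4}\|\u_{tt}\|_{\H}$; and for the damping term $\beta|(\mathcal{C}'(\u)\u_t,\u_{tt})|\leq C\|\u\|_{\widetilde{\L}^\infty}^{r-1}\|\u_t\|_{\H}\|\u_{tt}\|_{\H}$. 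Applying Agmon's inequality to $\|\u\|_{\widetilde{\L}^\infty}$ (in the form $\|\u\|_{\widetilde{\L}^\infty}\leq C(\|\u\|_{\H}+\|\u\|_{\H}^{1/2}\|\Delta\u\|_{\H}^{1/2})$ for $d=2$ and $\|\u\|_{\widetilde{\L}^\infty}\leq C(\|\u\|_{\H}+\|\u\|_{\H}^{1/4}\|\Delta\u\|_{\H}^{3/4})$ for $d=3$), Gagliardo-Nirenberg to the $\widetilde{\L}^4$ norms, and then absorbing the $\|\u_{tt}\|_{\H}$ factors into $\frac12\|\u_{tt}\|_{\H}^2$ by Young's inequality, I obtain a differential inequality of the shape $\frac12\|\u_{tt}\|_{\H}^2+\frac{\mu}{2}\frac{\d}{\d t}\|\nabla\u_t\|_{\H}^2+\frac{\alpha}{2}\frac{\d}{\d t}\|\u_t\|_{\H}^2\leq a(t)\|\nabla\u_t\|_{\H}^2+b(t)\|\u_t\|_{\H}^2+C\|g_t\|_0^2\|\f\|_{\H}^2$, where $a,b$ depend polynomially on $\|\u\|_{\H}$ and $\|\Delta\u\|_{\H}$.

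I would then integrate from $\epsilon_2$ to $t$, insert the already-established bounds on $\sup_{t}\|\Delta\u(t)\|_{\H}$ from Lemma \ref{lemb5} to render the coefficients $a,b$ uniform, and control $\int\|\nabla\u_t\|_{\H}^2$ and $\int\|\u_t\|_{\H}^2$ by Lemma \ref{lemb4}. This leaves the boundary term $\frac{\mu}{2}\|\nabla\u_t(\epsilon_2)\|_{\H}^2+\frac{\alpha}{2}\|\u_t(\epsilon_2)\|_{\H}^2$, which — exactly as in the passage from \eqref{bb45} to \eqref{bb47} — I would remove by integrating once more over $\epsilon_2\in[\epsilon_1,t]$ and invoking Lemma \ref{lemb4} to replace $\|\nabla\u_t(\epsilon_2)\|_{\H}^2$ by its time integral. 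The result is the claimed estimate for $\sup_{t\in[\epsilon_2,T]}\|\nabla\u_t(t)\|_{\H}^2+\int_{\epsilon_2}^T\|\u_{tt}(t)\|_{\H}^2\d t$, with the three distinct right-hand sides governed by the regularity available in each regime: the $\frac{r+1}{2}$-power bound \eqref{bb47} for $d=2,\ r>3$, the $(t-\epsilon)^{-3}$-type bound \eqref{bb2111} for $d=2,\ r\in[1,3]$, and the $\frac{3r+1}{4}$-power bound \eqref{bb2112} for $d=3,\ r\geq3$.

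The main obstacle will be the bookkeeping of Sobolev exponents in three dimensions. There Agmon's inequality costs a $\frac34$-power of $\|\Delta\u\|_{\H}$ and Gagliardo-Nirenberg a further $\frac34$-power in the $\widetilde{\L}^4$-estimate of $\nabla\u$, so the coefficient multiplying $\|\u_t\|_{\H}^2$ in the damping-term estimate grows like $\|\Delta\u\|_{\H}^{3(r-1)/2}$; matching this against the regularity furnished by Lemma \ref{lemb5} is exactly what forces the exponent $\frac{3r+1}{4}$ in \eqref{bb21112}, and one must verify that these powers remain integrable on $[\epsilon_2,T]$. The two-dimensional sub-cases are entirely analogous but less delicate, since Agmon's inequality there costs only a $\frac12$-power of $\|\Delta\u\|_{\H}$.
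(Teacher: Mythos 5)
Your proposal follows essentially the same route as the paper's own proof: testing the time-differentiated equation \eqref{bb23} with $\u_{tt}$ (where the pressure term vanishes by orthogonality of $\H$ and $\G(\T)$), estimating the forcing, the two convective terms and the damping term exactly as in \eqref{bb541}--\eqref{bb544} (respectively \eqref{bb56}--\eqref{bb58} in 3D) via H\"older, Agmon, Gagliardo--Nirenberg and Young, integrating from $\epsilon_2$ to $T$, inserting the bounds of Lemmas \ref{lemb1}, \ref{lemb4} and \ref{lemb5}, and finally averaging over $\epsilon_2\in[\epsilon_1,t]$ to eliminate the boundary terms $\|\nabla\u_t(\epsilon_2)\|_{\H}^2$ and $\|\u_t(\epsilon_2)\|_{\H}^2$ via Lemma \ref{lemb4}. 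Your identification of the source of the exponent $\frac{3r+1}{4}$ in the 3D case (the $\frac34$-powers of $\|\Delta\u\|_{\H}$ from Agmon's and Gagliardo--Nirenberg's inequalities) also matches the paper's computation, so the proposal is correct as it stands.
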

	\begin{proof}
		Taking the inner product with $\u_{tt}(\cdot)$ in the equation \eqref{bb23}, we deduce 
		\begin{align}\label{bb54}
			&\|\u_{tt}(t)\|_{\H}^2+\frac{\mu}{2}\frac{\d}{\d t}\|\nabla\u_t(t)\|_{\H}^2+\frac{\alpha}{2}\frac{\d}{\d t} \|\u_t(t)\|_{\H}^2\nonumber\\&=(\f g_t(t),\u_{tt}(t))-((\u_t(t) \cdot\nabla)\u(t),\u_{tt}(t))\nonumber\\&\quad-((\u(t) \cdot \nabla)\u_t(t),\u_{tt}(t))-\beta (\mathcal{C}'(\u)\u_t(t),\u_{tt}(t)),
		\end{align}
		for a.e. $t\in[\epsilon_2, T]$, for some $0<\epsilon_2\leq T$.	Using H\"older's and Young's inequalities, we estimate  $|(\f g_t,\u_{tt})|$ as 
		\begin{align}\label{bb541}
			|(\f g_t,\u_{tt})|\leq\|g_t\|_{\mathrm{L}^{\infty}}\|\f\|_{\H}\|\u_{tt}\|_{\H}\leq 2 \|g_t\|_{\mathrm{L}^{\infty}}^2\|\f\|_{\H}^2+\frac{1 }{8}\|\u_{tt}\|_{\H}^2.
		\end{align}
		\vskip 0.2 cm
	\noindent\textbf{Case I:} 	\emph{$d=2$ and $r > 3$.}
		Calculations similar to \eqref{bb40}-\eqref{bb42} yield 
		\begin{align}
			|((\u_t \cdot\nabla)\u,\u_{tt})| 
			&\leq 	\frac{1}{8}\|\u_{tt}\|_{\H}^2+\|\u_t\|_{\H}\|\nabla\u_t\|_{\H}\left(\|\u\|_{\H}^2+\|\u\|_{\H}^\frac{1}{2}\|\Delta\u\|_{\H}^\frac{3}{2}\right),\label{bb542}\\
			|((\u \cdot\nabla)\u_t,\u_{tt})|&\leq 	\frac{1}{8}\|\u_{tt}\|_{\H}^2+	C\left(\|\u\|_{\H}^2+\|\u\|_{\H}\|\Delta\u\|_{\H}\right)\|\nabla\u_t\|_{\H}^2,\label{bb543}\\
|(\mathcal{C}'(\u)\u_t,\u_{tt})|
		&\leq \frac{1}{8}\|\u_{tt}\|_{\H}^2+ C \left(\|\u\|_{\H}^{2(r-1)}+\|\u\|_{\H}^{r-1}\|\Delta\u\|_{\H}^{r-1}\right)\|\u_t\|_{\H}^2.\label{bb544}
	\end{align}
		Substituting the estimates \eqref{bb541}-\eqref{bb544} in \eqref{bb54} and then integrating it from $\epsilon_2$ to $T$, we arrive at
		\begin{align}\label{bb55}
			\nonumber&\int_{\epsilon_2}^T\|\u_{tt}(s)\|_{\H}^2\d s+\mu \|\nabla\u_t(t)\|_{\H}^2+\alpha \|\u_t(t)\|_{\H}^2 
			\nonumber\\&\leq \mu \|\nabla\u_t(\epsilon_2)\|_{\H}^2+\alpha \|\u_t(\epsilon_2)\|_{\H}^2+4(t-\epsilon_2)\|g_t\|_0^2\|\f\|_{\H}^2
			\no \\& \quad+ C(t-\epsilon_2)\sup_{t\in[\epsilon_2,T]} \big(\|\u_t(t)\|_{\H}\|\nabla\u_t(t)\|_{\H}\big) \sup_{t\in[\epsilon_2,T]}\left(\|\u(t)\|_{\H}^2+\|\u(t)\|_{\H}^\frac{1}{2}\|\Delta\u(t)\|_{\H}^\frac{3}{2}\right)
			 \no\\&\quad+C(t-\epsilon_2)\sup_{t\in[\epsilon_2,T]}\left(\|\u(t)\|_{\H}^2+\|\u(t)\|_{\H}\|\Delta\u(t)\|_{\H}\right)\sup_{t\in[\epsilon_2,T]}\|\nabla\u_t(t)\|_{\H}^2
			 \no \\& \quad+C(t-\epsilon_2)\left(\|\u(t)\|_{\H}^{2(r-1)}+\|\u(t)\|_{\H}^{r-1}\|\Delta\u(t)\|_{\H}^{r-1}\right)\sup_{t\in[\epsilon_2,T]}\|\u_t(t)\|_{\H}^2,
		\end{align}
		for all $t \in [\epsilon_2,T]$. Using the energy estimates obtained in Lemmas \ref{lemb1}, \ref{lemb4} and \ref{lemb5} in \eqref{bb55}, we obtain
		\begin{align*}
			\|\nabla\u_t(t)\|_{\H}^2  
			&\leq  \|\nabla\u_t(\epsilon_2)\|_{\H}^2+\frac{\alpha}{\mu} \|\u_t(\epsilon_2)\|_{\H}^2\\&\quad+C\bigg\{\bigg(1+\frac{1}{(t-\epsilon)^2}\bigg) \|\u_0\|_{\H}^2+\bigg(1+\frac{1}{t-\epsilon}\bigg)\|\f\|_{\H}^2\big(\|g\|_0^2+\|g_t\|_0^2\big)\bigg\}^{\frac{r+1}{2}},
		\end{align*}
		for all $t \in [\epsilon_2,T]$. Integrating the above estimate over $\epsilon_2$ from $\epsilon_1$ to $t$ and then using the energy estimate obtained in Lemma \ref{lemb4}, we deduce 
		\begin{align*}
			\|\nabla\u_t(t)\|_{\H}^2&\leq \frac{1}{  t-\epsilon_1}\bigg( \int_{\epsilon_1}^t\|\nabla\u_t(\epsilon_2)\|_{\H}^2\d\epsilon_2+\frac{\alpha}{\mu}\int_{\epsilon_1}^t\|\u_t(\epsilon_2)\|_{\H}^2\d\epsilon_2 \bigg)\\&\quad+C\bigg\{\bigg(1+\frac{1}{(t-\epsilon)^2}\bigg) \|\u_0\|_{\H}^2+\bigg(1+\frac{1}{t-\epsilon}\bigg)\|\f\|_{\H}^2\big(\|g\|_0^2+\|g_t\|_0^2\big)\bigg\}^{\frac{r+1}{2}}
			\no\\& \leq C\bigg\{\bigg(1+\frac{1}{(t-\epsilon_1)^2}\bigg) \|\u_0\|_{\H}^2+\bigg(1+\frac{1}{(t-\epsilon_1)}\bigg)\|\f\|_{\H}^2\big(\|g\|_0^2+\|g_t\|_0^2\big)\bigg\}^{\frac{r+1}{2}},
		\end{align*}
		for any $0<\epsilon<\epsilon_1<\epsilon_2 < T$.	Thus, from  \eqref{bb55}, one can reach \eqref{bb53}
		and $\u_{tt} \in \mathrm{L}^{2}(\epsilon_2,T;\H)$.
			\vskip 0.2 cm
		\noindent\textbf{Case II:} 	\emph{$d=2$ and $r \in [1,3]$.} Substituting the estimates \eqref{bb541}-\eqref{bb544} in \eqref{bb54} and then
	 using the similar arguments as $d=2$ and $r > 3$, we obtain the required estimate \eqref{bb21111} 	and $\u_{tt} \in \mathrm{L}^{2}(\epsilon_2,T;\H)$.
	 	\vskip 0.2 cm
	 \noindent\textbf{Case III:} 	\emph{$d=3$ and $r \geq3$.} 
	 Calculations similar to \eqref{bb401}-\eqref{bb421} give
	 \begin{align}
	 	|((\u_t \cdot\nabla)\u,\u_{tt})|  &\leq \frac{1}{8}\|\u_{tt}\|_{\H}^2+ C\|\u_t\|_{\H}^\frac{1}{2}\|\nabla\u_t\|_{\H}^\frac{3}{2}\left(\|\u\|_{\H}^{2}+\|\u\|_{\H}^\frac{1}{4}\|\Delta\u\|_{\H}^{\frac{7}{4}}\right),\label{bb56} \\
	 	|((\u \cdot\nabla)\u_t,\u_{tt})| & \leq \frac{1}{8}\|\u_{tt}\|_{\H}^2+ C\left(\|\u\|_{\H}^2+\|\u\|_{\H}^{\frac{1}{2}}\|\Delta\u\|_{\H}^{\frac{3}{2}}\right)\|\nabla\u_t\|_{\H}^2, \label{bb57}\\
	 	\big|\big(\mathcal{C}'(\u)\u_t,\u_{tt}\big)\big| 		&\leq \frac{1}{8}\|\u_{tt}\|_{\H}^2+C  \left(\|\u\|_{\H}^{2(r-1)}+\|\u\|_{\H}^{\frac{r-1}{2}}\|\Delta\u\|_{\H^2}^{\frac{3(r-1)}{2}}\right)\|\u_t\|_{\H}^2.\label{bb58}
	 \end{align}
 Substituting \eqref{bb541} and \eqref{bb56}-\eqref{bb58} in \eqref{bb54} and then
 using the similar arguments as $d=2$ and $r > 3$, we obtain the required estimate \eqref{bb21112} and $\u_{tt} \in \mathrm{L}^{2}(\epsilon_2,T;\H)$.
	\end{proof}
	\begin{remark}\label{remb7}
		From Lemmas \ref{lemb6} and \ref{lemb7}, the facts$$\u_t\in\mathrm{L}^2(\epsilon_2,T;\H^2_p\cap \V) \ \ \text{and}  \ \ \u_{tt}\in\mathrm{L}^2(\epsilon_2,T;\H),$$ imply that $\u_t\in\C([\epsilon_2,T];\V)$ for any $0<\epsilon<\epsilon_1<\epsilon_2\leq T$. 
	\end{remark}
\end{appendix}

\medskip\noindent
{\bf Acknowledgments:} P. Kumar and M. T. Mohan would  like to thank the Department of Science and Technology (DST), India for Innovation in Science Pursuit for Inspired Research (INSPIRE) Faculty Award (IFA17-MA110). The first author expresses his heartfelt thanks to Mr. Kush Kinra for useful discussions.

\medskip\noindent	{\bf  Declarations:} 

\noindent 	{\bf  Ethical Approval:}   Not applicable 

\noindent  {\bf   Competing interests: } The authors declare no competing interests. 

\noindent 	{\bf   Authors' contributions: } All authors have contributed equally. 

\noindent 	{\bf   Funding: } DST, India, IFA17-MA110 (M. T. Mohan).

\noindent 	{\bf   Availability of data and materials: } Not applicable.

\end{document}